\newtheorem{theorem}{Theorem}[section]
\newtheorem{lemma}[theorem]{Lemma}
\newtheorem{proposition}[theorem]{Proposition}
\newtheorem{corollary}[theorem]{Corollary}
\newtheorem{claim}[theorem]{Claim}
\newtheorem{problem}{Problem}
\newenvironment{remark}%
   {\refstepcounter{theorem}%
        \medbreak\noindent{\bf Remark \thetheorem.\space}}%
   {\par\medbreak}%
\newenvironment{example}%
   {\refstepcounter{theorem}%
        \medbreak\noindent{\bf Example \thetheorem.\space}}%
   {\par\medbreak}%
\renewenvironment{proof}%
   {\medbreak\noindent{\it Proof:\space}}%
   {\par\noindent\vrule height 5pt width 5pt depth 0pt\smallbreak}%
\newcommand{\df}{\bf}
\newcommand{\tq}{\mathrel{:}}
\newtheorem*{theorem*}{Theorem}
\let\sauvegardetiret=\-
\renewcommand{\-}[1]{\ifx#1-\penalty10000\hbox{-\relax}\penalty10000\else\sauvegardetiret#1\fi}
\newcommand{\NN}{{\rm\bf N}}
\newcommand{\ZZ}{{\rm\bf Z}}
\newcommand{\QQ}{{\rm\bf Q}}
\newcommand{\RR}{{\rm\bf R}}
\newcommand{\GG}{{\rm\bf G}}
\newcommand{\PP}{{\rm\bf P}}  
\newcommand{\PN}{{%\mathbf 
                  P}}
\newcommand{\UU}{{%\mathbf 
                  U}}
\newcommand{\cA}{{\cal A}}
\newcommand{\cB}{{\cal B}}
\newcommand{\cC}{{\cal C}}
\newcommand{\cD}{{\cal D}}
\newcommand{\cE}{{\cal E}}
\newcommand{\cF}{{\cal F}}
\newcommand{\cH}{{\cal H}}
\newcommand{\cI}{{\cal I}}
\newcommand{\cO}{{\cal O}}
\newcommand{\cP}{{\cal P}}
\newcommand{\cQ}{{\cal Q}}
\newcommand{\cS}{{\cal S}}
\newcommand{\cT}{{\cal T}}
\newcommand{\cU}{{\cal U}}
\newcommand{\cV}{{\cal V}}
\newcommand{\cX}{{\cal X}}
\newcommand{\cY}{{\cal Y}}
\newcommand{\cZ}{{\cal Z}}
\newcommand{\Supp}{\mathop{\rm Supp}}
\newcommand{\Card}{\mathop{\rm Card}}
\newcommand{\CB}{\mathop{\rm CB}}
\newcommand{\Gr}{\mathop{\rm Gr}}
\newcommand{\Lring}{\mathop{{\cal L}_{\rm ring}}}
\newcommand{\Vect}{\mathop{\rm Vect}}
\newcommand{\Alg}{\mathop{\rm Alg}}
\newcommand{\coalg}{\mathop{\rm coalg}}
\newcommand{\Tp}{\mathop{\rm tp}}
\newcommand{\Id}{\mathop{\rm Id}}
\newcommand{\TR}[1]{\mathbf T_{#1}}
\title{Semi-algebraic triangulation over $p$-adically closed fields}
\author{Luck Darni\`ere}
\begin{document}

\maketitle

\begin{abstract}
  We prove a triangulation theorem for semi-algebraic sets over a
  $p$\--adically closed field, quite similar to its real counterpart.
  We derive from it several applications like the existence of
  flexible retractions and splitting for semi-algebraic sets. 
\end{abstract}

\begin{center}
  \begin{minipage}{.7\textwidth}
    \setlength{\parskip}{-2.5ex}
    \centerline{\bf Contents}
    \vskip-8mm
    \let\bfseries\relax
%     \small
    \tableofcontents
  \end{minipage}
\end{center}

% \blfootnote{AMS Classification: 12J12, 51H20, 57N80.}
% \blfootnote{Keywords: $p$-adic geometry, semi-algebraic sets, cell decomposition,
% triangulation, retraction, simplicial complex}

\section{Introduction}
\label{se:intro}

Our knowledge of geometric objects in affine spaces over $p$\--adic
fields, that is the field $\QQ_p$ of $p$\--adic numbers or a finite
extension of it, has grown tremendously in the past decades. Several
remarkable analogies have emerged with real geometry, in spite of the
striking differences between the real and the $p$\--adic metrics. The
present paper raises a new such analogy: we prove a triangulation
theorem over $p$\--adically closed fields, quite similar to its real
counterpart. Let us first recall the classical results in $p$-adic
geometry which will be used here. 
\\

Semi-algebraic sets over a field $F$ are the finite unions of sets
defined by finitely many conditions ``$f(x)=0$'' or ``$f(x)$ has a
non-zero $N$\--th root in $F$'', where $f$ is a polynomial function
with $m$ variables. Of course if $F$ is real closed we can restrict
the last conditions to $N=2$ (that is to ``$f(x)>0$'') and if $F$ is
algebraically closed to $N=1$ (that is to $f(x)\neq0$). It is shown in
\cite{maci-1976} that semi-algebraic sets over $\QQ_p$ are stable
under the taking of boolean combinations \emph{and} projections (from
$\QQ_p^m$ to $\QQ_p^{m-1}$, for every $m$). This is a $p$-adic version
of Tarski's theorem for real closed fields (and of Chevalley's theorem
for algebraically closed fields). Prestel and Roquette (see
\cite{pres-roqu-1984}) have generalized it to arbitrary $p$\--adically
closed fields (a $p$\--adic version of real closed fields). 
\\

Denef has proved in \cite{dene-1984} a Cell Decomposition Theorem for
$p$\--adic semi-algebraic sets very similar to its real counterpart,
and derived from it the rationality of Poincar\'e series. Another major
application of cell decomposition is that it provides a good dimension
theory for semi-algebraic sets (see \cite{drie-scow-1988}). Nowadays a
cell over $\QQ_p$ is generally defined as the set of $(x,t)\in\QQ_p^m\times\QQ_p$
such that
\begin{equation}
  |\nu(x)|\square_1|t-c(x)|\square_2|\mu(x)|\quad\mbox{and}\quad t-c(x)\in H
  \label{eq:cell-square}
\end{equation}
where $c$, $\mu$, $\nu$ are semi-algebraic functions (that is functions
whose graph is semi-algebraic), $\square_1$ and $\square_2$ are $\leq$, $<$ or no
relation, and $H$ is $\{0\}$ or a coset of some semi-algebraic subgroup
$\GG$ of $\QQ_p^\times=\QQ_p\setminus\{0\}$. We call it a cell mod $\GG$. Denef worked
with cells mod $\QQ_p^\times$ and implicitly with cells mod $\PN_N^\times$, the
multiplicative group of non-zero $N$\--th powers. This use of cells
mod $\PN_N^\times$ was then made more explicit by Cluckers. Gradually,
people began to replace them by cells mod
\begin{displaymath}
  Q_{N,M}^\times=\bigcup_{k\in\ZZ}p^{kN}(1+p^M\ZZ_p)
\end{displaymath}
where $\ZZ_p$ denotes the ring of $p$\--adic integers (and $\ZZ$ the ring
of integers). Indeed the Cell Decomposition Theorem only asserts that
every semi-algebraic set $S\subseteq\QQ_p^{m+1}$ has a finite partition in cells
mod $\PN_N^\times$ for some $N$. But it usually comes with a Cell
Preparation Theorem (similar to Weierstrass preparation) which says
that, given semi-algebraic functions $\theta_1,\dots,\theta_r$ from $S$ to $\QQ_p$, for
some positive integers $N$, $M$, $e$ there is a partition of $S$ in
finitely many cells mod $Q_{N,M}^\times$ on each of which 
\begin{displaymath}
  |f(x,t)|^e=|h(x)|\cdot|t-c(x)|^\alpha
\end{displaymath}
where $h$ is a semi-algebraic function, $\alpha\in\ZZ$ and $c$ is as in
(\ref{eq:cell-square}). Using such a preparation, Cluckers has proved
in \cite{cluc-2001} that for every two infinite semi-algebraic sets
$A$, $B$ over a $p$\--adically closed field, there is a semi-algebraic
bijection from $A$ to $B$ if and only if $A$ and $B$ have the same
dimension. 

Note that these semi-algebraic bijections are not continuous in
general: for example Clucker's theorem applies to the valuation ring
$\ZZ_p$, which is compact, and to $\ZZ_p\setminus\{0\}$, which is not. This lack of
global continuity conditions is due to the fact that the cell
decomposition techniques treat the various cells of the partition
independently, without giving any information on how their frontiers
touch each other. This is where triangulations come into the
picture\footnote{A different improvement of cell decompositions facing
  this question is given by stratifications. Such stratifications have
  been recently introduced in $p$\--adically closed field
  \cite{cluc-comt-loes-2012}, and in more general non-standard
  Henselian valued fields \cite{halu-2014}. However their relationship
  with the $p$\--adic triangulation is quite unclear at the moment,
  due to the very peculiar conditions involved in the definition of
$p$\--adic simplexes.\label{ft:strat}}.
\\

The real Triangulation Theorem says that every semi-algebraic set over
$\RR$ is semi-algebraically homeomorphic to the union of a simplicial
complex, that is (informally) a finite family of simplexes which touch
each other along their faces. We have introduced in \cite{darn-2017} a
notion of polytopes and simplexes adapted to $\QQ_p$. We delay precise
definitions to Section~\ref{se:notation} but give here some intuition
on it. The $p$\--adic polytopes share many structural properties with
their real counterpart:
\begin{itemize}
  \item
    As inverse images by the valuation (in $Q_{1,M}^r$) of subsets
    of $\ZZ^r$, they are defined by very special (simple) systems of
    $\QQ$\--linear inequalities. 
  \item
    There is a notion of ``faces'' attached to them with good
    properties: every face of a polytope $S$ is itself a polytope; if
    $S''$ is a face of $S'$ and $S'$ a face of $S$ then $S''$ is a
    face of $S$; the union of the proper faces of $S$ is a partition
    of its frontier.
  \item 
    Among the $p$\--adic polytopes, the simplexes are those whose
    number of facets is minimal\footnote{Real simplexes can be
    characterised, among the polytopes of a given dimension $d$, as
  those whose number is minimal (namely $d+1$).} in a very strong
  sense: a simplex has at most one facet, which is itself a simplex.
  \item
    Last but not least, every $p$\--adic polytope can be divided in
    simplexes by a certain uniform process of ``Monotopic Division''
    which offers a good control both on their shapes and their faces. 
\end{itemize}
Just as in the real case, we can then define a simplicial complex over
$\QQ_p$ essentially as a finite family of simplexes in $Q_{1,M}^r$, for
some positive integer $M$, which touch each other along their faces
(again, see Section~\ref{se:notation} for a more precise definition). A
simplified version of our main result, the Triangulation
Theorem~\ref{th:triang-m}, can then be stated as follows. 

\begin{theorem*}[Triangulation]
  For every semi-algebraic set $S\subseteq\QQ_p^m$ there is a semi-algebraic
  homeomorphism $\varphi:T\to S$ whose domain $T$ is the union of a simplicial
  complex $\cT$. 
 
  Moreover, given semi-algebraic functions $\theta_1,\dots,\theta_r$ from $S$ to
  $\QQ_p$, $(\cT,\varphi)$ can be chosen so that on every $T\in\cT$ the
  valuation of each $\theta_i\circ\varphi(y)$ is a $\ZZ$\--linear function of the
  valuations of the coordinates of $y\in T$. 
\end{theorem*}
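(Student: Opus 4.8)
The plan is to proceed by induction on $m$, mimicking the classical real proof (Hironaka's / van den Dries's style) but replacing its three core ingredients — Cell Decomposition, a "good directions" lemma, and the simplicial approximation of a graph — by their $p$-adic analogues. First I would reduce to the case where $S$ is bounded (by applying a semi-algebraic homeomorphism $\QQ_p^m\to(p\ZZ_p)^m$ coordinatewise, e.g. $x\mapsto x/(1+|x|)$-type maps adapted to the $p$-adic absolute value, which are semi-algebraic and transform the given functions $\theta_i$ in a controlled way). Then, writing $S\subseteq\QQ_p^{m-1}\times\QQ_p$, apply the Cell Decomposition/Preparation Theorem to $S$ together with the functions $\theta_i$: this yields a finite partition of the projection $\pi(S)\subseteq\QQ_p^{m-1}$ and, over each piece, cells described by $|\nu(x)|\,\square_1\,|t-c(x)|\,\square_2\,|\mu(x)|$ with $t-c(x)$ in a coset of $Q_{N,M}^\times$, on which each $|\theta_i|$ is a monomial $|h(x)|\cdot|t-c(x)|^\alpha$. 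By induction there is a triangulation of $\pi(S)$ compatible with all the data appearing in these cells ($c$, $\mu$, $\nu$, the $h$'s, and the finitely many functions whose graphs bound the pieces). The crux is then to show that over each open simplex $\sigma$ of this triangulation, the finitely many cells stacked above $\sigma$ can be simultaneously triangulated into a complex sitting in some $Q_{1,M'}^r$, compatibly with the pull-back of the coordinate valuations — this is the $p$-adic ``cylinder over a simplex is triangulable'' step.

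The key steps in order: (1) the boundedness reduction; (2) invoke Cell Decomposition + Preparation for $(S,\theta_1,\dots,\theta_r)$; (3) apply the inductive triangulation to $\pi(S)$ made compatible with the finitely many semi-algebraic functions and sets produced in step (2) — here I would use that, by the Monotopic Division property recalled in the introduction, a triangulation compatible with given functions refines to one whose simplexes I can handle; (4) over a fixed open simplex $\sigma\subseteq Q_{1,M}^{r}$, analyze a single cell $C=\{(y,t): |\nu(y)|\,\square_1\,|t-c(y)|\,\square_2\,|\mu(y)|,\ t-c(y)\in \xi Q_{N,M'}^\times\}$: after the fibrewise translation $t\mapsto t-c(y)$ (a semi-algebraic homeomorphism, legitimate since $c$ is affine-in-valuation on $\sigma$ by step (3)), $C$ becomes a ``band'' whose fibres are determined purely by $v(t)$ lying in an interval with $\ZZ$-linear endpoints in $v(y)$ and a congruence condition mod $N$ plus the residue condition mod $p^{M'}$; (5) recognize such a band, after subdividing $\sigma$ and possibly increasing $M$, as (a finite union of) $p$-adic polytopes in $Q_{1,M'}^{r+1}$, then apply Monotopic Division to cut each into simplexes, collecting everything into a simplicial complex $\cT$ and assembling the homeomorphisms $\varphi$ piecewise; (6) check that the gluing is consistent along faces — this is where the ``faces of a polytope partition its frontier'' and ``a simplex has at most one facet'' properties are essential — and that the valuation-linearity of each $\theta_i\circ\varphi$ is preserved, since $|\theta_i|$ was monomial in $|t-c(y)|$ and $|h(y)|$, both of which become valuation-linear in the coordinates of $y\in\sigma$.

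The main obstacle I expect is step (4)–(5): making the stacked cells fit \emph{together} into a single simplicial complex in one ambient $Q_{1,M'}^r$. Individually each cell is easy to triangulate, but the real difficulty — exactly the one flagged in the introduction as the gap left open by cell decomposition — is controlling how the \emph{frontiers} of adjacent cells in the same fibre, and of cells over adjacent simplexes of the base, match up; the coset conditions modulo $Q_{N,M'}^\times$ (which have no real analogue) interact with the base triangulation and may force repeatedly refining $M$ and subdividing, and one must prove this process terminates with a genuine complex. A secondary technical obstacle is ensuring the fibrewise translation $t\mapsto t-c(y)$ and the boundedness homeomorphism do not destroy the ``simplex'' structure — this is why the compatibility in step (3) must be with the \emph{valuations} of $c$ and the $\theta_i$, not merely with their zero-sets — and verifying that the resulting maps are homeomorphisms (not just continuous bijections) uses properness/compactness arguments special to the bounded, i.e. $\ZZ_p$-valued, setting secured in step (1).
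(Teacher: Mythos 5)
Your overall skeleton (induction on $m$, reduction to the bounded case, Denef preparation over the base, inductive triangulation of the socle, Monotopic Division to cut the fibred pieces, a fibrewise change of variable absorbing the center and the coset condition) does track the architecture of the paper, and you correctly locate the difficulty in how the stacked cells and their frontiers fit together. But there are two genuine gaps, and they are exactly the two points the paper spends most of its effort on. First, your step (3)--(4) tacitly assumes that once the base triangulation is compatible with the \emph{valuations} of $c,\mu,\nu$ (so that these are $\ZZ$-linear in $v(y)$ on each simplex), the fibrewise picture over a simplex extends controllably to its closure. This is false: valuation-linearity on a simplex does not imply large continuity, and without large continuity of the center and bounds the closure of your ``band'' is not governed by $\bar c,\bar\mu,\bar\nu$ at all --- compare Example~\ref{ex:simplex}, where a map that is linear in the valuations fails to have a limit along a face and the resulting set is not a polytope. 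Denef's cells simply cannot be taken with largely continuous data in general (take for $\cA$ the graph of a function that is not largely continuous). The paper's way out is not optional: one first proves, \emph{using the induction hypothesis $\TR m$} (via the Retraction, Splitting and piecewise-large-continuity theorems of Section~\ref{se:appli-triang}), the Good Direction Lemma~\ref{le:gd-open} and the preparation-up-to-deformation Theorem~\ref{th:large-cont-prep}: after composing with an arbitrarily small linear homeomorphism $u_\eta(x,t)=(x+t\eta,t)$, the cells can be taken largely continuous and fitting, and since $u_\eta$ is a homeomorphism this still triangulates $S$. Your plan never produces largely continuous cell data, so steps (4)--(6) have nothing to work with on the frontiers.

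Second, even granting largely continuous cells, your sequential order --- triangulate the base first, then organize the cells above it --- runs into the vicious circle the paper flags after Lemma~\ref{le:pretriang}: to make the cells into a monoplex whose frontiers match (your step (6)) you must refine them, and any such refinement forces a vertical refinement over a finer partition of the base, which destroys the compatibility with the triangulation you already fixed (and, conversely, re-triangulating the refined base destroys the tree structure of the cells). Your own remedy, ``repeatedly refining $M$ and subdividing, and one must prove this process terminates,'' is precisely the unsolved core; nothing in your plan makes it terminate. The paper resolves this by constructing the cellular monoplex, its $\lhd^n$-transition functions, and the triangulation of its socle \emph{simultaneously}, by induction on the simplexes of a pre-existing triangulation (the preparation triples of Sections~\ref{se:cont-comp}--\ref{se:cont-mono}, using Monotopic Division with a carefully chosen radius function $\varepsilon$), and only then lifts the socle triangulation to the cells by the explicit Cartesian-morphism parametrization $t=c_A(\psi\circ\Phi(y))+\pi^{-NM'}\lambda_A y_{r_A}^N$ of Lemma~\ref{le:lift-triang} (note this is an $N$-th power substitution, not just your translation $t\mapsto t-c(y)$: the translation alone leaves the coset condition $t-c\in\xi Q_{N,M'}^\times$, which is not a polytope condition in the fibre coordinate). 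Without these two mechanisms --- the good-direction deformation to gain large continuity, and the simultaneous monoplex-plus-socle-triangulation construction --- the plan does not go through as written.
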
 

\begin{remark}\label{re:real-vs-p-adic}
  The simplexes in the above complex $\cT$ are not contained in
  $\QQ_p^m$ but in finitely many copies of $\QQ_p^q$ for various $q$,
  usually much larger than $m$. This is the main, but harmless,
  difference with the triangulation in the real case. 
\end{remark}

In the real case, cell decomposition and triangulation hold not only
for semi-algebraic sets over $\RR$ but also over any real-closed fields,
and more generally for definable sets in $o$\--minimal expansions of
such fields. In the $p$\--adic case, Denef's Cell Decomposition
Theorem holds in arbitrary $p$\--adically closed fields. Several
variants of it, sometimes weaker, have been proved to hold in some, if
not all, $P$\--minimal expansions of such fields (see
\cite{dene-drie-1988} and \cite{cluc-2004} for subanalytic sets,
\cite{hask-macp-1997}, \cite{darn-cubi-leen-2017},
\cite{cubi-leen-2016}, \cite{cham-cubi-leen-2017} and
\cite{darn-halu-2017} for definable sets in $P$\--minimal and
$p$\--optimal structures). 

In the present paper we do not restrict ourselves to $\QQ_p$ and its
finite extensions, but work in an arbitrary $p$\--adically closed
field $K$ fixed once and for all. Apart of the $p$\--adic fields there
are many natural examples of such: the algebraic closure of $\QQ$ inside
$\QQ_p$ (which is not complete), the $t$\--adic completion of the field
$\bigcup_{n\geq1}\QQ_p((t^{1/n}))$ of Puiseux series over $\QQ_p$ (whose value
group is not $\ZZ$, but $\ZZ\times\QQ$ lexicographically ordered), and many
others (every ultraproduct of $p$\--adically closed fields is still
$p$\--adically closed). We let $v$ denote the (unique) $p$-adic
valuation of $K$, $R$ its valuation ring and $\cZ$ its value group. As
usual $\cZ$ is augmented by an element $+\infty$ for $v(0)$, and we let
$\Gamma=\cZ\cup\{0\}$. 

Almost all the arguments in our proofs remain valid for
definable sets in $p$\--optimal structures on $K$ satisfying the
Extreme Value Property (see \cite{darn-halu-2017}). Unfortunately
there is one single exception: the proof of the Good Direction
Lemma~\ref{le:gd-open}, which involves polynomial functions, does not
generalize to the more general ``basic functions'' involved in the
definable sets in $p$\--optimal structures. Thus we will stick to the
semi-algebraic framework in all this paper. 
\\

It is organised as follows. All the needed prerequisites, in
particular those concerning $p$\--adic simplexes, are recalled in
Section~\ref{se:notation}, which culminates with the final statement
of the Triangulation Theorem for semi-algebraic sets and functions in
$m$ variables (Theorem~\ref{th:triang-m}). We denote it $\TR m$. All
the applications presented below are then derived from $\TR m$ in
Section~\ref{se:appli-triang}. By means of these applications and
Denef's Cell Preparation Theorem we prove in Section~\ref{se:cell-dec}
a ``largely continuous cell preparation up to a small deformation''
for semi-algebraic functions in $m+1$ variables
(Theorem~\ref{th:large-cont-prep}). Sections~\ref{se:cont-comp} to
\ref{se:cart-map} are then devoted to our main constructions, which
are summarized in Lemma~\ref{le:pretriang} and
Lemma~\ref{le:lift-triang} (see also Remark~\ref{re:comp-cell} below).
In Section~\ref{se:triang}, we finally derive $\TR {m+1}$ from $\TR m$
by means of these two technical lemmas, which finishes the proof of
our $p$\--adic triangulation theorem for every $m$. 

\begin{remark}\label{re:comp-cell}
  Denef's Cell Decomposition Theorem gives a partition of any
  semi-algebraic set $S\subseteq K^{m+1}$ in finitely many cells, but we do
  not control how these cells touch each other. On the other
  hand, if a cell $C$ is defined by functions $c$, $\mu$, $\nu$ which
  extend to continuous functions $\bar c$, $\bar \mu$, $\bar \nu$ on the
  closure of $\widehat{C}$, the frontier of $C$ naturally decomposes
  in cells, each of which is defined by means of $\bar c$, $\bar \mu$,
  $\bar \nu$. These auxiliary cells can be seen as ``faces'' of $C$. It
  allows us to speak of ``complexes of cells'', in a sense which will
  be made precise in sections~\ref{se:cont-comp} and
  \ref{se:cont-mono}. The main results of these sections prove that
  after only a linear deformation of $S$, which can be chosen
  arbitrarily ``small'' (that is close to the identity), it is
  possible to decompose the image of $S$ in a {\em complex} of cells.
  Moreover one can require this complex to be a tree with respect to
  the specialisation order. 
\end{remark}

We now present several other applications of the Triangulation
Theorem, all of which will be proved in Section~\ref{se:appli-triang}. 

\begin{theorem*}[Lifting]
  For every semi-algebraic function $f:X\subseteq K^m\to K$ such that $|f|$ is
  continuous, there is a continuous semi-algebraic function $h:X\to K$
  such that $|f|=|h|$. %(Theorem~\ref{th:mod-f-cont}). 
\end{theorem*}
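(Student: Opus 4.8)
The plan is to reduce the Lifting Theorem to the Triangulation Theorem $\TR m$ applied to the domain $X$ together with the single auxiliary function $f$. Concretely, I would apply the Triangulation Theorem to $S = X$ and the function list $\theta_1 = f$, obtaining a semi-algebraic homeomorphism $\varphi : T \to X$ whose domain $T$ is the union of a simplicial complex $\cT$, and such that on each $T \in \cT$ the valuation $v\bigl(f(\varphi(y))\bigr)$ is a $\ZZ$\--linear function of the valuations of the coordinates of $y \in T$. The idea is then to build $h$ by transporting, simplex by simplex, a function whose valuation matches $|f|$ but which is manifestly continuous because it is (a composition with $\varphi^{-1}$ of) a monomial in the coordinates; the gluing across faces will be controlled by the continuity of $|f|$ itself.

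The key steps, in order, are as follows. First, fix $T \in \cT$ of dimension $d$; on $T$ we have $v(f \circ \varphi(y)) = \sum_i \lambda_i v(y_i) + \gamma$ for suitable integers $\lambda_i$ (the $y_i$ ranging over the coordinates actually involved) and a constant $\gamma \in \Gamma$. I would then look for an element $\pi \in K$ with $v(\pi) = \gamma$ (taking $\pi$ to be a power of the uniformizer, or $0$ if $\gamma = +\infty$ on that piece) and define on $\varphi(T)$ the candidate $h_T(x) = \pi \cdot \prod_i y_i^{\lambda_i}$ where $y = \varphi^{-1}(x)$; one checks $|h_T| = |f|$ on $\varphi(T)$ by construction, and $h_T$ is continuous on $\varphi(T)$ since $\varphi^{-1}$ is continuous and the monomial map is continuous where the relevant coordinates are nonzero (on a simplex, the coordinates that carry nonzero $\lambda_i$ do not vanish, so no division-by-zero issue arises, modulo a short argument about which coordinates can be zero on a $p$\--adic simplex). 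Second, and this is the crux, I must glue the $h_T$ into a single continuous $h$ on all of $X$: since the $\varphi(T)$ for $T \in \cT$ partition $X$ and their closures overlap along images of faces, I would argue that on the closure of $\varphi(T)$ the function $h_T$ extends continuously (its absolute value extends continuously because $|f|$ does, and a $p$\--adically closed field has enough structure — or one invokes that the simplex picture makes the monomial extend), and that two such extensions agree in absolute value on shared faces because both equal $|f|$ there; a genuine continuous $h$ is then obtained by choosing representatives compatibly, e.g. by defining $h$ on each $\varphi(T)$ and checking the pasting lemma applies because $|f|$ continuous forces $h_T \to 0$ exactly where neighbouring pieces have $h = 0$.

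The main obstacle I anticipate is precisely the gluing: having $|h_T| = |f|$ on each piece does not by itself give a well-defined continuous $h$, because on the boundary between two simplices the two monomial expressions need only agree in absolute value, not as functions, and a careless choice of $\pi$ or of the monomial representative could produce jumps in $h$ even though $|h|$ is continuous. To handle this I would exploit the tree structure of the complex $\cT$ under the specialisation/face order (available, as in Remark~\ref{re:comp-cell}) and define $h$ by induction on the dimension of the simplices: extend $h$ first on the lowest-dimensional faces using continuity of $|f|$ to pin down the value (which must lie in the closure of the values on adjacent higher pieces), then propagate outward. A cleaner alternative, which I would try first, is to avoid choosing monomials explicitly and instead define $h(x)$ directly as a suitable ``leading coefficient'' of $f$ relative to the triangulation — but in any case the heart of the matter is that $\TR m$ gives $v(f\circ\varphi)$ as an honest $\ZZ$\--linear function of coordinate valuations, which is exactly the combinatorial input that lets one write down a continuous $h$ with $|h| = |f|$, the remaining work being bookkeeping about faces and the pasting lemma.
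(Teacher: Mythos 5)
Your overall route is the paper's route: triangulate $f$ by $\TR m$, use the monomial form of $v(f\circ\varphi)$ on each simplex to write down a candidate on each piece, and handle the faces where $f$ tends to $0$ by continuity of $|f|$ (that last point you get right). But the step you defer as ``bookkeeping'' -- propagating $h$ from a face to the simplexes specializing to it -- is the actual heart of the proof, and your plan does not contain a mechanism for it. Knowing the value of $h$ on a face $T$ of $S$, plus the requirement $|h|=|f|$ on $S$, does not tell you how to define $h$ on $S$: the exponents $\alpha_{i,S}$ realizing the linear form $\xi_S$ with $\xi_S(vy)=v(f\circ\varphi(y))$ are not unique, and different monomial representatives are genuinely different functions with different boundary behaviour. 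For instance, if two coordinates have equal valuation on $S$ (which the triangular systems defining simplexes allow), then $y_3$ and $y_3y_1y_2^{-1}$ have the same valuation on $S$, but the second one need not converge at a face where $y_1,y_2\to 0$ while $f$ stays away from $0$, because the unit $y_1/y_2$ can oscillate. So ``pin down the value on the face and propagate outward'' is exactly the unresolved point, not a routine pasting-lemma check.

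What makes the construction work in the paper is a structural fact you never invoke: a largely continuous affine map on a discrete polytope factors through the coordinate projection onto a face (Proposition~\ref{pr:prol-proj}, together with Remark~\ref{re:face-proj}). Since $|f|$ is continuous, $\xi_S$ extends continuously to each face $vT$, and when $f\circ\varphi\neq0$ on $T$ this gives $\xi_S=\xi_T\circ\pi_T$, i.e.\ $v(f\circ\varphi)$ on $S$ depends only on the coordinates supported on $T$. One then defines $g_S$ by induction on the specialization order: take an arbitrary monomial realizing $\xi_S$ only on the \emph{minimal} simplexes where $f\circ\varphi\neq0$, and on every other $S$ set $g_S:=g_T\circ\pi_T$, the pullback along the coordinate projection onto the smallest face $T$ with $f\circ\varphi\neq0$; continuity across such faces is then automatic, and at faces where $f\circ\varphi=0$ it follows from continuity of $|f|$ as you said. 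Without this factorization (or an equivalent device for choosing compatible representatives), your induction cannot be carried out, so as written the proposal has a genuine gap at its central step.
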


In the above theorem $|x|=p^{-v(x)}$ is the usual $p$\--adic norm if
$\cZ=\ZZ$. Otherwise this $p$\--adic norm is not defined but can be
replaced without inconvenience by the following generalization: we let
$|a|=aR^\times=\{au\tq u\in R^\times\}$ for every $a\in K$, and $|K|=\{|a|\tq a\in K\}$.
The latter is naturally ordered by inclusion, and isomorphic to
$\Gamma$ with the reverse order~: $|a|\leq|b|$ if and only if $v(a) \geq
v(b)$. So $|a|$ is just a multiplicative notation for $v(a)$: we have
$|ab|=|a|.|b|$ and $|a+b|\leq\max(|a|,|b|)$, and of course $|a|=0$ if and
only if $a=0$.

The real counterpart of the above result is quite obvious. On the
contrary, the next two results do not hold in real geometry. In the
same vein as Clucker's result on classification of semi-algebraic sets
up to semi-algebraic bijection \cite{cluc-2001}, they confirm the
intuition that the lack of connectedness and of ``holes'' (in the
sense of algebraic topology, see below) makes semi-algebraic sets over
$p$\--adically closed fields much more flexible than over real closed
fields. 
\\

Recall that a {\df retraction} of a topological space $X$ onto a
subspace $Y$ is a continuous map $\sigma:X\to Y$ such that $\sigma(y)=y$ for every
$y\in Y$. When such a retraction exists on a Hausdorff space $X$, then
necessarily $Y$ is closed in $X$.

Over the reals, the main obstruction for the existence of retractions
is the existence of ``holes'' which are detected by homotopy. This
does not work over $p$-adic fields. Indeed, replacing the unit
interval $[0,1]$ in the reals by the unit ball in $K$, that is the
ring $R$ of the $p$\--adic valuation of $K$, we may say that a
non-empty semi-algebraic set $X\subseteq K^m$ is ``semi-algebraically
contractible'' if there is a continuous semi-algebraic function $H: X\times
R\to X$ and $a\in X$ such that $H(x,1)=x$ and $H(x,0)=a$ for every $x\in X$.
But this is always true: given any $a\in X$ the function $H(x,s)=x$ if
$s$ is invertible in $R$ and $H(x,s)=a$ otherwise, has all the
required properties. However it is another story to prove that
retractions actually exist.

\begin{theorem*}[Retraction] 
  For every non-empty semi-algebraic sets $Y\subseteq X\subseteq K^m$, there is a
  semi-algebraic retraction of $X$ onto $Y$ if and only if $Y$ is
  closed in $X$.
\end{theorem*}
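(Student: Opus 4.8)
\emph{Plan.} The ``only if'' direction needs nothing special: $K^m$ is Hausdorff, so for any retraction $\sigma\colon X\to Y$ the set $Y=\{x\in X:\sigma(x)=x\}$ is the preimage of the diagonal of $K^m\times K^m$ under the continuous map $x\mapsto(\sigma(x),x)$, hence closed in $X$. All the content is in the converse, and the plan is to reduce it, via the Triangulation Theorem, to a statement about simplicial complexes. Assume $Y$ closed in $X$. First I would apply $\TR{m}$ (Theorem~\ref{th:triang-m}) to $X$, taken compatibly with the semi-algebraic subset $Y$, getting a semi-algebraic homeomorphism $\varphi\colon T\to X$ with $T=\bigcup\cT$ for a simplicial complex $\cT$ and $\varphi^{-1}(Y)=\bigcup\cT'$ for a subfamily $\cT'\subseteq\cT$. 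Put $T'=\varphi^{-1}(Y)$, which is closed and (as $Y\neq\emptyset$) nonempty in $T$. Since distinct simplexes of $\cT$ are disjoint, every proper face of a simplex $S\in\cT$ lies in $\cT$ and is contained in $\partial S\subseteq\bar S$, closedness of $T'$ forces $\cT'$ to be stable under taking faces, i.e. a subcomplex. As $\sigma:=\varphi\circ\rho\circ\varphi^{-1}$ is a semi-algebraic retraction of $X$ onto $Y$ whenever $\rho$ is a semi-algebraic retraction of $T$ onto $T'$, the theorem reduces to: \emph{for any subcomplex $\cT'$ of a finite $p$-adic simplicial complex $\cT$ with $\bigcup\cT'$ nonempty and closed in $\bigcup\cT$, there is a semi-algebraic retraction of $\bigcup\cT$ onto $\bigcup\cT'$.}

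I would prove this by induction on $\Card(\cT\setminus\cT')$, the case $\cT=\cT'$ being the identity. If $\cT\neq\cT'$ then, since $\cT'$ is a subcomplex $\neq\cT$ and every simplex of $\cT$ is a face of a maximal one, some simplex $\Delta$ maximal in $\cT$ for the face order lies outside $\cT'$; being maximal, $\Delta$ is open in $T=\bigcup\cT$, so $T\setminus\Delta=\bigcup(\cT\setminus\{\Delta\})$ is closed in $T$, and $\cT\setminus\{\Delta\}$ is again a simplicial complex containing $\cT'$. It then suffices to build a semi-algebraic retraction $\rho_0\colon T\to T\setminus\Delta$: indeed $\bigcup\cT'$ is still closed in $T\setminus\Delta$, the induction hypothesis supplies a semi-algebraic retraction $\rho_1\colon T\setminus\Delta\to\bigcup\cT'$, and $\rho:=\rho_1\circ\rho_0$ works. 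If $\partial\Delta=\emptyset$, then $\Delta$ is clopen in $T$ and I take $\rho_0$ to be the identity on $T\setminus\Delta$ and constant equal to some point of $\bigcup\cT'$ on $\Delta$. If $\partial\Delta\neq\emptyset$, then by the structure theory of $p$-adic simplexes recalled in Section~\ref{se:notation} the frontier $\partial\Delta$ is the closure of the unique facet of $\Delta$ and is a union of simplexes of $\cT$; given a continuous semi-algebraic retraction $r_\Delta\colon\bar\Delta\to\partial\Delta$, glueing $r_\Delta$ (on the closed set $\bar\Delta$) with the identity (on the closed set $T\setminus\Delta$), which agree on $\partial\Delta=\bar\Delta\cap(T\setminus\Delta)$, produces $\rho_0$ by the pasting lemma.

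The step I expect to be the main obstacle is the last input: \emph{a closed $p$-adic simplex admits a continuous semi-algebraic retraction onto its frontier}. This is precisely the point where the argument cannot be transposed from the real case, in which it fails already for a $2$-simplex; it must instead exploit the peculiar shape of $p$-adic simplexes — a single facet, itself a simplex, the resulting chain of faces, and their explicit description as valuation-preimages of systems of $\QQ$-linear inequalities, together with Monotopic Division. I would construct $r_\Delta$ by induction on $\dim\Delta$, at each stage collapsing $\bar\Delta$ onto its unique closed facet by an explicit coordinate-wise deformation dictated by that linear description. Everything else — compatibility of the triangulation with $Y$, the pasting-lemma continuity arguments, and the stability of simplicial complexes under removal of a maximal simplex — is routine.
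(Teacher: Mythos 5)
Your overall strategy is the paper's: triangulate the pair $(X,Y)$ by $\TR m$, observe that $\varphi^{-1}(Y)$ is the union of a lower subfamily $\cT'$ of $\cT$, and reduce to retracting $\bigcup\cT$ onto $\bigcup\cT'$ by peeling off one simplex of $\cT\setminus\cT'$ at a time (the paper runs the induction bottom-up through lower subsets, you run it top-down through maximal simplexes; that difference is immaterial). But there is a genuine gap in your peeling step: you glue $r_\Delta$ with the identity using the identification $\partial\Delta=\bar\Delta\cap\bigl(\bigcup\cT\setminus\Delta\bigr)$, which implicitly assumes $\partial\Delta\subseteq\bigcup\cT$. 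The complex produced by the triangulation is closed only when $X$ is closed and bounded (Remark~\ref{re:closed-triang}); in general a maximal simplex $\Delta$ can have $\partial\Delta\neq\emptyset$ while none, or only some, of its proper faces belong to $\cT$. In that case a retraction $r_\Delta\colon\bar\Delta\to\partial\Delta$ sends points of $\Delta$ \emph{outside} $\bigcup\cT$, so your $\rho_0$ does not take values in $\bigcup\cT\setminus\Delta$ and the induction breaks. The correct dichotomy is not ``$\partial\Delta=\emptyset$ versus $\partial\Delta\neq\emptyset$'' but ``$\Delta$ has no proper face in $\cT$'' (then $\Delta$ is clopen in $\bigcup\cT$ and may be collapsed to a point of $\bigcup\cT'$) versus ``it has one'', in which case $\Delta$ must be collapsed onto the \emph{largest proper face of $\Delta$ belonging to $\cT$}, whose closure inside $\bigcup\cT$ is precisely the frontier of $\Delta$ there; this is exactly how the paper's induction step is set up.

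Conversely, the step you single out as the main obstacle is not one: a continuous semi-algebraic retraction of $\bar\Delta$ onto the closure of a face $F$ needs neither an induction on $\dim\Delta$ nor Monotopic Division. It is simply the coordinate projection $\pi_J$ with $J=\Supp F$: by Remark~\ref{re:face-proj} and Proposition~\ref{pr:prol-proj} it maps $\Delta$ onto $F$, it is continuous on $\bar\Delta$, and it is the identity on every point whose support is contained in $J$, hence on $\overline{F}$. Substituting this projection onto the largest face of $\Delta$ lying in $\cT$ for your $r_\Delta$ repairs the gap and makes your argument coincide, up to the direction of the induction, with the paper's proof.
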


It is worth mentioning that it is the next Splitting Theorem, already
conjectured in \cite{darn-2006}, which was the main motivation for the
research presented in this paper. Here $\partial X$ denotes the topological
frontier of $X$, see Section~\ref{se:notation}.

\begin{theorem*}[Splitting]
  Let $X$ be a relatively open non-empty semi-algebraic subset of
  $K^m$ without isolated points, and $Y_1,\cdots,Y_s$ a collection of
  closed semi-algebraic subsets of $\partial X$ such that\footnote{Note that
    $Y_1,\dots,Y_s$ are not assumed to be disjoint. All of them can
  be equal to $\partial X$, for example.} $Y_1\cup\cdots\cup Y_s=\partial X$.
  Then there is a partition of $X$ into non-empty\footnote{A {\df partition} of
  a set $X$ is for us a family of two-by-two disjoint subsets of $X$
  covering $X$. We do not assume that the pieces must be non-empty. So
  when it happens by exception, like here, that this property is required
  and does not follow from the context, we explicitly mention it.}
  semi-algebraic sets $X_1,\dots,X_s$ such that $\partial X_i= Y_i$ for $1\leq i\leq s$.
\end{theorem*}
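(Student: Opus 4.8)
The plan is to derive the Splitting Theorem from the Triangulation Theorem $\TR m$. First I would replace $X$ by a triangulation of its closure: apply $\TR m$ to obtain a semi-algebraic homeomorphism $\varphi\colon\bigcup\cT\to\overline X$ that is moreover compatible with the finitely many semi-algebraic sets $X$, $\partial X$, $Y_1,\dots,Y_s$, in the sense that each of them is the union of the images of some of the open simplices of $\cT$ (compatibility with finitely many subsets follows by applying $\TR m$ to their characteristic functions). Let $U\subseteq\cT$ be the set of open simplices mapped into $X$. Since $X$ is open in $\overline X$ and dense in it, one checks that every simplex of $\cT$ is a face of a member of $U$, that $U$ is closed under passing to cofaces, that $\partial X$ corresponds to the subcomplex $\cT_0=\cT\setminus U$, and that each $Y_i$ corresponds to a subcomplex $\cT_i\subseteq\cT_0$ with $\cT_1\cup\dots\cup\cT_s=\cT_0$. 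It then suffices to partition $\bigcup U$ into non-empty semi-algebraic sets whose topological frontiers are exactly $\bigcup\cT_1,\dots,\bigcup\cT_s$, and to transport the result back through $\varphi$.

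Then I would build this partition one open simplex at a time, using the feature of $p$-adic geometry that has no real analogue: an open $p$-adic simplex $\sigma$ is the preimage under the valuation of a subset of $\cZ^r$, hence it is totally disconnected and its faces correspond to prescribed ``directions to infinity'' of the coordinates, so that $\sigma$ can be partitioned into finitely many clopen semi-algebraic pieces whose closures accumulate exactly on any prescribed non-empty families of faces. Concretely, for each $\tau\in\cT_0$ fix one coface $\sigma(\tau)\in U$ (possible since every simplex of $\cT_0$ is a proper face of a member of $U$). For each $\sigma\in U$ split $\sigma$ into clopen pieces $\sigma_1,\dots,\sigma_s$ (some possibly empty) so that: $\overline{\sigma_i}$ meets no face of $\sigma$ lying in $\cT_0\setminus\cT_i$; $\overline{\sigma_i}\supseteq\tau$ whenever $\tau\in\cT_i$ and $\sigma=\sigma(\tau)$; and the splittings are coherent along the face relation inside $U$, meaning that near a common face the pieces of two simplices of $U$ agree. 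Set $X_i=\varphi\bigl(\bigcup_{\sigma\in U}\sigma_i\bigr)$. This is a partition of $X$; the coherence makes each $X_i$ clopen in $X$, so $\partial X_i=\overline{X_i}\cap\partial X$, and then the two accumulation conditions give respectively $\partial X_i\subseteq Y_i$ and $Y_i\subseteq\partial X_i$. Non-emptiness of $X_i$ is automatic when $Y_i\neq\emptyset$, since its responsible pieces accumulate on a non-empty set; when $Y_i=\emptyset$ one first reserves a small clopen ball of $X$ to be shared among the colours with empty $Y_i$, using that $X$ is infinite --- this is where the exclusion of isolated points (equivalently, of the case $m=0$) comes in.

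The hard part is precisely the simplex-wise splitting and its global bookkeeping: one must cut each $p$-adic simplex into clopen pieces with closures prescribed on a prescribed set of faces, do this coherently across faces shared by several simplices and for possibly overlapping $Y_i$, and simultaneously secure both inclusions $\partial X_i\subseteq Y_i$ and $Y_i\subseteq\partial X_i$ while keeping every piece non-empty. Note that $\partial X_i\subseteq Y_i$ forces each $X_i$ to be clopen in $X$, which is possible only because $K^m$ is totally disconnected --- exactly the point where the statement diverges from its (false) real counterpart --- and reconciling this with the rigid combinatorics of faces of $p$-adic simplexes is the technical heart of the argument. The compatible triangulation and the bookkeeping of ``responsible'' cofaces are, by comparison, routine once the local splitting lemma for $p$-adic simplexes is available.
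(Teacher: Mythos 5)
Your reduction to a simplicial statement is plausible in outline, but the step you yourself call the heart of the proof is not proved and is not available in the form you invoke it. First, the local lemma as you state it (``closures accumulate exactly on any prescribed non-empty families of faces'') is false: the faces of a $p$\--adic simplex are linearly ordered by specialization, and if $\overline{\sigma_i}$ contains a face $\tau$ it contains $\overline{\tau}$, hence every smaller face; so only downward-closed prescriptions are achievable (this happens to be compatible with the $Y_i$ being closed, but your lemma needs to be reformulated and then actually proved). Second, the coherence requirement across simplices of $U$ sharing faces is not bookkeeping: since $\partial X_i\subseteq Y_i\subseteq\partial X$ forces each $X_i$ to be clopen in $X$, the clopen pieces chosen in a simplex $\sigma$ must match, along every common face $\tau\in U$, the pieces chosen in $\tau$ and in all other cofaces of $\tau$, while simultaneously preserving the prescribed accumulation $\tau\subseteq\overline{\sigma(\tau)_i}$ for $\tau\in\cT_i$; organizing this requires an induction over the complex that you do not carry out. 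The tools the paper actually has at this level are the Monotopic Division Theorem~\ref{th:mono-div} and Proposition~\ref{pr:split-p-adic}, and the latter only yields splittings in which \emph{all} pieces have the \emph{same} frontier $\partial A$ --- it does not give pieces with distinct prescribed frontiers. There is also a secondary issue: for unbounded $X$ the triangulation is not closed (Remark~\ref{re:closed-triang}), so faces of simplices of $\cT$ need not belong to $\cT$ and your ``every simplex of $\cT_0$ is a face of a member of $U$'' needs the preliminary compactification reduction that the paper performs in Lemma~\ref{le:split-level}.

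The paper takes a genuinely different route that sidesteps the simplex-by-simplex construction entirely. It reduces by induction to $s=2$ and to the case where pieces may be empty, then uses the Retraction Theorem~\ref{th:retraction} to get a continuous semi-algebraic retraction $\rho:\overline{X}\to\partial X$ and proves the key claim that for $Z$ the closure of a relatively open semi-algebraic subset of $\partial X$ one has $\partial\big(\rho^{-1}(Z)\cap X\big)=Z$. Applying this to $Z_1=\overline{Y_1\setminus Y_2}$, $Z_2=\overline{Y_2\setminus Y_1}$ and to the closure $Z_0$ of the remaining part of $\partial X$ produces sets with the prescribed frontiers for free; the triangulation-based splitting is needed only in its weak ``equal frontiers'' form (Lemma~\ref{le:split-level}, via Proposition~\ref{pr:split-p-adic}) to divide the piece above the overlap region $Z_0$ between $X_1$ and $X_2$. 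If you want to salvage your approach you must state and prove the coherent simplex-wise splitting lemma; as written, your proposal reduces the theorem to precisely that unproven statement.
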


The trivial remark that every ball $B\subseteq K^m$ is disconnected can be
seen as a very special case of the above Splitting Property (applied
to $X=B$ with $Y_1=Y_2=\emptyset$). This property is actually (in a sense
which can be made precise, see \cite{darn-2018-tmp}) the strongest
possible form of disconnectedness that can be observed in a finite
dimensional topological space whose points are closed. It is a
versatile property which we encountered in different contexts with
minor changes (see \cite{darn-2018-tmp}, \cite{darn-junk-2018}). In the
present paper, it plays a key role in the induction step. 
\\

A {\df limit value} for a function $f:X\subseteq K^m\to K$ at a point $x$
adherent to $X$, is a value  $l\in K$ such that $(x,l)$ is adherent to
the graph of $f$. Of course $f$ tends to $l$ at $x$ if and only if $l$
is the unique limit value of $f$ at $x$. Let us say that $f$ is {\df
largely continuous} on a subset $A$ of $X$ if the restriction of $f$
to $A$ has a unique limit value at every point adherent to $A$, that
is if $f$ extends to a continuous function on the topological closure
of $A$. If $A$ is not mentioned it simply means that $f$ is largely
continuous on its domain $X$. Finally $f$ is {\df piecewise largely
continuous} if there exists a finite partition of $X$ in
semi-algebraic pieces on which $f$ is largely continuous. Of course in
that case $f$ has finitely many limit values at every point adherent
to $X$. 

\begin{theorem*}[Largely Continuous Splitting]
  Let $f:X\subseteq K^m\to K$ be a semi-algebraic function whose graph has
  bounded\footnote{%
    This boundedness assumption could easily be removed. It suffices
    to add to $K$ a point at infinity and require that $f$ has
    finitely many limit values in $\tilde K=K\cup\{\infty\}$ at every point of the
    closure of $X$ in $\tilde K^m$, using the same construction as in
    the preparation of the proof of Lemma~\ref{le:split-level}.
  }
  domain. If $f$ has finitely many limit values at every point
  adherent to $X$ then $f$ is piecewise largely continuous.
\end{theorem*}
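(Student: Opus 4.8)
The plan is to rephrase the statement as a property of the closure of the graph of $f$, to reduce that property by triangulation to a purely combinatorial question about $p$\--adic simplicial complexes, and to settle the latter by induction on dimension.

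First I would reformulate. Write $G=\Gr f\subseteq K^{m+1}$; the boundedness hypothesis means $G$, hence its closure $\overline G$, is bounded, and the hypothesis on limit values says exactly that the projection $\pi\colon K^{m+1}\to K^m$ onto the first $m$ coordinates is finite\--to\--one on $\overline G$. It is enough to find a partition of $G$ into finitely many semi\--algebraic pieces $G_1,\dots,G_k$ such that $\pi$ is injective on each closure $\overline{G_i}$. Indeed $\pi$ then maps $G_i$ bijectively onto $X_i:=\pi(G_i)$, the sets $X_i$ partition $X$, and since $\overline{G_i}$ is closed and bounded it is definably compact, so $\pi$ carries it homeomorphically onto $\overline{X_i}=\pi(\overline{G_i})$; composing the inverse with the last coordinate gives a continuous function on $\overline{X_i}$ that extends $f|_{X_i}$, i.e.\ $f$ is largely continuous on $X_i$.

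Next I would triangulate. By Denef's Cell Decomposition Theorem and finiteness of the fibers, $\overline G$ is a finite disjoint union of graphs $\Gr(c_j|_{D_j})$ of semi\--algebraic functions; applying the Triangulation Theorem $\TR{m+1}$ to $\overline G$, compatibly with these pieces and tracking the $m+1$ coordinate projections as auxiliary functions, I get a semi\--algebraic homeomorphism $\varphi\colon T\to\overline G$ with $T=\bigcup\cT$ for a simplicial complex $\cT$, such that on each $S\in\cT$ the valuation of every coordinate of $\varphi(y)$ is a $\ZZ$\--linear function of the valuations of the coordinates of $y$, and moreover $\varphi(S)$ lies in a single $\Gr(c_j|_{D_j})$. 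Then $g:=\pi\circ\varphi\colon T\to K^m$ is semi\--algebraic, finite\--to\--one (as $\varphi$ is bijective and $\pi$ is finite\--to\--one on $\overline G$), injective on every open simplex of $\cT$ (because $\varphi(S)$ is part of a graph over $K^m$), and, read through valuations, its restriction to each simplex is an affine $\ZZ$\--linear map between integral polytopes. If $\cT$ can be subdivided into a complex on which $g$ is injective on every \emph{closed} simplex $\overline S$, the reformulation is settled: for each simplex $S$ of the subdivision put $G_S:=G\cap\varphi(S)$; the $G_S$ partition $G$, and $\overline{G_S}\subseteq\varphi(\overline S)$, on which $\pi=g\circ\varphi^{-1}$ is injective.

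The hard part is this last claim: a finite\--to\--one semi\--algebraic map $g$ on a $p$\--adic simplicial complex, valuation\--affine on each simplex and injective on each open simplex, can be made injective on every closed simplex after a suitable subdivision. I would prove it by induction on the dimension of the complex, using the two special features of $p$\--adic simplexes recalled in Section~\ref{se:notation}: a simplex has at most one facet, itself a simplex, so a closed simplex is a finite descending chain of open simplexes; and any polytope can be cut into simplexes by a Monotopic Division. On a top\--dimensional simplex $S$ the only possible failures of injectivity on $\overline S$ come from collisions of $g(S)$ with the images of lower\--dimensional faces or of other simplexes; since $g$ is injective on $S$, each such collision locus inside $S$ has dimension $<\dim S$, and it is cut out by finitely many $\ZZ$\--linear conditions coming from the affine models, hence is a finite union of subpolytopes of $S$ of smaller dimension. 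Cutting $S$ along them by a monotopic division adapted to these subpolytopes pushes all non\--injectivity down to the faces, where the induction hypothesis applies. The genuinely delicate point, I expect, is to carry out these cuts coherently on simplexes sharing a common face so that they assemble into an honest subdivision of the whole complex; here I would invoke the Splitting Theorem to redistribute the lower\--dimensional pieces along the prescribed frontiers. Once $g$ is injective on every closed simplex, the theorem follows as in the preceding paragraph.
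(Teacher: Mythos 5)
There is a genuine gap at the heart of your argument. The reformulation (partition $\Gr f$ into pieces on whose closures the coordinate projection is injective) and the triangulation of $\overline{\Gr f}$ are fine, but everything then rests on the claim that $\cT$ can be subdivided so that $g=\pi\circ\varphi$ becomes injective on every \emph{closed} simplex, and the step you offer for this does not work. The Triangulation Theorem only controls the \emph{valuations} of the coordinates of $\varphi$ (they are $N$\--monomial mod small units); the collision condition $g(y)=g(z)$ is a field-level equality, not a valuation condition, so the non-injectivity locus inside a simplex $S$ is just a semi-algebraic subset of smaller dimension --- there is no reason whatsoever that it be ``cut out by finitely many $\ZZ$\--linear conditions'' or be a union of subpolytopes of $S$. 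Consequently there is nothing to ``cut along'': the paper has no tool for subdividing a $p$\--adic simplicial complex along an arbitrary lower-dimensional semi-algebraic subset while remaining simplicial. Monotopic Division (Theorem~\ref{th:mono-div}) only divides a simplex compatibly with a \emph{given simplicial} partition of its frontier together with an $\varepsilon$\--control, and the Splitting Theorem~\ref{th:split-def} produces partitions with prescribed frontiers, not simplicial subdivisions; producing a simplicial refinement adapted to an arbitrary semi-algebraic set is essentially the Triangulation Theorem itself, so at this point the argument begs the question. The coherence of the cuts across simplexes sharing a face, which you flag as the delicate point, is likewise left unresolved.

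Two further remarks. First, in the paper this statement is Theorem~\ref{th:piec-str-cont}, proved assuming only $\TR m$, and it is consumed \emph{inside} the induction step $\TR m\Rightarrow\TR{m+1}$ (via Lemma~\ref{le:gd-strong} and Theorem~\ref{th:large-cont-prep}); your proof invokes $\TR{m+1}$ on the graph in $K^{m+1}$, which is legitimate if one takes the full Triangulation Theorem as given, but could not replace the paper's proof without making the induction circular. Second, the paper's route is quite different and sidesteps any subdivision problem: it stays in $K^m$, inducts on $(\dim X,\dim\partial X)$, recovers the finitely many limit functions over $\partial X$ by the compactness Theorem~\ref{th:comp-inter} and Skolem functions, uses the Splitting Theorem to arrange the frontiers, and --- this is the $p$\--adic substitute for the connectedness of real simplexes --- uses a semi-algebraic retraction $\rho$ of $\overline X$ onto the relevant part of $\partial X$ (Theorem~\ref{th:retraction}) to define the sets $U_l$ where $f(x)$ is closest to the $l$\--th limit branch $\bar g_l(\rho(x))$, to which the induction applies. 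Some device of that kind, separating the branches near the frontier, is exactly what your sketch is missing.
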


The real counterpart of this result is easily seen to be true, by
means of a triangulation and the trivial remark that every real
simplex is connected (see Section~\ref{se:appli-triang}). This
last argument is no longer valid in the $p$\--adic case but, as we will
see, the existence of retractions allows us to bypass this problem and
recover the full result in the $p$\--adic context. \\

We can also mention two other applications of the Triangulation
Theorem, to $p$\--adic semi-algebraic geometry and to model theory,
which are outside of the scope of this paper.
\begin{enumerate}
  \item
    [(i)] One of the main advantages of proving the Triangulation
    Theorem for every $p$\--adically closed field, not only for 
    $p$\--adic fields, is that it allows us to combine it with the
    very powerful model theoretic compactness theorem. This in turn
    provides ``uniform'' triangulations, which {\em almost} give us
    for free a $p$\--adic analogue of Hardt's Theorem\footnote{Hardt's
      Theorem in real geometry says that the fibers of a
    semi-algebraic projection have finitely many homeomorphism types.}.
    Some difficulties still remain because it is much less easy to
    construct homeomorphisms between $p$\--adic simplexes than between
    real simplexes (see Problem~\ref{pr:classification}). Hopefully this
    will be addressed in a further paper. 
  \item
    [(ii)] By \cite{darn-2006} the Splitting Property for $p$\--adic
    sets (which was only conjectural at this time) ensures that the
    complete theory of the lattice $L(K^m)$ of closed semi-algebraic
    subsets of $K^m$ is decidable. This is in contrasts with the real case, since we
    know from \cite{grze-1951} that the complete theory of $L(\RR^m)$ is
    undecidable for every $m\geq2$. Moreover the theory of $L(K^m)$
    only depends on $m$, not on the $p$\--adically closed field
    involved and not even on $p$, hence it is the same for $L(K^m)$
    and $L(\QQ_2^m)$ (see \cite{darn-2018-tmp}). 
\end{enumerate}

Finally let us present a few open problems tightly connected
with the present work. 

\begin{problem}
  Extend the Triangulation Theorem to $p$\--adic subanalytics sets,
  and more generally to definable sets in some $p$\--optimal
  expansions of $K$.
\end{problem}

\begin{problem}\label{pr:classification}
  By giving reasonable sufficient conditions for different $p$\--adic
  simplexes to be homeomorphic, classify $p$\--adic semi-algebraic
  sets up to semi-algebraic homeomorphisms.
\end{problem}

\begin{problem}
  For any semi-algebraic set $S\subseteq K^m$, construct a triangulation
  $(\varphi,\cT)$ such that the image of $\cT$ by $\varphi$ is a stratification of
  $S$. Or conversely use existing stratifications of $S$ (see
  footnote~\ref{ft:strat}) to construct a better (or a more general)
  triangulation.
\end{problem}

\section{Prerequisites and notation}

\label{se:notation}

We let $\NN$ denote the set of positive integers and $\NN^*=\NN\setminus\{0\}$. For
all integers $k,l$ we let $[\![k,l]\!]$ be the set of integers $i$ such
that $k\leq i\leq l$ (hence an empty set if $k>l$).

Recall that we have fixed once and for all a $p$\--adically closed
field $K$. Following \cite{pres-roqu-1984}, this is the fraction
field of a (unique) Henselian valuation ring $R$ such that the residue
field of $R$ is finite, the value group $\cZ$ of $R$ has a least
strictly positive element, and $\cZ/n\cZ$ has exactly $n$ elements for
every integer $n\geq1$. We fix once and for all a generator $\pi$ of the
maximal ideal of $R$, and let $R^\times=R\setminus \pi R$ denote the multiplicative
group of invertible elements of $R$.

Let $\cQ$ denote the divisible hull of $\cZ$ and $\Omega=\cQ\cup\{+\infty\}$. As
an ordered group, $\ZZ$ identifies naturally to the smallest non-trivial
convex subgroup of $\cZ$. We consider $\ZZ$ and $\QQ$ as embedded into
$\cQ$ {\it via} this identification. 

For every subset $X$ of $K$ we let $X^*=X\setminus\{0\}$. However, if $X^*$ is a
subgroup of the multiplicative group of $K$, we denote it $X^\times$ in
order to highlight this property (so $R^*=R\setminus\{0\}\neq R^\times$ but
$K^*=K\setminus\{0\}=K^\times$). For every subgroup $G$ of $K^\times$ we let $xG=\{xg\tq g\in
G\}$ for every $x\in K$, and $K/G=\{xG\tq x\in K\}$. For example $xR^\times=|x|$
and $K/R^\times=|K|$. Abusing the notation, $0G=\{0\}$ will be denoted $0$
whenever the context makes it unambiguous. 

In order to ease the notation, given $a\in K^m$, $A\subseteq K^m$ and $f:X\to K^m$
we will often write $va$ for $v(a)$, $vA$ for the direct image $v(A)$,
$vf$ for the composite $v \circ f$, and similarly for $|A|$ and $|f|$.

At some rare places it will be convenient to add to $K$ a new element
$\infty$ (and to $\Gamma$ and $|K|$ new elements $-\infty$ and $+\infty$ respectively)
with the natural convention that $0^{-1}=\infty$, $\infty^{-1}=0$, $v(\infty)=-\infty$,
$|\infty|=+\infty$, and $a.\infty=\infty$ for every $a\in K^\times$. We also let
$0.\infty=1$ and $0^0=1$ when needed.

\subsection{Topology and coordinate projections}

When an $m$\--tuple $a$ is given, it is understood that $(a_1,\dots,a_m)$
are its coordinates, except if otherwise specified. For every $a\in K^m$
we let
\begin{displaymath}
  va=\big(va_1,\dots,va_m\big)
  \quad\mbox{ and }\quad
  |a|=\big(|a_1|,\dots,|a_m|\big).
\end{displaymath}
This should not be confused with $\|a\|=\max(|a_1|,\dots,|a_m|)$. For $r\in
K^\times$ the (clopen) {\df ball} of center $a$ and radius $r$ is defined as
\begin{displaymath}
  B(a,r)=\big\{x\in K^m\tq \|x-a\|\leq|r|\big\}. 
\end{displaymath}

The valuation induces a topology on $K$, which is inherited by $|K|$
and $\Gamma$. The topology generated on $\Omega$ by the open intervals and the
intervals $]a,+\infty]$ for $a\in\cQ$, extends the topology of $\Gamma$. The
direct products of these topological spaces are endowed with the
product topology. For every subset $X$ of any of these spaces,
$\overline{X}$ denotes the topological closure of $X$. In particular
$\overline{\cZ}=\Gamma$ and $\overline{\cQ}=\Omega$. Note that $\Gamma$ is closed in
$\Omega$. The {\df specialisation preorder} on the subsets of $X$ is
defined by $B\leq A$ iff $B\subseteq \overline{A}$. 

We let $\partial X=\overline{\overline{X}\setminus X}$ denote the {\df frontier} of
$X$. We say that $X$ is {\df relatively open} if it is open in
$\overline{X}$, that is if $\partial X=\overline{X}\setminus X$. 

When a function $f$ is largely continuous (see Section~\ref{se:intro}) we usually
denote $\overline{f}$ the continuous extension of $f$ to the closure
of its domain. On the contrary, the restriction of
$f$ to some subset $A$ of its domain is denoted $f_{|A}$.
\\

The {\df support} of an element $a$ of $K^m$ (or $|K|^m$),
denoted $\Supp a$, is the set of indexes $k$ such that $a_k\neq0$. The
support of an element $b$ of $\Gamma^m$, denoted $\Supp b$, is the set of
indexes $k$ such that $b_k=+\infty$. Note that with this definition, one
has that for every $a\in K^m$
\begin{displaymath}
   \Supp a = \Supp |a| = \Supp v(a). 
\end{displaymath}

For every subset $S$ of $K^m$ (resp. $|K|^{m+1}$, resp. $\Gamma^{m+1}$) and
every $I\subseteq\{1,\dots,m\}$ we let 
\begin{displaymath}
  F_I(S)=\big\{a\in\overline{S}\tq\Supp a=I\big\}. 
\end{displaymath}
When $F_I(S)\neq\emptyset$ we call it the {\df face of $S$ with support $I$}. The
{\df coordinate projection} of $K^m$ (resp. $|K|^m$, $\Gamma^m$) onto its
face with support $I$ will be denoted $\pi_I$. So $\pi_I(a)$ is the unique
point $b$ with support $I$ such that $b_i=a_i$ for every $i\in I$. 
\\

For every $a\in K^{m+1}$ (resp. $|K|^{m+1}$, resp. $\Gamma^{m+1}$) we let
$\widehat{a}$ denote the tuple of the first $m$ coordinates of $a$, so
that $a=(\widehat{a},a_{m+1})$. If $A$ is a set of $(m+1)$\--tuples we
let $\widehat{A}=\{\widehat{a}\tq a\in A\}$, and if $\cA$ is a family of
such sets we let
\begin{displaymath}
  \widehat{\cA}=\big\{\widehat{A}\tq A\in\cA\big\}. 
\end{displaymath}
We call $\widehat{A}$ (resp. $\widehat{\cA}$) the {\df socle} of $A$
(resp. $\cA$).

Given two families $\cH$, $\cA$ of subsets of $K^{m+1}$ we say that
$\cH$ is {\df finer} than $\cA$ if every $H\in\cH$ which meets a set $A\in\cA$
is contained in $A$. If moreover $\cH$ is a partition of $\bigcup\cA$ we
say that $\cH$ {\df refines} $\cA$. We will often distinguish between
``{\em horizontal refinements}'' for which
$\widehat{\cH}=\widehat{\cA}$, and ``{\em vertical refinement}'' for
which $\cH$ is the family of $A\cap(X\times K)$ where $A$ ranges over $\cA$
and $X$ over a refinement of the socle of $\cA$.

\subsection{Semi-algebraic sets} 

For every integer $N\geq1$ let $\PN_N^\times=\PN_N\setminus\{0\}$ with\footnote{The
  notation $\PN_N$ is sometimes used for the set of {\em non-zero}
  $N$-th powers. The conventions used here leads to denote it
  $\PN_N^\times$, so as to highlight its multiplicative group structure.}
\begin{displaymath}
  \PN_N=\big\{ a\in K\tq \exists x\in K,\ a = x^N\big\}.
\end{displaymath}
$\PN_N^\times$ is a clopen subgroup of $K^\times$ with finite index,
and $\PN_1=K^\times$. Hence a subset $K^m$ is a {\df semi-algebraic set} if
it is a boolean combination of finitely many sets $S_i$ defined by
conditions
\begin{equation}\label{eq:semi-alg}
  f_i(x)\in \PN_{N_i}
\end{equation}
where the $f_i$'s are $m$\--ary polynomial functions. A {\df
semi-algebraic map} is a function whose graph is semi-algebraic.
Rational functions, root functions and monomial functions (see below)
are semi-algebraic, among many others. 

Abusing a little bit the terminology, we also say that a subset $S$ of
$K^m\times|K|^n$ is semi-algebraic if $\{(x,t)\in K^{m+n}\tq (x,|t|)\in S\}$ is
semi-algebraic. Similarly a function $f:X\subseteq K^m\to|K|^n$ is
semi-algebraic if its graph is. When a map $\varphi$ is defined on the
disjoint union of finitely many semi-algebraic sets $A_i$ living in
different copies of $K^m$, we say that $\varphi$ is semi-algebraic if its
restriction to each $A_i$ is semi-algebraic in the classical sense. 

\begin{remark}\label{re:N-mac}
  If $N'$ divides $N$ then $\PN_N^\times$ is a clopen subgroup of
  $\PN_{N'}^\times$ with finite index. For this reason, all the
  integers $N_i$ appearing in (\ref{eq:semi-alg}) can be replaced
  by any common multiple $N$. Note also that $0\in\PN_N$ is an empty
  condition, equivalent to $1\in\PN_N$, hence all the $f_i$'s can be
  assumed to be non-zero polynomials.
\end{remark}

\begin{theorem}[Macintyre]\label{th:macintyre}
  If $S\subseteq K^{m+1}$ is semi-algebraic then $\widehat{K}$ is
  semi-algebraic. 
%   The definable subsets of $K^m$ are exactly the semi-algebraic ones.
\end{theorem}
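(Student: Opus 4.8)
The plan is to deduce the statement from quantifier elimination for the theory of $p$\--adically closed fields in Macintyre's language --- the language of rings together with a unary predicate $P_n$ for each $n\geq2$, interpreted as the set of $n$\--th powers. Indeed, a semi-algebraic subset of $K^m$ is precisely a set defined by a quantifier-free formula of that language with parameters in $K$, the socle $\widehat S$ of a semi-algebraic $S\subseteq K^{m+1}$ is defined by $\exists t\,\phi(\widehat x,t)$ with $\phi$ quantifier-free, and quantifier elimination turns this into a quantifier-free formula, hence exhibits $\widehat S$ as semi-algebraic; iterating over the coordinates, together with the fact that semi-algebraic sets are closed under boolean combinations by definition, then gives stability under all coordinate projections. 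So the content is the quantifier elimination, and it is enough to eliminate a single existential quantifier from a conjunction of literals.

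\emph{Reduction to a normal form.} Such a conjunction is a finite list of conditions $f_i(\widehat x,t)\in P_{N_i}$ and $g_j(\widehat x,t)\notin P_{M_j}$ with $f_i,g_j\in K[\widehat x][t]$ (a ring equation $h=0$ being the case $P_1$, negated). Choose $N$ divisible by all the $N_i$ and $M_j$. Since $P_N^\times$ has finite index in $K^\times$ and in each $P_{N_i}^\times$ (Remark~\ref{re:N-mac}), each of these conditions becomes, after splitting into finitely many cases indexed by coset representatives $c$, of the form ``$f(\widehat x,t)=0$'' or ``$f(\widehat x,t)\in cP_N^\times$''. One more splitting reduces us to describing $\widehat S=\{\widehat x\tq\exists t\ (f_0(\widehat x,t)=0\ \wedge\ \bigwedge_i f_i(\widehat x,t)\in c_iP_N^\times)\}$, where the equation $f_0=0$ may be absent.

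\emph{The core step.} I would first dispose of the case where $f_0$ is present and has positive degree in $t$: after a semi-algebraic partition of the $\widehat x$\--space according to which coefficients of $f_0(\widehat x,\cdot)$ vanish and, using Hensel's lemma and finitely many valuation conditions on the coefficients, according to the factorisation type of $f_0(\widehat x,\cdot)$ over $K$, the admissible $t$ form on each piece a uniformly finite set of roots of a fixed factor, and a resultant and symmetric-function computation rewrites ``some such root satisfies all the conditions $f_i\in c_iP_N^\times$'' as a semi-algebraic condition on $\widehat x$, reducing us to the equation-free case. There all the $f_i$ have positive degree in $t$, and the crux is a one-variable cell decomposition: I would partition the $t$\--line, uniformly in $\widehat x$, into finitely many semi-algebraic pieces --- balls, annuli and $P_N$\--cosets around semi-algebraic centres $a_k(\widehat x)$ --- on each of which every $f_i(\widehat x,t)$ equals $u_{i,k}(\widehat x)\,(t-a_k(\widehat x))^{e_{i,k}}$ times an element of $P_N^\times$, with the $u_{i,k}$ semi-algebraic. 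On such a piece ``$f_i(\widehat x,t)\in c_iP_N^\times$'' becomes a congruence on $v(t-a_k(\widehat x))$ modulo $N$, a coset condition on $u_{i,k}(\widehat x)$, and, where two centres interact, a comparison between $v(a_k-a_{k'})$ and $v(t-a_k)$. Thus ``$\exists t$ on this piece'' is a statement of Presburger arithmetic about the finitely many values $v(t-a_k(\widehat x))\in\cZ$, subject to linear inequalities and congruences whose parameters are valuations of semi-algebraic functions of $\widehat x$; since $\cZ$ is a $\ZZ$\--group --- discretely ordered, with $\cZ/n\cZ$ of $n$ elements for every $n$ --- it eliminates quantifiers, so this is equivalent to a quantifier-free Presburger condition on those valuations, and hence (valuation-comparison and congruence conditions on semi-algebraic functions being semi-algebraic) to a semi-algebraic condition on $\widehat x$. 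Taking the union over the finitely many pieces and the finitely many splittings exhibits $\widehat S$ as semi-algebraic.

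\emph{Expected main obstacle.} The delicate point is the one-variable cell decomposition \emph{uniformly in} $\widehat x$: the centres $a_k(\widehat x)$, the units $u_{i,k}(\widehat x)$, the exponents $e_{i,k}$ and the partition must all be chosen semi-algebraically, with a number of pieces bounded in terms of the $\sum_i\deg_t f_i$. There is an apparent circularity, since such decompositions are usually deduced \emph{from} the projection theorem; the way out (this is essentially Macintyre's argument) is an induction on $\sum_i\deg_t f_i$ driven by Hensel's lemma --- once $v(t-a)$ is large relative to the valuations of the relevant coefficients, each $f_i(\widehat x,t)$ is $N$\--th-power-equivalent to its leading monomial, so only a ``bounded'' part of the $t$\--line, covered by finitely many balls of controlled radius around the roots of the $f_i$ and of their derivatives, requires the inductive step, and on each such ball a Taylor expansion together with Hensel's lemma replaces $f_i$ by a polynomial of strictly smaller degree in a new variable. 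Making these Hensel-type estimates uniform in the parameters $\widehat x$, and verifying at each stage that the sets produced remain semi-algebraic, is where essentially all of the effort lies; by comparison the value-group elimination and the finite-index coset bookkeeping are routine. In the actual write-up I would simply invoke \cite{maci-1976} for $\QQ_p$ and \cite{pres-roqu-1984} for an arbitrary $p$\--adically closed field.
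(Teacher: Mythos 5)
Your proposal is fine, and it ends up doing exactly what the paper does: the paper offers no proof of this statement but simply quotes it as a classical black box from \cite{maci-1976} (for $\QQ_p$) and \cite{pres-roqu-1984} (for arbitrary $p$\--adically closed fields), which is precisely what your last sentence falls back on. Your sketch of the quantifier-elimination route (one-variable cell decomposition via Hensel's lemma plus Presburger elimination in the value group) is a reasonable outline of the standard argument -- in spirit closer to Denef's later cell-decomposition proof than to Macintyre's original model-theoretic one -- but since the paper itself supplies no proof, citing the references is the intended treatment.
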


This fundamental result has many consequences. The most prominent one
is that a subset $S$ of $K^m$ is semi-algebraic if and only if there
is a first-order formula\footnote{For the notion of first order
  formula, we refer the reader to any introductory book of
  model-theory, such as \cite{hodg-1997} for example.} $\varphi(x)$ in
$\Lring=\{0,1,+,-,\times\}$ (the language of rings), possibly with
parameters in $K$, such that 
\begin{displaymath}
  S=\big\{a\in K^n\tq K\models\varphi(a)\}.
\end{displaymath}

\begin{remark}
  Given $m$\--ary definable functions $f$, $g$, the set of points in
  $K^m$ satisfying the condition ``$|f(x)|\leq|g(x)|$'' is known to be
  semi-algebraic\footnote{This follows from the non-trivial fact that
    $R$ is definable by means of the Kochen operator (see
    \cite{pres-roqu-1984}).}. Thus we will consider these expressions
  as abbreviations for some first order formulas in the language of
  rings stating the same property). Similarly, if $\varphi(x,y)$ is a
  formula with $m+n$ variables and $S\subseteq K^n$ is definable by a formula
  $\psi(y)$ then we will consider $\exists y\in S,\varphi(x,y)$ as a formula since it
  is an abbreviation for the genuine formula $\exists y,\psi(y)\land\varphi(x,y)$. 
\end{remark}

Another important consequence of Macintyre's theorem is that every
$p$\--adically closed field is elementarily equivalent to a finite
extension of $\QQ_p$ (see \cite{pres-roqu-1984}). In other words, there
is a finite extension $L$ of $\QQ_p$ such that $K$ and $L$ satisfy
exactly the same parameter-free formulas in $\Lring$. The following
property transfers from $L$ to $K$ by means of this elementary
equivalence. Recall that a family $(C_a)_{a \in A}$ of semi-algebraic
subsets of $K^n$ is {\df uniformly semi-algebraic} if $A\subseteq K^m$ is
definable and there is a formula $\varphi(x,y)$ with $m+n$ free variables
such that $C_a=\{b\in K^m\tq K\models\varphi(a,b)\}$ for every $a\in A$.

\begin{theorem}\label{th:comp-inter}
  Let $(C_\alpha)_{\alpha\in R^*}$ be a uniformly definable family of non-empty,
  closed and bounded subsets of $K^n$, such that $|\beta|\leq|\alpha|$ implies
  that $C_\beta\subseteq C_\alpha$. Then $\bigcap_{\alpha\in R^*}C_\alpha$ is non-empty. 
\end{theorem}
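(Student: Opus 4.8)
\emph{Proof proposal.} The plan is to reduce the statement to the local compactness of finite extensions of $\QQ_p$ by the transfer principle recalled above, the key observation being that — although the conclusion ``$\bigcap_{\alpha\in R^*}C_\alpha\neq\emptyset$'' looks infinitary — both it and the hypotheses become first-order once the parameters defining the family are treated as variables.

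First I would fix, using Macintyre's theorem, a parameter-free $\Lring$\--formula $\varphi(x,t,\bar c)$ (with $x$ ranging over $K^n$ and $t$ over $K$) and a tuple $\bar c\in K^l$ such that $C_\alpha=\{x\in K^n\tq K\models\varphi(x,\alpha,\bar c)\}$ for every $\alpha\in R^*$, absorbing any parameters of the given defining formula into $\bar c$. Since $R$ is definable, the relations $t\in R^*$ and $v(\beta)\geq v(\alpha)$ are definable, and since membership of a point in the topological closure of $\{x\tq\varphi(x,\alpha,\bar c)\}$, as well as boundedness of that set, are expressed by $\Lring$\--formulas, I can write a parameter-free formula $\Psi(\bar c)$ stating exactly that the sets $\{x\tq\varphi(x,\alpha,\bar c)\}$, for $\alpha\in R^*$, are non-empty, closed and bounded and obey the monotonicity clause of the statement. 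Consider then the parameter-free $\Lring$\--sentence
\begin{displaymath}
  \sigma\ :\qquad \forall\bar c\ \Big(\,\Psi(\bar c)\ \longrightarrow\ \exists x\ \forall t\,\big(t\in R^*\rightarrow\varphi(x,t,\bar c)\big)\Big),
\end{displaymath}
whose inner formula asserts precisely that $\bigcap_{\alpha\in R^*}\{x\tq\varphi(x,\alpha,\bar c)\}$ is non-empty; the theorem is thus equivalent to $K\models\sigma$.

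By the transfer principle $K$ is elementarily equivalent to some finite extension $L$ of $\QQ_p$, so it suffices to check $L\models\sigma$. Let $\bar c\in L^l$ satisfy $\Psi$ in $L$ and set $D_\alpha=\{x\in L^n\tq L\models\varphi(x,\alpha,\bar c)\}$: each $D_\alpha$ is closed and bounded in $L^n$, hence compact because $L$ is locally compact. Moreover for $\alpha,\beta\in R^*$ one has $\alpha\beta\in R^*$ and $v(\alpha\beta)=v\alpha+v\beta\geq\max(v\alpha,v\beta)$ since $v\alpha,v\beta\geq0$, so $|\alpha\beta|\leq|\alpha|$ and $|\alpha\beta|\leq|\beta|$, whence by monotonicity $\emptyset\neq D_{\alpha\beta}\subseteq D_\alpha\cap D_\beta$. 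Thus $(D_\alpha)_{\alpha\in R^*}$ is a downward directed family of non-empty compact sets, so it has the finite intersection property and $\bigcap_{\alpha\in R^*}D_\alpha\neq\emptyset$. This proves $L\models\sigma$, hence $K\models\sigma$; applying the conclusion of $\sigma$ to the tuple $\bar c$ defining the given family, which satisfies $\Psi$ by hypothesis, produces the desired point in $\bigcap_{\alpha\in R^*}C_\alpha$.

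There is no serious obstacle here: the ambient setup has been arranged exactly so that this transfer goes through, and the one thing to be careful about is checking that non-emptiness, closedness, boundedness and the monotonicity clause really are first-order conditions on $\bar c$ — which rests on the definability of $R$ (hence of $R^*$ and of inequalities between norms) recalled earlier.
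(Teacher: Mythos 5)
Your proof is correct and is exactly the argument the paper intends: the paper gives no detailed proof but states that this property "transfers from $L$ to $K$ by means of this elementary equivalence," i.e.\ one encodes the hypotheses and conclusion as a parameter-free sentence (quantifying over the defining parameters) and verifies it in a finite extension $L$ of $\QQ_p$, where closed bounded sets are compact and the directed family argument applies. Your write-up just makes explicit the first-order expressibility of non-emptiness, closedness, boundedness and monotonicity (via definability of $R$), which is precisely the point the paper leaves to the reader.
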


The next classical properties can easily be derived from this theorem (or
transfered from $L$ to $K$ by elementary equivalence).

\begin{theorem}\label{th:extr-val}
  For every continuous semi-algebraic function $f:X\subseteq K^m\to K^n$ whose
  domain $X$ is closed and bounded, $f(X)$ is closed and bounded. As a
  consequence:
  \begin{enumerate}
    \item
      $\|f\|$ is bounded and attains its bounds.
    \item 
      If $f$ is injective then it is a homeomorphism from $X$ to
      $f(X)$. 
  \end{enumerate}
\end{theorem}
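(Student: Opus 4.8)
The final statement is Theorem~\ref{th:extr-val}: for a continuous semi-algebraic $f:X\subseteq K^m\to K^n$ with $X$ closed and bounded, the image $f(X)$ is closed and bounded, and the two numbered consequences follow. So I need to prove: (1) image of closed+bounded under continuous semi-algebraic is closed+bounded; (2a) $\|f\|$ bounded and attains bounds; (2b) continuous semi-algebraic injection on closed+bounded domain is a homeomorphism onto its image.

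Let me think about how to prove the main claim from Theorem~\ref{th:comp-inter} (the compactness/intersection theorem for uniformly definable families of closed bounded sets). Actually, the cleanest route is: closedness + boundedness is basically $p$-adic compactness, and Theorem~\ref{th:comp-inter} is exactly the finite-intersection-property characterization of compactness, restricted to uniformly definable families. Boundedness of $f(X)$: $X$ is bounded, so $X\subseteq B(0,r)$ for some $r$; consider the semi-algebraic function $x\mapsto \|f(x)\|$ on $X$... hmm, but I want to bound it. Actually here's the approach: suppose $f(X)$ is unbounded. Then for each $\alpha\in R^*$ the set $C_\alpha = \{x\in X : \|f(x)\|\geq |\alpha|^{-1}\}$... wait, I need to be careful with direction of the valuation. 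Let me set $C_\alpha = \overline{\{x \in X: \|f(x)\| \geq |\alpha|^{-1}\}}$ — these are closed (by construction) and bounded (subsets of the bounded $X$... well, of $\overline X = X$), uniformly definable in $\alpha$, nonempty if $f(X)$ is unbounded, and nested since larger $\|f\|$-threshold gives smaller set: $|\beta|\leq|\alpha|$ means $|\beta|^{-1}\geq|\alpha|^{-1}$ so $C_\beta\subseteq C_\alpha$. By Theorem~\ref{th:comp-inter} there is a point $x_0$ in the intersection; then by continuity of $\|f\|$... actually $x_0$ is a limit of points where $\|f\|$ is arbitrarily large, contradicting that $\|f(x_0)\|$ is a finite value and $\|f\|$ is continuous. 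Wait — I need that $x_0 \in X$ and continuity pins down $\|f(x_0)\|$; but $x_0$ being in the closure of each threshold set means in every neighborhood of $x_0$ there are points with $\|f\|$ exceeding any given bound, contradicting continuity at $x_0$. Good. For closedness of $f(X)$: let $b\in \overline{f(X)}\setminus f(X)$; set $C_\alpha = \overline{\{x\in X : \|f(x)-b\|\leq|\alpha|\}}$ — closed, bounded (inside $\overline X=X$), uniformly definable, nested (smaller $\alpha$ shrinks), nonempty since $b\in\overline{f(X)}$. Intersection gives $x_0\in X$; then $\|f(x_0)-b\| \leq |\alpha|$ for all $\alpha\in R^*$ by continuity, forcing $f(x_0)=b$, contradiction. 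That handles the main statement.

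**The consequences.** For (1): $\|f\| = \max(|f_1|,\dots,|f_n|)$ is a continuous semi-algebraic function $X\to|K|$ (absolute value is continuous, max of continuous is continuous, and it is semi-algebraic in the extended sense of the paper); its image is a bounded subset of $|K|$, so it has a supremum, and by the closedness argument applied to this scalar-valued function the supremum is attained (alternatively apply the $C_\alpha$-intersection argument directly to $\{x : \|f(x)\| \geq s\}$ for $s$ the supremum). For (2): if $f$ is injective and continuous on closed bounded $X$, I want $f^{-1}:f(X)\to X$ continuous. The standard argument: let $b_0\in f(X)$, $b_0 = f(a_0)$; suppose $f^{-1}$ is not continuous at $b_0$, i.e. there's $\varepsilon$ such that points $b$ arbitrarily close to $b_0$ have $f^{-1}(b)$ at distance $>\varepsilon$ from $a_0$. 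Consider $C_\alpha = \overline{\{x\in X : \|x-a_0\|\geq\varepsilon,\ \|f(x)-b_0\|\leq|\alpha|\}}$: closed, bounded, uniformly definable, nested, and nonempty by the failure of continuity. The intersection gives $x_0\in X$ with $\|x_0-a_0\|\geq\varepsilon$ but (by continuity of $f$) $\|f(x_0)-b_0\|\leq|\alpha|$ for all $\alpha$, so $f(x_0)=b_0=f(a_0)$, contradicting injectivity since $x_0\neq a_0$. Hence $f^{-1}$ is continuous, and $f$ is a homeomorphism onto $f(X)$ (which is closed and bounded by part (1)).

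**Expected obstacle and loose ends.** The arguments above are all variations on one trick (build a nested uniformly-definable family of closed bounded nonempty sets, invoke Theorem~\ref{th:comp-inter}, derive a contradiction from continuity/injectivity), so there is no deep difficulty. The points requiring care are: (i) checking the uniform definability of each family $(C_\alpha)_{\alpha\in R^*}$ — but each is cut out by a single formula with the extra parameter $\alpha$, together with a topological closure, and closure of a uniformly definable family is uniformly definable (Macintyre's theorem, Theorem~\ref{th:macintyre}), so this is routine; (ii) making sure that after taking closures the sets stay inside $X$ — this uses precisely that $X$ is closed, so $\overline{\{\dots\}\cap X}\subseteq \overline X = X$; (iii) the direction conventions relating $|\cdot|$, $\leq$ on $|K|$, and the order on $\Gamma$, which the excerpt has already pinned down. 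So the "hard part" is essentially bookkeeping; the mathematical content is entirely carried by Theorem~\ref{th:comp-inter}. I would present the proof as: first the boundedness of $f(X)$, then its closedness, then (1) as a special/parallel case, then (2b) via the same mechanism.
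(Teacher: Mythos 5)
Your handling of the main statement and of item 2 is correct, and it follows the route the paper itself indicates: the paper gives no written proof of Theorem~\ref{th:extr-val}, saying only that it ``can easily be derived'' from Theorem~\ref{th:comp-inter} or transferred from a finite extension $L$ of $\QQ_p$ by elementary equivalence. Your nested families $(C_\alpha)_{\alpha\in R^*}$ are a correct spelling-out of the first route (note the closures you take are superfluous: since $X$ is closed and $f$ continuous, the sets $\{x\in X:\|f(x)\|\geq|\alpha|^{-1}\}$, $\{x\in X:\|f(x)-b\|\leq|\alpha|\}$, etc., are already closed in $K^m$), and your injectivity argument for item 2 is fine.

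Item 1, however, contains a genuine gap: the step ``its image is a bounded subset of $|K|$, so it has a supremum'' is false in a general $p$-adically closed field. The paper explicitly allows value groups $\cZ\neq\ZZ$ (e.g.\ $\ZZ\times\QQ$ lexicographically ordered, for the Puiseux-type example), and in such a non-archimedean $\ZZ$-group a bounded subset need not have a least upper bound (any irrational cut in the divisible part produces one); only for $\QQ_p$ and its finite extensions is your assertion automatic. Your two fallbacks do not repair this, since both presuppose that the supremum $s$ exists. What saves the statement is definability: $\{\|f(x)\|\tq x\in X\}$ comes from a semi-algebraic set, and for each defining formula the assertion ``if the set of norms is bounded then it has a greatest (and a least) element attained on $X$'' is a first-order statement about $K$; it holds in every finite extension of $\QQ_p$ (closed bounded sets are compact there, or simply because bounded sets of values in $\ZZ$ attain their bounds), hence in $K$ by the elementary equivalence recalled after Theorem~\ref{th:macintyre}. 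This is exactly the ``or transferred from $L$ to $K$'' alternative the paper offers, and it is also how Corollary~\ref{co:max} is obtained. If you insist on deducing attainment from Theorem~\ref{th:comp-inter} alone, the natural candidate families such as $\{x\in X\tq\forall x'\in X,\ \|f(x')\|\leq|\alpha|^{-1}\|f(x)\|\}$ are not obviously non-empty when $\cZ$ is non-archimedean, so you should either invoke the transfer argument for this item or cite the Presburger (stably embedded) structure of the value group; as written, this part of your proof does not go through in the generality the paper requires.
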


\begin{corollary}\label{co:max}
  For every bounded semi-algebraic subset $X$ of $K^m$ which is
  non-empty, there is an element $x\in X$ such that $\|x\|$ is maximal on
  $X$. 
\end{corollary}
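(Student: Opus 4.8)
The plan is to reduce the assertion to the corresponding (and genuinely easier) statement for the topological closure $\overline{X}$, and then to upgrade the maximiser found on $\overline{X}$ to one lying in $X$ itself, exploiting the ultrametric character of $\|\cdot\|$.

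First I would note that $\overline{X}$ is a closed, bounded, semi-algebraic set (it is a closure, $X$ is bounded, and closures of semi-algebraic sets are semi-algebraic), and that the inclusion $\overline{X}\hookrightarrow K^m$ is a continuous semi-algebraic map. By Theorem~\ref{th:extr-val}(1) the function $\|\cdot\|$ is bounded on $\overline{X}$ and attains its bound, so I can fix $\bar x\in\overline{X}$ with $\rho:=\|\bar x\|=\max\{\|y\|\tq y\in\overline{X}\}$. If $\rho=0$ then $\overline{X}=\{0\}$, hence $X=\{0\}$ and we are done, so I may assume $\rho>0$ and $\bar x\neq0$.

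Next I would use that $\|\cdot\|$ is locally constant away from the ``spheres''. Choose a coordinate $j$ with $|\bar x_j|=\rho$ and put $a=\bar x_j\in K^\times$, so $|a|=\rho$. Since $\pi\in\pi R\setminus R^\times$ we have $v(\pi)>0$, hence $|\pi a|=|\pi|\cdot\rho<\rho$; therefore every $y\in B(\bar x,\pi a)$ satisfies $\|y-\bar x\|\leq|\pi a|<\|\bar x\|$, and the ultrametric inequality forces $\|y\|=\|\bar x\|=\rho$. Now $B(\bar x,\pi a)$ is a clopen neighbourhood of $\bar x$ and $\bar x\in\overline{X}$, so $B(\bar x,\pi a)\cap X\neq\emptyset$; any $x$ in this intersection satisfies $\|x\|=\rho\geq\|y\|$ for every $y\in X\subseteq\overline{X}$, which is precisely the required maximality.

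The only delicate point — the one I regard as the sole potential obstacle — is exactly this passage from $\overline{X}$ back to $X$: a priori $\sup_{x\in X}\|x\|$ need not be attained in $X$, and (unlike in the real case) the value group being ``discrete'' does not make $X$ compact. The argument circumvents this by observing that once the supremum is realised at a point $\bar x$ of the closure, it is realised on an entire clopen ball around $\bar x$, and such a ball necessarily meets $X$. Everything else is routine.
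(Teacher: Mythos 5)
Your argument is correct. The paper itself gives no written proof of Corollary~\ref{co:max}: it is presented among the ``classical properties'' that can be derived from Theorem~\ref{th:comp-inter} or simply transferred from a finite extension $L$ of $\QQ_p$ to $K$ by elementary equivalence (over $L$ the claim is immediate because the norm takes values in the discrete set $p^{\ZZ}\cup\{0\}$, and attainment of a maximal norm on the set defined by a fixed formula is a first-order scheme). Your route is genuinely different and transfer-free: you invoke Theorem~\ref{th:extr-val} on the closed bounded set $\overline{X}$ and then pass back to $X$ by the ultrametric observation that $\|\cdot\|$ is constant on the clopen ball $B(\bar x,\pi \bar x_j)$ around a nonzero maximiser $\bar x\in\overline{X}$, a ball which must meet $X$. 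That last step is exactly the delicate point, and you handle it correctly; note that it is really needed here, since for a general $p$-adically closed $K$ the value group $\cZ$ may be non-archimedean (e.g. $\ZZ\times\QQ$ lexicographically ordered), so one cannot argue, as over $\QQ_p$, that a bounded-above set of values automatically has a maximum. Both approaches are legitimate; yours has the advantage of working uniformly in $K$ with no appeal to elementary equivalence, at the cost of the small local-constancy argument, while the paper's intended derivation is a one-line transfer.
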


Another crucial property of the semi-algebraic structure on a
$p$\--adically closed fields is the existence of so called ``built-in
Skolem functions'' (see \cite{drie-1984}, or the appendix of
\cite{dene-drie-1988} for a more constructive proof). Basically, this
property says that for every semi-algebraic subset $A$ of $K^{m+n}$,
the coordinate projection of $A$ onto $K^m$ has a {\em semi-algebraic}
section.

\begin{theorem}[Skolem functions]\label{th:skol}
  Let $X\subseteq K^m$ be semi-algebraic set and $\varphi(x,t)$ a formula  with
  $m+n$ free variables. If, for every
  $a\in X$ there is $b\in K^n$ such that $K\models\varphi(a,b)$, 
%   \begin{displaymath}
%     K\models\exists t,\ \varphi(a,t) 
%   \end{displaymath}
  then there exists a semi-algebraic function $\xi:X\to K^n$ (called a
  Skolem function) such that $K\models\varphi(x,\xi(x))$ for every $x\in X$. 
\end{theorem}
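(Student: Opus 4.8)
This is van den Dries's theorem on definable Skolem functions for $p$\--adically closed fields \cite{drie-1984}, and the plan is to recover it from Denef's Cell Decomposition Theorem \cite{dene-1984}. One first reduces to $n=1$ by induction on $n$: with $A=\{(a,b)\in K^{m+n}\tq a\in X,\ K\models\varphi(a,b)\}$, let $A'\subseteq K^{m+n-1}$ be the image of $A$ under the projection forgetting the last coordinate (semi\-algebraic by Theorem~\ref{th:macintyre}, with projection to $K^m$ still equal to $X$); a Skolem function $\eta:X\to K^{n-1}$ for $A'$ reduces the problem, for each $a\in X$, to the choice of some $t\in K$ with $(a,\eta(a),t)\in A$, that is to a Skolem function over its socle for the semi\-algebraic set $B=\{(a,\eta(a),t)\tq(a,\eta(a),t)\in A\}\subseteq K^{(m+n-1)+1}$. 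So it is enough to prove that every semi\-algebraic $A\subseteq K^{m+1}$ admits a semi\-algebraic section $\xi:\widehat A\to K$ of its coordinate projection (recall that $\widehat A$ is semi\-algebraic, again by Theorem~\ref{th:macintyre}).

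Apply Denef's Cell Decomposition Theorem to partition $A$ into finitely many cells $C_1,\dots,C_s$. Each $C_j$ is either ``thin'', with fibres reduced to a point $\{c_j(a)\}$, or ``thick'', of the form $\{(a,t)\tq a\in\widehat{C_j},\ t-c_j(a)\in\lambda_j\PN_{N_j}^\times,\ |\nu_j(a)|\,\square_1\,|t-c_j(a)|\,\square_2\,|\mu_j(a)|\}$ for suitable semi\-algebraic $c_j,\nu_j,\mu_j$, an integer $N_j$, a constant $\lambda_j\in K^\times$ and relations $\square_1,\square_2$ each among $\leq$, $<$ or nothing. The socles $\widehat{C_j}$ are semi\-algebraic (Theorem~\ref{th:macintyre}) and cover $\widehat A$, so $\jmath(a):=\min\{j\tq a\in\widehat{C_j}\}$ is semi\-algebraic and it suffices to build a semi\-algebraic section $\sigma_j$ of each $C_j$; then $\xi(a):=\sigma_{\jmath(a)}(a)$ works. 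One takes $\sigma_j=c_j$ for a thin cell. For a thick cell, the non\-empty fibre minus the centre is $S_a=\{s\in\lambda_j\PN_{N_j}^\times\tq|\nu_j(a)|\,\square_1\,|s|\,\square_2\,|\mu_j(a)|\}$, and one first selects a target valuation $\delta(a)$ for $s$ canonically: the set of attained valuations is the intersection of the coset $v(\lambda_j)+N_j\cZ$ with a convex subset of $\cZ$ determined by $v\nu_j(a)$ and $v\mu_j(a)$; since $\cZ$ is a $\ZZ$\--group such a set has a least element when bounded below and a greatest when bounded above only, and one lets $\delta(a)$ be that extremum (or $v(\lambda_j)$ when it is unbounded on both sides). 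It then remains to exhibit an actual $s\in S_a$ with $v(s)=\delta(a)$.

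This last selection is the technical core of the proof, and the main obstacle; it is also the only step that genuinely uses $p$\--adic closedness. One constructs $s$ (equivalently the point $\sigma_j(a)=c_j(a)+s$) digit by digit in its expansion relative to the fixed uniformiser $\pi$: once finitely many digits are fixed, the admissible values of the next one form a non\-empty subset of the finite residue field, from which one picks the least under a fixed ordering, Hensel's lemma being used to express membership in the coset $\lambda_j\PN_{N_j}^\times$ through finitely many such digits. When $K$ is not complete, Theorem~\ref{th:comp-inter} ensures that the resulting descending chain of non\-empty closed bounded balls meets $K$ in the sought point $s$. Checking that the construction lands in $S_a$ and is semi\-algebraic uniformly in $a$ is routine but lengthy, and is carried out in \cite{drie-1984} and, in more constructive form, in the appendix of \cite{dene-drie-1988}.
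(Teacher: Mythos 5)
The paper does not actually prove Theorem~\ref{th:skol}: it is quoted from van den Dries \cite{drie-1984} (with the appendix of \cite{dene-drie-1988} for a constructive proof), so a citation together with your routine reduction to $n=1$ would have matched the paper. Your reductions (induction on $n$ via Theorem~\ref{th:macintyre}, cell decomposition, taking the centre on type-$0$ cells, and selecting a target valuation in the coset $v(\lambda_j)+N_j\cZ$, which does have an extremal element since the value group is a $\ZZ$-group) are fine; but the step you yourself call the technical core does not work, as written, in the generality in which the theorem is stated (an arbitrary $p$-adically closed field $K$).

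Concretely: (i) a ``digit-by-digit expansion relative to $\pi$'' presupposes a distinguished element of each valuation, i.e.\ a cross-section $\gamma\mapsto\pi^{\gamma}$ of $v$; no such canonical section exists once $\cZ\neq\ZZ$ (e.g.\ the Puiseux-series example of the introduction), and even over $\QQ_p$ the set $p^{\ZZ}$ is \emph{not} semi-algebraic, so ``choose the least admissible digit'' is not obviously a semi-algebraic prescription uniformly in $a$ -- making such a selection definable is precisely the content of the cited proofs, not a routine verification. (ii) The appeal to Theorem~\ref{th:comp-inter} is misplaced: your balls arise from a recursion indexed by the standard integers, not from a uniformly semi-algebraic family $(C_\alpha)_{\alpha\in R^*}$, so the theorem does not apply; and even granting a non-empty intersection, when $\cZ$ has elements above $\ZZ$ the intersection of balls of radii $|\pi|^{\delta(a)+k}$, $k\in\NN$, is not a singleton (and its points need only lie in the closure of $S_a$), so you are left with exactly the selection problem you started from -- the argument is circular there. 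Also, \cite{drie-1984} is a model-theoretic argument, not a digit construction, so the literature does not fill this particular gap in the form you describe. A correct route in this generality is either van den Dries's argument itself, or: prove the statement over a finite extension of $\QQ_p$ (where the constructive argument of the appendix of \cite{dene-drie-1988} applies), and note that ``having definable Skolem functions'' is a scheme of sentences about the complete theory, hence transfers to every $p$-adically closed field by the elementary equivalence recalled after Theorem~\ref{th:macintyre}.
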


For example, if a semi-algebraic function $f:X\to K$ takes values in
$\PN_N$, then Theorem~\ref{th:skol} applied to the formula $\varphi(x,t)$
saying that ``$f(x)=t^N$'' gives a semi-algebraic function $\xi:X\to K$
such that $f=\xi^N$. 
\\

There is a good dimension theory for semi-algebraic sets over
$p$\--adically closed fields, see \cite{drie-scow-1988} and
\cite{drie-1989}. We will repeatedly use the following properties of
this dimension, for every semi-algebraic sets $A$, $B$ and
semi-algebraic map $f$ defined on $A$. By convention $\dim\emptyset=-\infty$.

\begin{enumerate}
  \item 
    $\dim A = 0$ if and only if $A$ is finite non-empty.
    
  \item
    $\dim A\cup B=\max(\dim A,\dim B)$. 
  \item 
    If $A\neq\emptyset$, $\dim \partial A<\dim A$. 
  \item 
    $\dim f(A)\leq \dim A$. 
\end{enumerate}

The {\df local dimension} of a semi-algebraic set $A\subseteq K^m$ at a point
$a\in A$ is the minimum of $\dim U$, for every semi-algebraic
neighbourhood $U$ of $a$ in $A$ (with respect to the relative
topology, induced by $K^m$ on $A$). $A$ is {\df pure dimensional} if
it has the same local dimension at every point. Note that if a
semi-algebraic set $B$ is open in $A$ and $A$ is pure dimensional then
so is $B$, and that a cell is pure dimensional if and only if its
socle is. This last point, combined with Denef's Cell Decomposition
Theorem~\ref{th:D1} and a straightforward induction, shows that every
semi-algebraic set $A$ is the union of finitely many pure dimensional
ones.

\subsection{Root functions and monomial functions}

Following Lemma~1.3 in \cite{cluc-leen-2012} there is for each integer
$M>0$ a unique group homomorphism $\overline{ac}_M$ from $K^\times$ to
$(R/\pi^MR)^\times$ such that $\overline{ac}_M(\pi)=1$ and 
$\overline{ac}_M(u)=u+\pi^MR$ for every $u\in R^\times$. The construction of
$\overline{ac}_M$ given in \cite{cluc-leen-2012} shows that for each
integer $N>0$ the set \begin{displaymath}
  Q_{N,M} = \{0\}\cup \big\{ x\in\PN_N^\times\cdot(1+\pi^MR) \tq \overline{ac}_M(x)=1 \big\} 
\end{displaymath}
is semi-algebraic. $Q_{N,M}^\times=Q_{N,M}\setminus\{0\}$ is a clopen subgroup of
$K^\times$ with finite index. When $v(K^\times)=\ZZ$ then 
$Q_{N,M}^\times = \bigcup_{k\in\ZZ} \pi^{kN}(1+\pi^MR)$ so the above definition of
$Q_{N,M}$ is compatible with the notation of the introduction.

If $M>2v(N)$, Hensel's Lemma implies that $1+\pi^MR\subseteq\PN_N$, hence
$Q_{N,M}$ is contained in $\PN_N$. The importance of $Q_{N,M}$ comes
from the following property, which also follows from Hensel's lemma
(see for example lemma~1 and corollary~1 in \cite{cluc-2001}).

\begin{lemma}\label{le:Hensel-DP}
  The function $x\mapsto x^e$ is a group endomorphism of $Q_{N,M}^\times$. If
  $M>v(e)$ this endomorphism is injective and its image is
  $Q_{eN,v(e)+M}^\times$.  
\end{lemma}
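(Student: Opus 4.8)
The plan is to verify the three assertions — that $x\mapsto x^e$ maps $Q_{N,M}^\times$ into itself and is a homomorphism, that it is injective when $M>v(e)$, and that its image is exactly $Q_{eN,v(e)+M}^\times$ — by working through the defining description $Q_{N,M}^\times=\{x\in\PN_N^\times\cdot(1+\pi^MR)\tq\overline{ac}_M(x)=1\}$ and exploiting the multiplicativity of $\overline{ac}_M$. The homomorphism property is immediate since raising to the power $e$ is multiplicative on the abelian group $K^\times$, so the only content in the first sentence is that $Q_{N,M}^\times$ is closed under it. For that I would write $x=y^N(1+\pi^M r)$ with $y\in K^\times$, $r\in R$ and $\overline{ac}_M(x)=1$; then $x^e=(y^e)^N\cdot(1+\pi^M r)^e$, and $(1+\pi^M r)^e\in 1+\pi^M R$ because the binomial expansion has all higher terms in $\pi^{2M}R\subseteq\pi^M R$. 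Hence $x^e\in\PN_N^\times\cdot(1+\pi^M R)$, and $\overline{ac}_M(x^e)=\overline{ac}_M(x)^e=1$, so $x^e\in Q_{N,M}^\times$.

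For injectivity when $M>v(e)$: suppose $x\in Q_{N,M}^\times$ with $x^e=1$. Write $x=1+\pi^M s$ modulo the subgroup $\PN_N^\times$ — more precisely, since $\overline{ac}_M(x)=1$ and $x\in\PN_N^\times(1+\pi^M R)$ one checks $x\in 1+\pi^M R$ after dividing out the $N$-th power part carefully; actually the cleanest route is to observe $Q_{N,M}^\times$ has no nontrivial torsion, via Hensel/the standard fact that for $M>v(e)$ the multiplicative subgroup $1+\pi^M R$ is torsion-free (if $(1+\pi^M s)^e=1$ with $s\ne 0$ of minimal valuation, the leading term $e\pi^M s$ has valuation $v(e)+M+v(s)<2(M+v(s))$ when $M>v(e)$, forcing $e\pi^M s=0$, a contradiction). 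Combined with $\overline{ac}_M$ being a homomorphism to a finite group and the $\PN_N^\times$-component analysis, one concludes $x=1$. This is essentially the content of the cited lemma~1 and corollary~1 of \cite{cluc-2001}, so I would lean on that rather than reprove it in full.

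For the image: the inclusion of the image of $Q_{N,M}^\times$ under $x\mapsto x^e$ into $Q_{eN,v(e)+M}^\times$ follows the same computation as above — $x^e=(y^e)^{N}(1+\pi^M r)^e$ where now I need $(1+\pi^M r)^e\in 1+\pi^{v(e)+M}R$, which holds because modulo $\pi^{v(e)+M}R$ one has $(1+\pi^M r)^e\equiv 1+e\pi^M r$ and $v(e\pi^M r)\geq v(e)+M$ (the remaining binomial terms have valuation $\geq 2M>v(e)+M$ once $M>v(e)$), and one checks $\overline{ac}_{v(e)+M}(x^e)=1$ from $\overline{ac}_{v(e)+M}(x)\in\ker$ of raising to $e$; alternatively use that $x^e$ is both an $eN$-th power times a $1$-unit and has trivial $\overline{ac}_{v(e)+M}$. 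The reverse inclusion — surjectivity onto $Q_{eN,v(e)+M}^\times$ — is where I expect the real work: given $z\in Q_{eN,v(e)+M}^\times$, write $z=w^{eN}(1+\pi^{v(e)+M}t)$ with $\overline{ac}_{v(e)+M}(z)=1$; then $w^{eN}=(w^N)^e$ is already an $e$-th power of the element $w^N\in\PN_N^\times$, and it remains to show the $1$-unit factor $1+\pi^{v(e)+M}t$ (suitably adjusted) is an $e$-th power of an element of $1+\pi^M R$. This is precisely Hensel's Lemma applied to $X^e-(1+\pi^{v(e)+M}t)$: the candidate root $1$ satisfies $v(1^e-(1+\pi^{v(e)+M}t))\geq v(e)+M > 2v(e)=v(eX^{e-1})|_{X=1}\cdot 2$... more carefully, $v(f(1))\geq v(e)+M$ while $v(f'(1))=v(e)$, so $v(f(1))>2v(f'(1))$ needs $M>v(e)$, which is our hypothesis, and Hensel gives a root in $1+\pi^{M}R$. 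Assembling the $\overline{ac}$ bookkeeping then shows the preimage lies in $Q_{N,M}^\times$, completing the proof.

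The main obstacle, then, is the surjectivity half of the image claim: getting the Hensel estimate $v(f(1))>2v(f'(1))$ to come out to exactly the stated hypothesis $M>v(e)$, and tracking the $\overline{ac}_M$ versus $\overline{ac}_{v(e)+M}$ conditions so that the Hensel root actually lands in $Q_{N,M}^\times$ and not merely in $\PN_N^\times(1+\pi^M R)$. Everything else is routine binomial-expansion valuation estimates.
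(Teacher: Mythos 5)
The paper itself offers no proof of this lemma: it simply notes that the statement ``follows from Hensel's lemma'' and cites lemma~1 and corollary~1 of \cite{cluc-2001}, so there is no in-paper argument to compare with line by line. Your plan is in the spirit of that hint, and two of its three parts are sound: closure under $x\mapsto x^e$ (binomial estimate plus $\overline{ac}_M(x^e)=\overline{ac}_M(x)^e=1$), the inclusion of the image in $Q_{eN,v(e)+M}^\times$ (using that $\overline{ac}_M$ is the reduction of $\overline{ac}_{v(e)+M}$, so $\overline{ac}_{v(e)+M}(x)$ is the class of a $1$\--unit of level $M$ whose $e$\--th power is trivial modulo $\pi^{v(e)+M}$), and injectivity (an $e$\--torsion element of $Q_{N,M}^\times$ has valuation $0$, hence lies in $1+\pi^MR$ because $\overline{ac}_M(x)=1$, and $1+\pi^MR$ has no $e$\--torsion when $M>v(e)$ — your ``divide out the $N$\--th power part'' detour is unnecessary).

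The genuine gap is exactly the step you describe as ``assembling the $\overline{ac}$ bookkeeping then shows the preimage lies in $Q_{N,M}^\times$''. It does not, for an arbitrary decomposition $z=w^{eN}(1+\pi^{v(e)+M}t)$: the factor $w$ is only determined up to elements $\delta$ with $\delta^{eN}\in 1+\pi^{v(e)+M}R$, and such $\delta$ include roots of unity whose class modulo $\pi^M$ is nontrivial, so the candidate $w^Nx_0$ produced by Hensel need not satisfy $\overline{ac}_M(w^Nx_0)=1$. Concretely, take $K=\QQ_p$ with $p$ odd, $e=2$, $N=1$, $M\geq 1$, and $z=p^2(1+p^Mt)\in Q_{2,M}^\times$: the choice $w=-p$ is a perfectly valid decomposition ($w^{2}=p^{2}$), but the resulting root $-px_0$ has $\overline{ac}_M(-px_0)\equiv-1\neq 1$, hence lies outside $Q_{1,M}^\times$ even though the other square root $px_0$ is the good preimage. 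So the real content of surjectivity is showing that a \emph{good} choice of $w$ exists (equivalently, that the bad candidate can be corrected by a suitable $e$\--th root of unity lying in $\PN_N^\times(1+\pi^MR)$), which amounts to an analysis of the $e$\--torsion of $(R/\pi^{v(e)+M}R)^\times$: the tame part is handled by Teichm\"uller lifts, but the wild part needs its own argument, and none of this is supplied by your sketch. The route closest to the paper's citation avoids the issue altogether: over a finite extension of $\QQ_p$ one has the canonical description $Q_{N,M}^\times=\bigcup_k\pi^{kN}(1+\pi^MR)$, so the Hensel computation on $1$\--units finishes the proof with no bookkeeping, and the general $p$\--adically closed case follows by elementary equivalence since the statement is first order (recall $Q_{N,M}$ is semi-algebraic). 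Either supply the torsion/correction argument or argue by transfer; as written, the surjectivity half is not yet a proof.
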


In particular $x\mapsto x^e$ defines a continuous bijection from
$Q_{1,v(e)+1}$ to $Q_{e,2v(e)+1}$. We let $x\mapsto x^{1/e}$ denote
the reverse continuous bijection. In particular it is defined on
$Q_{N,M}$ for every $N$, $M$ such that $e$ divides $N$ and $M>2v(e)$. 

\medskip

For all positive integers $e$, $n$ we let 
\begin{displaymath}
  \UU_e=\{x\in K\tq x^e=1\} 
  \quad\mbox{ and }\quad
  U_{e,n}=\UU_e\cdot(1+\pi^nR).
\end{displaymath}
Analogously to Landau's notation $\cO(x^n)$ of calculus, we let
$\cU_{e,n}(x)$ denote {\em any} semi-algebraic function in the
multi-variable $x$ with values in $U_{e,n}$. Any such function
is the product of two semi-algebraic functions, with values in
$\UU_e$ and $1+\pi^nR$ respectively. 
So, given a family of functions $f_i$, $g_i$ on the same domain $X$,
we write that $f_i=\cU_{e,n}g_i$ for every $i$, when there are
semi-algebraic functions $\omega_i:X\to R$ and $\chi_i:X\to\UU_e$ such that for
every $x$ in $X$
\begin{displaymath}
  f_i(x)=\chi_i(x)\big(1+\pi^n\omega_i(x)\big)g_i(x).
\end{displaymath}
$\cU_{1,n}(x)$ is simply denoted $\cU_n(x)$. 

\begin{remark}\label{re:racine-de-U}
  If $f(x)=\cU_n(x)$ for some $n>2v(e)$ then $f^{1/e}$ is well defined
  and takes values in $1+\pi^{n-v(e)}R$. Therefore we can write
  $\cU_n(x)=(\cU_{n-v(e)}(x))^e$. 
\end{remark}

A function $g$ is {\df $N$\--monomial} on $S\subseteq K^q$ if either it is
constantly equal to $\infty$ or there exists $\xi\in K$ and $\beta_1,\dots,\beta_q\in\ZZ$ such that
\begin{displaymath}
  \forall x=(x_1,\dots,x_q)\in S,\ g(x)=\xi\prod_{i=1}^q x_i^{N\beta_i}.
\end{displaymath}
In this definition we use when necessary our convention that $0^0=1$. A
function $f$ is {\df $N$\--monomial mod $U_{e,n}$} if
$f=\cU_{e,n}g$
with $g$ an $N$\--monomial function.

\subsection{Discrete and $p$\--adic simplexes}

We say that $f:S\subseteq F_I(\Gamma^q)\to\Omega$ is {\df affine} if either
it is constantly equal to $+\infty$, or there are elements $\alpha_0\in\cQ$ and
$\alpha_i\in\QQ$ for $i\in I$ such that 
\begin{displaymath}
  \forall x\in S,\ f(x)=\alpha_0+\sum_{i\in I}\alpha_ix_i.
\end{displaymath}

Polytopes\footnote{In \cite{darn-2017} we introduced discrete
polytopes in $\Gamma^q$ as ``largely continuous precells mod $N$'', for an
arbitrary $q$\--tuple $N$ of positive integers. In the present paper
$N=(1,\dots,1)$ will not play any role so we remove it from the
definition.} in $\Gamma^q$ are defined by induction on $q$. The only
polytope in $\Gamma^0$ is $\Gamma^0$ itself (which is a one-point set). For
every $I\subseteq[\![1,q+1]\!]$, a subset $A$ of $F_I(\Gamma^{q+1})$ is a {\df
discrete polytope} of $\Gamma^{q+1}$ if $\widehat{A}$ is a discrete
polytope of $\Gamma^q$ and if there is a pair $(\mu,\nu)$ of {\em largely
continuous} affine maps from $\widehat{A}$ to $\Omega$, called a {\df
presentation} of $A$, such that $0\leq \mu\leq \nu$ and
\begin{displaymath}
  A=\Big\{a\in F_I(\Gamma^{q+1})\tq \widehat{a}\in\widehat{A}\mbox{ and }
  \mu(\widehat{a})\leq a_{q+1} \leq \nu(\widehat{a})\Big\}.
\end{displaymath}

\begin{example}\label{ex:simplex}
  \begin{itemize}
    \item 
      $A=\NN\times\NN$ is a discrete polytope with two facets
      $F_{\{1\}}(A)=\NN\times\{+\infty\}$ and $F_{\{2\}}(A)=\{+\infty\}\times\NN$.
    \item
      $B=\{(x,y)\in\ZZ^2\tq 0\leq y\leq x\}$ is a discrete simplex, with proper
      faces $F_{\{2\}}(B)=\{+\infty\}\times\NN$ and $F_\emptyset(B)=\{(+\infty,+\infty)\}$.
    \item 
      $C=\{(x,y,z)\in\ZZ^3\tq (x,y)\in B$ and $z=2y-2x\}$ is a subset of $\ZZ^3$
      defined by linear inequalities, whose proper faces $F_{\{3\}}(C)$
      and $F_\emptyset(C)$ are linearly ordered by specialization. However the
      linear map $\nu(x,y)=2y-2x$ defining $C$ is not largely continuous
      on $B$: it has no limit when $(x,y)$ tends to $(+\infty,+\infty)$ in $B$.
      Note that $F_{\{3\}}(C)=\{+\infty\}^2\times 2\NN$ can not be defined
      by linear inequalities. Thus $F_{\{3\}}(C)$ is definitely not a
      polytope, and so neither is $C$. 
  \end{itemize}  
\end{example}

All the references in the next proposition are taken from
\cite{darn-2017}.

\begin{proposition}\label{pr:pres-face}
  Let $q\geq1$ and $A\subseteq F_I(\Gamma^q)$ be a discrete polytope. Let $(\mu,\nu)$ be a
  largely continuous presentation of $A$, let $J$ be a subset of $I$, and
  $\widehat{J}=J\setminus\{q\}$. Finally let $Y=F_{\widehat{J}}(\widehat{A})$. 
  Then $F_J(A)\neq\emptyset$ if and only if either $q\in J$ and $\bar\mu<+\infty$ on $Y$,
  or $q\notin J$ and $\bar\nu=+\infty$ on $Y$ (Proposition~3.11). When this
  happens:
  \begin{enumerate}
    \item
      $F_J(A)=\pi_J(A)$ (Proposition~3.3).
    \item 
      The socle of $F_J(A)$ is a face of $\widehat{A}$: 
      $\widehat{F_J(A)}=F_{\widehat{J}}(\widehat{A})=Y$
      (Proposition~3.7).
    \item\label{it:pres-face}
      $F_J(A)$ is a discrete polytope and $(\bar\mu_{|Y},\bar\nu_{|Y})$ is a
      presentation of it (Proposition~3.11):
      \begin{displaymath}
        F_J(A)=\big\{b\in F_J(\Gamma^q)\tq \widehat{b}\in Y,\mbox{ and }
        \bar\mu(\widehat{b})\leq b_q\leq \bar\nu(\widehat{b})\big\}.
      \end{displaymath}
  \end{enumerate}
\end{proposition}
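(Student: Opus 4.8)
The plan is to prove the proposition by induction on $q$, reducing everything to a description of the closure $\overline{A}$ fibre by fibre over its socle $\widehat{A}$. The base case $q=1$ is immediate from the definitions (then $\widehat{A}=\Gamma^0$ is a one-point set and $(\mu,\nu)$ is a pair of constants $0\le\mu\le\nu$ in $\Omega$). For the inductive step the central tool is the closure formula
\begin{displaymath}
  \overline{A}=\Big\{a\in\Gamma^q\tq\widehat{a}\in\overline{\widehat{A}}\ \mbox{ and }\ \bar\mu(\widehat{a})\le a_q\le\bar\nu(\widehat{a})\Big\},
\end{displaymath}
whose ``$\subseteq$'' inclusion is routine (if $a^{(k)}\in A$ tends to $a$, then $\widehat{a^{(k)}}\to\widehat{a}$ places $\widehat{a}$ in $\overline{\widehat{A}}$, and passing to the limit in $\mu(\widehat{a^{(k)}})\le a^{(k)}_q\le\nu(\widehat{a^{(k)}})$ via the continuous extensions $\bar\mu,\bar\nu$ gives the bounds), while the reverse inclusion is the crux and is discussed below.

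Granting the closure formula, I would compute $F_J(A)=\{a\in\overline{A}\tq\Supp a=J\}$ by separating the last coordinate from the first $q-1$. Writing $a=(\widehat{a},a_q)$, the support condition on $\widehat{a}$ is precisely $\Supp\widehat{a}=\widehat{J}$, which together with $\widehat{a}\in\overline{\widehat{A}}$ says $\widehat{a}\in F_{\widehat{J}}(\widehat{A})=Y$; and the support condition on $a_q$ says $a_q$ is finite when $q\in J$ --- whence $0\le\bar\mu(\widehat{a})\le a_q$ forces $\bar\mu(\widehat{a})<+\infty$ --- or $a_q=+\infty$ when $q\notin J$ --- whence $a_q\le\bar\nu(\widehat{a})$ forces $\bar\nu(\widehat{a})=+\infty$. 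This yields the ``only if'' direction of the equivalence for \emph{some} $\widehat{a}\in Y$; to pass from ``some'' to ``all'' I would use the dichotomy for a largely continuous affine map restricted to $Y$: on $Y$ the coordinates outside $\widehat{J}$ are all equal to $+\infty$ and the others are finite, and since large continuity forbids such a map from tending to $-\infty$, its value on $Y$ is $+\infty$ either everywhere or nowhere; hence ``$\bar\mu<+\infty$ at one point of $Y$'' is equivalent to ``$\bar\mu<+\infty$ on $Y$'', and likewise for $\bar\nu$. The ``if'' direction then amounts to exhibiting, for each $\widehat{a}\in Y$ under the stated hypothesis, a value $a_q$ with $(\widehat{a},a_q)\in\overline{A}$ and support $J$: take $a_q=+\infty$ when $q\notin J$, and when $q\in J$ a lattice point of $[\bar\mu(\widehat{a}),\bar\nu(\widehat{a})]$, this interval containing one because the fibres of $A$ over the socle $\widehat{A}$ are nonempty and, $\mu$ and $\nu$ being continuous, this persists in the limit along a sequence of $\widehat{A}$ tending to $\widehat{a}$. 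Once $F_J(A)$ is written out explicitly the three numbered assertions drop out: the display in assertion~3 is exactly the one obtained (with $0\le\bar\mu_{|Y}\le\bar\nu_{|Y}$ inherited in the limit, and large continuity of the restrictions inherited from that of $\bar\mu,\bar\nu$); assertion~2 is the remark that every $\widehat{a}\in Y$ actually occurs, which is the fibre-nonemptiness just invoked; and assertion~1 follows by checking that pushing the coordinates outside $J$ of a point of $A$ to $+\infty$ lands in $F_J(A)$, and conversely that a point of $F_J(A)$ equals $\pi_J$ of a suitable genuine point of $A$ --- here using the inductive identity $F_{\widehat{J}}(\widehat{A})=\pi_{\widehat{J}}(\widehat{A})$ for the socle.

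The main obstacle is the nontrivial ``$\supseteq$'' inclusion of the closure formula: a point $a$ with $\widehat{a}\in\overline{\widehat{A}}$ and $\bar\mu(\widehat{a})\le a_q\le\bar\nu(\widehat{a})$ must be approximated by \emph{actual} points of $A$, i.e. by lattice points lying in the \emph{integer} fibres $[\mu(\widehat{a}^{(k)}),\nu(\widehat{a}^{(k)})]\cap(\cZ\cup\{+\infty\})$ for a suitable sequence $\widehat{a}^{(k)}\to\widehat{a}$ inside $\widehat{A}$. This would fail for arbitrary affine bounds --- the third item of Example~\ref{ex:simplex} already shows how large continuity can break down --- and it genuinely uses the rigidity built into the notion of discrete polytope (its defining maps being largely continuous affine), together with the behaviour of the ordered group $\cZ$ under limits; in the general $p$-adically closed setting, where $\cZ$ need not be $\ZZ$, this is precisely Proposition~3.3 of \cite{darn-2017}, the identifications of faces with coordinate projections used above being Propositions~3.7 and~3.11 there. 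Consequently the shortest honest write-up just invokes those three results and performs the bookkeeping above to match up the equivalences; one should also record the harmless degenerate case $Y=\emptyset$, where $F_J(A)=\emptyset$ and the condition ``$\bar\mu<+\infty$ on $Y$'' (resp.\ ``$\bar\nu=+\infty$ on $Y$'') is to be read as also requiring $Y$ nonempty.
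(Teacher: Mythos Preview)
The paper does not prove this proposition at all: the sentence immediately preceding it says ``All the references in the next proposition are taken from \cite{darn-2017}'', and each assertion is tagged with the corresponding proposition number from that earlier paper. So there is no argument to compare against beyond the bare citations.

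Your sketch is consistent with how those cited results assemble, and your own conclusion --- that the honest write-up simply invokes Propositions~3.3, 3.7 and~3.11 of \cite{darn-2017} --- matches exactly what the paper does. Two small comments on the sketch itself. First, your dichotomy argument (``$\bar\mu$ is $+\infty$ either everywhere or nowhere on $Y$'') is not really a consequence of large continuity alone; what makes it work is that $\bar\mu_{|Y}$ is again \emph{affine}, which is precisely Proposition~\ref{pr:prol-proj} (Proposition~3.5 of \cite{darn-2017}), so you are implicitly using one more result from that paper than you list. Second, your closure formula and the ``persists in the limit'' step for producing a lattice point in $[\bar\mu(\widehat{a}),\bar\nu(\widehat{a})]$ are exactly where the genuine content lies, and in a general $p$-adically closed field (where $\cZ$ need not be discrete in $\cQ$) this does require the machinery of \cite{darn-2017}; you are right to flag it rather than gloss over it. Your remark on the degenerate case $Y=\emptyset$ is also well taken.
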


We will also use the next result (Proposition~3.5 in
\cite{darn-2017}).

\begin{proposition}\label{pr:prol-proj}
  Let $A\subseteq\Gamma^q$ be a discrete polytope, $f:A\to\Omega$ be an affine map and
  $B=F_J(A)=\pi_J(A)$ a face of $A$. Assume that $f$ extends to a
  continuous map $f^*:A\cup B\to\Omega$. Then $f^*$ is affine and if $f^*\neq+\infty$
  then $f=f^*_{|B}\circ{\pi_J}_{|A}$. In particular if $f^*\neq+\infty$ then
  $f(A)=f^*(B)$.
\end{proposition}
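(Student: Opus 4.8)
The plan is to prove this by induction on $q$, peeling off the last coordinate and passing the hypothesis down to the socle $\widehat A$ by means of Proposition~\ref{pr:pres-face}. The case $f\equiv+\infty$ is trivial (then $f^*\equiv+\infty$ and the remaining assertions are vacuous), so assume $A\subseteq F_I(\Gamma^q)$ and $f(x)=\alpha_0+\sum_{i\in I}\alpha_ix_i$ with $\alpha_0\in\cQ$, $\alpha_i\in\QQ$; fix a largely continuous presentation $(\mu,\nu)$ of $A$ on $\widehat A$. Since $B=F_J(A)$ is a face we have $J\subseteq I$, and Proposition~\ref{pr:pres-face} gives $\widehat B=F_{\widehat J}(\widehat A)=:Y$ (with $\widehat J=J\setminus\{q\}$) together with a description of $B$ in terms of $Y$ and $(\bar\mu,\bar\nu)$. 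The base case $q=0$ is immediate, since then $B=A$ and $\pi_J=\Id$.

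For the induction step I would distinguish three cases according to the position of $q$. If $q\notin I$, then coordinate $q$ is constantly $+\infty$ on $A\cup B$ and does not occur in $f$; dropping it identifies $A\cup B$ with $\widehat A\cup Y$, $f$ with an affine map on $\widehat A$, $f^*$ with a continuous extension of it to $\widehat A\cup Y$, and $\pi_J$ with $\pi_{\widehat J}$, so the induction hypothesis delivers everything. If $q\in I\cap J$, write $f(x)=g(\widehat x)+\alpha_qx_q$ with $g$ affine on $\widehat A$; lifting nets from $\widehat A$ into $A$ (using large continuity of $\mu,\nu$) one checks that $g$ extends continuously to $\widehat A\cup Y$, with $g^*(\widehat b)=f^*(b)-\alpha_qb_q$, applies the induction hypothesis to $g$, and reassembles $f^*$ and the factorisation from the data for $g$ together with the unchanged coordinate $q$.

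The case carrying the real content is $q\in I\setminus J$: coordinate $q$ is finite on $A$ but equals $+\infty$ on every point of $B$, and Proposition~\ref{pr:pres-face} then forces $\bar\nu\equiv+\infty$ on $Y$, so along any net $a\to b$ in $A$ with $b\in B$ the coordinate $a_q$ is unbounded. Assume $f^*\neq+\infty$, i.e.\ $f^*_{|B}$ takes a finite value. When, near a point $\widehat b\in Y$, the admissible range $[\mu(\widehat a),\nu(\widehat a)]$ of $a_q$ leaves genuine room to move $a_q$, one produces two nets in $A$ converging to one and the same $b$ along which the values of $f$ differ by $\alpha_q(a_q-a_q')$; since $\bar\nu\to+\infty$ this difference can be arranged not to tend to $0$, contradicting single-valuedness of $f^*(b)$ unless $\alpha_q=0$ — and then, $f$ no longer involving coordinate $q$, the induction hypothesis applied to $\widehat A$ with the face $Y$ finishes the case exactly as before. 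In the complementary ``thin'' situation, $a_q$ is essentially pinned to $\mu(\widehat a)$ near $B$, so $A$ is locally a graph over $\widehat A$; substituting $a_q=\mu(\widehat a)$ turns $f$ into the affine map $g+\alpha_q\mu$ on $\widehat A$, to which the induction hypothesis applies directly, the correction term $\alpha_q\mu$ being harmlessly absorbed. In all cases one obtains that $f^*_{|B}$ is affine and that $f=f^*_{|B}\circ{\pi_J}_{|A}$, and $f(A)=f^*(B)$ follows at once from surjectivity of ${\pi_J}_{|A}\colon A\to B$.

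The step I expect to be the real obstacle is precisely this last case: making rigorous the ``two competing nets'' (keeping both inside $A$ and aimed at the prescribed $b\in B$) and cleanly separating the ``room to wiggle'' situation from the ``thin'' one — this is where the polytope structure is genuinely used, through the geometry of the fibres of $\pi_J$ and the large continuity of $\mu$ and $\nu$.
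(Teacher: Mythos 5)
First, a remark on the comparison itself: the paper does not prove this proposition — it imports it verbatim as Proposition~3.5 of \cite{darn-2017} — so there is no in-paper proof to measure your sketch against; I can only judge it on its own terms. Your cases $q\notin I$ and $q\in I\cap J$ are essentially fine, although the mechanism that makes the net-lifting (and your ``wiggle'') work is not large continuity of $\mu,\nu$ by itself but the discreteness fact that an affine map whose coefficients have common denominator $D$ takes values in $\alpha_0+\frac{1}{D}\cZ$, so that a finite limit is attained \emph{eventually exactly} and two distinct admissible values of $a_q$ shift $f$ by at least $|\alpha_q|$; this should be said explicitly, but it is a gloss, not a gap.

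The genuine gap is in the case $q\in I\setminus J$, and it is not merely the bookkeeping of the two nets. The conclusion $f=f^*_{|B}\circ\pi_{J|A}$ is a statement about \emph{all} of $A$: if $\alpha_q\neq 0$ and even one fibre $[\mu(\widehat a_0),\nu(\widehat a_0)]\cap\cZ$, anywhere in $\widehat A$, contains two points, then $f$ is non-constant on a fibre of $\pi_J$ and the factorisation fails; so any complete proof must rule this out using only the hypothesis, which constrains $f$ near $B$ alone. Your wiggle argument only inspects fibres arbitrarily close to $Y$, and your ``thin'' branch only asserts that $A$ is a graph ``near $B$'' and then concludes; neither says anything about fat fibres away from $B$, so the dichotomy as you set it up does not close the case. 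What is missing is precisely the rigidity input you never invoke: $\mu$ and $\nu$ are themselves affine and \emph{largely continuous}, so the induction hypothesis (or an analogous statement about affine maps on $\widehat A$) applies to them and is what forces thinness near $Y$ to propagate to the whole fibre $\pi_{\widehat J}^{-1}(\widehat b)\cap\widehat A$, resp.\ forces $\mu$ to factor through $\pi_{\widehat J}$ when $\bar\mu_{|Y}$ is finite (giving wiggle room and $\alpha_q=0$). There is also a smaller but concrete defect inside the thin branch as written: the pinned value of $a_q$ is the unique element of $[\mu(\widehat a),\nu(\widehat a)]\cap\cZ$, which equals $\mu(\widehat a)$ only when $\mu(\widehat a)\in\cZ$, and as a function of $\widehat a$ it need not be affine (take $\mu=x/2$, $\nu=x/2+1/2$); so ``substituting $a_q=\mu(\widehat a)$'' does not hand you an affine map $g+\alpha_q\mu$ on $\widehat A$ to which the induction hypothesis can be applied verbatim.
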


A {\df discrete simplex} is a discrete polytope whose faces are
linearly ordered by specialization. This is a ``monohedral largely
continuous precell mod $(1,\dots,1)$'' in \cite{darn-2017}. Of course
every face of a simplex is a simplex (see Remark~3.12 of
\cite{darn-2017}).

For every $M\geq1$ we let $D^MR^q=(R\cap Q_{1,M})^q$ and define {\df
$p$\--adic simplexes of index $M$} as the inverse images of discrete
simplexes by the restriction of the valuation to $D^MR^q$. The faces
of a simplex $S$ of index $M$ are obviously the pre-images in $D^MR^q$
of the faces of $vS$. In particular they are linearly ordered by
specialization. $S$ is closed if and only if $vS$ is a singleton in
$\Gamma^q$. If $S$ is not closed, its largest proper face $T$ is called its
{\df facet} and $\partial S=\overline{T}$. 

\begin{remark}\label{re:face-proj}
  With the notation of Proposition~\ref{pr:prol-proj}, if
  $S=v^{-1}(A)\cap D^MR^q$ and $T=v^{-1}(B)\cap D^MR^q$ then $T=F_J(S)$, and
  so by Proposition~\ref{pr:prol-proj} $T=\pi_J(S)$. We will
  sometimes refer to the restriction of $\pi_J$ to $A$ (resp. $S$) as to
  ``the coordinate projection of $A$ onto $B$ (resp. of $S$ onto
  $T$)''. 
\end{remark}

\begin{example}
  $S=\{(x,y)\in D^1R^2\tq 0<|x|\leq|y|\leq1\}$ is a simplex of index $1$ (it is
  the inverse image in $D^1R^2$ of the discrete simplex $B$ in
  example~\ref{ex:simplex}). Intuitively we can visualise it (more
  exactly its image in $|K|\times|K|$) in the next figure, with its faces
  $F_{\{1\}}(S)=D^1R\times\{0\}$ and $F_\emptyset(S)=\{(0,0)\}$. More general simplexes
  will be defined by triangular systems of inequalities between norms
  of largely continuous monomial functions with rational exponents,
  hence their intuitive representations will usually have curved
  shapes. 
\end{example}

\begin{center}
  \small
  \begin{tikzpicture}
    \fill[color=gray!20] (0,0) -- (1.7,0) -- (1.7,1.7) -- cycle;
    \draw[thin] (0,1.9) -- (0,0) -- (2.5,0); 
    \draw[very thick] (0,0) node{\tiny$\bullet$} 
              -- (1.7,0) node[midway,below]{$F_{\{1\}}(S)$} ;
    \draw (0,0) node[below left]{$F_\emptyset(S)$}
          (1.1,.6) node{$S$};
  \end{tikzpicture}
\end{center}

\subsection{Simplicial complexes}

We will have to consider complexes of sets, of cells and of simplexes.
All of them are finite families of subsets of a topological space $X$,
organised in a such a way that one controls how the closures of these
sets intersect. 

Recall first that an ordered set $\cA$ is a {\df tree} if for every
$A$ in $\cA$, the set of elements in $\cA$ smaller than $A$ is
linearly ordered. It is a {\df rooted tree} if it has one smallest
element. A {\df lower subset} of $\cA$ is a subset $\cB$ of $\cA$ such
that whenever an element of $\cA$ is smaller than an element of $\cB$,
it belongs to $\cB$.

Now, given a finite family $\cA$ of pairwise disjoint subsets of $X$,
we call $\cA$ a {\df closed complex} if every $A\in\cA$ is relatively
open and if its frontier $\partial A$ is a union of elements of $\cA$. The
specialization preorder is then an order on $\cA$. If $\cA$, ordered
by specialization, is a tree (resp. a rooted tree) we call it a {\df
closed monoplex} (resp. {\df rooted closed monoplex}). A {\df complex}
(resp. {\df monoplex}) is then an arbitrary subfamily of a closed
complex (resp. closed monoplex). Of course a complex $\cA$ is a closed
complex if and only if $\bigcup\cA$ is closed.

\begin{remark}\label{re:comp-pure}
  Using that every semi-algebraic set is the disjoint union of
  finitely many pure dimensional ones, and that $\dim \partial A< \dim A$
  for every semi-algebraic set $A$, a straightforward induction shows
  that every finite family of semi-algebraic subsets of $K^m$ can be
  refined by a complex of pure dimensional semi-algebraic sets. 
\end{remark}

A {\df simplicial complex} $\cS$ in $D^MR^q$ (resp. in $\Gamma^q$) is a
complex of simplexes in $D^MR^q$ (resp. in $\Gamma^q$). 

\begin{remark}\label{re:well-dispatched}
  We do not require in our definition of a simplicial complex $\cS$ in
  $D^MR^q$ that different simplexes must have different supports.
  However it will follow from our construction that the simplicial
  complexes produced by $\TR m$ do have this additional property and
  more: for every $S,S'\in\overline{\cS}$, $S'\leq S$ if and only if $\Supp
  S' \subseteq \Supp S$ (see Remark~\ref{re:well-disp-proof}). So the tree $\cS$,
  ordered by specialisation, is isomorphic to the set $\{\Supp S\tq
  S\in\cS\}$ ordered by inclusion.
\end{remark}

Let $\cS$ be a finite family of simplexes in $D^MR^q$ (or $\Gamma^q$). Then
$\cS$ is a simplicial complex if and only if for every $S,T\in\cS$,
$\overline{S}\cap\overline{T}$ is the union of the common faces of $S$
and $T$. When this happens:
\begin{enumerate}
  \item
    $\cS$ is a monoplex;
  \item 
    every subset $\cS_0$ of $\cS$ in $D^MR^q$ is again a simplicial
    complex;
  \item 
    $\bigcup\cS_0$ is closed in $\bigcup\cS$ if and only if $\cS_0$ is a lower
    subset of $\cS$. 
\end{enumerate}

Let $\overline{\cS}$ denote the family of all the faces of the
elements of $\cS$. We call it the {\df closure} of $\cS$, and say that
$\cS$ is {\df closed} if $\cS=\overline{\cS}$. Note that $\cS$ is a
complex (resp. a closed complex) if and only if $\cS\subseteq\overline{\cS}$
(resp. $\cS=\overline{\cS}$) and the elements of $\overline{\cS}$ are
pairwise disjoint. 

If $\cS$ is a simplicial complex, we say $\cT$ is a {\df
simplicial subcomplex} of $\cS$ of if $\cT$ is a simplicial
complex such that $\overline{\cT}$ \emph{refines} a lower subset
of $\overline{\cS}$, and $\bigcup\cT$ is a closed subset of $\bigcup\cS$. 

The following results are respectively Theorem~6.3 and
Proposition~6.4 of \cite{darn-2017}. 

\begin{theorem}[Monotopic Division]\label{th:mono-div}
  Let $S$ be a simplex in $D^MR^q$ and $\cT$ a simplicial complex in
  $D^MR^q$ which is a partition of $\partial S$. Let $\varepsilon:\partial S\to K^\times$ be a
  definable function such that the restriction of $|\varepsilon|$ to every
  proper face of $S$ is continuous. Then there exists a finite
  partition $\cU$ of $S$ such that $\cU\cup\cT$ is a simplicial complex
  in $D^MR^q$, $\cU$ contains for every $T\in\cT$ a unique simplex $U$ with
  facet $T$, and moreover $\big\|u-\pi_J(u)\big\| \leq \big|\varepsilon(\pi_J(u))\big|$
  for every $u\in U$, where $J=\Supp(T)$.
\end{theorem}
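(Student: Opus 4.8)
The plan is to argue by induction on the ambient dimension $q$. If $S$ is closed, equivalently $\partial S=\emptyset$, then $\cT=\emptyset$ and $\cU=\{S\}$ already works (this settles $q=0$ as well); so assume $S$ is not closed, let $T_0$ be its facet and $J_0=\Supp T_0$, so that $\partial S=\overline{T_0}$ and, by Remark~\ref{re:face-proj}, the coordinate projection $\pi_{J_0}$ carries $S$ onto $T_0$ and is the identity on $\overline{T_0}$. Write $S=v^{-1}(A)\cap D^MR^q$ with $A\subseteq F_I(\Gamma^q)$ a discrete simplex, presented by a pair $(\mu,\nu)$ of largely continuous affine maps on the discrete simplex $\widehat A\subseteq\Gamma^{q-1}$.

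I would first dispose of the case $q\notin I$, where $x_q\equiv 0$ on $S$ and hence $S=\widehat S\times\{0\}$ with $\widehat S=v^{-1}(\widehat A)\cap D^MR^{q-1}$ a simplex: appending a null last coordinate commutes with $v^{-1}$ and with taking faces, closures, frontiers and supports, so $\partial S=\partial\widehat S\times\{0\}$, the socle $\widehat\cT=\{\widehat T\tq T\in\cT\}$ is a simplicial complex partitioning $\partial\widehat S$, and $\widehat t\mapsto\varepsilon(\widehat t,0)$ satisfies the hypotheses for $\widehat S$; the induction hypothesis then yields $\widehat\cU$, and $\cU=\{U\times\{0\}\tq U\in\widehat\cU\}$ does the job (the metric inequality transfers because $\pi_J(\widehat u,0)=(\pi_J(\widehat u),0)$). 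So from now on $x_q$ is non-zero and varies over $S$; there are again two sub-cases, according to whether the facet of $S$ is obtained by pushing $x_q$ to its degenerate value or by a degeneration of the socle, and I would reduce the construction to two building blocks — coning in the single new coordinate $x_q$ over a fixed base (an elementary one-dimensional computation), and the induction hypothesis applied to the socle — peeling coordinates off one at a time.

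The core construction, for a fixed $S$ whose facet lies in the $x_q$-direction, runs as follows. Using cell preparation and the Extreme Value Property (Theorem~\ref{th:extr-val}, Corollary~\ref{co:max}) I would first replace $\varepsilon$ by a smaller, still positive definable $\varepsilon'$ with $|\varepsilon'|\leq|\varepsilon|$ such that on each of the finitely many, linearly ordered proper faces of $S$ the value $|\varepsilon'|$ is dominated by the norm of an $N$-monomial function, for a suitable $N$; this is harmless since the conclusion for $\varepsilon'$ is stronger, and it is what guarantees that the truncated cones below have valuation sets which are genuine discrete simplexes. Then, for each $T\in\cT$, with $J=\Supp T\subseteq J_0$, I would pick a ``radius'' $\delta_T\colon\overline T\to K^\times$, $N$-monomial and largely continuous on $T$, with $|\delta_T|\leq|\varepsilon'|$, small enough that the $\delta_T$-ball around $t$ in the coordinates off $J$ stays inside $S$ for every $t\in T$, and coherent along the specialisation order of $\cT$ — this coherence being where Proposition~\ref{pr:prol-proj} on the extension of affine maps to faces, together with the monomial form just arranged, is used. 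I then set, by induction on the specialisation rank in $\cT$,
\[
  U_T=\Big\{u\in S\tq\pi_J(u)\in T\text{ and }\big\|u-\pi_J(u)\big\|\leq\big|\delta_T(\pi_J(u))\big|\Big\}\setminus\bigcup_{T'\in\cT,\,T'<T}U_{T'},
\]
together with the core $U_\star=S\setminus\bigcup_{T\in\cT}U_T$. Invoking Proposition~\ref{pr:pres-face} and the fact that the faces of a discrete simplex are linearly ordered, one checks that every $U_T$ and $U_\star$ is a $p$-adic simplex of index $M$; that the facet of $U_T$ is exactly $T$ (collapsing the coordinates off $J$ maps $U_T$ onto $\pi_J(U_T)=T$, and no smaller face intervenes, since the shells around the proper faces of $T$ have been removed); that $U_\star$ is closed once $\varepsilon'$ is monomial, or else is governed by $\widehat S$ and is subdivided by a last appeal to the induction hypothesis on the socle; and that $\|u-\pi_J(u)\|\leq|\varepsilon(\pi_J(u))|$ on $U_T$. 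Finally, as $\cU$ partitions $S$ and $\cT$ partitions the closed set $\partial S$, the family $\cU\cup\cT$ partitions $\overline S$, all its members are relatively open ($\partial U_T=\overline T$, $\partial U_\star=\emptyset$, and $\cT$ is a closed complex), and for $U_T,U_{T'}\in\cU$ one computes $\overline{U_T}\cap\overline{U_{T'}}=\overline T\cap\overline{T'}$, which is the union of the common faces of $T$ and $T'$, equivalently of $U_T$ and $U_{T'}$; by the characterisation of simplicial complexes recalled just above, $\cU\cup\cT$ is a simplicial complex, and it contains for each $T\in\cT$ the unique member $U_T$ with facet $T$.

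I expect the genuine obstacle to be the joint choice of the radii $\delta_T$: they must be $N$-monomial (so the cones are discrete simplexes), bounded by $|\varepsilon'|$, contained in $S$, \emph{and} mutually coherent along the specialisation order so that the shells tile $S$ exactly with $U_\star$ a single clean simplex; and then confirming that each $U_T$ has facet precisely $T$, and that $\cU\cup\cT$ really is a simplicial complex, demands careful tracking of how the presentations of the various faces restrict (Propositions~\ref{pr:pres-face} and~\ref{pr:prol-proj}). The remaining work is bookkeeping around the inductive reduction on $q$.
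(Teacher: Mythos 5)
First, a point about the comparison you are being measured against: the present paper does not prove this statement at all. It is quoted as a prerequisite, being Theorem~6.3 of \cite{darn-2017} (the text introduces it with ``The following results are respectively Theorem~6.3 and Proposition~6.4 of \cite{darn-2017}''). So there is no proof in this paper to compare your argument with; the benchmark is the construction carried out in that earlier paper on polytopes and simplexes, of which only the statement is used here.

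Taken as a self-contained proof, your proposal has a genuine gap, and it sits exactly where you yourself locate the ``genuine obstacle''. The reduction on $q$, the metric estimate on the shells, and the frontier computations are the routine part; the entire content of the theorem is the claim that the pieces you define are simplexes, and this is asserted rather than proved (``one checks that every $U_T$ and $U_\star$ is a $p$-adic simplex''), and in general it is false without further subdivision. Being a simplex of index $M$ means that the image under $v$ is a discrete simplex, i.e.\ admits a \emph{triangular} presentation by largely continuous affine bounds, one coordinate after the other, with faces linearly ordered by specialisation. Your set $U_T$ is cut out inside $S$ by the conditions $v(u_i)\geq v\delta_T(\pi_J(u))$ for \emph{all} $i\notin J$, intersected with the triangular presentation of $vS$ and with the complements of the smaller shells. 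Even with $v\delta_T$ affine (your monomial preparation), the lower bound on each off-$J$ coordinate becomes the maximum of two affine maps (the bound from the presentation of $vS$ and the bound from $\delta_T$), which is only piecewise affine, and the removed shells $U_{T'}$, $T'<T$, contribute a disjunction of negated inequalities; so $v(U_T)$ is in general not even a discrete polytope, let alone a simplex whose unique facet is $vT$, and it must be subdivided — which is precisely why the theorem is phrased as ``$\cU$ \emph{contains} for every $T\in\cT$ a unique simplex $U$ with facet $T$'' rather than ``$\cU$ consists of one shell per $T$ plus a core''. The same objection applies to $U_\star$, and the compatibility of the subdivided shells with the given complex $\cT$ (closures intersecting in unions of common faces) is exactly the delicate bookkeeping that the coherent choice of the radii is supposed to enable but is nowhere carried out. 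In short, what you have is a plausible outline of the easy reductions, with the actual content of Monotopic Division — producing genuine simplexes and a genuine simplicial complex — left unproved.
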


\begin{proposition}\label{pr:split-p-adic}
  Let $A\subseteq D^MR^q$ be a relatively open set. Assume that $A$ is the union
  of a simplicial complex $\cA$ in $D^MR^q$. Then for every integer
  $n\geq1$ there exists a finite partition of $A$ in semi-algebraic sets
  $A_1,\dots,A_n$ such that $\partial A_k=\partial A$ for every $k$. 
\end{proposition}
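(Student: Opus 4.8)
The plan is to reduce the statement to the Monotopic Division Theorem~\ref{th:mono-div} applied repeatedly, peeling off one simplex at a time from $\cA$ along the specialisation order. First I would recall that since $\cA$ is a simplicial complex with $\bigcup\cA = A$ relatively open, $\cA$ is a monoplex (a tree under specialisation) and $\partial A$ is the union of $\overline{\cA}\setminus\cA$, the family of proper faces that do not themselves belong to $\cA$. The idea is to produce, for each $k\in[\![1,n]\!]$, a piece $A_k$ that ``hugs'' all of $\partial A$ by sitting in a thin neighbourhood of it, so that $\overline{A_k}\supseteq\partial A$, while being small enough that $\partial A_k\subseteq\partial A$; together with $\partial A_k\supseteq\partial A$ this forces $\partial A_k=\partial A$.

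The key steps, in order. (1) Fix $n$. I would look for a single semi-algebraic set $A'\subseteq A$, relatively open, with $\partial A'=\partial A$ and such that $A\setminus A'$ is again relatively open with $\partial(A\setminus A')=\partial A$; iterating this $n-1$ times splits $A$ into $A_1,\dots,A_{n-1}$ and a remainder $A_n$, each with the desired frontier. So it suffices to do the case $n=2$. (2) For $n=2$: enumerate the simplexes of $\cA$ as $S_1,\dots,S_r$ in an order compatible with specialisation (smallest first, which is possible since $\cA$ is a tree and hence its closure $\overline{\cA}$ is a monoplex). For each non-closed $S_j$ with facet $T_j$, I want to carve inside $S_j$ a sub-simplex $U_j$ with facet $T_j$ that is contained in a ball of radius $|\varepsilon|$ around the coordinate projection onto $T_j$, where $\varepsilon$ is chosen tiny. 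This is exactly the output of Theorem~\ref{th:mono-div}, applied with $S=S_j$ and with $\cT$ the restriction to $\partial S_j$ of the simplices $U_i$ already constructed for faces $T_i$ of $S_j$ (which exist by induction since those faces are lower in the tree). Collecting the $U_j$ (and the closed simplices of $\cA$, handled trivially) into a family $\cU$, Theorem~\ref{th:mono-div} guarantees $\cU$ is a simplicial complex refining $\overline{\cA}$, so $A' := \bigcup\{U_j : j\}$ is relatively open. (3) Check $\partial A'=\partial A$: the inclusion $\partial A'\subseteq\partial A$ holds because every face of a $U_j$ not equal to $U_j$ is, by the Monotopic Division construction, a face of some $S_i$ with $i<j$, hence lies in $\overline{\cA}$, and if it is a face of $\cA$ outside $\cA$ it lies in $\partial A$ — the construction arranges that the ``new'' facets $T_j$ are precisely the old proper faces, so no new boundary is created inside $A$. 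The reverse inclusion $\partial A\subseteq\partial A'$ holds because each $U_j$ has the same facet $T_j$ as $S_j$, so $\overline{U_j}\supseteq\overline{T_j}$, and ranging over all non-closed $S_j$ recovers all of $\partial A=\overline{\overline{A}\setminus A}$. (4) Finally, $A\setminus A'$ is relatively open with the same frontier, by a symmetric argument: it too is a union of simplices of a refinement of $\cA$, each still abutting $\partial A$ along a facet, because the $U_j$ were chosen thin enough (radius $|\varepsilon|$, $\varepsilon$ small) that the complement $S_j\setminus U_j$ still reaches every proper face of $S_j$.

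The main obstacle I anticipate is bookkeeping the frontier bound $\partial A'\subseteq\partial A$ cleanly: one must verify that the Monotopic Division does not introduce, inside the open set $A$, any new $(q{-}1)$\nobreakdash-dimensional frontier pieces between a $U_j$ and the leftover $S_j\setminus U_j$, or between $U_j$ and $U_i$ for incomparable $i,j$. This is controlled by the clause in Theorem~\ref{th:mono-div} that $\cU\cup\cT$ is a simplicial complex (so closures meet only along common faces) together with the thinness estimate $\|u-\pi_J(u)\|\le|\varepsilon(\pi_J(u))|$, which should be leveraged to show the leftover pieces and the $U_j$ only touch along faces that already lay in $\partial A$. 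The choice of how small $\varepsilon$ must be (and that it can be taken definable with continuous $|\varepsilon|$ on each proper face) is the one genuinely delicate point; everything else is an induction on the tree $\cA$ driven by Theorem~\ref{th:mono-div}.
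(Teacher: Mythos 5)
Your plan breaks down at step (4), and the failure is not a matter of bookkeeping. Take the simplest instance $A=S=\{x\in D^MR\tq x\neq0\}$, so $\cA=\{S\}$ and $\partial A=\{0\}$. Applying Theorem~\ref{th:mono-div} to $S$ with $\cT=\{\{0\}\}$, the uniqueness clause says that exactly one piece $U$ of the division has facet $\{0\}$; since all pieces of the division have full support, any piece whose closure contains $0$ must have $\{0\}$ as its facet, so $U$ is the \emph{only} piece accumulating at $0$. Hence $S\setminus U$ is closed and $\partial(S\setminus U)=\emptyset\neq\partial A$, and this does not improve by shrinking $\varepsilon$: your phrase ``thin enough that the complement still reaches every proper face'' conflates being within distance $|\varepsilon|$ of a face with accumulating on it. (Even over the reals, the complement of a collar never has the same frontier as $A$.) So the induction of step (1) cannot be restarted on $A\setminus A'$. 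Step (3) has a second gap: the facet $T_j$ of $S_j$ may itself belong to $\cA$ (e.g.\ $\cA=\{S,T\}$ with $T$ the facet of $S$, so that $\partial A=\overline{T}\setminus T$). Then $\overline{U_j}$ contains all of $T_j\subseteq A$, while $A'\cap T_j$ is only the thin piece of $T_j$, so $\partial A'$ acquires points of $A$; your argument only treats faces of $U_j$ that lie outside $\cA$ and silently ignores the faces inside $\cA$.

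There is a structural reason why any ``one collar hugging the boundary plus a remainder'' scheme must fail. Because $A$ is relatively open, the requirements that $A_1,\dots,A_n$ partition $A$ with $\partial A_k=\partial A$ force each $A_k$ to be closed in $A$ (a point of $\overline{A_k}\cap A$ outside $A_k$ would lie in $\partial A_k=\partial A$, which is disjoint from $A$), hence clopen in $A$, with $\overline{A_k}\supseteq\partial A$; conversely any partition into semi-algebraic sets clopen in $A$ whose closures all contain $\partial A$ does the job. Thus all $n$ pieces must be interleaved arbitrarily close to \emph{every} point of $\partial A$; the natural mechanism is the value group (for instance, splitting the points of a simplex near a missing face according to congruence classes mod $n$ of the valuations of the coordinates tending to $0$ there), the delicate point being to make these assignments coherent along the tree $\cA$ so that each class is closed in $A$. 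Monotopic Division by itself cannot produce this interleaving, so the missing idea is genuinely different from the one you propose. Note finally that the present paper does not prove this proposition: it is quoted as Proposition~6.4 of \cite{darn-2017}, so that is the construction your argument would have to be measured against.
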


Finally, a {\df simplicial complex of index $M$} is a collection
$\cS=\{\cS_i\}_{i\in I}$ of finitely many\footnote{Possibly zero if the
index set $I$ is empty.} rooted simplicial complexes $\cS_i$ in
$D^MR^{q_i}$, for various integers $q_i$. The {\df closure} of $\cS$
is the collection of the closures of the $\cS_i$'s. It has {\df
separated supports} if each $\cS_i$ is. If $\cT=(\cT_i)_{i\in\cI}$ is a
collection of families $\cT_i$ of subsets of $D^MR^q_i$ we let $\biguplus\cT$
denote the disjoint union of the $\bigcup\cT_i$'s. We say that $\cT$ is a
{\df simplicial subcomplex} of $\cS$ if each $\cT_i$ is a simplicial
subcomplex of $\cS_i$.

Given a semi-algebraic homeomorphism $\varphi$ from $\biguplus\cS$ to a subset
$X$ of $K^m$, we will let
\begin{displaymath}
  \varphi(\cS)=\big\{ \varphi(S)\tq S\in\cS \big\}. 
\end{displaymath}
If $\cS$ is closed, $\varphi(\cS)$ is obviously a closed monoplex of pure
dimensional semi-algebraic sets partitioning $X$.

\begin{remark}\label{re:closed-triang}
  With $\varphi$ as above, $\cS$ is closed if and only if $X$ is closed and
  bounded. Indeed, each $\bigcup\cS_i$ is clopen in $\biguplus\cS$, hence its
  homeomorphic image $X_i$ by $\varphi$ is clopen in $X$. In particular $X$
  is closed and bounded in $K^m$ if and only if so is each $X_i$. Let
  $\varphi_i$ be the semi-algebraic homeomorphism from $\bigcup\cS_i$ to $X_i$
  induced by restriction of $\varphi$. Note that $\bigcup\cS_i$ is bounded (it is
  contained in $R^{q_i}$). By Theorem~\ref{th:extr-val} applied to
  $\varphi_i$ and $\varphi_i^{-1}$ it follows that $\bigcup\cS_i$ is closed in
  $K^{q_i}$, that is $\cS_i$ is closed, if and only if $X_i$ is closed
  and bounded in $K^m$.
\end{remark}

We can now state precisely our main result. 

\begin{theorem}[Triangulation $\TR m$]\label{th:triang-m}
  Given a finite family $(\theta_i:A_i\subseteq K^m\to K)_{i\in I}$ of semi-algebraic
  functions and integers $n,N\geq1$, for some integers $e,M$ which can be
  made arbitrarily large\footnote{The exact meaning of ``$e$, $M$ can
    be made arbitrarily large'' is a bit special here: it says that
    for any given integers $e_*\geq1$ and $M_*\geq1$, the integers $e$, $M$ can
    be chosen so that $e_*$ divides $e$ and $M_*\leq M$.
    \label{ft:arbit-large}}, there exists a simplicial complex $\cT$
  of index $M$ and a semi-algebraic homeomorphism $\varphi$ from the
  disjoint union of the simplexes in $\cT$ to $\bigcup_{i\in I}A_i$ such that
  for every $i$ in $I$:
  \begin{enumerate}
    \item
      $\displaystyle 
      \{ \varphi(T)\tq T\in\cT \mbox{ and } \varphi(T)\subseteq A_i\}$ is a 
      partition of $A_i$.
    \item 
      $\forall T\in\cT$ such that $\varphi(T)\subseteq A_i$, $\theta_i\circ\varphi_{|T}$ is
      $N$\--monomial mod $U_{e,n}$.
  \end{enumerate}
\end{theorem}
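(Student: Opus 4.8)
The plan is to prove $\TR m$ by induction on $m$, as announced in the introduction: the base case $\TR 0$ is trivial (a point, with the monomial functions being constants), and the inductive step deduces $\TR{m+1}$ from $\TR m$. I will concentrate on the inductive step, which is where all the work lies. Starting from a finite family $(\theta_i:A_i\subseteq K^{m+1}\to K)_{i\in I}$ and integers $n,N$, the first move is to reduce to a \emph{single} semi-algebraic set with a single function: replace the $A_i$ by a common refinement (a complex of pure-dimensional semi-algebraic sets, by Remark~\ref{re:comp-pure}), record on each piece which $\theta_i$ are relevant, and encode the finitely many $\theta_i$'s into one auxiliary semi-algebraic map by stacking coordinates. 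Thus it suffices to triangulate one semi-algebraic $S\subseteq K^{m+1}$ together with finitely many semi-algebraic functions $S\to K$, compatibly with a given $N$ and $n$, producing simplexes of large index $M$ and a large exponent $e$.

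Next I would run Denef's Cell Decomposition and the Cell Preparation Theorem on $S$ and the given functions, to partition $S$ into cells $C\subseteq K^{m}\times K$ mod $Q_{N',M'}^\times$ on which each $\theta_i\circ(\text{identity})$ has the prepared form $|h(x)|\cdot|t-c(x)|^\alpha$ with $h$ semi-algebraic on the socle. This is where Theorem~\ref{th:large-cont-prep} (``largely continuous cell preparation up to a small deformation'') enters: after a small linear deformation of $S$ — chosen close to the identity so that it is a semi-algebraic homeomorphism onto its image — one arranges that the defining data $c,\mu,\nu$ of each cell and the prepared functions $h$ are \emph{largely continuous}, so their closures are controlled. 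Then apply the induction hypothesis $\TR m$ to the socle $\widehat S\subseteq K^m$, equipped with all the socle-functions $c,\mu,\nu,h,\dots$ that arose: this yields a simplicial complex $\cT_0$ of some index and a homeomorphism $\varphi_0$ trivialising $\widehat S$ so that the valuations of all those socle-functions become $\ZZ$-linear (hence, after pulling back, monomial mod $U_{e,n}$) on each simplex of $\cT_0$. Over each such base simplex the cell $C$ now looks (in valuation coordinates) like a band between two affine functions with a congruence condition, i.e.\ after a fibrewise change of variable sending $t\mapsto (t-c(\widehat x))$ and rescaling, essentially a product of a simplex with an interval-type cell — this is the content of Lemma~\ref{le:pretriang} and Lemma~\ref{le:lift-triang}. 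Using the Monotopic Division Theorem~\ref{th:mono-div} one subdivides these bands into genuine $p$-adic simplexes whose faces match along the faces of the base, assembling everything into the required simplicial complex $\cT$ of index $M$ with the homeomorphism $\varphi$; the monomial form of $\theta_i\circ\varphi$ follows by composing the prepared form $|h|\cdot|t-c|^\alpha$ with the now-monomial expressions for $h$ and for the new vertical coordinate.

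The main obstacle, and the technical heart of the proof, is the gluing: cell decomposition treats cells independently and gives no information on how their frontiers meet, whereas a simplicial complex demands that closures intersect exactly along common faces and that the whole thing be a tree for specialisation. The ``small deformation'' of Theorem~\ref{th:large-cont-prep} and Remark~\ref{re:comp-cell} is what makes the frontiers behave — turning the cell decomposition into a \emph{complex of cells} organised as a tree — but transferring this from the base ($\TR m$ gives a tree downstairs) up to the $(m+1)$-st coordinate, while simultaneously respecting the congruence subgroup $Q_{N',M'}^\times$ and the prepared monomial data, is delicate. In particular one must choose the index $M$ of the final simplexes large enough to accommodate all the congruence conditions coming from cell preparation (hence the ``$M$ arbitrarily large'' and ``$e$ a multiple of any prescribed $e_*$'' clauses, see footnote~\ref{ft:arbit-large}), and one must invoke the Good Direction Lemma~\ref{le:gd-open} to pick coordinate directions making the relevant projections finite-to-one and the frontiers transverse — this is precisely the step that uses polynomials and does not generalise beyond the semi-algebraic setting. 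Once the complex-of-cells is in place over a triangulated base, Lemma~\ref{le:pretriang}, Lemma~\ref{le:lift-triang} and Theorem~\ref{th:mono-div} do the remaining bookkeeping to convert bands of cells into matching $p$-adic simplexes, and the separated-supports refinement of Remark~\ref{re:well-dispatched} is arranged along the way.
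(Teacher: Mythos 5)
Your roadmap is the paper's own: induction on $m$, the good-direction deformation and largely continuous cell preparation of Theorem~\ref{th:large-cont-prep}, then Lemma~\ref{le:pretriang} and Lemma~\ref{le:lift-triang}, and a final bookkeeping of monomiality --- so this is not a genuinely different route. The problem is that the way you describe the middle of the argument is precisely the scheme the paper explains cannot work, and this is the one real gap in your write-up. You propose to first apply $\TR m$ to the socle together with all the socle data $c,\mu,\nu,h$, and then, over each base simplex, to organise the cells into bands and cut them into simplexes. In that order the triangulation of the socle refines the socles of the cells, and the vertical refinement of the cell family by these new base pieces destroys the monoplex (tree) structure; conversely, if one first builds a cellular monoplex and only then triangulates its socles, the image of the triangulation is a refinement of the socle family, not the socle family itself. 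This is exactly the ``vicious circle'' discussed right after the statement of Lemma~\ref{le:pretriang}; its resolution --- constructing the base triangulation $\cV$, the cellular monoplex $\cD$ and the transition system $\cF_\cD$ \emph{simultaneously}, by induction over the simplexes of a preliminary triangulation via the notion of preparation $(\cT,\cB,\cF_\cB)$ of Section~\ref{se:cont-mono}, with the ``horizontal'' enlargement of boundary cells through the retractions $\sigma_U$ --- is the technical heart of the proof, and your sketch treats Lemma~\ref{le:pretriang} as if it were the naive two-step procedure it is designed to replace.

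Two smaller misattributions. The Monotopic Division Theorem~\ref{th:mono-div} is not what converts the fibre ``bands'' into simplexes: it is used inside Lemma~\ref{le:pretriang}, on the \emph{base}, to partition a newly added socle simplex compatibly with its boundary complex and with an $\varepsilon$-collar controlling the cell data near the frontier (Claim~\ref{cl:eps-loc-cst} and (\ref{eq:constr-Uk})). The passage from cells to simplexes in the $(m+1)$-st coordinate is the explicit construction of Lemma~\ref{le:lift-triang} in Section~\ref{se:cart-map}: one adds a coordinate $y_{r_A}$ and parametrises the fibre by $t=c_A(\psi\circ\Phi(y))+\pi^{-NM'}\lambda_A y_{r_A}^N$, the supports $H(A)$, $P(A)$ and the Cartesian morphism $\Phi$ organising the faces, with Lemma~\ref{le:Hensel-DP} (not Monotopic Division) providing the bijectivity. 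Finally, two points you elide but need: Theorem~\ref{th:large-cont-prep} requires bounded domains, so the reduction of the $A_i$ to subsets of $R^{m+1}$ (as in Lemma~\ref{le:split-level}) must come first; and before Lemma~\ref{le:lift-triang} applies one must pass from cells mod $Q_{N_0,M_0}^\times$ to cells mod $Q_{N_0,M_1+v(N_0)}^\times$ so that the coset group matches the index of the triangulation, as in the proof of Theorem~\ref{th:tm-plus-un}.
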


We call the pair $(\cT,\varphi)$ given by $\TR m$ a {\df triangulation} of
the $\theta_i$'s with {\df parameters} $(n,N,e,M)$. When a finite family
$(A_i)_{i\in I}$ of semi-algebraic sets is given, the result of the
application of $\TR m$ to the indicator functions of the $A_i$'s is
called a triangulation of $(A_i)_{i\in I}$.

\section{Applications}
\label{se:appli-triang}

In all this section we assume $\TR m$ and derive some applications.
The proof of the Triangulation Theorem goes by induction on $m$, and
most of the following applications are actually needed in the
induction step. So it is important to emphasize that throughout this
section, the integer $m$ will be fixed.

\begin{theorem}\label{th:mod-f-cont}
  If $f:X\subseteq K^m\to K$ is semi-algebraic and $|f|$ is continuous, then there
  exists a function $h:X\to K$ semi-algebraic {\em and} continuous such that
  $|f|=|h|$ on $X$.
\end{theorem}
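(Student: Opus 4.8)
The plan is to deduce Theorem~\ref{th:mod-f-cont} directly from the Triangulation Theorem $\TR m$ applied to the single function $f$ on $X$. First I would invoke $\TR m$ with the family reduced to $\theta_1=f:X\to K$ (so $I$ is a singleton and $A_1=X$), and with arbitrary parameters $n,N$; this yields a simplicial complex $\cT$ of index $M$ and a semi-algebraic homeomorphism $\varphi:\biguplus\cT\to X$ such that on each simplex $T\in\cT$ the composite $f\circ\varphi_{|T}$ is $N$\--monomial mod $U_{e,n}$. Since $\varphi$ is a homeomorphism, it suffices to construct a continuous semi-algebraic $h':\biguplus\cT\to K$ with $|h'|=|f\circ\varphi|$, and then set $h=h'\circ\varphi^{-1}$; continuity and semi-algebraicity transfer through $\varphi^{-1}$.

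Next I would exploit the monomial structure to build $h'$ piece by piece. On each simplex $T$ with support $J=\Supp T$ we have $f\circ\varphi(y)=\cU_{e,n}(y)\cdot\xi_T\prod_{i\in J} y_i^{N\beta_i}$ for some $\xi_T\in K$ and integers $\beta_i$ (or $f\circ\varphi\equiv\infty$, which cannot happen since $f$ is $K$\--valued; if $f\circ\varphi$ is identically $0$ on $T$ set $h'=0$ there). The natural candidate on $T$ is the honest monomial $h'_T(y)=\xi_T\prod_{i\in J} y_i^{N\beta_i}$, which is continuous and semi-algebraic on $T$ and satisfies $|h'_T|=|f\circ\varphi|$ on $T$ because the $\cU_{e,n}$ factor has norm $1$. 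The subtlety is that these monomial exponents $\beta_i$ may be negative, so $h'_T$ need not extend continuously to $\overline T$; but the hypothesis that $|f|$ is continuous on $X$ forces $|f\circ\varphi|=|h'_T|$ to be continuous on $T$ and to extend continuously to $\overline T\cap\biguplus\cT$, matching the values coming from neighbouring faces. The main obstacle, and the heart of the argument, is therefore to glue the local pieces $h'_T$ into a single globally continuous function: a priori $h'_T$ and $h'_{T'}$ need not agree on a common face $F$, even though their norms do.

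To overcome the gluing obstacle I would proceed by induction on the simplicial complex $\cT$ ordered by specialization, building $h'$ on lower subsets of $\overline\cT$ and extending one simplex at a time, from faces to their cofaces. Suppose $h'$ has been defined and is continuous on a closed subcomplex $\bigcup\cT_0$ with $|h'|=|f\circ\varphi|$ there, and let $S$ be a simplex whose facet (or whole frontier $\partial S$) already lies in $\bigcup\cT_0$. On $\partial S$ we already have a continuous $h'$, and on $S$ we have the candidate monomial $h'_S$ with $|h'_S|=|f\circ\varphi|=|h'|$ on $\partial S$ (using that $|f|$, hence $|f\circ\varphi|$, is continuous up to the closure). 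Thus on $\partial S$ the ratio $h'/h'_S$ is a continuous semi-algebraic function into $R^\times$ (values of norm $1$); the task is to correct $h'_S$ on $S$ by multiplying it by a continuous semi-algebraic unit-valued function $u:S\to R^\times$ extending $h'/h'_S$ from $\partial S$, so that $h'\mathbin{:=}u\cdot h'_S$ continuously matches the already-constructed part on $\partial S$ while keeping $|h'|=|f\circ\varphi|$ on all of $S$. Such a $u$ exists because $\partial S=\overline T$ for the facet $T$ (a simplex), $S$ is homeomorphic to a $p$\--adic simplex, and one can extend a continuous $R^\times$\--valued function from the closed facet to all of $S$ — concretely by a retraction-type or coordinate-projection argument, e.g. precomposing with the coordinate projection $\pi_{\Supp T}:S\to T$ of Remark~\ref{re:face-proj} to get a continuous extension, since $R^\times$ is closed under the relevant limits and $\pi_{\Supp T}$ restricts to the identity on $T$. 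I expect this extension-of-a-unit step, together with checking that it is compatible with \emph{all} faces of $S$ lying in $\cT_0$ (not just the facet), to be where the real work lies; everything else is bookkeeping with the definitions of $N$\--monomial mod $U_{e,n}$ and of simplicial complexes.
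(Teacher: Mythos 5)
Your overall strategy is the paper's: apply $\TR m$ to $f$, transfer the problem along $\varphi$ to the complex $\cT$, and build the continuous representative by induction on the specialisation order, transporting data along the coordinate projections of Remark~\ref{re:face-proj}. But the gluing step, which you correctly identify as the heart of the matter, does not work as you set it up. The ``ratio $h'/h'_S$ on $\partial S$'' is not defined: the honest monomial $h'_S$ in general has \emph{no limit} at points of $\partial S$, even when its norm $|h'_S|=|f\circ\varphi|$ extends continuously. The reason is that on a $p$\--adic simplex the valuations of the coordinates can satisfy affine relations (the bounds $\mu=\nu$ are allowed), so a monomial may genuinely involve a coordinate that vanishes on the face while its valuation still extends; its ``angular'' part then oscillates. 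For instance on the simplex $\{(y_1,y_2)\in D^MR^2\tq 0<|y_1|=|y_2|\}$ the monomial $y_2/y_1$ has constant norm $1$ but no limit at the vertex $(0,0)$. So there is no continuous $R^\times$\--valued function on $\partial S$ to extend, and no limit you could take instead; multiplying $h'_S$ by anything pulled back from $\partial S$ cannot repair this.

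The paper's proof resolves exactly this tension by \emph{discarding} the honest monomial on $S$ and defining $g_S:=g_T\circ\pi_T$, pulled back from the smallest proper face $T\in\cT$ on which $f\circ\varphi\neq0$. The ingredient you are missing is the justification that this pullback still has the correct norm on all of $S$, not merely near $T$: this is Proposition~\ref{pr:prol-proj}, which says that the affine map $v(f\circ\varphi)_{|vS}$, having a continuous (finite) extension to $vT$, factors through the coordinate projection $\pi_T$; hence $|g_T\circ\pi_T|=|f\circ\varphi|$ on the whole of $S$, and continuity across all faces is automatic because the definitions literally compose with projections. In your scheme the corresponding correction $u=(h'\circ\pi_T)/h'_S$ is not unit-valued on $S$ (its norm only tends to $1$ near $T$), so your stated invariant $|u|=1$ fails, and once you invoke the factorization the unit-correction scaffolding collapses to the paper's pullback anyway. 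A further, more minor, slip: pulling back from the \emph{facet} gives the zero function whenever $f\circ\varphi$ vanishes identically on the facet but not on $S$ (and your ratio becomes $0/0$ there); this is why the paper takes the smallest face on which $f\circ\varphi\neq0$ and handles the vanishing faces separately, using only the continuity of $|f|$.
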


\begin{proof}
$\TR m$ gives a triangulation $(\cT,\varphi)$ of $f$ with parameters
$(1,1,e,M)$.
On every $S\in\cT$, $f\circ\varphi_{|S}=\cU_{e,1}\psi$ with $\psi:S\to K$ a $1$\--monomial
function. Thus for some $q_S$ such that $S$ is contained in $D^MR^{q_S}$, there are
$\lambda_S$ in $K$ and $\alpha_{1,S},\dots,\alpha_{q_S,S}$ in $\ZZ$ such that:
\begin{equation}\label{eq:mod-f-cont}
  \forall x\in S,\ |f\circ\varphi(x)|=\bigg|\lambda_S\prod_{i=1}^{q_S} x_i^{\alpha_{i,S}}\bigg|
\end{equation}
Let $\alpha_{0,S}=v\lambda_S$ and $\xi_S:vS\to \Gamma$ be defined by:
\begin{displaymath}
  \forall a\in vS,\ \xi_S(a)=\alpha_{0,S} + \sum_{i=1}^{q_S} \alpha_{i,S}a_i
\end{displaymath}
By construction $\xi_S(vx)=vf(\varphi(x))$ for every $x\in S$ (in particular
$\xi_S$ only depends on $f\circ\varphi$, even if the coefficients $\alpha_{i,S}$ in
(\ref{eq:mod-f-cont}) are not uniquely determined by $f\circ\varphi$ on $S$). By
assumption $vf$ is continuous on $X$ hence so is $vf\circ\varphi$ on $\biguplus\cT$. In
particular $\xi_S$ extends continuously to $vT$ for every face $T$ of
$S$ in $\cT$, and the restriction to $vT$ of such an extension
$\overline{\xi}_S$ is
precisely $\xi_T$. By proposition~\ref{pr:prol-proj} it follows that if
$\xi_T\neq+\infty$ (that is if $f\circ\varphi\neq0$ on $T$) then $\xi_S=\xi_T\circ\pi_T$
where $\pi_T$ denotes the coordinate projection of $vS$ to $vT$ (see
Remark~\ref{re:face-proj}).

Now, for every $S$ in $\cT$ let $g_S:S\to K$ be defined (by induction on
$\cT$ ordered by specialization) as follows:
\begin{enumerate}
  \item
    If $f\circ\varphi=0$ on $S$, $g_S=0$. 
  \item 
    If $S$ is minimal (with respect to the specialisation preorder) among
    the simplexes in $\cT$ on which $f\circ\varphi\neq0$ then for every $x\in S$:
    \begin{displaymath}
      g_S(x)=\pi^{\alpha_{0,S}}\prod_{i=1}^{q_S} x_i^{\alpha_{i,S}} 
    \end{displaymath}
  \item 
    Otherwise $g_S=g_T\circ\pi_T$ where $\pi_T$ is the coordinate projection
    (see Remark~\ref{re:face-proj}) of $S$ onto its smallest proper face
    $T$ in $\cT$ on which $f\circ\varphi\neq0$.
\end{enumerate} 
By construction $vg_S(x)=\xi_S(vx)$ for every $x\in S$ hence $|g_S|=|f\circ\varphi|$
on $S$. Moreover for every face $T$ of $S$ in $\cT$ and every $y\in T$,
$g_S(x)$ tends to $g_T(y)$ as $x$ tends to $y$ in $S$ (because
$g_S(x)=g_T(\pi_T(x))$ if $g_T\neq0$, and otherwise because $|g_S|=|f\circ\varphi|$
on $S$, $|g_T|=|f\circ\varphi|=0$ on $T$ and $|f\circ\varphi|=|f|\circ\varphi$ is continuous by
assumption). 

The function $h:X\to K$ defined by $h=g_S\circ\varphi^{-1}$ on every $\varphi(S)$ with
$S$ in $\cT$, is clearly semi-algebraic. By construction $|f|=|h|$ on $X$,
and by the above argument $h$ is continuous on $X$.
\end{proof}

\begin{theorem}\label{th:retraction}
  For all non-empty semi-algebraic sets $Y\subseteq X\subseteq K^m$, there is a
  semi-algebraic retraction of $X$ onto $Y$ if and only if $Y$ is
  closed in $X$. 
\end{theorem}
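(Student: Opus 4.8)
The plan is to prove both directions of the equivalence. The easy direction was already recalled in the introduction: if a semi-algebraic retraction $\sigma:X\to Y$ exists then, $X$ being Hausdorff and $\sigma$ continuous with $\sigma_{|Y}=\mathrm{id}_Y$, the set $Y=\{x\in X\tq \sigma(x)=x\}$ is closed in $X$. So the real content is the converse: assuming $Y$ is closed in $X$ (and both non-empty semi-algebraic), construct a semi-algebraic retraction $X\to Y$.

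For the converse, the plan is to reduce via triangulation to a purely combinatorial statement about simplicial complexes. Apply $\TR m$ to the pair of indicator functions of $Y$ and $X$; since $Y$ is closed in $X$, this yields a simplicial complex $\cT$ of index $M$, a semi-algebraic homeomorphism $\varphi:\biguplus\cT\to X$, and a subfamily $\cT_Y=\{T\in\cT\tq \varphi(T)\subseteq Y\}$ whose union $\varphi^{-1}(Y)$ is closed in $\biguplus\cT$. By the properties of simplicial complexes recalled in Section~\ref{se:notation} (closedness of $\bigcup\cT_{Y,i}$ in $\bigcup\cT_i$ is equivalent to $\cT_{Y,i}$ being a lower subset), each $\cT_{Y,i}$ is a lower subset of $\cT_i$ for the specialisation order. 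Thus it suffices to build, for each rooted simplicial complex $\cS=\cT_i$ in $D^MR^{q_i}$ and each lower subset $\cS_0$, a semi-algebraic retraction of $\bigcup\cS$ onto $\bigcup\cS_0$; conjugating by $\varphi$ and taking the disjoint union over $i$ then gives the retraction of $X$ onto $Y$. (One should first note $\cS_0\neq\emptyset$ when $Y\cap X_i\neq\emptyset$, and handle the components $X_i$ disjoint from $Y$ separately — but since $Y$ is non-empty we can, e.g., retract each such $X_i$ to a single point of it and then, using that $X$ need not be connected, there is no constraint forcing the image to land in $Y$; more carefully, for components missing $Y$ we just need \emph{some} retraction onto a non-empty closed subset, which always exists by retracting to a point, and then we are not required by the statement that these land in $Y$ — wait, we are; so instead, since $X_i$ is clopen in $X$ and misses the closed set $Y$, and $Y\neq\emptyset$, we may pick any closed point $y_0\in Y$ in another component and this still does not help. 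The clean fix: the hypothesis $Y$ closed in $X$ together with $Y\neq\emptyset$ does allow components of $X$ disjoint from $Y$ only if... actually it does not forbid them. I would resolve this by observing that a retraction need not be surjective onto $Y$; it must only fix $Y$ pointwise and map into $Y$. For a component $X_i$ with $X_i\cap Y=\emptyset$ we still must send it into $Y$, which is possible since $Y\neq\emptyset$: pick any $y_0\in Y$ and set $\sigma\equiv y_0$ on $X_i$, which is continuous and semi-algebraic. So that case is trivial, and the real work is the components meeting $Y$.)

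The heart of the proof is therefore: given a rooted simplicial complex $\cS$ in $D^MR^q$ and a non-empty lower subset $\cS_0$, construct a semi-algebraic retraction of $\bigcup\cS$ onto $\bigcup\cS_0$. The natural approach is induction on $\cS\setminus\cS_0$, peeling off one maximal simplex $S\in\cS\setminus\cS_0$ at a time. If $S$ is maximal in $\cS$, then $\bigcup\cS\setminus S = \bigcup(\cS\setminus\{S\})$ and the frontier $\partial S$ is the union of the proper faces of $S$, all of which lie in $\cS\setminus\{S\}$ (as $\cS$ is a complex); since $S$ is a $p$-adic simplex it has at most one facet $T$ with $\partial S=\overline T$, so I need a semi-algebraic retraction of $\overline S = S\cup\overline T$ onto $\overline T$ (when $S$ is not closed) — i.e. of $S\cup\partial S$ onto $\partial S$ — fixing $\partial S$. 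This is where I expect the main obstacle: constructing an explicit semi-algebraic retraction of a single $p$-adic simplex (minus its interior, or rather of its closure) onto the closure of its facet. Concretely, working on the valuation side, $vS$ is a discrete simplex with presentation $(\mu,\nu)$ over $v\widehat S$, and the facet corresponds to collapsing the last coordinate to $\mu$ (or sending it to $+\infty$); on $D^MR^q$ one should be able to write down, using the monomial/root-function machinery and the coordinate projection $\pi_J$ of Remark~\ref{re:face-proj}, a map that pushes a point of $S$ toward its facet while being the identity on $\partial S$ — using that $\|u-\pi_J(u)\|$ can be controlled, in the spirit of Theorem~\ref{th:mono-div}. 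I would then compose these one-simplex retractions along a linear extension of the specialisation order, checking at each stage that the composite remains continuous (the delicate point is continuity at the frontier, which holds because each intermediate map fixes the relevant faces and the complex condition guarantees the closures intersect only along common faces) and semi-algebraic (automatic, being a finite gluing of semi-algebraic maps on closed semi-algebraic pieces). The output on $\bigcup\cS$ is then the desired retraction onto $\bigcup\cS_0$, and conjugating by $\varphi$ finishes the proof.
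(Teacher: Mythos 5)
Your overall strategy is the paper's: triangulate the pair $(X,Y)$ by $\TR m$, observe that $\cT_Y=\{T\in\cT\tq\varphi(T)\subseteq Y\}$ is a lower subset because $\varphi^{-1}(Y)$ is closed in $\biguplus\cT$, and then retract $\biguplus\cT$ onto $\biguplus\cT_Y$ simplex by simplex via coordinate projections onto faces, composing along the specialisation order (the paper works bottom-up, adjoining minimal elements of $\cT\setminus\cT_Y$ and composing one-step retractions, which is the mirror image of your top-down peeling). Two points in your sketch need adjusting, and both are handled by the paper's formulation. First, the one-simplex step is easier than you fear: by Remark~\ref{re:face-proj} (via Proposition~\ref{pr:prol-proj}) the coordinate projection $\pi_J$ maps $S$ onto its face with support $J$ and is the identity on every point of support contained in $J$, so $\pi_J$ glued with the identity elsewhere is already the required continuous semi-algebraic retraction; no Monotopic-Division-type estimate on $\|u-\pi_J(u)\|$ is needed. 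Second, and this is the genuine gap as written: you assert that all proper faces of a maximal $S\in\cT\setminus\cT_Y$ lie in $\cT$, and in particular you retract $S$ onto the closure of \emph{its facet}. But $\cT$ is only a complex, not a closed one (by Remark~\ref{re:closed-triang} it is closed only when $X$ is closed and bounded), so the facet of $S$ need not belong to $\cT$, and projecting onto it would send points outside $\biguplus\cT$. The repair is exactly the paper's move: retract $S$ by the coordinate projection onto the \emph{largest proper face of $S$ that belongs to $\cT$} (continuity at the frontier of $S$ inside $\biguplus\cT$ still holds, since that frontier is the closure, inside $\biguplus\cT$, of this face), and when $S$ has no proper face in $\cT$ at all it is clopen in $\biguplus\cT$ and can be sent to an arbitrary point of the part already retracted — the same device you use for the components of $X$ disjoint from $Y$. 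With these two corrections your argument coincides with the paper's proof.
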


\begin{proof}
One direction is general. For the converse we assume that $Y$ is
closed in $X$. Let $(\cS,\varphi)$ be a triangulation of $X$, $Y$ given by
$\TR m$, and let $\cT$ be the family of simplexes $T$ in $\cS$ such that
$\varphi(T)\subseteq Y$. It suffices to construct a continuous retraction of $\biguplus\cS$
onto $\biguplus\cT$. 

Let $\cS_0=\cT$ and $\sigma_0$ be the identity map on $\biguplus\cT$. Because $Y$
is closed in $X$, $\cT$ is a lower subset of $\cS$. Let $k$ be a
positive integer and assume that there is a lower subset $\cS_{k-1}$
of $\cS$ containing $\cT$, and a retraction $\sigma_{k-1}$ of $\biguplus\cS_{k-1}$
to $\cT$. If $\cS_{k-1}=\cS$ we are done. Otherwise let $S$ be a
minimal element (with respect to the specialisation order) in
$\cS\setminus\cS_{k-1}$, and let\footnote{We are abusing the notation here:
  $\cS$ is a finite collection of simplicial simplexes $\cS^{(i)}$ in
  $D^MR^{q_i}$ for various $q_i$, $\cS_{k-1}$ is a collection of lower
  subsets $\cS_{k-1}^{(i)}$ of $\cS^{(i)}$, there is an index $i_0$ such that
  $S$ belongs to $\cS^{(i_0)}$, and what we have denoted abusively
  $\cS_{k-1}\cup\{S\}$ is actually the collection of all the
  $\cS_{k-1}^{(i)}$'s
  for $i\neq i_0$ and of $\cS_{k-1}^{(i_0)}\cup\{S\}$.} $\cS_k=\cS_{k-1}\cup\{S\}$. It
only remains to build a retraction $\tau$ of $\biguplus\cS_k$ onto
$\biguplus\cS_{k-1}$. Indeed $\sigma_{k-1}\circ\tau$ will then be a continuous retraction
of $\cS_k$ onto $\cT$, and the result will follow by induction. 

If $S$ has no proper face in $\cS$ then it is clopen in $\biguplus\cS_k$. So
the map $\tau$ which is the identity map on $\biguplus\cS_k$ and which sends
every point of $S$ to an arbitrary given point of $\biguplus\cS_{k-1}$ is
continuous on $\biguplus\cS_k$, and a retraction of $\biguplus\cS_k$ onto $\biguplus\cS_{k-1}$.

Otherwise let $T$ be the largest proper face of $S$ in $\cS$. By
minimality of $S$, $T$ belongs to $\cS_{k-1}$. Let $\pi_T$ be the
coordinate projection of $S$ onto $T$. The frontier of $S$ inside
$\biguplus\cS_k$ is the closure of $T$ in $\biguplus\cS_k$, hence the function $\tau$
which coincides with the identity map on $\biguplus\cS_{k-1}$ and with $\pi_T$ on
$S$ is continuous. It is then a retraction $\biguplus\cS_k$ onto $\biguplus\cS_{k-1}$,
which finishes the proof. 
\end{proof}

The Splitting Theorem~\ref{th:split-def} is a strengthening of the
next lemma using retractions. 

\begin{lemma}\label{le:split-level}
  Let $X\subseteq K^m$ be a relatively open semi-algebraic set without
  isolated points and $n\geq1$ an integer. Then there exists a partition
  of $X$ in semi-algebraic sets $X_k$ for $1\leq k\leq n$ such that $\partial X_k=\partial
  X$ for every $k$.
\end{lemma}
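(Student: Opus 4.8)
The statement I need to prove is Lemma~\ref{le:split-level}: for a relatively open semi-algebraic $X\subseteq K^m$ without isolated points and any $n\geq1$, there is a partition $X=X_1\sqcup\dots\sqcup X_n$ of semi-algebraic pieces with $\partial X_k=\partial X$ for every $k$. The natural plan is to transport the problem to a simplicial model via $\TR m$ and then invoke Proposition~\ref{pr:split-p-adic}, which is exactly this splitting statement for a relatively open set that is the union of a simplicial complex in a single $D^MR^q$. So the first step is to apply $\TR m$ to the indicator function of $X$, obtaining a simplicial complex $\cS=\{\cS_i\}_{i\in I}$ of index $M$ and a semi-algebraic homeomorphism $\varphi:\biguplus\cS\to X$. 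Since a homeomorphism preserves the property of being relatively open and sends frontiers to frontiers (within the ambient sets), it suffices to split $\biguplus\cS$, and since $\biguplus\cS$ is the disjoint union of the clopen pieces $\bigcup\cS_i$, it suffices to split each $\bigcup\cS_i$ separately, i.e.\ to handle one rooted simplicial complex $\cA$ in $D^MR^q$ whose union $A$ is relatively open.

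**Reducing to Proposition~\ref{pr:split-p-adic}.** At this point Proposition~\ref{pr:split-p-adic} applies almost verbatim — it gives a partition of $A$ into $A_1,\dots,A_n$ with $\partial A_k=\partial A$, provided $A$ is relatively open and the union of a simplicial complex. Both hypotheses hold: $A=\bigcup\cS_i$ is relatively open because $X$ is and $\varphi$ is a homeomorphism carrying the relative topology of $\biguplus\cS$ over to that of $X$ (and relative openness is stable under restriction to the clopen piece $\bigcup\cS_i$); and it is the union of the simplicial complex $\cS_i$ by construction. Pulling the pieces $A_k$ back through $\varphi$ on each index $i$ and taking unions over $i\in I$ yields the desired partition $X_k=\bigcup_i \varphi(A^{(i)}_k)$. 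The frontier identity $\partial X_k=\partial X$ follows because $\varphi$ is a homeomorphism from $\biguplus\cS$ onto $X$: frontiers computed inside $\biguplus\cS$ match frontiers inside $X$, and on each clopen piece $\partial A^{(i)}_k=\partial(\bigcup\cS_i)$, whose $\varphi$-images reassemble to $\partial X$.

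**The one subtlety.** I need to make sure "$X$ without isolated points" is actually used, or else explain why it is harmless. The role it plays is in guaranteeing $n$ \emph{non-empty} pieces are possible in the model, or more precisely in ruling out the degenerate case where a connected component of $X$ is a single point (which cannot be split into $n\geq2$ non-empty relatively-open pieces with the same — empty — frontier, once $n$ exceeds the number of points). Concretely: if some $\cS_i$ consisted of a single closed $0$-dimensional simplex, $\bigcup\cS_i$ would be a point and Proposition~\ref{pr:split-p-adic} would fail for $n\geq2$. Having no isolated points in $X$ means no $\varphi(\bigcup\cS_i)$ is a single point; since each $\bigcup\cS_i$ is relatively open in $D^MR^q$ and nonempty with no isolated point, it is infinite, so $\dim\bigcup\cS_i\geq1$ and Proposition~\ref{pr:split-p-adic} genuinely applies. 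The main obstacle, then, is not any deep new argument but bookkeeping: checking that relative openness and the frontier equalities survive both the homeomorphism $\varphi$ and the decomposition of $\biguplus\cS$ into its clopen components, and verifying the no-isolated-point hypothesis transfers so that Proposition~\ref{pr:split-p-adic} is applicable to each $\bigcup\cS_i$. I would write the proof as: apply $\TR m$; observe $\varphi$ reduces the problem to $\biguplus\cS$; split into clopen pieces; on each piece apply Proposition~\ref{pr:split-p-adic}; transport back and reassemble; conclude the frontier identity from the homeomorphism.
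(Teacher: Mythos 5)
Your overall skeleton (triangulate, split each clopen piece via Proposition~\ref{pr:split-p-adic}, push back through $\varphi$) is the same as the paper's, but the transfer steps you treat as automatic are exactly where the real work lies, and as written they are wrong. Relative openness and the frontier $\partial$ are \emph{not} intrinsic topological properties of a set: they depend on how the set is embedded in its ambient space. A semi-algebraic homeomorphism $\varphi:\biguplus\cS\to X$ between subsets of different ambient spaces ($D^MR^{q_i}$ versus $K^m$) carries no information about closures or frontiers in those ambient spaces, since nothing guarantees that $\varphi$ or $\varphi^{-1}$ behaves well at points of $\partial X$ (which are not even in the image of $\varphi$ when you triangulate only the indicator function of $X$). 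So neither ``$\bigcup\cS_i$ is relatively open because $X$ is'' (the hypothesis you need for Proposition~\ref{pr:split-p-adic}) nor ``$\partial X_k=\partial X$ because frontiers match under the homeomorphism'' is justified; the latter is the whole content of the lemma.

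The paper closes this gap by two preliminary moves that your proposal omits. First it reduces to the case where $X$ is \emph{bounded}, via the partition by the sets $K_I^m$ and the maps $\psi_I$ (inversion of the unbounded coordinates); this reduction exists precisely ``in order to ensure that this set is still relatively open''. Second, it applies $\TR m$ not to $X$ alone but to the pair $(X,\partial X)$, so that $\varphi$ is a homeomorphism onto $\overline{X}=X\cup\partial X$, which is closed and bounded; by Remark~\ref{re:closed-triang} the complex $\cU$ is then closed, hence $\biguplus\cU$ is closed and bounded as well. Only then does Theorem~\ref{th:extr-val} give the two identities $\varphi^{-1}(\overline{Y})=\overline{\varphi^{-1}(Y)}$ and $\varphi(\overline{B})=\overline{\varphi(B)}$ for semi-algebraic subsets, which are what show that $A=\varphi^{-1}(X)$ is relatively open (so Proposition~\ref{pr:split-p-adic} applies) and that $\partial\varphi(A_k)=\varphi(\partial A_k)=\varphi(\partial A)=\partial X$. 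Without the boundedness reduction and without including $\partial X$ in the triangulated family, there is no substitute for these arguments, and your proof does not go through. (Your discussion of the no-isolated-points hypothesis is a side issue and not the main defect.)
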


We are going to prove Lemma~\ref{le:split-level} by using a
triangulation $(\cU,\varphi)$ of $(X,\partial X)$ and applying
Proposition~\ref{pr:split-p-adic} to $\varphi^{-1}(X)$. In order to ensure
that this set is still relatively open, we first reduce to the case
where $X$ is bounded by means of the following construction. 

Let $\tilde K=K\cup\{\infty\}$ and for every $I\subseteq\{1,\dots,m\}$ let $K_I^m=\tilde K_I^m\cap
K^m$ where
\begin{displaymath}
\tilde K_I^m = \{x\in\tilde K^m\tq x_k\in R\iff k\in I\}. 
\end{displaymath}
Let $R_I^m=\{x\in R^m\tq \forall k\notin I,\ x_k\neq0\}$, and for every $x\in R_I^m$ let
$\psi_I(x)=(y_k)_{1\leq k\leq m}$ be defined by $y_k=x_k$ if $k\in I$, and
$y_k=1/(\pi x_k)$ otherwise. Clearly $\psi_I$ is semi-algebraic
homeomorphism from $R_I^m$ to $K_I^m$ which extends uniquely to a
homeomorphism $\tilde\psi_I$ from $R^m$ to $\tilde K_I^m$.

\begin{proof}
Note first given a partition of $X$ in finitely many
semi-algebraic pieces $U_1,\dots,U_r$ which are clopen in $X$, it
suffices to prove the result separately for each $U_j$. Indeed, each
$U_j$ will then be relatively open with $\partial U_j\subseteq \partial X$ (because $U_j$ is
clopen in $X$), and $\bigcup_{j\leq r}\partial U_j =\partial X$ (because $\partial
U_j=\overline{U}_j\setminus X$ and $\bigcup_{j\leq r}\overline{U}_j=\overline{X}$).
So, if a partition of each $U_j$ in semi-algebraic pieces
$(U_{j,k})_{1\leq k\leq n}$ is found such that $\partial U_{j,k}=\partial U_j$ for every
$k$, then the union $X_k$ of $U_{j,k}$ for $1\leq j\leq r$ defines a
partition of $X$ in semi-algebraic pieces and we have $\partial X_k=\bigcup_{j\leq
r}\partial U_{j,k}$ (same argument as above) hence $\partial X_k=\bigcup_{j\leq r}\partial U_j=\partial
X$. 

Now, as $I$ ranges over the subsets of $\{1,\dots,m\}$, the sets $X\cap K_I^m$
form a partition of $X$ in semi-algebraic sets clopen in $X$. By the
argument above we can deal separately with each of these sets, hence
we can reduce to the case where $X\subseteq K_I^m$ for some $I$.

Let $Y=\psi_I^{-1}(X)$ and $\hat X$ be the closure of $X$ in $\hat K_I^m$.
Note that $\hat\psi_I(\overline{Y})=\hat X$. The fact that
$\overline{X}\setminus X$ is closed in $K^m$, hence in $K_I^m$, implies that
$\hat X\setminus X$ is closed in $\hat K_I^m$. It follows that its image under
$\hat \psi_I^{-1}$, which is precisely $\overline{Y}\setminus Y$, is closed in
$R^m$, hence in $K^m$. Thus $Y$ is relatively open. It then suffices
to prove the result for $Y$, that is we can assume that $X=Y$ is
bounded. Of course we can assume as well that $\partial X$ is non-empty
(otherwise $X_1=X$ and $X_k=\emptyset$ for $2\leq k\leq n$ is obviously a solution). 

$\TR m$ gives a triangulation $(\cU,\varphi)$ of $(X,\partial X)$. $\cU$ is the
disjoint union of finitely many simplicial complexes $\cU_j$ in
$D^MR^{q_j}$ for $1\leq j\leq r$. Let $U_j=\varphi(\bigcup\cU_j)\cap X$ for every $j$,
this defines a partition of $X$ in semi-algebraic sets clopen in $X$.
By using again the initial remark in this proof, it suffices to check the
result for each $U_j$ separately. So we can assume that $\cU$ itself
is a simplicial complex in $D^MR^q$ for some $q$. 

By construction $\overline{X}$ is semi-algebraic, closed and bounded,
and $\varphi^{-1}$ is semi-algebraic and continuous, so
$\varphi^{-1}(\overline{Y})=\overline{\varphi^{-1}(Y)}$ for every
semi-algebraic\footnote{For every continuous map $f:X\subseteq K^q\to K^r$
  and every $Y\subseteq X$, if $X$ is closed then
  $f(\overline{Y})\subseteq\overline{f(Y)}$. The reverse inclusion holds 
  if $X$ is compact, or if $f$, $Y$, $X$ are semi-algebraic and $X$ is
  closed and bounded (see
  Theorem~\ref{th:extr-val}).\label{fn:image-compact}}
 $Y\subseteq\overline{X}$. Let $A=\varphi^{-1}(X)$, we have
$\overline{A}=\varphi(\overline{X})$ hence $\overline{A}\setminus A=\varphi(\overline{X}\setminus
X)$ is closed, that is $A$ is relatively open.
Proposition~\ref{pr:split-p-adic} then applies to $A$ and gives a
partition of $A$ in semi-algebraic sets $A_1,\dots,A_n$ such that $\partial A_k=\partial
A$ for every $k$. 

For $1\leq k\leq n$ let $X_k=\varphi(A_k)$. These semi-algebraic sets form a
partition of $X$, because $A_1,\dots,A_n$ form a partition of $A$.
Moreover, since $\bigcup\cU$ is semi-algebraic, closed and bounded,
we have $\varphi(\overline{B})=\overline{\varphi(B)}$ for every
semi-algebraic\footnote{See footnote~\ref{fn:image-compact}} set
$B$ contained in $\bigcup\cU$. It follows that for $1\leq k\leq n$ we have
$\partial X_k=\varphi(\partial A_k)=\varphi(\partial A)=\partial X$, which proves the result.
\end{proof}

\begin{theorem}\label{th:split-def}
  Let $X$ be a relatively open non-empty semi-algebraic subset of
  $K^m$ without isolated points, and $Y_1,\cdots,Y_s$ a collection of
  closed semi-algebraic subsets of $\partial X$ such that $Y_1\cup\cdots\cup Y_s=\partial X$.
  Then there is a partition of $X$ in non-empty semi-algebraic sets
  $X_1,\dots,X_s$ such that $\partial X_i= Y_i$ for $1\leq i\leq s$.
\end{theorem}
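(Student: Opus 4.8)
The plan is to deduce Theorem~\ref{th:split-def} from Lemma~\ref{le:split-level} together with the Retraction Theorem~\ref{th:retraction}. The rough idea: Lemma~\ref{le:split-level} already gives a partition of $X$ into $s$ semi-algebraic pieces, each having \emph{full} frontier $\partial X$; I now want to shrink the frontier of the $i$-th piece down to the prescribed closed set $Y_i\subseteq\partial X$, using a retraction to absorb the excess frontier into the neighbouring pieces.

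First I would dispose of degenerate cases. If $\partial X=\emptyset$ then each $Y_i=\emptyset$ and I can take $X_1=X$, $X_i=\emptyset$ for $i\geq 2$ (or, if non-emptiness of all pieces is wanted, split $X$ by Lemma~\ref{le:split-level} with empty frontiers — which is exactly what that lemma gives when $\partial X=\emptyset$, the sets $X_k$ all having empty frontier). So assume $\partial X\neq\emptyset$. Next, apply Lemma~\ref{le:split-level} with $n=s$ to get a partition $X=W_1\sqcup\cdots\sqcup W_s$ into non-empty semi-algebraic sets with $\partial W_i=\partial X$ for every $i$. Now consider $Z_i=\overline{W_i}\setminus Y_i = (W_i\cup\partial X)\setminus Y_i$. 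Since $Y_i$ is closed in $\partial X$ and $\partial X=\overline{W_i}\setminus W_i$, the set $Z_i$ is a relatively open semi-algebraic subset of $K^m$ whose frontier is contained in $Y_i$; more precisely $Z_i$ is $W_i$ together with the part of $\partial X$ outside $Y_i$, and $\partial Z_i\subseteq Y_i$. The set $\overline{W_i}$ is closed, $Y_i$ is closed in it, and $Y_i$ is non-empty precisely when... — here one must be a little careful, since some $Y_i$ could be empty while $\partial X\neq\emptyset$; I will handle that by first discarding from the list any $Y_i=\emptyset$ and redistributing, or by noting the construction still goes through with the convention $\dim\emptyset=-\infty$. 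Apply Theorem~\ref{th:retraction} to the pair $Y_i\subseteq \overline{W_i}$ (both non-empty semi-algebraic, $Y_i$ closed in $\overline{W_i}$): there is a semi-algebraic retraction $\sigma_i:\overline{W_i}\to Y_i$.

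Now define $X_i = W_i\cup\big(\sigma_i^{-1}(Y_i)\cap(\partial X\setminus Y_i)\big)$ — that is, I keep $W_i$ and additionally attach to it the points of $\partial X$ lying outside $Y_i$ that $\sigma_i$ sends into... no: more simply, the right move is to \emph{reassign} each point of $\partial X$ not already forced into some $Y_i$. Actually the cleanest route: let me instead build the $X_i$ directly. For each point $y\in\partial X$, since $\bigcup Y_i=\partial X$, $y$ lies in at least one $Y_i$; and I want $\partial X_i=Y_i$ exactly, so I need $X_i$ to contain, arbitrarily close to every point of $Y_i$, points of $X$, and to contain \emph{no} points of $X$ arbitrarily close to points of $\partial X\setminus Y_i$. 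The retraction $\sigma_i:\overline{W_i}\to Y_i$ pulls $W_i$ "towards" $Y_i$: concretely I would cover $\overline{W_i}$ by $\sigma_i$ and then reassign the points of $W_i$ whose image under $\sigma_i$ is "near" the wrong part — but $\sigma_i$ maps onto all of $Y_i$, and a standard fact is that $\overline{\sigma_i^{-1}(N)}\supseteq$ a neighbourhood argument gives $\partial(\sigma_i^{-1}(U))$ controlled. So I would set, using a triangulation or directly: partition $W_i$ according to which $Y_j$ the point $\sigma_i$-retracts to, but $\sigma_i$ only sees $Y_i$.

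Let me restructure. The expected main obstacle — and the place the argument really lives — is arranging the frontiers to be \emph{exactly} $Y_i$ rather than merely contained in or containing it, while keeping the pieces non-empty and partitioning $X$. I expect the actual proof proceeds by: (1) reduce to $X$ bounded and to $\cU$ a single simplicial complex via a triangulation $(\cU,\varphi)$ of $(X,\partial X)$ and the clopen-partition trick used in Lemma~\ref{le:split-level}; (2) on the complex side, use the tree structure of $\overline{\cU}$ under specialisation — each simplex $S$ with $\varphi(S)\subseteq X$ has a facet, etc.; (3) for each simplex $T$ with $\varphi(T)\subseteq\partial X$, because $\{Y_i\}$ covers $\partial X$, assign $T$ to some index $i(T)$ with $\varphi(T)\subseteq Y_i$, and then use Proposition~\ref{pr:split-p-adic} (or Lemma~\ref{le:split-level}) on each open simplex $S\subseteq\varphi^{-1}(X)$ to split it into $s$ pieces with full frontier $\partial S$, then absorb each piece into the appropriate $X_i$ via the retraction/coordinate-projection maps so that only the faces $T$ with $i(T)=i$ survive in $\partial X_i$. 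The combinatorial bookkeeping — checking that after absorption $\partial X_i$ equals $\bigcup_{T:i(T)=i}\varphi(T)=Y_i$ exactly, and that all pieces are non-empty — is the crux; everything else is the now-familiar machinery of Lemma~\ref{le:split-level} plus Theorem~\ref{th:retraction}. I would therefore present the proof as: invoke Lemma~\ref{le:split-level} for the "$\subseteq$-rich" direction, invoke Theorem~\ref{th:retraction} to carve the frontier down to $Y_i$, and spend the bulk of the write-up verifying $\partial X_i=Y_i$ via the specialisation-tree structure of the triangulation.
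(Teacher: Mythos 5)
Your write-up never actually constructs the partition: after producing $W_1,\dots,W_s$ with Lemma~\ref{le:split-level} and the retractions $\sigma_i:\overline{W_i}\to Y_i$, you abandon the definition of the $X_i$ twice (``no: more simply\dots'', ``Let me restructure'') and end by describing what you \emph{expect} the proof to do, explicitly deferring the ``combinatorial bookkeeping'' that you yourself identify as the crux. That bookkeeping \emph{is} the theorem; what remains in your text is a list of ingredients. Moreover the way you use the retraction points in the wrong direction: a retraction of $\overline{W_i}$ \emph{onto} $Y_i$ maps everything into $Y_i$ and therefore cannot distinguish the points of $W_i$ that accumulate on $Y_i$ from those that accumulate on $\partial X\setminus Y_i$, so it gives no mechanism to ``carve the frontier down to $Y_i$''; and your tentative formula $X_i=W_i\cup\big(\sigma_i^{-1}(Y_i)\cap(\partial X\setminus Y_i)\big)$ is not even a subset of $X$, whereas the $X_i$ must partition $X$.

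The idea you are missing is to take a single retraction $\rho:\overline{X}\to\partial X$ onto the \emph{whole} frontier (Theorem~\ref{th:retraction}) and to pull subsets of $\partial X$ \emph{back} into $X$: the key claim is that if $V\subseteq\partial X$ is open in $\partial X$, $Z=\overline{V}$ and $A=\rho^{-1}(Z)\cap X$, then $\partial A=Z$ (proved by a neighbourhood argument using only continuity of $\rho$ and the fact that $\rho$ fixes $\partial X$ pointwise). With this, one reduces to $s=2$ by induction, sets $Z_1=\overline{Y_1\setminus Y_2}$, $Z_2=\overline{Y_2\setminus Y_1}$, $Z_0=\overline{\partial X\setminus(Z_1\cup Z_2)}$, pulls them back to $A_1,A_2,A_0$ with $\partial A_k=Z_k$, and applies Lemma~\ref{le:split-level} only to the non-isolated part of the ambiguous piece $A_0$, splitting it into two sets with frontier $Z_0$ which are then attached to $A_1$ and $A_2$; non-emptiness is repaired at the end by a clopen-ball adjustment. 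So the order of operations is the reverse of yours: the retraction is used first, to assign points of $X$ to the $Y_i$'s according to where they retract, and Lemma~\ref{le:split-level} is used afterwards, only on the overlap region; no triangulation or specialisation-tree analysis is needed at this stage. As it stands, the set-level construction of the $X_i$ and the verification $\partial X_i=Y_i$ are absent from your proposal, so it has a genuine gap.
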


\begin{proof} 
$X$ is non-empty and has no isolated point, hence is infinite. The
result is obvious for $s=0$ (there is nothing to prove) and $s=1$
(take $X_1=X$). By induction it suffices to prove it for $s=2$.
Indeed, if $s\geq 3$ and the result is proved for $s-1$, then the result
for $s=2$ applied to $X$ with $Z_1=Y_1\cup\cdots\cup Y_{s-1}$ and $Z_2=B_s$ gives
a partition in two pieces $X'_1$, $X'_2$ such that $\partial X'_l=Z_l$ for
$l=1,2$, and the induction hypothesis applied to $X'_1$ with
$Y_1,\dots,Y_{s-1}$ gives a partition of $X'_1$ in pieces $X_1,\dots,X_{s-1}$ such
that $\partial X_i=Y_i$ for $1\leq i\leq s$. The conclusion follows, by taking
$X_s=X'_2$. So from now on we assume that $s=2$. 

It suffices to prove the weaker result that a partition $(X'_1,X'_2)$
exists with all the required properties for $(X_1,X_2)$ except possibly
the condition that they are non-empty. Indeed, if such a
partition is found and for example $X'_2=\emptyset$ then necessarily $Y_2=\partial
X'_2=\emptyset$. In that case pick any $x\in X$, and choose a clopen neighbourhood $V$
of $x$ such that $V\cap \partial X$ is empty (this is possible because $X$ is
relatively open). Then $X_1=X\setminus V$ and $X_2=X\cap V$ give the
conclusion. 

Let $\rho:\overline{X}\to \partial X$ be a continuous retraction of $\overline{X}$
onto $\partial X$ given by Theorem~\ref{th:retraction}. Let $V\subseteq\partial X$ be any
semi-algebraic set open in $\partial X$, $Z$ its closure and $A=\rho^{-1}(Z)\cap X$. We
are claiming that $\partial A = Z$. Note that $A$ is closed in $X$ by
continuity of $\rho$, because $A$ is the inverse image of the closed set
$Z$ by $\rho_{|X}$. So it suffices to prove that $\overline{A}\cap\partial X=Z$, or
equivalently that $\overline{A}\cap\partial X$ contains $V$ and is contained in
$Z$. For the first inclusion let $y$ be any element of $V$, and $W$
any neighbourhood of $y$. We have to prove that $W\cap A\neq\emptyset$. By
continuity of $\rho$ at $y=\rho(y)$ there is a neighbourhood $U$ of $y$ such
that $U\cap\overline{X}$ is contained in $\rho^{-1}(W\cap V)$. In particular
\begin{displaymath}
  U\cap W\cap X\subseteq U\cap\overline{X}\subseteq \rho^{-1}(W\cap V) \subseteq \rho^{-1}(V)=A
\end{displaymath}
so $U\cap W\cap A=U\cap W\cap X$. 
On the other hand, $U\cap W\cap X\neq\emptyset$ because $U\cap W$ is a neighbourhood of $y$
and $y\in V\subseteq \overline{X}$. {\it A fortiori} $W\cap A$ is non-empty. This
proves that $y\in\overline{A}$, hence that $V\subseteq\overline{A}\cap\partial X$.
Conversely, if $y'$ is any element of $\partial X\setminus Z$, there is a
neighbourhood $W'$ of $y'$ such that $W'\cap \partial X$ is disjoint from $Z$.
By continuity of $\rho$, $\rho^{-1}(W)$ is then a neighbourhood of $y'$ in
$\overline{X}$. It is disjoint from $\rho^{-1}(Z)=A$ hence
$y'\notin\overline{A}$. So $\overline{A}$ is disjoint from $\partial X\setminus Z$. That
is $\overline{A}\cap\partial X\subseteq Z$, which proves our claim.

Let $Z_1=\overline{Y_1\setminus Y_2}$ and $Z_2=\overline{Y_2\setminus Y_1}$. For
$k=1,2$ let $A_k=\rho^{-1}(Z_k)$. Let $Z_0$ be the closure of $\partial X\setminus(Z_1\cup
Z_2)$ and $A_0=\rho^{-1}(Z_0)$. The above claim gives that $\partial A_k= Z_k$
for $0\leq k\leq 2$. Let $B_0$ be the set of non-isolated points of $A_0$.
Clearly $\partial B_0 = \partial A_0 = Z_0$ since $A_0\setminus B_0$ is finite. In
particular $B_0$ is relatively open, and Lemma~\ref{le:split-level}
gives two semi-algebraic sets $B_1$, $B_2$ partitioning $B_0$ such
that $\partial B_1=\partial B_2=Z_0$. So if we set $X_1=A_1\cup B_1$ and $X_2=A_2\cup
B_2\cup(A_0\setminus B_0)$ we get the conclusion. 
\end{proof}

\begin{theorem}\label{th:piec-str-cont}
  Let $f:X\subseteq K^m\to K$ be a semi-algebraic function with bounded graph
  (that is $f$ is a bounded function on a bounded domain). If
  it has finitely many limit values at every point of $\overline{X}$
  then $f$ is piecewise largely continuous.
\end{theorem}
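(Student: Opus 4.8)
The plan is to follow the skeleton of the real-case argument — reduce to the situation where $f$ is continuous on a relatively open piece, then analyse the behaviour at the frontier — but with the step where the real proof invokes connectedness of simplexes replaced by the Retraction Theorem~\ref{th:retraction} and the Splitting Theorem~\ref{th:split-def}. The point is that over $\RR$ one notes that the (finite) set of limit values of a continuous function reached from within a connected open simplex must be a singleton; $p$\--adic simplexes are totally disconnected, so this fails, and retractions will have to do that job.

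First I would apply $\TR m$ to $f$, reducing (after composing with the triangulating homeomorphism, and treating separately the trivial simplexes on which $f\equiv0$) to the case where $X$ is the union of a simplicial complex and, on each simplex $T$, $f_{|T}=u_T\cdot g_T$ with $g_T$ an $N$\--monomial function and $u_T$ a semi-algebraic function of constant norm $1$ (the $U_{e,n}$\--part). Then $|f|=|g_T|$ is continuous on $T$, and the finiteness hypothesis forces $g_T$ to be largely continuous on $T$: if the affine valuation $\xi_T$ of $g_T$ did not extend continuously to some face of $vT$, the set of its limit values at the corresponding boundary point would be a non-degenerate interval of $\Omega$, so $|f|=|g_T|$ would take infinitely many values near that point, contradicting the hypothesis; hence $g_T$ extends continuously by Proposition~\ref{pr:prol-proj}. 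Since $g_T$ is largely continuous, it now suffices to partition $T$ into semi-algebraic pieces on which $u_T$ is largely continuous, except possibly at the faces where $g_T\to0$ (on which $f\to0$ irrespective of $u_T$). Because $u_T$ has constant norm and finitely many limit values at the relevant boundary points, we are reduced to the following constant-norm statement: a semi-algebraic $u$ of constant norm with finitely many limit values is piecewise largely continuous; and by Denef's Cell Decomposition Theorem $u$ is piecewise continuous, so by Remark~\ref{re:comp-pure} and induction on $\dim X$ (the case $\dim X=0$ being trivial, the lower-dimensional pieces handled by the induction hypothesis, the isolated points clopen and trivial) we may finally assume $X$ relatively open, pure-dimensional, without isolated points and $u$ continuous on $X$.

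The heart of the proof is then to split such an $X$ so that each piece approaches a single limit value at each of its boundary points. The closure $G$ of the graph of $u$ is closed and bounded, and $\cL:=G\cap(\partial X\times K)$ is a closed bounded semi-algebraic set with finite non-empty fibres over $\partial X$; since $u$ has norm $1$, all these limit values lie in $R^\times$, and stratifying $\partial X$ by the (finitely many, because the residue field is finite) residues of the limit values sitting above each point makes $\cL$ tractable. Using Skolem functions (Theorem~\ref{th:skol}) together with the finiteness of the fibres, I would decompose $\cL$ into finitely many semi-algebraic ``limit sheets'' and, passing to closures, cover $\partial X$ by closed semi-algebraic sets $Y_1,\dots,Y_s$, each equipped with a semi-algebraic selection of one limit value over it. Let $\rho:\overline X\to\partial X$ be a semi-algebraic retraction (Theorem~\ref{th:retraction}, applicable since $\partial X$ is closed in $\overline X$). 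The idea is to define $X_j$ as the set of $x\in X$ for which $u(x)$ is, valuation-theoretically, closest to the $j$\--th selected limit value over $\rho(x)$ among the sheets available there (ties broken by the index), and to check — from the continuity of $u$ and $\rho$, the continuity of the selected sheets, and an auxiliary induction on $\dim\partial X$ to control the points where $\rho$ stays inside a lower-dimensional sheet — that the $X_j$ partition $X$ into semi-algebraic pieces on each of which $u$ (hence $f$) extends continuously to the closure; the Splitting Theorem~\ref{th:split-def} (or already Lemma~\ref{le:split-level}) is used along the way to keep the frontiers of the $X_j$ under control so that the dimension induction closes up.

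The main obstacle I expect is precisely this last splitting construction. Over $\RR$ connectedness gives the uniqueness of the limit value for free; over $K$ one must instead manufacture it by ``aiming'' each piece at a single sheet with the retraction, and the delicate part is the bookkeeping — the closedness of the limit sheets, the jumps of the fibre cardinality of $\cL$ over $\partial X$, and the simultaneous control of the limit behaviour of $u$ on $X_j$ at every boundary point together with the frontier of $X_j$ itself. This is where the Retraction and Splitting Theorems do the work that connectedness does in real geometry, and getting it right is the technical core of the argument.
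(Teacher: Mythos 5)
Your core construction is, in substance, the paper's own proof: Skolem-select the finitely many limit values over $\partial X$, make those selections largely continuous, cover $\partial X$ by the closures of their domains, cut $X$ accordingly with the Splitting Theorem~\ref{th:split-def}, retract $\overline{X}$ onto the relevant closed subset of $\partial X$ by Theorem~\ref{th:retraction}, and define the pieces by ``$f(x)$ is strictly closest to one selected sheet evaluated at the retraction of $x$'', absorbing the leftover set and the frontiers by an induction on $(\dim X,\dim\partial X)$. So that part of your plan is sound, even though you leave the bookkeeping as an acknowledged obstacle. One point you gloss over, which is exactly where the paper's lexicographic induction does real work: a Skolem selection of limit values over $\partial X$ is merely semi-algebraic, not continuous; you must first apply the statement being proved (induction hypothesis, since $\dim\partial X<\dim X$) to those selections to make them piecewise largely continuous, and only then pass to closures and compose with the retraction. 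Also, the finiteness of the fibres of the limit set comes from the hypothesis itself, not from the finiteness of the residue field (distinct limit values may share a residue), and the non-emptiness of these fibres requires Theorem~\ref{th:comp-inter}.

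The genuine gap is in your preliminary reduction via $\TR m$. Applying $\TR m$ to $f$ alone gives a homeomorphism $\varphi:\biguplus\cT\to X$; since $X$ is bounded but in general not closed, $\cT$ is not a closed complex and $\varphi$ does not extend continuously to $\overline{\biguplus\cT}$ (Remark~\ref{re:closed-triang}). A boundary point $y_0$ of a simplex $T$ therefore corresponds to no point of $\overline{X}$: as $y\to y_0$ in $T$, the images $\varphi(y)$ may spread over an infinite subset of $\partial X$, so the hypothesis ``finitely many limit values at every point of $\overline{X}$'' gives no control on $f\circ\varphi$, $g_T$ or $u_T$ at $\partial T$, and your first step (that $\xi_T$ must extend continuously, and that $u_T$ has finitely many limit values at the relevant faces) does not follow. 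This can be repaired by triangulating the pair $(X,\partial X)$ as in the proof of Lemma~\ref{le:split-level}, so that $\biguplus\cT$ is closed and limit values transfer through $\varphi$; and even then the inference ``$|f|$ takes infinitely many values near $y_0$, contradicting the hypothesis'' is too quick — values clustering at $0$ produce infinitely many norms without producing more than one limit value, so you need the boundedness of the graph plus Theorem~\ref{th:comp-inter} to convert infinitely many persistent finite valuations into infinitely many genuine limit values. But note that after all these repairs the reduction buys you nothing: your ``constant-norm statement'' is the theorem itself, and the constant norm plays no role in your splitting argument, so the triangulation detour can simply be deleted — which is what the paper does, using only piecewise continuity of semi-algebraic functions together with the Retraction and Splitting Theorems.
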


Note that the counterpart of Theorem~\ref{th:piec-str-cont} for
real-closed fields holds. Indeed, by triangulation we can reduce
to the case of a continuous function $f$ on a simplex $S\subseteq K^m$. The
assumption that $f$ has finitely many limit values at every point of
$\overline{S}$ then implies directly that $f$ is largely continuous.
Indeed, this follows easily from the fact that over real-closed fields
the direct image by a continuous semi-algebraic map of any
semi-algebraically connected set (such as $S\cap B$ with $B$ a ball
centered at any point of $\overline{S}$) is again semi-algebraically
connected.

On the contrary, $p$\--adic simplexes are not at all
semi-algebraically connected and it can happen that a function
satisfying all these assumptions on a $p$\--adic simplex is not
largely continuous. For example on the simplex $S=D^1R^*$ the
semi-algebraic function $f$ defined by $f(x)=0$ if $v(x)\in2\cZ$ and
$f(x)=1$ otherwise is a continuous, bounded function having two
distinct limit values at $0$. Thus $f$ is not largely continuous. It
is obviously piecewise largely continuous, though.

\begin{proof}
Every semi-algebraic function is piecewise continuous (see for example
\cite{mour-2009}). So, replacing $f$ by its restriction to the pieces
of an appropriate partition of $X$ if necessary, we can assume that
$f$ is continuous. Removing $X\cap\partial X$ if necessary (using a
straightforward induction on $\dim X$ and the fact that $\dim \partial X<\dim
X$) we can even assume that $X$ is relatively open. The proof
then goes by induction on the lexicographically ordered tuples
$(e,e')$ where $e=\dim X$ and $e'=\dim\partial X$. If $\partial X$ is empty,
that is $X$ is closed, then $f$ is largely continuous and the result
is obvious. So let us assume that $e'\geq0$ (hence $e\geq 1$)  and the
result is proved for smaller tuples $(e,e')$. 

Let $D=(\partial X\times K)\cap\overline{\Gr f}$. The projection of $D$ onto $\partial X$
has finite fibers hence $D$ is a union of cells of type~$0$. The
number of these cells, say $N$, then bounds the cardinality of these
fibers, that is the number of limit values of $f$ at every point of $\partial
X$. For every $a\in\partial X$ let $D_a=\{t\in K\tq (a,t)\in D\}$. We first show
that $\widehat{D}=\partial X$, that is $D_a\neq\emptyset$ for every $a\in \partial X$. For every
$\varepsilon\in R^*$ let $C_\varepsilon=(B(a,\varepsilon)\times K)\cap\overline{\Gr f}$. This is a uniformly
semi-algebraic family of closed and bounded semi-algebraic subsets of
$K^n$. Each of them is non-empty because $C_\varepsilon$ contains $(x,f(x))$
for any $x$ in $B(a,\varepsilon)\cap X$ (which is non-empty since $a\in\partial X$).
Obviously $C_{\varepsilon_1}\subseteq C_{\varepsilon_2}$ whenever $|\varepsilon_1|\leq|\varepsilon_2|$, so $\bigcap_{\varepsilon\in
R^*}C_\varepsilon$ is non-empty by Theorem~\ref{th:comp-inter}. This last set
is equal to $D_a$, which proves our claim. 

For $1\leq i\leq N$ let $W_i$ be the set of $a\in \partial X$ such that $D_a$ has
exactly $i$ elements. These sets $W_i$ form a partition of $D$ in
semi-algebraic pieces. By Theorem~\ref{th:skol} (and a straightforward
induction) there are semi-algebraic functions $f_{i,j}:W_i\to K$ such
that $D_a=\{f_{i,j}(a)\}_{1\leq j\leq i}$ for every $a\in D_a$. Since $\dim \partial
X<\dim X$, by the induction hypothesis these functions $f_{i,j}$ are
piecewise largely continuous. This gives a partition of $\partial X$ in
semi-algebraic pieces $V_k$ for $1\leq k\leq r$, and a family of largely
continuous semi-algebraic functions $g_{k,l}:V_k\to K$ for $1\leq l\leq s_k$
such that $V_k\subseteq W_{s_k}$ and $D$ is the union of the graphs of all
these functions $g_{k,l}$. 

Theorem~\ref{th:split-def} applied to $X$ and the sets
$\overline{V_k}$ for $1\leq k\leq r$ gives a partition of $X$ in
semi-algebraic pieces $X_k$ such that $\partial X_k=\overline{V_k}$. It
suffices to prove that the restrictions of $f$ to each $X_k$ is
piecewise largely continuous. So we can assume that $r=1$ and $X=X_1$.
That is, we have a semi-algebraic set $V=V_1$ dense in $\partial X$ and
largely continuous functions $g_l=g_{1,l}:V\to K$ for $1\leq l\leq s=s_1$ such
that $D_a=\{g_{l}(a)\}_{1\leq l\leq s}$ has $s$ elements for every $a\in D_a$.
Replacing $V$ by $V\setminus \partial V$ if necessary we can assume that $V$ is
relatively open.

Let $\rho:\overline{X}\to\overline{V}$ be a continuous retraction given by
Theorem~\ref{th:retraction}. For $1\leq l\leq s$ let 
\begin{displaymath}
  U_l=\big\{x\in X\tq \forall k\neq l,\ 
  \big|f(x)-\bar g_l\big(\rho(x)\big)\big| <
  \big|f(x)-\bar g_k\big(\rho(x)\big)\big| \big\}.
\end{displaymath}
Each $U_l$ is open in $X$ by continuity of $f$, $\rho$ and the $\bar g_k$'s.
Their complements $X'=X\setminus\bigcup_{l=1}^s U_l$ are closed in $X$, hence $\partial X'\subseteq \partial
X$. Moreover, for every $a\in V$, the limit values of $f$ at
$a$ being by construction the pairwise distinct $g_l(a)$ for $1\leq l\leq s$, there exists
$\varepsilon\in R^*$ such that every point of $B(a,\varepsilon)\cap X$ belongs to one
of the $U_l$'s. In other words $B(a,\varepsilon)\cap X'=\emptyset$ hence $a$ does not
belong to the closure of $X'$. So $\partial X\subseteq \partial X\setminus V=\partial V$, in particular
$\dim \partial X'< \dim V=\dim \partial X$ hence the induction hypothesis applies to
the restriction of $f$ to $X'$. 

It only remains to check that the restrictions of $f$ to each $U_l$
are piecewise largely continuous. We are claiming that $f$ has only
one limit value at every point $a$ of $\overline{U}_l\setminus \partial V$. Note that
$\overline{U}_l$ is the disjoint union of $\overline{U}_l\cap X$ and
$\overline{U}_l\cap \partial X$, and that $\partial X=V\cup \partial V$. Obviously, if
$a\in\overline{U}_l\cap X$ then by continuity of $f$, $f(x)$ tends to
$f(a)$ as $x$ tends to $a$ in $U_l$. Now if $a\in (\overline{U}_l\setminus \partial V)\setminus
X$ then $a\in V$, $\rho(a)=a$ and $\bar g_k(\rho(a))=g_k(a)$ for every $k$.
Hence by definition of $U_l$, $f(x)$ is closer to $g_l(a)$ than to
every other $g_k(a)$, so $g_l(a)$ is the only possible limit value of
$f(x)$ as $x$ tends to $a$ in $U_l$, which proves our claim. So the
semi-algebraic function $g$ which coincides with $f$ on
$\overline{U}_l\cap X$ and with $g_l$ on $V$ is continuous. The frontier
of its domain is contained in $\overline{U_l}\cap\partial X\subseteq\partial X=V\cup\partial V$ and is
disjoint from $V$, hence is contained in $\partial V$. By the induction
hypothesis, $g$ is then piecewise largely continuous, hence so is
$f_{|U_l}$ since $f$ and $g$ coincide on $U_l$.
\end{proof}

\section{Largely continuous cell decomposition}
\label{se:cell-dec}

This section recalls the main theorem of \cite{dene-1984} in order to
emphasize some details which appear only in its proof. These details
are important for us because they ensure that the functions defining
the cells involved in the conclusions inherit certain properties,
defined below, from the functions in the assumptions. Using them we
are going to derive from $\TR m$ a new preparation theorem for
semi-algebraic functions ``up to a small deformation''
(Theorem~\ref{th:large-cont-prep}). The point is that after such a
deformation, we get a Cell Preparation Theorem involving only cells
defined by largely continuous functions. 

In order to do so, it is crucial for us to control the boundary of any
cell $C$ we are dealing with. Ideally, we would like it to decompose
naturally in cells defined by functions obtained for the functions
defining $C$ by passing to the limits, just as it is done for the
faces of discrete polytopes (Item~\ref{it:pres-face} of
Proposition~\ref{pr:pres-face}). With this aim in mind, we now
introduce a sharper notion of cell mod $\GG$, for any clopen
semi-algebraic subgroup $\GG$ of $K^\times$ with finite index. 

A {\df presented cell $A$ mod $\GG$} in $K^{m+1}$ is a tuple
$(c_A,\nu_A,\mu_A,G_A)$ with $c_A$ a semi-algebraic function on a
non-empty domain $X\subseteq K^m$ with values in $K$ (called the {\df center}
of $A$), $\nu_A$ and $\mu_A$ either semi-algebraic functions on $X$ with
values in $K^\times$ or constant functions on $X$ with values $0$ or $\infty$
(called the {\df bounds} of $A$), and $G_A$ an element of $K/G$
(called the {\df coset} of $A$), having the property that for every
$x\in X$ there is $t\in K$ such that:
\begin{equation}
  |\nu_A(x)|\leq|t-c_A(x)|\leq|\mu_A(x)|\quad\mbox{and}\quad t-c_A(x)\in G_A
  \label{eq:def-cell}
\end{equation}
We say that $A$ is {\df largely continuous} if its center and bounds
are. In any case the set of tuples $(x,t)\in X\times K$ satisfying
(\ref{eq:def-cell}) is a cell, in the general sense given in the
introduction. When we want to distinguish this set from the presented
cell $A$ we call it the {\df cellular set underlying $A$}.
Nevertheless, abusing the notation, we will also denote it $A$ most
often. The conditions enumerated above (\ref{eq:def-cell}) ensure that
the domain $X$ of $c_A$, $\mu_A$, $\nu_A$ is exactly the socle
$\widehat{A}$ of $A$. When two presented cells $A$ and $B$ have the
same underlying cellular set we write it $A\simeq B$.

From now onwards we will use the word ``{\df cell}'' mostly for
\emph{presented cells} but also very often for the \emph{underlying
cellular sets}, the difference being clear from the context. For
instance we will freely talk of disjoint (presented) cells, of bounded
(presented) cells, of (presented) cells partitioning some set and so
on, meaning that the corresponding cellular sets have these
properties. Also for any $Z\subseteq\widehat{A}$ we will write $A\cap(Z\times K)$ both
for this (cellular) set and for the presented cell
$(c_{A|Z},\nu_{A|Z},\mu_{A|Z},G_A)$. The latter will also be denoted
$(c_A,\nu_A,\mu_A,G_A)_{|Z}$. Similarly $\Gr c_A$ both denotes the graph
of $c_A$ and the presented cell $(c_A,0,0,\{0\})$.

A presented cell $A$ is of {\df type} $0$ if $G_A=\{0\}$, of type $1$
otherwise. The type of $A$ is denoted $\Tp A$. We say that $A$ is
{\df well presented} if either $v\nu_A-v\mu_A$ is unbounded or $\nu_A=\mu_A$.
We call $A$ a {\df fitting cell} if it has {\df fitting bounds}, that
is, for every $x\in\widehat{A}$:
\begin{displaymath}
  |\mu_A(x)|=\sup\{|t-c_A(x)|\tq (x,t)\in A\} 
\end{displaymath}
\begin{displaymath}
  |\nu_A(x)|=\inf\{|t-c_A(x)|\tq (x,t)\in A\}
\end{displaymath}

Sometimes it will be convenient to write $G_A=\lambda_A\GG$ for some $\lambda_A\in
G_A$. We will always do this uniformly, so that $\lambda_A=\lambda_B$ whenever
$G_A=G_B$. To that end a set $\Lambda_\GG$ of representatives of $K/\GG$ is
fixed once and for all, and when we consider a presented cell $A$ mod
$\GG$ it is understood that $\lambda_A$ is the unique element of
$G_A\cap\Lambda_\GG$. In addition, we require from this set of representatives
that every $\lambda\in\Lambda_\GG$ has the smallest possible positive valuation. In
particular if $\GG=\PN_N^\times$ or $Q_{N,M}^\times$ and $A$ is a cell mod $\GG$
of type~$1$ then $0\leq v\lambda_A<N$.
\\

For every family $\cA$ of presented cells in $K^{m+1}$ we
let\footnote{Here the letters $\CB$ stand for ``center and
  boundaries''.}
$\CB(\cA)$ denote the family of all the functions $c_A$, $\mu_A$, $\nu_A$
for $A\in\cA$. Given another family $\cD$ of presented cells in
$K^{m+1}$ we say that:
\begin{enumerate}
  \item
    $\cD$ belongs to $\Vect\cA$ if for every $D\in \cD$, $c_D,\nu_D$ are
    $K$\--linear combinations of functions $f_{|\widehat{D}}$ for
    $f\in\CB(\{A\in\cA\tq \widehat{D}\subseteq\widehat{A}\})$, and either $\mu_D$ is
    such a linear combination as well or $\mu_D=\infty$.
  \item \label{it:D1-n}
    $\cD$ belongs to $\Alg_n\cA$ if $\cD$ is finer than $\cA$ and for
    every $A\in\cA$, every $D\in\cD$ contained in $A$ and every $(x,t)\in D$:
    \begin{enumerate}
      \item\label{it:D1-n-good}
        either $t-c_A(x)=\cU_n(x,t)(t-c_D(x))$;
      \item \label{it:D1-n-bad}
        or $t-c_A(x)=\cU_n(x,t)h_{D,A}(x)$ where $h_{D,A}:\widehat{D}\to
        R$ is the product of (finitely many) linear combinations of
        functions ${c_B}_{|\widehat{D}}$ such that $B\in\cA$ and
        $\widehat{D}\subseteq\widehat{B}$.
    \end{enumerate}
\end{enumerate}

These somewhat cumbersome definitions help us to express Denef's Cell
Decomposition Theorem in a slightly more precise way than in
\cite{dene-1984}.

\begin{theorem}[Denef]\label{th:D1}
  Given a semi-algebraic subgroup $\GG$ of $K^\times$ with finite index,
  let $\cA$ be a finite family of presented cells mod $\GG$ in
  $K^{m+1}$. Then for every positive integer $n$ there exists a finite
  family $\cD$ of fitting cells mod $\GG$ refining $\cA$ such that
  $\widehat{\cD}$ is a partition of $\bigcup\widehat{\cA}$ and $\cD$
  belongs to $\Vect\cA$ and to $\Alg_n\cA$.
\end{theorem}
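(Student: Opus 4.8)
The plan is to follow Denef's original proof of Cell Decomposition, tracking at each step how the center and bounds of the new cells depend on the old ones. The key observation is that Denef's proof is not a black box: it proceeds by an induction that produces cells whose defining data are built from the old data by two operations — taking $K$-linear combinations (this gives membership in $\Vect\cA$) and extracting roots / reading off valuations of polynomials (this gives the $\cU_n$-multiplier and the $\Alg_n$ condition). So the strategy is to re-run Denef's argument and verify that these two bookkeeping properties are preserved throughout.

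\medskip

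Concretely, I would proceed in the following steps. First, reduce to the case where $\cA$ consists of a single cell $A$ mod $\GG$, since we can treat the cells of $\cA$ one at a time and then take a common refinement of the socles using Macintyre's theorem (Theorem~\ref{th:macintyre}) together with a dimension induction on the socles; the point is that over a common socle refinement, linear combinations and $\cU_n$-factorisations of the relevant functions remain well defined. Second, on a single cell $A$, the condition ``$(x,t)\in A$'' is a finite conjunction of conditions of the form $f_i(x,t)\in\PN_{N}^\times\cdot$(coset), and after the change of variable $t\mapsto t-c_A(x)$ we may assume $c_A=0$. Then apply Denef's Cell Decomposition Theorem in its classical form to the polynomials $f_i(x,t)$: this produces a partition of the socle and, over each piece, cells on which each $f_i$ factors as $f_i(x,t)=(\text{unit})\cdot u_i(x)\cdot(t-c(x))^{e_i}$ where $c$ is a root of some $f_j$ — and here $c$ is, up to the root extraction, a $K$-linear combination of the coefficients of the $f_j$'s, hence lands in $\Vect\cA$ after we absorb the root-extraction ambiguity into the $\cU_n$ term (using Lemma~\ref{le:Hensel-DP} and Remark~\ref{re:racine-de-U} to control the precision $n$). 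Third, refine once more so that each new bound $\nu_D$, $\mu_D$ is $\sup$/$\inf$ of $|t-c_D(x)|$ over the cell — this is the ``fitting'' condition and costs only a further finite partition of the socle, of the kind already produced by cell decomposition; and check this refinement does not disturb $\Vect\cA$ or $\Alg_n\cA$. Fourth, split the $\Alg_n$ clause into the ``good'' case~\ref{it:D1-n-good}, where $t-c_A$ and $t-c_D$ agree up to a unit, and the ``bad'' case~\ref{it:D1-n-bad}, where the relevant factor $t-c(x)$ has been \emph{pushed into the socle} because $c$ turned out to be, on this piece, asymptotically dominated by (a product of linear combinations of) other centers — this is exactly the dichotomy that appears when one compares two roots of the $f_j$'s and one of them is negligible relative to the other.

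\medskip

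The main obstacle, and the place where real care is needed, is Step~three combined with the $\Alg_n$ bookkeeping: Denef's proof naturally produces the factorisation of $f_i$ in terms of \emph{a} root $c$ of \emph{some} $f_j$, but to get the statement we must express everything in terms of the centers $c_B$ of cells $B\in\cA$ with $\widehat D\subseteq\widehat B$, and control the unit factor to arbitrary precision $\pi^n$. The unit factor coming out of Hensel's lemma is only an element of $1+\pi^M R$ for the $M$ built into Denef's algorithm, so one has to iterate / refine to boost $M$ beyond $n$; this is routine but must be stated carefully, and it is why the theorem quantifies ``for every positive integer $n$'' rather than producing a fixed precision. The second delicate point is ensuring that $\widehat\cD$ is a genuine \emph{partition} of $\bigcup\widehat\cA$ (not merely a refinement), which forces us, when handling several cells of $\cA$ with overlapping socles, to first partition the union of socles and only then apply the single-cell argument fibrewise — and to check that ``belongs to $\Vect\cA$'' is stable under this fibrewise passage, which it is because restricting a linear combination to a smaller socle is again a linear combination over that socle. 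Everything else — disjointness of the $\cD$'s, the type being inherited, the representatives $\lambda_A$ being chosen uniformly — is immediate from the conventions fixed before the statement.
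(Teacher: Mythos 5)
Your overall strategy --- not treating Denef's theorem as a black box, but re-running the proof of Theorem~7.3 of \cite{dene-1984} and tracking that the new centers and bounds arise as $K$\--linear combinations and that the unit factors can be absorbed into $\cU_n$ --- is in spirit what the paper itself does: the paper offers no new argument for the $\Vect\cA$ and $\Alg_n\cA$ properties, it merely records that they appear in the \emph{proof} of Denef's Theorem~7.3 (not in its statement), and the only portion it actually writes out is the footnoted passage from cells mod $K^\times$ to fitting cells mod an arbitrary clopen finite-index subgroup $\GG$, obtained by splitting each cell of Denef's decomposition into finitely many cells mod $\GG$ with the same center and bounds.

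There is, however, a step in your plan that would fail: the initial reduction to a single cell. The conclusion requires $\cD$ to be a partition of $\bigcup\cA$ which is finer than $\cA$, so when two cells $A_1,A_2\in\cA$ overlap you must decompose $A_1\cap A_2$ and $A_1\setminus A_2$ into cells; over a common piece of the socles these are boolean combinations of conditions involving the two \emph{different} centers $c_{A_1}$ and $c_{A_2}$, and no refinement of the socles resolves this overlap in the fibre direction. Turning such a boolean combination into cells, and relating $t-c_{A_1}$ to $t-c_{A_2}$ up to a $\cU_n$ factor (which is precisely where case~(b) of $\Alg_n\cA$, with products of linear combinations of centers, comes from), is the content of the theorem for a family --- so your merging step is as hard as the statement you are proving. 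The repair is simply to feed all the cells of $\cA$ (their centers, bounds and coset conditions) into Denef's construction simultaneously, which is what both Theorem~7.3 and the paper's footnote do. Two further omissions: you never convert the cells mod $K^\times$ (or mod $\PN_N^\times$) produced by the classical statement back into cells mod the given group $\GG$, which is the one step the paper does spell out; and the claim that the fitting normalisation ``costs only a further partition of the socle'' needs an argument, since by Proposition~\ref{pr:fit-cell} making $\mu_D,\nu_D$ fitting amounts to replacing them by bounds whose valuations lie in $vG_D$, and one must check this replacement is compatible with membership in $\Vect\cA$.
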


This is essentially theorem~7.3 of \cite{dene-1984}. Indeed, for any
given integer $N$, if $n$ is large enough then $1+\pi^nR\subseteq \PN_N\cap R^\times$.
Hence $\cU_n(x,t)$ in conditions~(\ref{it:D1-n-good}),
(\ref{it:D1-n-bad}) of the definition of $\Alg_n\cA$ can be written
$u(x,t)^N$ with $u$ a semi-algebraic function from $A$ to $R^\times$
(thanks to Theorem~\ref{th:skol}). This is how the above result is
stated in \cite{dene-1984} with $\GG=K^\times$. Our slightly more precise
form, as well as the additional properties involving $\Vect\cA$ and
$\Alg_n\cA$, appear only in the proof of theorem~7.3 in
\cite{dene-1984} (still with $\GG=K^\times$). The generalization to fitting
cells mod an arbitrary clopen semi-algebraic group $\GG$ with finite
index in $K^\times$ is straightforward\footnote{Here is a sketchy proof.
  For each $A\in\cA$ let $B_A$ be the cell mod $K^\times$ with the same
  center of bounds as $A$. Denef's construction applied to the family
  $\cB$ of all these cells $B_A$ gives a family $\cC$ of cells mod
  $K^\times$ refining $\cB$. Each $C$ in $\cC$ is the union of a finite
  family $\cD_C$ of cells mod $\GG$ with the same center and bounds as
  $C$, each of which is clopen in $C$ (because $\GG$ is clopen in
  $K^\times$ with finite index). For each $A\in\cA$ let $\cD_A$ be the family
  all the cells in $\bigcup\{D_C\tq C\in\cC\}$ contained in $A$. The family
$\cD=\bigcup\cD_A\tq A\in\cA\}$ gives the conclusion.}.

Given a polynomial function $f$, we say that a function
$h:X\subseteq K^m\to K$ belongs to $\coalg(f)$ if there exists a finite
partition of $X$ into definable pieces $H$, on each of which the degree
in $t$ of $f(x,t)$ is constant, say $e_H$, and such that the following
holds. If $e_H\leq 0$ then $h(x)$ is identically equal to $0$ on $H$.
Otherwise there is  a family $(\xi_1,\dots,\xi_{r_H})$ of $K$\--linearly
independent elements in an algebraic closure of $K$ and a family of
definable functions $b_{i,j}:H\to K$ for $1\leq i\leq e_H$ and $1\leq j\leq r_H$,
and $a_{e_H}:H\to K^*$ such that for every $x$ in $H$
\begin{displaymath}
  f(x,T)=a_{e_H}(x)\prod_{1\leq i\leq e_H}
         \bigg(T-\sum_{1\leq j\leq r_H}b_{i,j}(x)\xi_j\bigg)
\end{displaymath}
and
\begin{displaymath}
  h(x)=\sum_{1\leq i\leq e_H}\sum_{1\leq j\leq r_H}\alpha_{i,j}b_{i,j}(x)
\end{displaymath}
with the $\alpha_{i,j}$'s in $K$. If $\cF$ is any family of
polynomial functions we let $\coalg(\cF)$ denote the set of
linear combinations of functions in $\coalg(f)$ for $f$ in $\cF$.

\begin{theorem}[Denef]\label{th:D2}
  Let $\cF\subseteq K[X,T]$ be a finite family of polynomials, with $X$ an
  $m$\--tuple of variables and $T$ one more variable. Let
  $N\geq1$ be an integer and $\cA$ a family of boolean combinations of
  subsets of the form $f^{-1}(\PN_N)$ with $f\in\cF$. For every integer
  $n\geq 1$ there is a finite family of fitting cells mod $\PN_N^\times$ 
  refining $\cA$, with center and bounds in $\coalg(\cF)$, and for
  every such cell $H$ a positive integer $\alpha_{f,H}$ and a semi-algebraic
  function $h_{f,H}:\widehat{H}\to K$ such that for every $(x,t)\in H$:
  \begin{displaymath}
    f(x,t)=\cU_n(x,t)h_{f,H}(x)(t-c_H(x))^{\alpha_{f,H}}. 
  \end{displaymath}
\end{theorem}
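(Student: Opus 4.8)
The plan is to deduce Theorem \ref{th:D2} from Theorem \ref{th:D1} by first running a preliminary cell decomposition adapted to the polynomials in $\cF$, and then feeding the resulting cells into Denef's Theorem \ref{th:D1}. First I would recall the elementary $p$\--adic fact underlying the whole argument: if $g(T)=\prod_{j}(T-r_j)$ is a polynomial over an algebraically closed valued field, then for $t$ in a suitable neighbourhood, the factor $|t-r_j|$ is controlled by comparing $|t|$ (or $|t-c|$ for an appropriate center $c$) with the pairwise distances $|r_i-r_j|$; more precisely, after partitioning according to which root is closest to $t$, one gets $|g(t)|=\text{(unit)}\cdot|a|\cdot|t-c(x)|^{\deg g}$ where $c$ can be taken to be a distinguished root. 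This is the origin of the exponent $\alpha_{f,H}$ and of the factor $h_{f,H}$.

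Next I would make this uniform in the parameter $x$. Fix an algebraic closure and write each $f\in\cF$, on a definable piece $H_0$ where $\deg_T f$ is constant, as $f(x,T)=a(x)\prod_{i=1}^{e}(T-\beta_i(x))$, where the $\beta_i(x)$ are the algebraic-over-$K$ roots; choosing a $K$\--basis $\xi_1,\dots,\xi_r$ of the $K$\--span of the coordinates of the $\beta_i$, we get the $b_{i,j}(x)$ appearing in the definition of $\coalg(f)$. Now partition $H_0$ according to the valuation ordering of the differences $v(\beta_i(x)-\beta_{i'}(x))$ and $v(a(x))$; on each piece, one root $\beta_{i_0}(x)$ can serve as the center $c(x)$ (which is a $\coalg(\cF)$\--function since it is one of the $\sum_j b_{i,j}(x)\xi_j$), and on each piece the relevant bounds — the successive valuations of the $|\beta_i-\beta_{i_0}|$ — are again $K$\--linear combinations of the $b_{i,j}$, hence lie in $\coalg(\cF)$. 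This produces a finite family $\cA'$ of cells mod $\PN_N^\times$, refining $\cA$, with center and bounds in $\coalg(\cF)$, on each of which $|f(x,t)|=|h'_{f}(x)|\cdot|t-c(x)|^{\alpha_f}$ for the appropriate integer $\alpha_f$ and $h'_f\in\coalg(\cF)$.

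To upgrade the norm identity $|f(x,t)|=\cdots$ to an equality $f(x,t)=\cU_n(x,t)h_{f,H}(x)(t-c_H(x))^{\alpha_{f,H}}$, I would apply Theorem \ref{th:D1} to the family $\cA'$, with the same $n$, obtaining a finite family $\cD$ of fitting cells mod $\PN_N^\times$ refining $\cA'$ and belonging to $\Vect(\cA')$ and $\Alg_n(\cA')$. Membership in $\Vect(\cA')$ keeps the centers and bounds inside $\coalg(\cF)$ (a $K$\--linear combination of $\coalg(\cF)$\--functions is again in $\coalg(\cF)$). Membership in $\Alg_n(\cA')$ gives, for each $D\subseteq A\in\cA'$ and each $(x,t)\in D$, that $t-c_A(x)$ equals $\cU_n(x,t)$ times either $(t-c_D(x))$ or a function $h_{D,A}(x)\in R$; iterating the factorization of $f$ root by root — at each step replacing $t-\beta_i(x)$ by $\cU_n$ times either $t-c_D(x)$ or a center-difference, and absorbing all center-differences and the leading coefficient $a(x)$ into a single semi-algebraic function $h_{f,D}:\widehat D\to K$ — yields exactly the desired equality on each $D$. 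I would set $H=D$, $c_H=c_D$, $\alpha_{f,H}=\alpha_f$. The main obstacle, and the part deserving the most care, is the bookkeeping needed to check that the factorization of $f$ \emph{simultaneously} over all $f\in\cF$ can be arranged on one common refinement whose centers and bounds stay in $\coalg(\cF)$ while remaining compatible with the fitting-cell output of Theorem \ref{th:D1}; this is where one must be careful that each successive closest-root comparison is governed by a $\coalg(\cF)$\--bound, and that the $\cU_n$ errors compound multiplicatively without leaving $U_{1,n}$.
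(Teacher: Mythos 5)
There is a genuine gap, and it sits exactly at the step you call ``upgrading'' the norm identity to the equality $f(x,t)=\cU_n(x,t)h_{f,H}(x)(t-c_H(x))^{\alpha_{f,H}}$. The $\Alg_n$ condition in Theorem~\ref{th:D1} only relates the \emph{centers of the cells}: it says that $t-c_A(x)$ equals $\cU_n(x,t)$ times either $t-c_D(x)$ or a function of $x$. It carries no information whatsoever about the factors $t-\beta_i(x)$ of $f$, so it cannot convert the identity $|f(x,t)|=|h'_f(x)|\cdot|t-c(x)|^{\alpha_f}$ into a factorization of $f$ itself with a unit in $1+\pi^nR$. The real content here is Denef's Lemma~7.2 and the induction in his Theorem~7.3: one needs cells carrying not just the valuation ordering of the root differences but congruence (angular component) conditions on $t-c(x)$, so that each factor $t-\beta_i(x)$ is either $(t-c(x))(1+\pi^n(\cdot))$ or $(c(x)-\beta_i(x))(1+\pi^n(\cdot))$; the case where $|t-c(x)|$ is comparable to $|c(x)-\beta_i(x)|$ is not disposable bookkeeping but is precisely where new centers are introduced and the degree is lowered by induction. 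Your step~1 also chooses a closest root $\beta_{i_0}(x)=\sum_j b_{i_0,j}(x)\xi_j$ as center; this is in general not $K$\--valued (the $\xi_j$ lie in a proper algebraic extension), hence is neither an admissible center of a cell over $K$ nor a $\coalg(\cF)$\--function, since $\coalg(f)$ consists of $K$\--linear combinations $\sum_{i,j}\alpha_{i,j}b_{i,j}$ with $\alpha_{i,j}\in K$. Relatedly, when one does factor $f$ over an algebraic closure, the individual units live in the extension field, and one must group Galois-conjugate roots (or argue through valuations and angular components) to land back in a $K$\--valued $\cU_n$; your sketch does not address this.

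For comparison, the paper does not reprove any of this: it invokes Denef's Theorem~7.3 of \cite{dene-1984} directly, observing from the proofs of his Lemma~7.2 and Theorem~7.3 that the units produced there are of the form $1+\pi^n\omega$ and that the centers and bounds lie in $\coalg(\cF)$; the only work done is then to refine the socles so that $h_{f,B}\PN_N^\times$ is constant, to split each cell mod $K^\times$ into finitely many cells mod $\PN_N^\times$ with the same center and bounds (whence $f(x,t)\PN_N^\times$ is constant on each piece, so each piece is contained in or disjoint from every $A\in\cA$), and to keep those pieces contained in $\bigcup\cA$. If you want a self-contained argument rather than a citation, you must supply the substance of Lemma~7.2 and the degree induction; deducing it from Theorem~\ref{th:D1} as stated is not possible.
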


\begin{proof}
  W.l.o.g. we can assume that every $f$ in $\cF$ is non constant and
  that $n$ is large enough so that $1+\pi^nR\subseteq \PN_N^\times$. Theorem~7.3 in
  \cite{dene-1984} gives a finite family of cells $B$ mod $K^\times$
  partitioning $K^m$, and for each of them a positive integer
  $\alpha_{f,B}$ and semi-algebraic functions $u_{f,B}:B\to R^\times$ and
  $h_{f,B}:\widehat{B}\to K$ such that:
  \begin{equation}\label{eq:D2}
    \forall(x,t)\in B,\ f(x,t)=u_{f,B}(x,t)^Nh_{f,B}(x)(t-c_B(x))^{\alpha_{f,B}} 
  \end{equation}
  Moreover the functions $u_{f,B}^N$ constructed in the proofs of
  lemma~7.2 and theorem~7.3 in \cite{dene-1984} are precisely of the
  form $1+\pi^n\omega_{f,B}$ for some semi-algebraic function $\omega_{f,B}$ on
  $B$, and the functions $c_B$, $\mu_B$, $\nu_B$ constructed there belong
  to $\coalg(\cF)$. Refining the socle of $B$ if necessary we can
  ensure that $h_{f,B}(x)\PN_N^\times$ is constant as $(x,t)$ ranges over
  $B$. On the other hand $B$ splits into finitely many cells mod
  $\PN_N^\times$, with the same center and bounds as $B$, because $\PN_N^\times$
  has finite index in $K^\times$. On each of these cells $H$,
  $f(x,t)\PN_N^\times$ is constant by (\ref{eq:D2}). Hence $H$ is either
  contained or disjoint from $A$, for every $A\in\cA$. So the family of
  all these cells $H$ which are contained in $\bigcup\cA$ gives the
  conclusion. 
\end{proof}

Using that every semi-algebraic function is piecewise continuous, the
cells mod $\PN_N^\times$ given by Theorem~\ref{th:D2} can easily be chosen
with continuous center and bounds. However it is not possible to
ensure that they are largely continuous (think of the case
where $\cA$ consists of a single semi-algebraic set which is itself
the graph of a semi-algebraic function which is not largely
continuous). Our aim, in the remainder of this section, is to find a
work-around. We are going to prove that it can be done, not exactly
for $\theta$ but for a function $\theta\circ u_\eta$ where $\eta\in K^m$ can be chosen
arbitrarily small and $u_\eta$ is the linear automorphism of $K^{m+1}$
defined by:
\begin{equation}\label{eq:def-u-eta}
  \forall(x,t)\in K^m\times K,\quad u_\eta(x,t)=(x+t\eta,t).
\end{equation}

\begin{remark}\label{re:gd-approx}
  The smaller $\eta$ is, the closer $u_\eta$ is to the identity map since
  $\|\eta\|$ is also the norm (in the usual sense for linear maps) of
  $u_\eta-\Id$. So the functions $\theta\circ u_\eta$ can be considered as
  ``arbitrarily small deformations'' of $\theta$. 
\end{remark}

In \cite{drie-1998} a good direction for a subset $S$ of $K^{m+1}$ is
defined as a non-zero vector $x=(x_1,\dots,x_{m+1})\in K^{m+1}$ such that
every line directed by $x$ has finite intersection with $S$. 
It is more convenient to identify such collinear vectors
hence we redefine {\df good directions} for $S$ as the points
$x=[x_1,\dots,x_{m+1}]$ in the projective space $\PP^m(K)$ such that every
affine line in $K^{m+1}$ directed by $x$ has finite intersection with
$S$. 

Analogously we call $x\in\PP^m(K)$ a {\df geometrically good direction}
for a family $\cF$ of polynomials in $K[X,T]$ if for
every algebraic extension $F$ of $K$ and every $f\in\cF$, $x$ is a good
direction for the zero set of $f$ in $F^{m+1}$. 

\begin{remark}\label{re:gd-pol}
  With the above notation, $[\eta,1]$ is a good direction for $S$ if and
  only if the projection of $u_\eta^{-1}(S)$ onto $K^m$ has finite
  fibers. Indeed for every $a\in K^m$ and every $t\in K$ we have:
  \begin{displaymath}
    (a,0)+t(\eta,1)\in S\iff (a+t\eta,t)\in S \iff (a,t)\in u_\eta^{-1}(S)
  \end{displaymath}
  Therefore $[\eta,1]$ is a geometrically good direction for $\cF$ if and
  only if for every algebraic extension $F$ of $K$ and every $f\in\cF$,
  the projection onto $F^m$ of the zero set of $f\circ u_\eta$ in $F^{m+1}$
  has finite fibers.
\end{remark}

\begin{lemma}[Good Direction]\label{le:gd-open}
  For every finite family $\cF$ of non-zero polynomials in $K[X,T]$,
  the set of geometrically good directions for $\cF$ contains a
  non-empty Zariski open subset of $\PP^m(K)$. In particular, for
  every non-zero $\varepsilon\in R$ there is $\eta\in R^m$ such that $\|\eta\|\leq|\varepsilon|$ and
  $[\eta,1]$ is a good direction for $\cF$. 
\end{lemma}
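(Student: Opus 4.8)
The plan is to reduce the problem to a single polynomial $f \in K[X,T]$ and to find, geometrically, the locus of directions $[x] \in \PP^m(K)$ that fail to be good for the zero set $Z(f) \subseteq \overline{K}^{m+1}$. The key observation is that a direction $[x]$ is \emph{bad} for $Z(f)$ precisely when some affine line directed by $x$ is entirely contained in $Z(f)$ (since $Z(f)$ is an algebraic hypersurface, a line meets it either in finitely many points or lies inside it). So I would first show that the set of bad directions for $f$ is contained in a proper Zariski closed subset of $\PP^m(K)$.

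To do this, write $f = f_d + f_{d-1} + \cdots + f_0$ in homogeneous components with respect to all $m+1$ variables, where $d = \deg f$ and $f_d \neq 0$. If a line $\ell(s) = a + s x$ (with $x \neq 0$, $a \in \overline{K}^{m+1}$) lies in $Z(f)$, then the polynomial $s \mapsto f(a + sx)$ is identically zero; its leading coefficient in $s$ is $f_d(x)$, so necessarily $f_d(x) = 0$. Thus every bad direction $[x]$ satisfies $f_d(x) = 0$, i.e. lies in the projective hypersurface $V(f_d) \subsetneq \PP^m(\overline{K})$. Since $f_d$ is a nonzero homogeneous polynomial with coefficients in $K$, the complement $\PP^m \setminus V(f_d)$ is a nonempty Zariski open set defined over $K$, and it consists of good directions for $Z(f)$ in \emph{every} algebraic extension $F$ of $K$ simultaneously (the argument only used $f_d \neq 0$, which is preserved under field extension). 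Intersecting these open sets over the finitely many $f \in \cF$ (and discarding the finitely many $f$ that are constant, which impose no condition) yields a nonempty Zariski open subset $\cO \subseteq \PP^m(K)$, defined over $K$, all of whose $K$-points are geometrically good directions for $\cF$; this proves the first assertion.

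For the second assertion I would use that $\PP^m(K)$ is Zariski-dense in $\PP^m$ (as $K$ is infinite, being a $p$-adically closed field) together with the structure of the chart $t \neq 0$: the map $\eta \mapsto [\eta, 1]$ identifies $K^m$ with an affine open subset of $\PP^m(K)$, and its preimage $\cO'$ of $\cO$ is a nonempty Zariski open subset of $K^m$ (nonempty because $V(f_d)$ cannot contain the whole chart $t\neq 0$ unless $f_d$ is divisible by... — more simply, because if it did then $\cO$ would miss all of $K^m$, contradicting density of $K$-points combined with $\cO \neq \emptyset$; one can arrange from the start, after a preliminary $K$-linear change of the $T$-variable, that $[0,\dots,0,1]$ itself is good, so that $\cO'$ contains the origin). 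Given nonzero $\varepsilon \in R$, the ball $\{\eta \in R^m \tq \|\eta\| \leq |\varepsilon|\}$ is Zariski-dense in $K^m$ (it is infinite in each coordinate), hence meets the nonempty Zariski open set $\cO'$; any $\eta$ in the intersection satisfies $\|\eta\| \leq |\varepsilon|$ and $[\eta,1] \in \cO$, so by Remark~\ref{re:gd-pol} it is a good direction for $\cF$.

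The main obstacle is purely bookkeeping: making sure the Zariski open set is genuinely defined over the base field $K$ (so that it has $K$-points, and indeed $R$-points in every small ball) rather than merely over $\overline{K}$, and handling the normalization that puts the distinguished direction $[0,\dots,0,1]$ inside it so that the affine chart $\eta \mapsto [\eta,1]$ actually sees the open set. Both are routine once one notes $f_d \in K[X,T]$ and that a $K$-rational linear change of coordinates in $T$ (alone) preserves the hypothesis that $\cF$ consists of nonzero polynomials while moving any chosen non-bad direction to $[0,\dots,0,1]$; the genuine use of \emph{polynomiality} — as the paper warns in the introduction — is exactly the step "a line meets a hypersurface finitely often or lies in it", which fails for the general basic functions of $p$-optimal structures.
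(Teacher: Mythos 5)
Your proposal is correct and is essentially the paper's own argument: bad directions are shown to lie in the zero locus of the top-degree homogeneous form (you take the leading form $f_d$ of each $f\in\cF$ and intersect the complements, the paper equivalently takes the leading form $p_\cF^\circ$ of the product $p_\cF$), and the second assertion follows because every ball in $K^m\subseteq\PP^m(K)$ is Zariski dense. Your extra worry about the chart $t\neq0$ is unnecessary -- the dehomogenization $g(X,1)$ of a non-zero homogeneous $g$ is automatically non-zero -- but your handling of it is harmless.
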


\begin{proof}
Let $p_\cF$ be the product of the polynomials in $\cF$, and $d$ its
total degree. Then $p_\cF$ can be written as $p_\cF=p_\cF^\circ-q_\cF$
with $p_\cF^\circ$ a non zero homogeneous polynomial of degree $d$ and
$q_\cF$ a polynomial of total degree $<d$. 

Let $b\in K^{m+1}$ be non-zero and $x$ the corresponding point in
$P^m(K)$. It is not a geometrically good direction for $\cF$ if and
only if for some algebraic extension $F$ of $K$ and some $a\in F^m$ the
line $a+F.b$ is contained in the zero set of $p_\cF$ in $F^{m+1}$,
that is $p_\cF(a+tb)=0$ for every $t\in F$ or equivalently
$p_\cF^\circ(a+Tb)=q_\cF(a+Tb)$. This implies that the degree in $T$ of
$p_\cF^\circ(a+Tb)$ is $<d$. In particular the coefficient of $T^d$ in
$p_\cF^\circ(a+Tb)$ is zero. A straightforward computation shows that this
coefficient is just $p_\cF^\circ(b)$. 

So every element in $\PP^m(K)$ which is outside the zero set of
$p_\cF^\circ$ is a geometrically good direction for $\cF$. This proves the
main point. Now if $K^m$ is identified with its image in $\PP^m(K)$ by
the mapping $a\mapsto[a,1]$ then every ball in $K^m$ is Zariski dense in
$\PP^m(K)$, so the last claim of the lemma holds. 
\end{proof}

\begin{lemma}\label{le:gd-strong}%\label{pr:gd-strong}
  Assume $\TR m$. Let $\eta\in K^m$ be such that $[\eta,1]$ is a geometrically
  good direction for $\cF$. Let $u_\eta$ be as in (\ref{eq:def-u-eta})
  and $\cF_\eta=\{f\circ u_\eta\tq f\in\cF\}$. Then every function in $\coalg(\cF_\eta)$
  whose graph is bounded is piecewise largely continuous. 
\end{lemma}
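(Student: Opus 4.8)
The plan is to reduce Lemma~\ref{le:gd-strong} to Theorem~\ref{th:piec-str-cont} (piecewise large continuity for semi-algebraic functions with bounded graph having finitely many limit values). So let $h\in\coalg(\cF_\eta)$ have bounded graph, say $h:X\subseteq K^m\to K$. After partitioning $X$ we may assume we are on one piece $H$ on which the degree $e=e_H$ in $T$ of $f\circ u_\eta$ is constant (for the relevant $f\in\cF$, or rather for the product $p_\cF$), and on which $h(x)=\sum_{i,j}\alpha_{i,j}b_{i,j}(x)$ with $f\circ u_\eta(x,T)=a_e(x)\prod_{i\leq e}\big(T-\sum_j b_{i,j}(x)\xi_j\big)$, the $\xi_j$ being $K$-linearly independent in an algebraic closure $\overline K$ of $K$. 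First I would observe that it suffices to show that $h$ has finitely many limit values at every point of $\overline X$, since then Theorem~\ref{th:piec-str-cont} applies directly (boundedness of the graph of $h$ being assumed, and $X$ bounded may be arranged since having finitely many limit values is the only hypothesis beyond boundedness — actually one should first intersect $X$ with a large ball and handle the unbounded part separately, but more simply: on the unbounded locus one can compose with the chart maps $\psi_I$ as in the preparation of Lemma~\ref{le:split-level}; however it is cleaner to note that $h$ coalg-defined still makes sense and just treat $h$ restricted to bounded pieces).

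The key point is the geometrically good direction hypothesis. Since $[\eta,1]$ is a geometrically good direction for $\cF$, by Remark~\ref{re:gd-pol} the zero set of $f\circ u_\eta$ in $\overline K^{m+1}$ projects onto $\overline K^m$ with finite fibers; that is, for generic (in fact every) $x$, the polynomial $(f\circ u_\eta)(x,T)$ is not identically zero, and in particular $e_H\geq 1$ on the pieces that matter, and the roots $\sum_j b_{i,j}(x)\xi_j$ for $1\leq i\leq e_H$ are exactly the finitely many points of the fiber over $x$. Now fix $a\in\overline X$. A limit value of $h$ at $a$ is obtained along some sequence (or rather, using saturation / Theorem~\ref{th:comp-inter}, along some curve or ultrafilter) $x\to a$ in $X$; along it, each root $\sum_j b_{i,j}(x)\xi_j$ of $f\circ u_\eta$ either stays bounded and converges, in $\overline K$, to a root of the limit polynomial $(f\circ u_\eta)(a,T)$ (this limit polynomial being non-zero of degree $\leq e_H$ because $[\eta,1]$ is a good direction), or escapes to infinity. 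The crucial finiteness input is therefore: the set of possible limits of the root-vectors $(b_{1,j}(x),\dots,b_{e_H,j}(x))_j$ is finite. This follows because these are, up to $K$-linear coordinates $\xi_j$, symmetric functions in the finitely many roots of a polynomial whose coefficients are the coefficients of $(f\circ u_\eta)(x,T)$, themselves semi-algebraic functions of $x$ with limits at $a$ (after a further finite partition so that each such coefficient, being a polynomial in $x$ and $\eta$, extends continuously — polynomials are continuous, so this is automatic).

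So the argument runs: the coefficients of $(f\circ u_\eta)(x,T)$ in $T$ are polynomials in $x$, hence extend continuously to $a$; their limits define a polynomial $P_a(T)\in K[T]$ which, by the good direction property applied at $a$ (or rather to lines through the point $[\eta,1]$), is non-zero — here is where I must be careful: the good direction property a priori concerns lines through points of the variety, not the limit polynomial, so the honest statement is that $\deg_T P_a\leq e_H$ and $P_a\neq 0$ because the leading coefficient $a_e(x)$ of $f\circ u_\eta$ has the form $p_{\cF_\eta}^\circ$-related constant, in fact by the computation in the proof of Lemma~\ref{le:gd-open} the degree-$d$ coefficient is the nonzero constant $p_\cF^\circ([\eta,1])$, so the leading coefficient of the product does not vanish and, descending to the individual $f$, after the partition defining $e_H$, $a_{e_H}(x)$ is bounded away from $0$ — this needs the partition to also split on $|a_{e_H}(x)|$, which is fine. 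Then the (multiset of) roots of $(f\circ u_\eta)(x,T)$ in $\overline K$ converge (along any convergent subnet, of which there are finitely many limit-multisets since root-finding is continuous on monic-ish polynomials with bounded-away leading coefficient and bounded coefficients — bounded because the graph of $h$, hence of the $b_{i,j}$, is bounded, which bounds the symmetric functions and hence all coefficients) to the roots of $P_a(T)$, a fixed finite multiset. Expressing the $b_{i,j}(x)$ as $K$-coordinates of these roots in the basis $(\xi_j)$, they converge to the corresponding coordinates of the roots of $P_a$; hence $h(x)=\sum\alpha_{i,j}b_{i,j}(x)$ has finitely many limit values at $a$ (bounded by a number depending only on $\deg p_\cF$ and the number of root-orderings).

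The main obstacle is the careful handling of ``convergence of roots'': one must either invoke continuity of roots of polynomials over the complete field $\overline K$ (available when $K$ is $p$-adic) or, for a general $p$-adically closed $K$, phrase everything first-order and transfer, using Theorem~\ref{th:comp-inter}/Theorem~\ref{th:skol} to realize limit values and Skolemize the root functions $b_{i,j}$ (noting they are semi-algebraic, being coalg-data), so that ``finitely many limit values'' becomes a statement about finiteness of a semi-algebraic fiber, provable by transfer to a finite extension $L$ of $\QQ_p$ where completeness is available. Concretely: the set $\{(a,w)\in\overline X\times K : w$ is a limit value of $h$ at $a\}$ is semi-algebraic (it is $(\overline X\times K)\cap\overline{\Gr h}$), its projection to $\overline X$ has the property that each fiber is finite iff $h$ has finitely many limit values everywhere, and this finiteness is exactly what the good-direction hypothesis buys via the above limit-of-roots analysis. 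Once finiteness of limit values is in hand, Theorem~\ref{th:piec-str-cont} (which already assumed $\TR m$) finishes the proof, after the routine reduction to bounded $X$ via the charts $K_I^m$, $\psi_I$ introduced before the proof of Lemma~\ref{le:split-level}, exactly as done there.
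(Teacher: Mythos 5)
Your overall strategy (establish that every $h\in\coalg(\cF_\eta)$ has finitely many limit values at each point of the closure of its domain, then invoke Theorem~\ref{th:piec-str-cont}) is the same as the paper's, but the step where you actually extract finiteness from the good-direction hypothesis does not go through as written. First, you justify $P_a\neq0$ and, more importantly, the non-degeneration of the fibre polynomials by claiming that the leading $T$\--coefficient of $f\circ u_\eta$ is the nonzero constant $p_\cF^\circ([\eta,1])$. That nonvanishing is only the \emph{sufficient} condition for goodness exhibited in the proof of Lemma~\ref{le:gd-open}; it is not a consequence of the hypothesis that $[\eta,1]$ is a geometrically good direction, which is all that Lemma~\ref{le:gd-strong} assumes. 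Under the actual hypothesis the degree in $T$ of $f_\eta(x,T)$ can drop as $x$ approaches the frontier of a piece $A_e$, the leading coefficient $a_{e}(x)$ can tend to $0$, and roots do escape to infinity; partitioning on $|a_e(x)|$ cannot make it bounded away from $0$ near such a boundary point. Second, your patch for the escaping roots --- ``bounded because the graph of $h$, hence of the $b_{i,j}$, is bounded'' --- is a non sequitur: boundedness of the particular combination $h=\sum\alpha_{i,j}b_{i,j}$ does not bound the individual root-coordinate functions $b_{i,j}$, and these are typically unbounded exactly in the degenerating situation. Third, the ``continuity of roots'' you lean on needs completeness of the field (or a genuine first-order transfer with parameters, which you only gesture at), whereas $K$ is an arbitrary $p$\--adically closed field.

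The missing idea, which is how the paper proceeds, is that no convergence-of-roots argument is needed at all: work in a finite Galois extension $F$ of $K$ splitting the relevant polynomials, note that the graphs of the root functions $\lambda_i=\sum_j b_{i,j}\xi_j$ \emph{and of their conjugates} $\sigma_k\circ\lambda_i$ are contained in the zero set $Z_F(f_\eta)$, which is closed, and whose projection to $F^m$ has finite fibres precisely because the direction is \emph{geometrically} good (Remark~\ref{re:gd-pol} applied to $F$). Hence the closure of each graph has finite fibres, i.e.\ each $\sigma_k\circ\lambda_i$ has finitely many limit values everywhere; inverting the matrix $(\sigma_k(\xi_j))$ expresses each $b_{i,j}$ as a linear combination of the conjugates of the single root $\lambda_i$, so finiteness of limit values passes to the $b_{i,j}$ and to $h$, and Theorem~\ref{th:piec-str-cont} then finishes. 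This uses only closedness of $Z_F(f_\eta)$ and the finite-fibre property, with no completeness, no bound on the roots, and no special position of $\eta$ beyond the stated hypothesis; your proposal would need to be repaired along these lines to be correct.
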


\begin{proof}
The functions in $\coalg(\cF_\eta)$ are linear combinations of functions
in $\coalg(f_\eta)$ for $f\in\cF$, hence it suffices to fix any $f$ in
$\cF$ and prove the result for $\coalg(f_\eta)$. Let $d$ be the degree in
$T$ of $f$, and $F$ a Galois extension of $K$ in which every
polynomial in $K[T]$ of degree $\leq d$ 
factors. Given a basis $\cB=(\xi_1,\dots,\xi_r)$ of $F$ over $K$, for each
integer $e\leq d$ let $a_e\in K[X]$ be the coefficient of $T^e$ in $f_\eta$,
let $A_e\subseteq K^m$ be the set of elements $x\in K^m$ such that $f_\eta(x,T)$
has degree $e$ in $T$, and choose a family of semi-algebraic functions
$b_{i,j}:A_e\to K$ such that for every $x\in A_e$
\begin{equation}\label{eq:skol-facto2}
  f_\eta(x,T)=a_e(x)\prod_{i\leq e}\Big(T-\sum_{j\leq r}b_{i,j}(x)\xi_j\Big).
\end{equation}

Let $Z_F(f_\eta)$ denote the zero set of $f_\eta$ in $F$, and $\sigma_1,\dots,\sigma_r$ be
the list of $K$\--automorphisms of $F$. Fix an integer $i\leq e$, and for
every $x\in A_e$ let
\begin{displaymath}
  \lambda_i(x)=\sum_{j\leq r}b_{i,j}(x)\xi_j.
\end{displaymath}
For every $k\leq r$ we have
\begin{displaymath}
  \sigma_k(\lambda_i(x))=\sum_{j\leq r}b_{i,j}(x)\sigma_k(\xi_j).
\end{displaymath}
Inverting the matrix $(\sigma_k(\xi_j))_{j\leq r,k\leq r}$ gives for every $j\leq r$ the
function $b_{i,j}$ as a linear combination of $\sigma_k\circ\lambda_i$ for $k\leq r$. By
construction $\Gr \sigma_k\circ\lambda_i$ is contained in $Z_F(f_\eta)$. This set is
closed, hence $\overline{\Gr \sigma_k\circ\lambda_i}$ is contained in $Z_F(f_\eta)$ too. 

The projection of $Z_F(f_\eta)$ onto $F^m$ has finite fibers since $\eta$ is
a good direction for $\cF$ (see Remark~\ref{re:gd-pol}). So the same
holds for the closure of the graph of $\sigma_k\circ\lambda_i$. This means that
each $\sigma_k\circ\lambda_i$ has finitely many different limit values at every point of
$\overline{A_e}$. Obviously each $b_{i,j}$ inherits this property,
hence so does every $h\in\coalg f_\eta$. If moreover the graph of $h$ is
bounded, it then follows from Theorem~\ref{th:piec-str-cont} (using
$\TR m$) that $h$ is piecewise largely continuous. 
\end{proof}

Now we can turn to the ``largely continuous cell preparation 
up to small deformation'' which was the aim of this section. We obtain
it by combining the above construction based on good directions and the
classical cell preparation theorem for semi-algebraic functions from
Denef (Corollary~6.5 in \cite{dene-1984}) revisited by Cluckers
(Lemma~4 in \cite{cluc-2001}). 

\begin{theorem}\label{th:large-cont-prep}
  Assume $\TR m$. Let $(\theta_i:A_i\subseteq K^{m+1}\mapsto K)_{i\in I}$ be a finite family of
  semi-algebraic functions whose domains $A_i$ are bounded. Then for
  some integer $e\geq1$ and all integers $n,N\geq1$ there exists a tuple
  $\eta\in K^m$, an integer $M_0>2v(e)$, an integer $N_0$ divisible by
  $eN$, and a finite family $\cD$ of largely continuous fitting cells
  mod $Q_{N_0,M_0}^\times$, such that $\cD$ refines $\{u_\eta^{-1}(A_i)\tq i\in
  I\}$ and such that for every $i\in I$, every $D\in\cD$ contained in
  $u_\eta^{-1}(A_i)$ and every $(x,t)\in D$
  \begin{displaymath}
     \theta_i\circ u_\eta(x,t)=\cU_{e,n}(x,t)h_{i,D}(x)
    \big[\lambda_D^{-1}\big(t-c_D(x)\big)\big]^\frac{\alpha_{i,D}}{e} 
  \end{displaymath}
  where $u_\eta$ is as in (\ref{eq:def-u-eta}), $h_{i,D}:\widehat{D}\to K$
  is a semi-algebraic function and $\alpha_{i,D}\in\ZZ$. 
  
  Moreover the set of $\eta\in K^m$ having this property is Zariski dense
  (in particular $\eta$ can be chosen arbitrarily small), and the
  integers $e$, $M$ can be chosen arbitrarily large (in the sense of
  footnote~\ref{ft:arbit-large}).
\end{theorem}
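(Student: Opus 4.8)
The plan is to combine the classical $p$-adic cell preparation theorem for semi-algebraic functions (Corollary~6.5 of \cite{dene-1984}, in the form of Lemma~4 of \cite{cluc-2001}) with the good-direction machinery of Lemmas~\ref{le:gd-open} and~\ref{le:gd-strong}. The first and main task is to extract from the \emph{proof} of Denef's preparation theorem — using the refined forms of Theorems~\ref{th:D1} and~\ref{th:D2} recorded above, in particular the $\coalg$ notation — a finite family $\cF\subseteq K[X,T]$ depending only on the $\theta_i$'s, with the following feature: for \emph{every} $\eta\in K^m$, the preparation theorem applied simultaneously to the functions $\theta_i\circ u_\eta$ and to the sets $u_\eta^{-1}(A_i)$ yields a finite family of fitting cells mod $Q_{N_1,M_1}^\times$ refining $\{u_\eta^{-1}(A_i)\tq i\in I\}$, on each cell $D$ of which
\[
  \theta_i\circ u_\eta(x,t)=\cU_{e_1,n_1}(x,t)\,h(x)\,\bigl[\lambda_D^{-1}(t-c_D(x))\bigr]^{\alpha_1/e_1},
\]
with the center $c_D$, the bounds $\mu_D,\nu_D$, and all the auxiliary functions belonging to $\coalg(\cF_\eta)$, where $\cF_\eta=\{f\circ u_\eta\tq f\in\cF\}$. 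This is the main obstacle, and it is pure bookkeeping: one must check that the polynomials manufactured by Denef's algorithm (leading coefficients, discriminants, and the auxiliary polynomials witnessing membership in $\coalg$), when it is run on $\theta_i\circ u_\eta$, coincide with those obtained by running it on $\theta_i$ and substituting $u_\eta$ — so that $\cF$ can be fixed once and for all, independently of $\eta$. This is exactly why the machinery of Section~\ref{se:cell-dec} was set up.

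Granting this, I would choose $\eta\in K^m$ so that $[\eta,1]$ is a geometrically good direction for $\cF$; by Lemma~\ref{le:gd-open} the set of such $\eta$ is Zariski dense and meets every ball, which gives the final assertion of the theorem about $\eta$. Apply the preparation of the previous paragraph with this $\eta$ and call $\cD_0$ the resulting family of fitting cells. Since each $A_i$ is bounded and $u_\eta$ is a linear automorphism, $u_\eta^{-1}(A_i)$ is bounded, hence so is every $D\in\cD_0$; an elementary ultrametric computation then shows that a bounded cell has bounded center and bounded bounds, so that $c_D$, $\mu_D$ and $\nu_D$ all have bounded graph. As these functions lie in $\coalg(\cF_\eta)$ and $[\eta,1]$ is good for $\cF$, Lemma~\ref{le:gd-strong} (which invokes $\TR m$ through Theorem~\ref{th:piec-str-cont}) shows that $c_D$, $\mu_D$, $\nu_D$ are piecewise largely continuous. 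Replacing the socle $\widehat D$ of each $D$ by the pieces of the common refinement of the three corresponding partitions, and restricting each cell accordingly — an operation that preserves the cell structure as well as the fitting and refining properties — yields a finite family $\cD$ of \emph{largely continuous} fitting cells, still refining $\{u_\eta^{-1}(A_i)\tq i\in I\}$, each carrying the preparation formula above.

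It remains to adjust the parameters. Fix $n,N\geq1$ and prescribed $e_*,M_*\geq1$. Let $e$ be a common multiple of $e_*$ and of the exponent $e_1$ produced by the preparation (taking higher roots being harmless), let $N_0$ be a common multiple of $eN$ and $N_1$, and pick $M_0$ larger than $M_*$, than $2v(e)$ and than $2v(N_0)$. Then $Q_{N_0,M_0}^\times\subseteq\PN_{N_0}^\times\subseteq\PN_{N_1}^\times$, so each cell mod $Q_{N_1,M_1}^\times$ splits into finitely many cells mod $Q_{N_0,M_0}^\times$ having the same center and bounds and clopen in it; thus we may assume $\cD$ consists of fitting, largely continuous cells mod $Q_{N_0,M_0}^\times$. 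For $D\in\cD$ the element $\lambda_D^{-1}(t-c_D(x))$ lies in $Q_{N_0,M_0}^\times$, and since $e\mid N_0$ and $M_0>2v(e)$ the map $x\mapsto x^{1/e}$ is defined on $Q_{N_0,M_0}$ by Lemma~\ref{le:Hensel-DP}, so $\bigl[\lambda_D^{-1}(t-c_D(x))\bigr]^{\alpha_{i,D}/e}$ makes sense. A routine matching of exponents (with $\alpha_{i,D}=\alpha_1\,e/e_1\in\ZZ$), absorption of the harmless factor $\lambda_D^{\alpha_1}$ into $h_{i,D}$, and the inclusion $\cU_{e_1,n_1}\subseteq\cU_{e,n}$ (legitimate once $e_1\mid e$ and $n_1\geq n$, which we ensure by taking the preparation parameter $n_1$ large enough, using Remark~\ref{re:racine-de-U} if the unit modulus must be raised) then bring the formula into the shape required by the theorem. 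Since $M_0$ and $e$ were only bounded below, the concluding ``moreover'' follows as well.
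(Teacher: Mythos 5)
Your overall skeleton --- choose a good direction via Lemma~\ref{le:gd-open}, use boundedness of the $A_i$'s plus Lemma~\ref{le:gd-strong} (hence $\TR m$ through Theorem~\ref{th:piec-str-cont}) to make the cell data largely continuous after refining socles, then adjust groups and exponents --- is indeed the paper's. But the step you flag as the ``main obstacle'' and dismiss as pure bookkeeping is a genuine gap, and the commutation you propose to check is false as stated. The shear $u_\eta(x,t)=(x+t\eta,t)$ changes degrees in $T$ (already $f=X_1$ becomes $X_1+\eta_1T$), so the leading coefficients, discriminants and auxiliary polynomials produced by Denef's algorithm run on $\theta_i\circ u_\eta$ are \emph{not} the compositions with $u_\eta$ of those produced for $\theta_i$; there is no such identity to verify. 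Moreover the quantifiers of the statement require $e$ to be fixed before $\eta$ (and independently of $n,N$), which your route would only secure through this false uniformity, since the exponent produced by a preparation of $\theta_i\circ u_\eta$ a priori depends on $\eta$.

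The paper sidesteps all of this by splitting the preparation into two stages in a specific order. First, Corollary~6.5 of \cite{dene-1984} is applied to the \emph{undeformed} $\theta_i$: this yields the exponent $e$ and, on pieces of $A_i$, an identity $\theta_i^{e}=\cU_{n_0}\cdot\tilde u\cdot f/g$ with $f,g$ polynomials; these polynomials, together with polynomials defining the pieces via $\PN_{N_0}$\--conditions, are collected into a finite family $\cF$ depending only on the $\theta_i$'s. Composing this pointwise identity with $u_\eta$ is trivial. Only then is $\eta$ chosen good for $\cF$ (Lemma~\ref{le:gd-open}), and Theorem~\ref{th:D2} is applied \emph{after} deformation, directly to $\cF_\eta=\{f\circ u_\eta\tq f\in\cF\}$; its conclusion automatically places centers and bounds in $\coalg(\cF_\eta)$, which is exactly what Lemma~\ref{le:gd-strong} needs. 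No comparison between ``run the algorithm then substitute'' and ``substitute then run the algorithm'' ever occurs. A second, smaller omission: since stage one prepares $\theta_i^{e}\circ u_\eta$ rather than $\theta_i\circ u_\eta$, the final formula requires extracting $e$\--th roots --- showing the resulting factor $\tilde h_{i,D}$ takes values in $\PN_e$ (using Remark~\ref{re:racine-de-U} to write the unit as an $e$\--th power), invoking Theorem~\ref{th:skol} to choose a root $h_{i,D}$, and absorbing the $\UU_e$\--ambiguity into the $\cU_{e,n}$ factor --- whereas your write-up assumes the hypothetical preparation already delivers $\theta_i\circ u_\eta$ itself in the desired shape.
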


\begin{remark}\label{re:zero-infty}
  The above expression of $\theta_i\circ u_\eta$ is well defined because $e$
  divides $N_0$, $M_0>2v(e)$ and $\lambda_D^{-1}(t-c_D(x))$ belongs to
  $Q_{N_0,M_0}$ for every $(x,t)\in D$ (see the definition of $x\mapsto
  x^\frac{1}{e}$ on $Q_{N_0,M_0}$ after Lemma~\ref{le:Hensel-DP}). Of
  course if $D$ is of type $0$, then $\lambda_D=t-c_D(x)=0$ and we use our
  conventions that $0^{-1}=\infty$ and $\infty.0=1$.
\end{remark}

If we were only interested in the existence of such a preparation
theorem with largely continuous cells for $\theta_i\circ u_\eta$, the integer $N$
would be of no use and could be taken equal to $1$. However it will be
convenient to allow different values of $N$ when
we will use Theorem~\ref{th:large-cont-prep} in the proof the
Triangulation Theorem.

\begin{proof}
Let $e_*,M_*\geq1$ be arbitrary integers.
Corollary~6.5 in \cite{dene-1984} applied to each $\theta_i$ gives an
integer $e_i\geq 1$ and a family $\cA_i$ of semi-algebraic sets
partitioning $A_i$ such that for every every $A$ in $\cA_i$ and every
$(x,t)$ in $A$:
\begin{equation}\label{eq:theta-e-u}
  \theta_i^{e_i}(x,t)=u_{i,A}(x,t)\frac{f_{i,A}(x,t)}{g_{i,A}(x,t)}
\end{equation}
where $u_{i,A}$ is a semi-algebraic function from $A$ to $R^\times$ and
$f_{i,A}$, $g_{i,A}$ are polynomial functions such that
$g_{i,A}(x,t)\neq0$ on $A$. Replacing if necessary each $e_i$ by a common
multiple $e$ of them and of $e_*$, we can assume that $e_i=e$ for
every $i$ and $e$ is divisible by $e_*$. Let $\cA$ be a 
refinement of $\bigcup_{i\in I}\cA_i$.

Fix any two integers $n,N\geq1$ and any integer $n_0$ such
that $n_0\geq n+v(e)$ and $n_0>2v(e)$. Since $D^{n_0}R^\times$ is a subgroup
of $R^\times$ with finite index, every $A\in\cA$ splits into finitely many
semi-algebraic pieces on each of which $u_{i,A}$ is constant modulo
$D^{n_0}R^\times$ (for every $i\in I$ such that $A\subseteq A_i$). Thus, refining
$\cA$ if necessary, (\ref{eq:theta-e-u}) can be replaced, for every
$A$ in $\cA$ contained in $A_i$ and every $(x,t)$ in $A$, by
\begin{equation}
  \theta_i^{e}(x,t)=\cU_{n_0}(x,t)\tilde u_{i,A}\frac{f_{i,A}(x,t)}{g_{i,A}(x,t)}
  \label{eq:theta-e-U}
\end{equation}
with $\tilde u_{i,A}\in R^\times$. 

Each $A$ in $\cA$ is semi-algebraic. So there is
a finite family $\cB$ of semi-algebraic sets refining $\cA$, an
integer $N_0\geq1$ and a finite list $\cF$ of non-zero polynomials in
$m+1$ variables such that every element of $\cB$ is a boolean
combinations of sets $f^{-1}(\PN_{N_0})$ with $f\in\cF$. By
Remark~\ref{re:N-mac}, $N_0$ can be chosen divisible by $eN$.
Expanding $\cF$ if necessary, we can assume that all the polynomials
$f_{i,A}$ and $g_{i,A}$ in (\ref{eq:theta-e-U}) also belong to $\cF$,
except those which are equal to the zero polynomial. 

Lemma~\ref{le:gd-open} gives $\eta\in K^{m+1}$ such that $[\eta,1]$ is a
geometrically good direction for $\cF_\eta$, where $\cF_\eta=\{f\circ u_\eta\tq
f\in\cF\}$. Note that every set in $\cA_\eta=\{u_\eta^{-1}(A)\tq A\in\cA\}$ is a
boolean combination of sets $\smash{f_\eta^{-1}(\PN_{N_0})}$ with $f_\eta\in\cF_\eta$.
Denef's Theorem~\ref{th:D2} applied to $\cF_\eta$ gives a finite family
$\cC$ of fitting cells mod $\PN_{N_0}^\times$ which refines $\cA_\eta$ and
whose center and bounds belong to $\coalg\cF_\eta$, such that for every
$f\in\cF$, every $C\in\cC$ and every $(x,t)\in C$
\begin{equation}
  f_\eta(x,t)=\cU_{n_0}(x,t)h_{f,C}(x)\big(t-c_C(x)\big)^{\alpha_{f,C}}
  \label{eq:f-eta}
\end{equation}
where $h_{f,C}:\widehat{C}\to K$ is a semi-algebraic function and
$\alpha_{f,C}$ is a positive integer. We removed the zero polynomial from
$\cF$, but obviously (\ref{eq:f-eta}) holds for $f=0$ as well, by
taking $h_{f,C}=0$ in that case. Each $A_i$ is bounded hence so is their
union $\bigcup\cA$ as well as $\bigcup\cA_\eta$. So the center and bounds of every
cell in $\cC$ must be bounded functions with bounded domain. By
Lemma~\ref{le:gd-strong} (assuming $\TR m$) these functions are
piecewise largely continuous. Refining the socle of $\cC$ if
necessary, and $\cC$ accordingly, we can then reduce to the case where
every cell in $\cC$ is largely continuous. Note that $\cU_{n_0}\circ
u_\eta=\cU_{n_0}$, so by combining (\ref{eq:theta-e-U}) and
(\ref{eq:f-eta}) we get that for every $i\in I$, every $C\in\cC$ contained
in $u_{\eta}^{-1}(A_i)$ and every $(x,t)\in C$
\begin{equation}
  \theta_{i,\eta}(x,t)^{e}=\cU_{n_0}(x,t)h_{i,C}(x)\big(t-c_C(x)\big)^{\alpha_{i,C}}
  \label{eq:theta-C}
\end{equation}
where $\theta_{i,\eta}=\theta_i\circ u_\eta$, $h_{i,C}:\widehat{C}\to K$ is a
semi-algebraic function and $\alpha_{i,C}\in\ZZ$. For any
integer $M_0>2v(e)$, $Q_{N_0,M_0}^\times$ is a subgroup with finite index in
$\PN_{N_0}^\times$ hence every such cell $C$ mod $\PN_{N_0}^\times$ splits into
finitely many cells $D$ mod $Q_{N_0,M_0}^\times$ with the same center, bounds
and type as $C$. The integer $M_0$ can be chosen arbitrarily large,
in particular greater than $M_*$. Let $\cD$ be the family of all these
cells $D$. From (\ref{eq:theta-C}) and Lemma~\ref{le:Hensel-DP} we
derive that for every $i\in I$, every $D\in\cD$ contained in
$u_{\eta}^{-1}(A_i)$ and every $(x,t)\in D$
\begin{equation}
  \theta_{i,\eta}(x,t)^{e}=\cU_{n_0}(x,t)\tilde h_{i,D}(x)
  \Big(\big[\lambda_D^{-1}\big(t-c_D(x)\big)\big]^\frac{\alpha_{i,D}}{e}\Big)^{e}
  \label{eq:theta-D}
\end{equation}
where $\tilde h_{i,D}=h_{i,C}$ and $\alpha_{i,D}=\alpha_{i,C}$ with $C$ the unique
cell in $\cC$ containing $D$. The factor $\cU_{n_0}$ in
(\ref{eq:theta-D}) can be written $\cU_{n_0-v(e)}^{e}$ by
Remark~\ref{re:racine-de-U}. Thus (\ref{eq:theta-D}) implies that
$\smash{\tilde h_{i,D}}$ takes values in $\PN_e$. So by Theorem~\ref{th:skol}
there is a semi-algebraic function $h_{i,D}$ such that
$\tilde h_{i,D}=h_{i,D}^{e}$. As a consequence, from (\ref{eq:theta-D}) it
follows that there is a semi-algebraic function $\chi_{i,D}$ with values
in $\UU_e$ such that for every $(x,t)\in D$
\begin{equation}
  \theta_{i,\eta}(x,t)=\chi_{i,D}(x,t)\cU_{n_0-v(e)}(x,t)h_{i,D}(x)
  \big[\lambda_D^{-1}\big(t-c_D(x)\big)\big]^\frac{\alpha_{i,D}}{e}
  \label{eq:theta-chi}
\end{equation}
By construction $n_0-v(e)\geq n$ hence the factor $\cU_{n_0-v(e)}$
can {\it a fortiori}\, be replaced by $\cU_n$. Then $\chi_{i,D}\cU_n$
(which is just $\cU_{e,n}$) replaces $\chi_{i,D}\cU_{n_0-v(e)}$ in
(\ref{eq:theta-chi}), which proves the result. 
\end{proof}

\section{Cellular complexes}
\label{se:cont-comp}

For this and the next section, let $\GG$ be a fixed semi-algebraic
clopen subgroup of $K^\times$ with finite index. Then $v\GG$ is a subgroup
of $\cZ$ with finite index, hence $v\GG=N_0\cZ$ for some integer
$N_0\geq1$. Our aim in these two sections is to prove that every finite
family of bounded largely continuous fitting cells mod $\GG$, such as
the one given by Theorem~\ref{th:large-cont-prep}, can be refined in a
complex of cells mod $\GG$ satisfying certain restrictive assumptions
defined below. 

\paragraph{Notation.} 
For every largely continuous fitting cell $A$ mod $\GG$ in $K^{m+1}$
with socle $X$, recall that $A=(c_A,\mu_A,\nu_A,G_A)$ is a presented cell.
For every semi-algebraic set $Y$ contained in $\overline{X}$, $(\bar
c_A,\bar \nu_A,\bar \mu_A,G_A)_{|Y}$ is then also a largely continuous
presented cell mod $\GG$, provided the restrictions to $Y$ of $\bar\nu_A$ and
$\bar\mu_A$ either take values in $K^\times$ or are constant, and the
underlying set of tuples $(y,t)\in Y\times K$ defined by
\begin{equation}
  |\bar\mu_A(y)|\leq|t-\bar c_A(x)|\leq|\bar\nu_A(x)|\mbox{ \ and \ }
  t-\bar c_A(x)\in G_A
  \label{eq:partial-A}
\end{equation}
is non-empty. Similarly, the sets $\partial^0_YA$ and $\partial^1_YA$ defined below
are (if non-empty) largely continuous fitting cells mod $\GG$
contained in $\overline{A}\cap(Y\times K)$.
\begin{itemize}
  \item
    $\partial_Y^0 A=(\bar c_A,0,0,\{0\})_{|Y}$ if $\bar\nu_A=0$ on $Y$, $\partial_Y^0A=\emptyset$
    otherwise;
  \item 
    $\partial_Y^1 A=(\bar c_A,\bar \nu_A,\bar \mu_A,G_A)_{|Y}$ if $\bar\mu_A\neq0$ on
    $Y$, $\partial_Y^1A=\emptyset$ otherwise.
\end{itemize}
If non empty the underlying set of $\partial_Y^0 A$ is the graph of the
restriction of $\bar c_A$ to $Y$, while the underlying set of $\partial_Y^A$
is the set of $(x,t)\in Y\times K$ satisfying (\ref{eq:partial-A}).
For example, when $\bar\nu_A=0\neq\bar\mu_A$ on $Y$ and $\nu_A\neq0$ on $X$, we can
intuitively represent these sets as follows. 

\begin{center}
  \begin{tikzpicture}
    \small
    \def\fonctioncA{plot[domain=0:2] (\x,{.3+\x*(6-\x)/26})}
    \newcommand{\fonctioncB}[2]{plot[domain=0:2] (\x,{#1+\x*(6-\x)/#2})}
    \newcommand{\cellule}[3]{
        \fill[color=gray!#1] 
          plot[domain=#2:#3] (\x,{.3+\x*(6-\x)/10}) --
          plot[domain=#3:#2] (\x,{1.2+\x*(6-\x)/17}) -- cycle;
      }

    \cellule{20}{0}{2};

    \draw[thin] (0,-.1) node[below]{$Y$} -- node[below]{$X$} (2,-.1);
    \draw[thin] (0,-.1) -- (0,1.7);
    \draw (0,-.1) node{\tiny$\bullet$};

    % Centre c_A
    \draw \fonctioncA;
    \draw (1,.3) node{$c_A$};

    % Bornes de A 
    \draw[dotted] \fonctioncB{.3}{10};
    \draw[dotted] \fonctioncB{1.2}{17};

    % Etiquettes de A
    \draw 
      (1,1.2) node{$A$};

    % Bord de A
    \draw[very thick] 
      (0,.3) node{\tiny$\bullet$} node[left]{$\partial_Y^0A$} -- node[left]{$\partial_Y^1A$} (0,1.2);

  \end{tikzpicture}
\end{center}

Provided that on $Y$, $\bar\nu_A$ and $\bar\mu_A$ either take values in
$K^\times$ or are constant, $\partial^0_YA$ and $\partial^1_YA$ are (if non-empty) largely
continuous fitting cells mod $\GG$ contained in $\overline{A}\cap(Y\times K)$.

\begin{remark}\label{re:dd-partition}
  If $\cX$ is a partition of $\overline{X}$, the family of non-empty
  $\partial_Y^iA$ for $i\in\{0,1\}$ and $Y\in\cX$ form a partition of
  $\overline{A}$. 
\end{remark}

Given two cells $A$, $B$ in $K^{m+1}$ and an
integer $n\geq1$, we write $B\lhd^n A$ if $B\subseteq A$ and if there exists
$\alpha\in\{0,1\}$ and a semi-algebraic function $h_{B,A}:\widehat{B}\to K$ such
that for every $(x,t)$ in $B$:
\begin{displaymath}
  t-c_A(x)=\cU_n(x,t)h_{B,A}(x)^\alpha\big(t-c_B(x)\big)^{1-\alpha}  
\end{displaymath}
We call $h_{B,A}$ a {\df $\lhd^n$\--transition} for $(B,A)$.
If $\cA$, $\cB$ are families of cells in $K^{m+1}$ we write
$\cB\lhd^n\cA$ if $B\lhd^nA$ for every $B\in\cB$ and $A\in\cA$ such that $B$
meets $A$. A {\df $\lhd^n$\--system} for $(\cB,\cA)$ is
then the data of one $\lhd^n$\--transition for each possible
$(B,A)$ in $\cB\times\cA$.

\begin{remark}\label{re:alg-n-comp}
  For any two finite families $\cA$, $\cB$ of cells mod $\GG$,
  if $\cB$ refines $\cA$ and belongs to $\Alg_n\cA$ then $\cB\lhd^n\cA$. 
\end{remark}

A {\df closed $\lhd^n$\--complex} of cells mod $\GG$ is a finite family $\cA$ of
largely continuous fitting cells mod $\GG$ such that $\bigcup\cA$ is
closed, the socle of $\cA$ is a complex of sets and for every
$A,B\in\cA$ if $B$ meets $\overline{A}$ then for some
$i\in\{0,1\}$, $\partial_Y^iA$ is a cell\footnote{The condition $\partial^iYA$ is a cell
  means that on $Y$, $\bar\mu_A$ and $\bar\nu_A$ either take values in $K^\times$
  or are constant.\label{ft:derAi-cell}} and $B\lhd^n\partial^i_YA$, with $Y=\widehat{B}$.
If moreover $B=\partial^i_YA$ we call $\cA$ a {\df closed cellular complex}
mod $\GG$. As the terminology suggests, we are going to prove that
closed $\lhd^n$- and cellular complexes are complexes of sets in the
general sense of Section~\ref{se:notation} (see
Proposition~\ref{pr:cell-comp}). Any subset of a closed
$\lhd^n$\--complex (resp. closed cellular complex) is a {\df
$\lhd^n$\--complex} (resp. a {\df cellular complex}. As usually
we call them {\df monoplexes} if they form a tree with respect to the
specialization order.

When $\cA$ is a $\lhd^n$\--complex of cells mod $\GG$, for all $Y\in\widehat{\cA}$
and for all cells $A$, $B$ in $\cA$ such that $B$ meets $\overline{A}$,
there is an integer $\alpha\in\{0,1\}$ and a semi-algebraic function
$h_{B,A}:\widehat{B}\to K$ such that for every $(x,t)$ in $B$:
\begin{displaymath}
  t-\bar c_A(x)=\cU_n(x,t)h_{B,A}(x)^\alpha\big(t-c_B(x)\big)^{1-\alpha}.  
\end{displaymath}
An {\df inner $\lhd^n$\--system} for $\cA$ is the data of one
function $h_{B,A}$ as above for every possible $A,B\in\cA$.

\begin{proposition}\label{pr:cell-comp}
  Let $\cA$ be a closed $\lhd^n$\--complex of cells mod $\GG$. Then $\cA$ is a
  closed complex of sets. Moreover, for every $A,B\in\cA$ and every
  $Z\in\widehat{\cA}$ if $B$ meets $\partial_Z^0A$ then $B=\partial_Z^0A=\Gr\bar
  c_{A|Z}$.
\end{proposition}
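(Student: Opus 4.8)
The goal is to show that a closed $\lhd^n$-complex $\cA$ of cells mod $\GG$ is a closed complex of sets in the sense of Section~\ref{se:notation}, i.e.\ the cells are pairwise disjoint, each $A\in\cA$ is relatively open, and its frontier $\partial A$ is a union of elements of $\cA$; plus the supplementary claim about $\partial_Z^0A$. Pairwise disjointness and the fact that $\bigcup\cA$ is closed are part of the hypotheses, so the real content is: (i) $\partial A\subseteq\bigcup\cA$ for every $A$, with the frontier decomposing as a union of cells of $\cA$, and (ii) the rigidity statement that if $B\in\cA$ meets $\partial_Z^0A$ then $B$ is forced to equal $\partial_Z^0A=\Gr\bar c_{A|Z}$.

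The plan is to argue from the structure of $\overline{A}$. First I would establish that for a largely continuous fitting cell $A$ with socle $X$, its closure $\overline A$ is exactly $\bigcup_{Y}\bigl(\partial_Y^0A\cup\partial_Y^1A\bigr)$ as $Y$ ranges over a partition of $\overline X$ — this is essentially Remark~\ref{re:dd-partition} applied to a partition $\cX$ of $\overline X$ refining $\widehat{\cA}$ (such a partition exists and is a complex of sets because the socle of $\cA$ is assumed to be a complex). Intersecting with $X\times K$ recovers $A$ itself on the top piece $Y=X$, so $\partial A=\overline A\setminus A$ is the union of the $\partial_Y^iA$ for $Y\subsetneq X$ in $\cX$ (for $i=0,1$), together with the ``side'' of $A$ lying over $X$ itself if $A$ is not fitting-closed — but fittingness rules that out, which is exactly why we work with fitting cells. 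Next, for each such $Y=\widehat B$ with $B\in\cA$ meeting $\overline A$, the defining property of a closed $\lhd^n$-complex gives $i\in\{0,1\}$ with $\partial_Y^iA$ a cell and $B\lhd^n\partial_Y^iA$; since $B$ and $\partial_Y^iA$ have the same socle $Y$ and $B\subseteq\partial_Y^iA\subseteq\overline A$, and since $\bigcup\cA$ is closed while the $\partial_Y^iA$ are themselves fitting cells mod $\GG$ sitting inside $\bigcup\cA$, a counting/covering argument on each fibre over $y\in Y$ (using that cells mod $\GG$ in a fibre are either cosets of $\GG$ scaled by a norm condition, or singletons of type $0$, hence ``indecomposable'' in the relevant sense) forces $B=\partial_Y^iA$. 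This simultaneously shows $\partial A$ is covered by elements of $\cA$ and that $\cA$ is a closed complex.

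For the rigidity claim, suppose $B\in\cA$ meets $\partial_Z^0A$, which (when non-empty) is the graph of $\bar c_{A|Z}$, a set of type $0$. Then $B\subseteq\partial_Z^0A$ by the complex structure just established, so $\widehat B\subseteq Z$; but in fact $B$ meeting $\partial_Z^0A$ forces, via the $\lhd^n$-transition, that $t-\bar c_A(x)$ is divisible (up to a $\cU_n$ factor) by either $h_{B,A}(x)$ or $t-c_B(x)$, and on $\partial_Z^0A$ one has $t=\bar c_A(x)$ identically — so $t-c_B(x)$ must vanish on $B$, i.e.\ $B$ is also of type $0$ and is the graph of its own center, which therefore agrees with $\bar c_{A}$ on $\widehat B$. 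Combining with $B\subseteq\partial_Z^0A$ and the fact that $\partial_Z^0A$ has socle exactly $Z$ while $\widehat{\cA}$ is a partition forces $\widehat B=Z$, hence $B=\partial_Z^0A=\Gr\bar c_{A|Z}$.

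The main obstacle I anticipate is the bookkeeping in step (ii): turning ``$B\lhd^n\partial_Y^iA$ and both have socle $Y$ and $B\subseteq\partial_Y^iA\subseteq\bigcup\cA$ closed'' into the equality $B=\partial_Y^iA$. One has to rule out the possibility that $\partial_Y^iA$ properly contains $B$ together with other cells of $\cA$ in a way compatible with all the hypotheses; the resolution should come from fibrewise analysis — over a fixed $y\in Y$ a fitting cell mod $\GG$ of type $1$ is a set of the form $\{\,t: |\nu|\le|t-c|\le|\mu|,\ t-c\in\lambda\GG\,\}$, and such sets cannot be partitioned into more than one non-empty cell mod $\GG$ with the \emph{same} center $c$ and bounds, while the $\lhd^n$-relation with $\alpha=0$ pins the center of $B$ to that of $\partial_Y^iA$. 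I would isolate this as the technical heart and expect the formal write-up to lean on the fitting/well-presented conditions and on Remark~\ref{re:dd-partition} to close the gap.
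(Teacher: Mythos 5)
There are genuine gaps. First, you treat pairwise disjointness of the cells as a hypothesis; it is not. The definition of a closed $\lhd^n$\--complex only requires the \emph{socle} of $\cA$ to be a complex of sets, so only the socles are known to be pairwise disjoint; that distinct cells of $\cA$ are disjoint is part of what ``closed complex of sets'' asserts and must be proved. (The paper does it in a few lines: if $A\cap B\neq\emptyset$ then $\widehat{B}=\widehat{A}=:X$ since the socles partition; $B$ meets $\overline{A}$, so $B\subseteq\partial_X^iA$; but $\partial_X^iA$ meets $A$, hence equals $A$, so $B\subseteq A$ and $A=B$ by symmetry.) Second, and more seriously, the step you yourself isolate as the technical heart --- forcing $B=\partial_Y^iA$ for every $B\in\cA$ meeting $\overline{A}$ --- is false for a general closed $\lhd^n$\--complex, and the fibrewise ``indecomposability'' you invoke does not hold: a fibre $\{t\tq |\nu|\leq|t-c|\leq|\mu|,\ t-c\in\lambda\GG\}$ can perfectly well be partitioned into several non-empty cells mod $\GG$ with the \emph{same} center $c$ and different bounds (split the annulus radially), and the $\lhd^n$\--relation pins down only the center, not the bounds. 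If the equality $B=\partial_Y^iA$ were automatic, every $\lhd^n$\--complex would already be a cellular complex and the distinction between Proposition~\ref{pr:n-complex} and Lemma~\ref{le:pretriang} would collapse; the whole point of Section~\ref{se:cont-mono} is that this equality is hard to arrange. Fortunately it is not needed here: from the definition of a $\lhd^n$\--complex and the closedness of $\bigcup\cA$, every point of $\overline{A}$ lies in some $B\in\cA$ and $B\subseteq\partial_Y^iA\subseteq\overline{A}$ with $Y=\widehat{B}$; this mere inclusion, combined with the disjointness just proved, already shows $\partial A$ is a union of cells of $\cA$.

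A smaller slip: fittingness does not rule out frontier points of $A$ lying over the socle $X$ itself. A type-$1$ fitting cell with $\nu_A=0$ has $\Gr c_A\subseteq\partial A$ above $X$, so your description of $\partial A$ as the union of the $\partial_Y^iA$ for $Y\subsetneq X$ is incomplete; one must keep both indices $i\in\{0,1\}$ over every piece of $\overline{X}$, as in Remark~\ref{re:dd-partition}. Your treatment of the ``moreover'' statement is essentially the paper's (the pieces $\partial_Y^iA$ of $\overline{A}$ are pairwise disjoint, so $B$ meeting $\partial_Z^0A$ forces $(Y,i)=(Z,0)$, hence $B$ is of type $0$ with $c_B=\bar c_{A|Z}$, and $\widehat{B}=Z$ because the socles partition), but as written it rests on the two flawed earlier steps.
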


\begin{proof}
  By assumption the socle of every cell $A$ in $\cA$ is relatively
  open and pure dimensional. Thanks to the restrictions we made on the
  bounds in our definition of presented cells, it follows that $A$ is
  also relatively open and pure dimensional. 

  In order to show that $\cA$ is a partition, let $A$, $B$ be two cells
  in $\cA$ which are not disjoint and let $X=\widehat{A}$. Both
  $\widehat{B}$ and $X$ belong to $\widehat{\cA}$ and are not
  disjoint, hence $\widehat{B}=X$. Since $B$ meets $A\subseteq\overline{A}$,
  by assumption $B$ is contained in $\partial^i_XA$ with $i=\Tp B$. But then
  $\partial^i_XA$ meets $A$, hence obviously is equal to $A$. So $B\subseteq A$, and
  equality holds by symmetry. 

  Now let $A$ be any cell in $\cA$ and $X=\widehat{A}$. Since $\cA$
  is a closed complex, every point of $\overline{A}$ belongs to a unique
  $B\in\cA$. Since $B$ meets $\overline{A}$, by assumption $B\subseteq\partial^i_YA$
  with $Y=\widehat{B}$ and $i=\Tp B$. In particular $B\subseteq\overline{A}$,
  which proves that $\overline{A}$ is a union of cells in $\cA$ (hence
  so is $\partial A$ since $\cA$ is a partition and $\partial A$ is disjoint from
  $A$). This proves that $\cA$ is a closed complex of sets.

  The last point follows. Indeed, if $B$ meets $\partial_Z^0A\subseteq\overline{A}$
  then it is contained in $\partial_Y^iA$ for some $i\in\{0,1\}$, with
  $Y=\widehat{B}$. In particular $\partial_Z^0A$ meets $\partial_Y^iA$. They are two
  pieces of a partition of $\overline{A}$ (see
  Remark~\ref{re:dd-partition}) hence $\partial_Y^0A=\partial_Z^iA$. Therefore $Y=Z$
  and $i=0$, so $B\subseteq\partial_Z^0A$. That is, $B$ is of type $0$ and $c_B=\bar
  c_A$ on $\widehat{B}=Z$, so $B=\Gr\bar c_{A|Z}=\partial_Z^0A$.
\end{proof}

\begin{proposition}\label{pr:n-complex}
  Let $\cA$ be a finite family of largely continuous fitting cells mod
  $\GG$ and $n\geq1$ an integer. There exists a $\lhd^n$--complex $\cD$ of
  cells mod $\GG$ refining $\cA$ such that $\cD\lhd^n\cA$. 
\end{proposition}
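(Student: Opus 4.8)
The plan is to proceed in two stages: first apply Denef's Cell Decomposition Theorem to replace $\cA$ by a refinement made of largely continuous fitting cells mod $\GG$ that already lies in $\Alg_n\cA$ (hence satisfies $\lhd^n\cA$), and then reorganise this refinement into a $\lhd^n$\--complex by a construction over the socles, carried out by induction on $d=\dim\bigcup\widehat{\cA}$.

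\emph{Stage 1.} Apply Theorem~\ref{th:D1} to $\cA$ with the given $n$: this gives a finite family $\cD_0$ of fitting cells mod $\GG$ which \emph{refines} $\cA$ (in particular partitions $\bigcup\cA$), with $\widehat{\cD_0}$ a partition of $\bigcup\widehat\cA$, $\cD_0\in\Vect\cA$ and $\cD_0\in\Alg_n\cA$; Remark~\ref{re:alg-n-comp} then yields $\cD_0\lhd^n\cA$. Moreover $\cD_0$ consists of \emph{largely continuous} cells: by $\cD_0\in\Vect\cA$ the center and bounds of each $D\in\cD_0$ are $K$\--linear combinations (or the constant $\infty$) of restrictions $f_{|\widehat D}$ of functions $f\in\CB(\cA)$ whose domain contains $\widehat D$; each such $f$ extends continuously to $\overline{\widehat D}$ since $\cA$ is made of largely continuous cells, and a finite $K$\--linear combination of functions extending continuously to $\overline{\widehat D}$ again does. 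After a vertical refinement of the socle I may also assume that on each new socle $\nu_D$ and $\mu_D$ are each identically $0$, identically $\infty$, or identically $K^\times$\--valued. This keeps $\cD_0$ a family of largely continuous fitting cells mod $\GG$ with $\cD_0\lhd^n\cA$.

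\emph{Stage 2.} I induct on $d=\dim\bigcup\widehat\cA$, the case $d\le0$ being handled by hand (separate the finitely many socle points and linearly order the annuli in the last variable over each). For the step, apply Remark~\ref{re:comp-pure} to $\widehat{\cD_0}$ augmented by the frontier of $\bigcup\widehat{\cD_0}$ to get a \emph{closed} complex $\cX$ of pure dimensional semi-algebraic sets with $\bigcup\cX=\overline{\bigcup\widehat\cA}$ and finer than $\widehat{\cD_0}$, refined further so that for every $D\in\cD_0$ and every $X\in\cX$ with $X\subseteq\overline{\widehat D}$ the restrictions of $\bar c_D,\bar\mu_D,\bar\nu_D$ behave coherently over $X$: each boundary cell $\partial_X^iD$ ($i\in\{0,1\}$) is a genuine presented cell (see footnote~\ref{ft:derAi-cell}), and for any two $D,D'$ the predicate ``$\bar c_D-\bar c_{D'}\in G_D$'' and the norm comparisons of $|\bar c_D-\bar c_{D'}|$ with the bounds of $D'$ are settled identically on $X$. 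Applying the induction hypothesis to the finite family of all non-empty $\partial_X^iD$ with $\dim X<d$ produces a $\lhd^n$\--complex $\cE$ refining it, with $\cE\lhd^n\cA$ (the transition data just restrict). Over the pieces $X$ with $\dim X=d$ I keep the cells $D\cap(X\times K)$ for $X\subseteq\widehat D$; these are pairwise disjoint because $\cD_0$ partitions $\bigcup\cA$ and $\cX$ is finer than $\widehat{\cD_0}$. By Remark~\ref{re:dd-partition} the non-empty $\partial_X^iD$, as $X$ ranges over $\cX$, partition $\overline D$ for each $D$, so the frontier of every kept cell is covered by members of $\cE$; the remaining task is to check that the kept cells together with $\cE$ form a $\lhd^n$\--complex — i.e. sit inside a closed $\lhd^n$\--complex obtained by adjoining the missing boundary cells — using Proposition~\ref{pr:cell-comp} and the coherence arrangements to control incidences. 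Taking for $\cD$ the kept cells (those refining $\cA$) finishes the induction.

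\emph{Main obstacle.} The delicate point in Stage 2 is that for a kept cell $D\cap(X\times K)$ one needs \emph{every} cell of the family meeting its closure to be exactly one of its boundary cells $\partial_X^iD$, which is not automatic: the center of one cell may run through the annulus of another. Enforcing this requires an extra cell decomposition in the last variable $t$ over each $X$, so that centers and annuli become properly nested or separated, at the cost of possibly re-subdividing $X$; it is to absorb this re-subdivision of the socle that the induction on $d$ is set up, the troublesome frontiers then living in strictly smaller dimension. Verifying that each of these refinements preserves a witnessing $\lhd^n$\--system relative to $\cA$ is the bookkeeping heart of the argument.
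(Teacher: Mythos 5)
Your Stage~1 and the skeleton of Stage~2 (Denef's Theorem~\ref{th:D1} with $\Vect$/$\Alg_n$, then induction on $d=\dim\bigcup\widehat{\cA}$, feeding the boundary cells $\partial^i_XD$ over lower-dimensional socle pieces to the induction hypothesis) are essentially the paper's strategy. But the proof has a genuine gap exactly where you flag it: you never verify that the kept top-dimensional cells together with $\cE$ form a $\lhd^n$\--complex, and the fix you sketch for the ``main obstacle'' is not shown to work. The problematic incidences are the \emph{same-socle} ones: for $X$ of dimension $d$ and $D$ of type $1$ with $\nu_D=0$, the closure $\overline{D}\cap(X\times K)$ contains $\Gr c_D$, and one must know that every cell of the family meeting it is $\lhd^n$ to $\partial_X^0D$. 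Your remedy --- ``an extra cell decomposition in the last variable $t$ over each $X$'' --- creates new cells with new centers and bounds over the \emph{same} $d$\--dimensional socle, whose closures can again have bad incidences; since your induction is on the socle dimension, nothing forces this process to terminate, and the new cells do not automatically come with transitions to $\cA$ of the required multiplicative form $t-c_A(x)=\cU_n(x,t)h^\alpha(t-c(x))^{1-\alpha}$ (your ``coherence'' predicates --- coset membership and norm comparisons of $\bar c_D-\bar c_{D'}$ --- are strictly weaker than a $\cU_n$\--transition). The paper avoids any re-decomposition in $t$: after reducing to the case where $\cA$ is a partition with $\bigcup\cA$ closed, it shows that each point of $\Gr c_A$ (for $A$ of type $1$, $\nu_A=0$, top-dimensional socle) already lies in a type-$0$ cell of the partition with the same socle, so a refinement of the \emph{socle only} makes $\Gr c_A$ itself a member of the family; then over top-dimensional socles the only closure incidences are $D$ and $\Gr c_D$, both members, and everything else lives over lower-dimensional socles and is delegated to the induction hypothesis together with the a fortiori restriction $\partial_Y^iD=\partial_Z^iD\cap(Y\times K)$.

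A second, smaller, unjustified step is ``$\cE\lhd^n\cA$ (the transition data just restrict)''. The induction hypothesis gives transitions from cells of $\cE$ to the centers $\bar c_D$ of the boundary cells, not to the centers of cells of $\cA$; to conclude you must either include the cells of $\cD_0$ with lower-dimensional socles in the family fed to the induction hypothesis (as the paper does by putting $\cA\setminus\cA_d$ into $\cB$) and then compose two $\lhd^n$\--transitions, or argue separately that the boundary cells are $\lhd^n$ to the cells of $\cA$ they sit in; as written this also silently uses that $\bigcup\cA$ may be assumed closed and that $\cA$ may be assumed to be a partition, reductions you should state (the paper performs them at the outset by enlarging and then replacing $\cA$ by the Denef refinement). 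These points are fixable, but together with the unresolved top-dimensional incidence check they mean the heart of the proposition is asserted rather than proved.
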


In the next section we will prove that one can even require that $\cD$
is a cellular monoplex mod $G$.

\begin{proof}
The proof goes by induction on $d=\dim\bigcup\widehat{\cA}$. If a
$\lhd^n$\--complex $\cD_1$ is found which proves the result for a family
$\cA_1$ of cells mod $\GG$ containing $\cA_1$ then obviously the
family $\cD$ of cells in $\cD_1$ contained in $\bigcup\cA$ proves the
result for $\cA$. Thus, enlarging $\cA$ if necessary, we can assume
that $\bigcup\cA$ and $\bigcup\widehat{\cA}$ are closed. By Denef's
Theorem~\ref{th:D1} and Remark~\ref{re:alg-n-comp} there is a finite
family $\cB$ of largely continuous fitting cells mod $\GG$ refining
$\cA$ such that $\cB\lhd^n\cA$. Replacing $\cA$ by this refinement if
necessary we can also assume that $\cA$ is a partition. 

If $\cB$ is any vertical refinement of $\cA$ then obviously
$\cB\lhd^n\cA$. Thus, by taking if necessary a finite partition $\cX$
refining $\widehat{\cA}$ and replacing $\cA$ by the corresponding
vertical refinement (that is the family of all cells $A\cap(X\times K)$
with $A\in\cA$ and $X\in\cX$ contained in $\widehat{A}$), we can assume
that $\widehat{\cA}=\cX$ is a partition. By the same argument we can
assume as well that for every $A\in\cA$ and every $X\in\cX$ contained in $\partial X$,
the restrictions of $\bar\mu_A$ and $\bar\nu_A$ to $X$ take values in
$K^\times$ or are constant, hence $\partial_X^0A$ (resp. $\partial_X^1A$) is a cell
with socle $X$ whenever it is non-empty. By Remark~\ref{re:comp-pure}
we can even assume that it is a complex of pure dimensional sets. Let
$\cX_d$ be the family of $X\in\cX$ with dimension $d$. Note that every
$X\in\cX_d$ is open in $\bigcup\cX$ because $\cX$ is a complex and
$\dim\bigcup\cX=d$.

For every $X\in\cX_d$ let $\cA_X$ be the family of cells in $\cA$ with
socle $X$. For every cell $A\in\cA_X$ of type $1$ such that $\nu_A=0$,
$\Gr c_A$ is contained in $\overline{A}$ hence in $\bigcup\cA_X$ since
$\bigcup\cA$ is closed and $\widehat{\cA}$ is a partition. It may happen
that $\Gr c_A$ does not belong to $\cA$. With
Proposition~\ref{pr:cell-comp} in view we have to remedy this.
Every point $(x,c_A(x))$ in $\Gr c_A$ belongs to some cell $B$ in
$\cA_X$. This cell must be of type $0$ otherwise the fiber $B_x=\{t\in
K\tq (x,t)\in B\}$ would be open, hence it would contain a neighbourhood
$V$ of $c_A(x)$ and so $\{x\}\times V$ would be contained in $B$ and meet
$A$, which implies that $B\subseteq A$ since $\cA_X$ is partition, in
contradiction with the fact that $B$ meets $\Gr c_A$. So there is a finite
partition $\cY_A$ of $X$ in semi-algebraic pieces $Y$ on each of which
there is a unique cell $B\in\cA_X$ of type $0$ whose center coincides
with $c_A$ on $Y$. Repeating the same argument for every $A\in\cA_X$ and
every $X\in\cX_d$ gives a finite partition $\cY$ of $\bigcup\cX_d$ finer then
every such $\cY_A$. Let $\cX'$ be a complex of pure dimensional
semi-algebraic sets refining $\cX\cup\cY$. Replacing if necessary $\cA$
by the vertical refinement defined by $\cX'$, we can then assume from
now on that for every $X$ in $\cX_d$ and every $A\in\cA$ with socle $X$,
if $\nu_A=0$ then $\Gr c_A$ belongs to $\cA$. 

Let $\cA_d=\{A\in\cA\tq\widehat{A}\in\cX_d\}$ and $\cB$ be the union of
$\cA\setminus\cA_d$ and of the family of non-empty $\partial^i_YA$ for $i\in\{0,1\}$,
$A\in\cA_d$ and $Y\in\cX\setminus\cX_d$. Clearly $\dim\bigcup\widehat{B}<d$ so the induction
hypothesis gives a $\lhd^n$--complex $\cC$ of cells mod $\GG$ refining
$\cB$ such that $\cC\lhd^n\cB$. {\it A fortiori} $\cC\lhd^n(\cA\setminus\cA_d)$
because the latter is contained in $\cB$. So if we let $\cD=\cC\cup\cA_d$,
then $\cD$ refines $\cA$ and $\cD\lhd^n\cA$. It only remains to check that $\cD$
is a $\lhd^n$\--complex, and first that $\widehat{\cD}$ is a complex of
sets.

Note that $\widehat{\cA}_d=\cX_d$ hence 
$\widehat{\cD}=\widehat{\cC}\cup\widehat{\cA}_d=\widehat{\cC}\cup\cX_d$ 
is a partition,
and every set in $\widehat{\cD}$ is pure dimensional and relatively open (by
induction hypothesis for $\widehat{\cC}$ and by construction for
$\cX_d$). For every $X\in\widehat{\cD}$, we have to prove that $\partial X$ is
a union of sets in $\widehat{\cC}\cup\cX_d$. If $X\in\widehat{\cC}$ this is
clear because $\widehat{\cC}$ is a complex. Otherwise $X\in\cX_d$ hence
$\partial X$ is a union of sets in $\cX$ (because $\cX$ is a complex).
All these sets have dimension $<d=\dim X$ hence belong to
$\cX\setminus\cX_d$. But $\cC$ refines $\cB$, which contains
$\cA\setminus\cA_d$, whose socle is $\cX\setminus\cX_d$, hence $\widehat{\cC}$ refines
$\cX\setminus\cX_d$. Thus $\partial X$ is also the union of sets in $\widehat{\cC}$,
hence of $\widehat{\cD}$.

Now let $D,E\in\cD$ be such that $E$ meets $\overline{D}$, let
$X=\widehat{D}$ and $Y=\widehat{E}$. By construction $\partial_Y^0D$ and
$\partial_Y^1D$ are cells (if non-empty) and cover $\overline{D}\cap(Z\times K)$. So
there is $i\in\{0,1\}$ such that $\partial^i_Y D$ is a cell which meets $E$. We
have to prove that $E\lhd^n\partial_X^iD$. Note that $Y$ meets the socle of
$\overline{D}$, which is contained in $\overline{X}$, hence $Y=X$ or
$Y\subseteq\partial X$ because $\widehat{\cD}$ is a complex. So, if $\dim X<d$ then
also $\dim Y<d$ hence $D,E\in\cC$. In that case $E\lhd^n\partial_X^jD$ because
$\cC$ is a $\lhd^n$--complex. Thus we can assume that $\dim X=d$, that is
$D\in\cA_d$. We know that $Y=X$ or $Y\subseteq\partial X$. In the first case $Y=X$
hence $\partial_X^i D\in\cA_d\subseteq\cD$ by construction, so $E=\partial_X^i D$ because
$\cD$ is a partition. In the second case $Y\in\widehat{\cC}$ hence
$E\in\cC$. Now $Y$ is contained in some $Z\in\cX\setminus\cX_d$ because
$\widehat{\cC}$ refines $\cX\setminus\cX_d$, and $E$ meets $\partial_Z^iD$. By
construction $\partial_Z^iD$ belongs to $\cB$. Since $\cC\lhd^n\cB$ it follows
that $E\lhd^n\partial_Z^iD$ hence {\it a fortiori} $E\lhd^n\partial_Y^iD$ because $E\subseteq Y\times
K$ and $\partial_Y^iD=\partial_Z^iD\cap(Y\times K)$. 
\end{proof}

Before entering in more complicated constructions, let us mention here two
elementary properties of fitting cells which will be of some use later. 

\begin{proposition}\label{pr:fit-cell}
  Let $A\subseteq K^{m+1}$ be a cell mod $\GG$ of type $1$. Then:
  \begin{itemize}
    \item
      $\mu_A$ is a fitting bound if and only if  $\mu_A=\infty$ or
      $v\mu_A(\widehat{A})\subseteq vG_A$. 
    \item 
      $\nu_A$ is a fitting bound if and only if $\nu_A=0$ or
      $v\nu_A(\widehat{A})\subseteq vG_A$).
  \end{itemize}
\end{proposition}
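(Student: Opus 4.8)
The plan is to translate everything into the value group $\cZ$ and read the answer off the fibres of $A$. Writing $G_A=\lambda_A\GG$, the set of valuations attained on the coset $G_A$ is $vG_A=v\lambda_A+v\GG$, a coset of the finite-index subgroup $v\GG=N_0\cZ$ of $\cZ$. The one structural fact I would record first is that $N_0\cZ$, hence also $vG_A$, is cofinal and coinitial in $\cZ$: since $\cZ$ has a least positive element and $\cZ/N_0\cZ$ has exactly $N_0$ elements, division with remainder by $N_0$ shows that every element of $\cZ$ lies between two elements of $N_0\cZ$, so $N_0\cZ$ is cofinal and coinitial. Next, for $x\in\widehat A$, using $|a|\le|b|\iff va\ge vb$ together with the conventions $v0=+\infty$ and $v\infty=-\infty$, the fibre of $A$ over $x$ translated to the origin is $\{s\in G_A : v\mu_A(x)\le vs\le v\nu_A(x)\}$, so the set of valuations occurring in it is exactly $vG_A\cap[v\mu_A(x),v\nu_A(x)]$, and this is non-empty because $A$ is a presented cell.

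With this in hand I would first dispose of the degenerate shapes of the bounds. For a cell of type $1$ the bound $\mu_A$ cannot be identically $0$ (that would force $vs=+\infty$, that is $s=0\notin G_A$, contradicting non-emptiness of the fibre), so either $\mu_A\equiv\infty$ or $\mu_A$ takes all its values in $K^\times$; symmetrically $\nu_A\not\equiv\infty$, so either $\nu_A\equiv 0$ or $\nu_A$ takes all its values in $K^\times$. If $\mu_A\equiv\infty$ then, by coinitiality of $vG_A$, the set $\{|t-c_A(x)| : (x,t)\in A\}$ contains norms that are arbitrarily large, so its supremum is $+\infty=|\mu_A(x)|$ for every $x$; thus $\mu_A$ is a fitting bound, and this is precisely the case ``$\mu_A=\infty$''. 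If instead $\mu_A$ has values in $K^\times$, then for each $x$ the set $vG_A\cap[v\mu_A(x),v\nu_A(x)]$ is non-empty and bounded below by $v\mu_A(x)$, so its minimum equals $v\mu_A(x)$ if and only if $v\mu_A(x)\in vG_A$; translating back, $\mu_A$ is a fitting bound if and only if $v\mu_A(\widehat A)\subseteq vG_A$. Putting the two cases together gives the first equivalence.

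The second equivalence is the mirror image. If $\nu_A\equiv 0$ then, by cofinality of $vG_A$, the set $\{|t-c_A(x)| : (x,t)\in A\}$ contains norms arbitrarily close to $0$, so its infimum is $0=|\nu_A(x)|$ and $\nu_A$ is automatically a fitting bound; if $\nu_A$ has values in $K^\times$ then $\nu_A$ is a fitting bound if and only if the maximum of $vG_A\cap[v\mu_A(x),v\nu_A(x)]$ equals $v\nu_A(x)$ for every $x$, that is if and only if $v\nu_A(\widehat A)\subseteq vG_A$. I do not expect a genuine obstacle here; the only points needing care are the bookkeeping for the conventions on $0$, $\infty$ and $\pm\infty$, and the (standard) fact that a finite-index subgroup of the value group of a $p$-adically closed field is cofinal and coinitial. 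Everything else is a direct unwinding of the definitions of ``presented cell'' and ``fitting bound''.
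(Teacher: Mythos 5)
Your proof is correct, but on the nontrivial direction it takes a genuinely different route from the paper's. The paper dismisses $\mu_A=\infty$ as trivial, obtains ``fitting $\Rightarrow v\mu_A(\widehat A)\subseteq vG_A$'' essentially as you do, and then proves the converse by contradiction: it invokes Corollary~\ref{co:max} to pick an element $d$ of the fibre of maximal norm, observes that $|d|<|\mu_A(x)|$ would force $v(d/\mu_A(x))\in v\GG=N_0\cZ$, hence $|d|\leq|\pi^{N_0}\mu_A(x)|$, and then divides $d$ by some $g\in\GG$ with $v(g)=N_0$ to manufacture a strictly larger element of the fibre, a contradiction. You instead argue entirely in the value group: since $v\mu_A(x)\in vG_A$ (a direct image, by the paper's notation) you pick $s\in G_A$ with $v(s)=v\mu_A(x)$, and the non-emptiness of the fibre, which is part of the definition of a presented cell, gives $|\nu_A(x)|\leq|\mu_A(x)|$, so $(x,c_A(x)+s)\in A$ attains the bound. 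This is more elementary: it needs neither Corollary~\ref{co:max} nor the shift by an element of valuation $N_0$, only the $\ZZ$-group structure of $\cZ$ (discreteness, in the other direction, rules out the supremum being approached without being attained), and you also make explicit the degenerate cases $\mu_A=\infty$ and $\nu_A=0$ via cofinality and coinitiality of $vG_A$, which the paper leaves implicit. The only step you assert without proof is that $vG_A\cap[v\mu_A(x),v\nu_A(x)]$ has a minimum (resp.\ maximum); this does follow from the division-with-remainder observation you already made, and in fact you could bypass it entirely, since for the ``only if'' direction discreteness alone shows that if $v\mu_A(x)\notin vG_A$ then every norm in the fibre is at most $|\pi\mu_A(x)|<|\mu_A(x)|$.
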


\begin{proof}
  The case where $\mu_A=\infty$ being trivial, we can omit it. If $\mu_A\neq\infty$ is a
  fitting bound then obviously $v\mu_A(\widehat{A})\subseteq vG_A$ because
  $v(t-c_A(x))\in vG_A$ for every $(x,t)\in A$. Conversely assume that
  $v\mu_A(\widehat{A})\subseteq vG_A$. Let $x$ be any element of
  $\widehat{A}$. We have to prove that $|\mu_A(x)|=\max\{|d|\tq d\in D_x\}$
  where $D_x=\{t-c_A(x)\tq (x,t)\in A\}$. $D_x$ is bounded since $\mu_A\neq\infty$,
  hence by Corollary~\ref{co:max} it contains an element $d$ of
  maximal norm. By construction $|d|\leq|\mu_A(x)|$. Assume for a
  contradiction that $|d|<|\mu_A(x)|$, that is $v(d/\mu_A(x))>0$. By
  construction $v(d)$ and $v\mu_A(x)$ belong to $vG_A=v\lambda_A+v\GG$ hence
  $v(d/\mu_A(x))\in v\GG=N_0\cZ$. Thus $v(d/\mu_A(x))\geq N_0$, that is
  $|d|\leq|\pi^{N_0}\mu_A(x)|$. Pick any $g\in\GG$ such that $v(g)=N_0$ and
  let $t'=c_A(x)+d/g$. We have $t'-c_A(x)=d/g\in G_A$,
  $|\nu_A(x)|\leq|d|\leq|d/g|$ and $|d/g|\leq|\mu_A(x)|$, hence $(x,t')\in A$. So
  $t'-c_A(x)\in D_x$ and $|d|<|t'-c_A(x)|$, a contradiction. 
  The proof for $\nu_A$ is similar and left to the reader. 
\end{proof}

\begin{proposition}\label{pr:mu-bounded}
  For every fitting cell $A$ mod $Q_{N_0,M_0}$ in $K^{m+1}$, if $A\subseteq
  R^{m+1}$ then $v\mu_A\geq-M_0$.
\end{proposition}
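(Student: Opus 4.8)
The plan is to reduce to a single fibre $A_x$ (for $x\in\widehat A$) and to play the ultrametric inequality against the inclusion $1+\pi^{M_0}R\subseteq Q_{N_0,M_0}^\times$. First I would dispose of the degenerate cases. If $\Tp A=0$ then, $A$ being fitting, $\mu_A=0$ and $v\mu_A=+\infty\geq-M_0$. So from now on $A$ has type $1$; write $\GG=Q_{N_0,M_0}^\times$, so that $G_A$ is a coset of $\GG$. Because $A\subseteq R^{m+1}$, each fibre $A_x$ is non-empty and bounded, which forces $\mu_A$ to be $K^\times$-valued: it cannot be $\infty$ (that would make $A_x$ unbounded) nor the constant $0$ (that would make $A_x$ empty, as $0\notin G_A$ for a type $1$ cell). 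In particular $v\mu_A(x)$ is an honest element of $\cZ$ for each $x$.

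Next I would fix $x\in\widehat A$ and split on the sign of $v(c_A(x))$. If $v(c_A(x))\geq 0$, every $t\in A_x\subseteq R$ gives $|t-c_A(x)|\leq\max(|t|,|c_A(x)|)\leq 1$, so the fitting property gives $|\mu_A(x)|=\sup_{t\in A_x}|t-c_A(x)|\leq 1$, i.e. $v\mu_A(x)\geq 0\geq -M_0$. If $v(c_A(x))<0$, put $K=-v(c_A(x))>0$; since every $t\in A_x\subseteq R$ has $v(t)\geq 0>v(c_A(x))$, we get $|t-c_A(x)|=|c_A(x)|$ for all such $t$, and then the fitting property forces $|\nu_A(x)|=|\mu_A(x)|=|c_A(x)|$, so that $v\mu_A(x)=-K$. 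Hence everything reduces to showing $K\leq M_0$.

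The crux — and the step I expect to be least obvious — is to perturb a chosen $t\in A_x$ multiplicatively by $\GG$-units, rather than to argue with $t$ alone. Concretely, for any $g\in 1+\pi^{M_0}R\subseteq\GG$ the point $t'=c_A(x)+(t-c_A(x))g$ still lies in $A_x$: its distance to the center is unchanged (namely $|c_A(x)|$, which is both bounds) and $t'-c_A(x)=(t-c_A(x))g\in G_A\cdot\GG=G_A$. Therefore $v(t')\geq 0$ for every such $g$; dividing by $c_A(x)$ and setting $w=(t-c_A(x))/c_A(x)\in R^\times$, this says $v(1+wg)\geq K$ for all $g\in 1+\pi^{M_0}R$. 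Taking $g=1$ and $g=1+\pi^{M_0}$ and subtracting the two resulting inequalities yields $v(w\pi^{M_0})\geq K$, i.e. $M_0\geq K$ because $v(w)=0$; this contradicts $K>M_0$ unless $K\leq M_0$, which is what we wanted. The remaining work is purely bookkeeping: checking the degenerate cases above and verifying that $t'$ genuinely satisfies the cell conditions (the norm condition being immediate, the coset condition being just that $G_A$ is a coset of $\GG$).
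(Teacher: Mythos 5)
Your proof is correct and follows essentially the same route as the paper's: the decisive step in both is that multiplying the displacement $t-c_A(x)$ by an element of $1+\pi^{M_0}R\subseteq Q_{N_0,M_0}^\times$ yields another point of the same fibre, and comparing this with $A\subseteq R^{m+1}$ forces $v\mu_A(x)=vc_A(x)\geq-M_0$. The paper runs this as a contradiction (if $v\mu_A(x)<-M_0$ the perturbed point $t'=t+\pi^{M_0}\lambda_Ag$ would lie outside $R$), whereas you argue directly via the ultrametric inequality applied to $1+wg$; the difference is purely presentational.
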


Since $A\subseteq R^{m+1}$, one may naively expect that $|\mu_A|\leq1$, that is
$v\mu_A\geq0$. The presented cell
$A=(-\pi^{-M_0},\pi^{-M_0},\pi^{-M_0},Q_{N_0,M_0})$ is a counterexample in
$K$: it is contained in $R$ (it is actually equal to $R$) and
$v\mu_A=-M_0<0$. 

\begin{proof}
  Assume the contrary, that is $v\mu_A(x)<-M_0$ for some
  $x\in\widehat{A}$. Since $A$ is a fitting cell there is $t\in K$ such
  that $(x,t)\in A$ and $v(t-c_A(x))=v\mu_A(x)$. Since $A\subseteq R^{m+1}$, $t\in R$ hence
  $v(t-c_A(x))<0=v(t)$ implies that $vc_A(x)=v(t-c_A(x))=v\mu_A(x)$. So
  there are $a\in R$ and $g\in Q_{N,M}$ such that $c_A(x)=a\pi^{M_0+1}$ and
  $t-c_A(x)=\lambda_A g$. In particular $v(\lambda_Ag)=v(t-c_A(x))=v\mu_A(x)<-M_0$
  so $\pi^{M_0}\lambda_Ag\notin R$. Now let $t'=t+\pi^{M_0}\lambda_Ag$, then $t'\notin R$ since $t\in R$
  and $\pi^{M_0}\lambda_Ag\notin R$. On the other hand $1+\pi^{M_0}\in Q_{N_0,M_0}$ and
  \begin{displaymath}
    t'-c_A(x) = t-c_A(x)+\pi^{M_0}\lambda_Ag = \lambda_Ag + +\pi^{M_0}\lambda_Ag = 
    \lambda_A(1+\pi^{M_0})g.
  \end{displaymath}
  So $t'-c_A(x)\in \lambda_A Q_{N_0,M_0}$ and
  $v(t'-c_A(x))=v(\lambda_A(1+\pi^{M_0})g)=v(\lambda_Ag)=v\mu_A(x)$. Thus $(x,t')\in A$,
  a contradiction since $t'\notin R$ and $A\subseteq R^{m+1}$.
\end{proof}

\section{Cellular monoplexes}
\label{se:cont-mono}

We keep as in Section~\ref{se:cont-comp} a semi-algebraic clopen
subgroup $\GG$ of $K^\times$ with finite index, and $N_0\geq1$ an integer such
that $v\GG=N_0\cZ$. Lemma~\ref{le:pretriang} below (together with
Lemma~\ref{le:lift-triang}) is the technical heart of this paper. 
This section is entirely devoted to its proof. 

\begin{lemma}\label{le:pretriang}
  Assume $\TR m$. Let $\cA$ be a finite set of bounded, largely
  continuous, fitting cells mod $\GG$ in $K^{m+1}$. Let $\cF_0$ be a
  finite family of definable functions with domains in
  $\widehat{\cA}$. Let $n,N\geq1$ be a pair of integers. For some
  integers $e$, $M>2v(e)$ which can be made arbitrarily large (in the
  sense of footnote~\ref{ft:arbit-large}), there is a tuple
  $(\cV,\varphi,\cD,\cF_\cD)$ such that: 
  \begin{itemize}
    \item
      $\cD$ is a cellular monoplex mod $\GG$ refining $\cA$ such that
      $\cD\lhd^n\cA$. 
    \item
      $\cF_\cD$ is a  $\lhd^n$\--system for $(\cD,\cA)$.
    \item
      $(\cV,\varphi)$ is a triangulation of\/\footnote{Recall that $\CB(\cD)$
        denotes the family of center and bounds of the cells
      in $\cD$.} $\cF_0\cup\cF_\cD\cup\CB(\cD)$ with parameters $(n,N,e,M)$,
      such that $\widehat{\cD}=\varphi(\cV)$. 
  \end{itemize}
\end{lemma}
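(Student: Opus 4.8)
The plan is to reorganise the family $\cA$ into a cellular complex of cells mod $\GG$, then triangulate the socles by means of $\TR m$, and finally transport the triangulation back up to the cells. First I would, as in the proof of Proposition~\ref{pr:n-complex}, enlarge $\cA$ harmlessly so that $\bigcup\cA$ and $\bigcup\widehat{\cA}$ are closed; apply Proposition~\ref{pr:n-complex} to obtain a $\lhd^n$\--complex $\cD'$ of cells mod $\GG$ refining $\cA$ with $\cD'\lhd^n\cA$; and then upgrade $\cD'$ to a closed \emph{cellular} complex $\cD_1$ mod $\GG$ refining $\cA$ with $\cD_1\lhd^n\cA$. This upgrade proceeds by a descending induction on the dimension of the socle complex (in the style of the proof of Proposition~\ref{pr:n-complex}): at each level one pushes the frontier cells $\partial^i_YD$ down to lower socle dimension, refines afresh by Proposition~\ref{pr:n-complex} and Remark~\ref{re:comp-pure}, and turns the surviving relations $B\lhd^n\partial^i_YA$ into genuine equalities $B=\partial^i_YA$. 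A further routine vertical refinement ensures in addition that on each socle piece each of the extended bounds $\bar\mu_D,\bar\nu_D$ is either identically $0$ or nowhere $0$ (so that $\partial^i_YD$ is, when non-empty, the restriction of $\partial^i_ZD$ for the socle $Z$ of $\cD_1$ containing $Y$). Finally fix a $\lhd^n$\--system $\cF_{\cD_1}$ for $(\cD_1,\cA)$ and an inner $\lhd^n$\--system for $\cD_1$.

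Next I would apply $\TR m$ — available by hypothesis — to the finite family consisting of all the functions in $\cF_0$, in $\cF_{\cD_1}$, in the inner $\lhd^n$\--system for $\cD_1$ and in $\CB(\cD_1)$, together with the indicator functions of the socles $X\in\widehat{\cD_1}$. This produces integers $e$ and $M>2v(e)$, which $\TR m$ lets us take arbitrarily large in the sense of footnote~\ref{ft:arbit-large}, a simplicial complex $\cV$ of index $M$, and a semi-algebraic homeomorphism $\varphi$ from $\biguplus\cV$ onto $\bigcup\widehat{\cD_1}$, such that $\varphi(\cV)$ partitions $\bigcup\widehat{\cD_1}$, refines $\widehat{\cD_1}$, and for every $V\in\cV$ and every function $g$ of the family whose domain contains $\varphi(V)$ the composite $g\circ\varphi_{|V}$ is $N$\--monomial mod $U_{e,n}$.

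Then I would set
\begin{displaymath}
  \cD=\big\{\,(c_D,\mu_D,\nu_D,G_D)_{|\varphi(V)}\ :\ D\in\cD_1,\ V\in\cV,\ \varphi(V)\subseteq\widehat D\,\big\}
\end{displaymath}
and take $\cF_\cD$ to consist of the restrictions to the subsocles $\varphi(V)$ of the transitions in $\cF_{\cD_1}$. Since $\varphi(\cV)$ refines $\widehat{\cD_1}$, each socle of a cell of $\cD$ is of the form $\varphi(V)$ and every $\varphi(V)$ occurs this way, so $\widehat{\cD}=\varphi(\cV)$. That $\cD$ refines $\cA$, that $\cD\lhd^n\cA$, and that $\cF_\cD$ is a $\lhd^n$\--system for $(\cD,\cA)$, all follow at once from the corresponding facts for $\cD_1$ by restriction to the sets $\varphi(V)$. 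Likewise $(\cV,\varphi)$ is a triangulation of $\cF_0\cup\cF_\cD\cup\CB(\cD)$ with parameters $(n,N,e,M)$, since the functions in $\cF_\cD$ and $\CB(\cD)$ are just restrictions to the sets $\varphi(V)$ of functions triangulated above, and restricting $g$ to $\varphi(V)$ does not affect the $N$\--monomiality mod $U_{e,n}$ of $g\circ\varphi_{|V}$. Discarding at the end the cells of $\cD$ (and the corresponding simplexes of $\cV$) lying outside the original $\bigcup\cA$ recovers the statement for the original $\cA$.

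The remaining point, which is also the main obstacle, is to check that $\cD$ is a \emph{cellular monoplex} mod $\GG$. Its socle $\varphi(\cV)$ is a monoplex of sets in $K^m$: it is the homeomorphic image of the simplicial complex $\cV$, and, $\bigcup\widehat{\cD_1}$ being closed and bounded, so is $\cV$ (Remark~\ref{re:closed-triang}), whence by Theorem~\ref{th:extr-val} applied to $\varphi$ and $\varphi^{-1}$ the map $\varphi$ carries closures, frontiers and the specialisation order of $\cV$ onto those of $\varphi(\cV)$. Passing from $\cD_1$ to $\cD$ merely refines the socle complex: the restriction of a largely continuous fitting cell to a face of its socle is again a largely continuous fitting cell (centre and bounds being restrictions of $\bar c_D,\bar\mu_D,\bar\nu_D$, still largely continuous on the smaller closed domain), ``faces of faces are faces'', and the purity arrangement ensures that a face $\partial^i_{\varphi(V)}D$ of a cell of $\cD$ is exactly the restriction of a face $\partial^i_ZD$ of $\cD_1$, hence a cell of $\cD$; by Proposition~\ref{pr:cell-comp} this makes $\cD$ a closed complex of sets whose specialisation order is carried by that of $\widehat{\cD}=\varphi(\cV)$. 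The genuinely delicate part is to verify the tree property — that the faces of each cell of $\cD$ are linearly ordered by specialisation — which demands a careful analysis of the boundary pieces $\partial^0$ and $\partial^1$ of a cell and of how they interlock over a chain of socle faces; it is here that the monoplex structure of $\varphi(\cV)$, the purity of the bounds, and possibly a further suitable refinement of the socle in the first step, must be used in an essential way.
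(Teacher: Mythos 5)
Your plan is exactly the route that the paper explicitly identifies as a dead end (see the paragraph following the statement of Lemma~\ref{le:pretriang}): first produce a cellular monoplex $\cD_1$ refining $\cA$, then apply $\TR m$ to $\cF_0\cup\cF_{\cD_1}\cup\CB(\cD_1)$, and finally refine $\cD_1$ vertically by the pieces $\varphi(V)$. The vertical refinement does give a cellular \emph{complex} with $\widehat{\cD}=\varphi(\cV)$, and your verification of the triangulation and $\lhd^n$ conditions by restriction is fine; but it destroys the tree property, and this is not a technicality one can patch afterwards. Concretely, take $D\in\cD_1$ of type $1$ and a piece $\varphi(V)$ of its socle having two comparable proper faces $W<W'$ in the complex $\varphi(\cV)$ on which $\bar\nu_D=0\neq\bar\mu_D$. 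Then the frontier of $E=D\cap(\varphi(V)\times K)$ contains both the cell $\partial^1_W E$ (the ``annulus'' over $W$) and the cell $\partial^0_{W'}E=\Gr\bar c_{D|W'}$, and these two are incomparable for specialisation: the closure of the first lies over $\overline{W}$, which does not contain $W'$, while the closure of the second is contained in the graph of $\bar c_D$, which does not contain the annulus. Since triangulating the socle generically creates such chains of faces inside $\widehat{D}$, the refined family is not a monoplex. Your closing remark that the tree property ``possibly'' needs a further refinement of the socle is precisely the vicious circle the paper points out: any further refinement of the socle must itself be of the form $\varphi(\cV)$ for a triangulation of the (new) centers, bounds and transitions, so one cannot fix the socle after having applied $\TR m$.

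The paper's proof breaks this circle by constructing $\cV$, $\cD$ and $\cF_\cD$ \emph{simultaneously}, by induction along the simplicial complex produced by a first application of $\TR m$ to $\cF_0\cup\CB(\cA)$ plus an inner $\lhd^n$\--system (the notion of ``preparation'' (P1)--(P4) in Section~\ref{se:cont-mono}). When a new simplex $S$ is added, it is subdivided by the Monotopic Division Theorem~\ref{th:mono-div} with a definable $\varepsilon$ chosen (via the conditions (A1)--(A3) and Claim~\ref{cl:eps-loc-cst}) so that over each new simplex $U$ the center and bounds of the old cells are uniformly close to their limit values on the facet $T_U$; then a \emph{horizontal} refinement (Cases 2.1--2.4) replaces the vertical pieces $A_U$ by genuinely new cells such as $E_B=(c_B\circ\sigma_U,\nu_B\circ\sigma_U,\mu_B\circ\sigma_U,G_B)$, pulled back along the retraction $\sigma_U=\varphi\circ\pi_U\circ\varphi^{-1}$, whose frontier is the closure of a \emph{single} previously constructed cell (conditions ($\partial1$)--($\partial3$)). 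It is this pullback construction -- not any vertical restriction of the old cells -- that secures the monoplex property while keeping the new centers, bounds and transition functions $N$\--monomial along $\varphi$. None of this machinery appears in your proposal, and the step you defer (``verify the tree property'') is in fact the entire content of the lemma.
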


Note that, in order to obtain this result, it does not suffice to find
a continuous monoplex $\cD$ of well presented cells mod $\GG$ refining
$\cA$ such that $\cD\lhd^n\cA$, and then to select an arbitrary 
$\lhd^n$\--system $\cF_\cD$ for $(\cD,\cA)$ and to apply $\TR m$ to
$\cF_0\cup\cF_\cD\cup\CB(\cD)$. Indeed, this will give a triangulation
$(\cV,\varphi)$ of $\cF_0\cup\cF_\cD\cup\CB(\cD)$. But $\varphi(\cV)$ will then be a
refinement of $\widehat{\cD}$, not $\widehat{\cD}$ itself. It is then
tempting to vertically refine $\cD$, that is to replace $\cD$ by the
family $\cE$ of cells $D\cap(\varphi(V)\times K)$ for $D\in\cD$ and $V\in\cV$ such that
$\varphi(V)\subseteq\widehat{D}$. This ensures that $\widehat{\cE}=\varphi(\cV)$ and $\cE$ is a
cellular complex such that $\cE\lhd^n\cA$. But $\cE$ is no
longer a monoplex.

In order to break this vicious circle we have to build $\cV$, $\cD$
and $\cF_\cD$ simultaneously. The remainder of this section is
devoted to this construction. It is divided in three parts:
(\ref{sub:prepar}) preparation,
(\ref{sub:vert-refin}) vertical refinement,
(\ref{sub:hori-refin}) horizontal refinement.

\subsection{Preparation}
\label{sub:prepar}

Given a family $\cX$ of subsets of $K^m$, we let $\cF_{0|\cX}$ denote
the family of all the restrictions $f_{|X}$ with $f\in\cF_0$ and $X\in\cX$
contained in the domain of $f$. By the same argument as in the
beginning of the proof of Proposition~\ref{pr:n-complex}, we can
assume that $\bigcup\cA$ is closed. Finally, replacing if necessary $\cA$
by a refinement $\cD$ given by Proposition~\ref{pr:n-complex} and
$\cF_0$ by $\cF_{0|\cX}$ with $\cX=\widehat{\cD}$, we are reduced to
the case where $\cA$ is a closed $\lhd^n$--complex of bounded cells mod $\GG$.
Enlarging $\cF_0$ if necessary, we can, and will, assume that it
contains $\CB(\cA)$ and an inner $\lhd^n$\--system for $\cA$. For some
integers $e$, $M>2v(e)$ which can be made arbitrarily large, $\TR m$
gives a triangulation $(\cS,\varphi)$ of $\cF_0$ with parameters
$(n,N,e,M)$. For every $A\in\cA$ we let $S_A=\varphi^{-1}(\widehat{A})$. 

Since $\bigcup\cA$ is bounded and closed in $K^{m+1}$, its image
$\bigcup\widehat{\cA}$ by the coordinate projection is closed in $K^m$ by
Theorem~\ref{th:extr-val}. Now $\varphi$ is a homeomorphism from $\biguplus\cS$ to
$\bigcup\widehat{\cA}$, hence $\cS$ is closed by
Remark~\ref{re:closed-triang}. 

Let $\cA'$ be the family of cells $A\cap(\varphi(S)\times K)$ for $A\in\cA$ and
$S\in\cS$ such that $\varphi(S)\subseteq\widehat{A}$, and let $\cF_0'=\cF_{0|\varphi(\cS)}$.
Since every cell in $\cA'$ has the same center and bounds as the
unique cell in $\cA$ which contains it, clearly $\cA'$ is still a
closed $\lhd^n$--complex, $\cF'_0$ contains $\CB(\cA')$ and an inner
$\lhd^n$\--system for $\cA'$, and $(\cS,\varphi)$ is still a triangulation of
$\cF'_0$. Thus, replacing $(\cA,\cF_0)$ by $(\cA',\cF'_0)$ if
necessary, we can assume that $\varphi(\cS)=\widehat{\cA}$, that is
$S_A\in\cS$ for every $A\in\cA$. 

A {\df preparation} for $(\cS,\varphi,\cA,\cF_0)$ is a tuple
$(\cT,\cB,\cF_\cB)$ such that:
\begin{description}
  \item[(P1)] $\cT$ is a simplicial subcomplex of $\cS$. We let
    $\cS_{|\cT}=\{S\in\cS\tq S\subseteq\biguplus\cT\}$, and $\cA_{|\cT}$ be the family of
    cells $A\in\cA$ such that $S_A\in\cS_{|\cT}$. Note that:
    \begin{itemize}
      \item
        $\cT$ is closed because $\biguplus\cT$ is closed in $\biguplus\cS$.
      \item 
        By Remark~\ref{re:closed-triang} it follows that the image by
        $\varphi$ of $\bigcup\cT$, that is the socle of $\cA_{|\cT}$, is closed
        too. 
      \item 
        Hence $\cA_{|\cT}$ is closed because $\bigcup\cA_{|\cT}$ is the
        inverse image of its socle by the (continuous) coordinate
        projection of $\bigcup\cA$ onto $\bigcup\widehat{\cA}$. 
    \end{itemize}
  \item[(P2)]
    $\cB$ is a cellular monoplex %of well presented cells 
    mod $\GG$ refining $\cA_{|\cT}$ such that $\varphi(\cT)=\widehat{\cB}$.
    For every $B\in\cB$ we let $T_B=\varphi^{-1}(\widehat{B})$. Note that
    $\cB$ is closed because $\bigcup\cB=\bigcup\cA_{|\cT}$. 
  \item[(P3)] 
    $\cB\lhd^n\cA_{|\cT}$ and $\cF_\cB$ is a  $\lhd^n$\--system for
    $(\cB,\cA_{|\cT})$.
  \item[(P4)]
    $\cT$ together with the restriction of $\varphi$ to $\biguplus\cT$, which we
    will denote $\varphi_{|\cT}$, is a triangulation of $\cF_\cB\cup\CB(\cB)$
    with parameters $(n,N,e,M)$. Note that, since $\cT$ refines
    $\cS_{|\cT}$ and $(\cS,\varphi)$ is a triangulation of $\cF_0$,
    $(\cT,\varphi_{|\cT})$ is also a triangulation of $\cF_{0|\cX}$ with
    $\cX=\varphi(\cT)$.
\end{description}

\begin{remark}\label{re:pretri-prep}
  Obviously $(\emptyset,\emptyset,\emptyset)$ is preparation for $(\cS,\varphi,\cA,\cF_0)$. Given an
  arbitrary preparation $(\cT,\cB,\cF_\cB)$ for $(\cS,\varphi,\cA,\cF_0)$
  such that $\bigcup\cT\neq\bigcup\cS$, and $S$ a minimal element in
  $\cS\setminus\cS_{|\cT}$, it suffices to build from it a preparation
  $(\cU,\cC,\cF_\cC)$ such that $\biguplus\cU=\biguplus\cT\cup S$. Indeed, $\cS_{|\cU}$
  contains one more element of $\cS$ than $\cS_{|\cT}$ thus, starting
  from $(\emptyset,\emptyset,\emptyset)$ and repeating the process inductively we will finally
  get a preparation $(\cV,\cD,\cF_\cD)$ such that $\biguplus\cV=\biguplus\cS$, hence
  $\cA_{|\cV}=\cA$.  (P4) then implies that $(\cV,\varphi)$ is a
  triangulation of $\cF_0\cup\cF_\cB\cup\CB(\cB)$ with parameters
  $(n,N,e,M)$. So the tuple $(\cV,\varphi,\cD,\cF_\cD)$ satisfies the
  conclusion of Lemma~\ref{le:pretriang}, which finishes the proof. 
\end{remark}

So from now on, let $(\cT,\cB,\cF_\cB)$ be a given preparation for
$(\cS,\varphi,\cA,\cF_0)$ such that $\biguplus\cT\neq\biguplus\cS$. Let $S$ be a minimal element
in $\cS\setminus\cS_{|\cT}$ and $\cA_S=\{A\in\cA\tq\widehat{A}=\varphi(S)\}$. The
minimality of $S$ ensures that every proper face of $S$ belongs to
$\cS_{|\cT}$, hence $\biguplus\cS\cup S$ and $\bigcup(\cA_{|\cT}\cup\cA_S)$ are closed.

\begin{claim}\label{cl:A0-A1}
  Let $A$ be a cell of type $1$ in $\cA_S$, $T$ a simplex in $\cT$
  contained in $\overline{S}$,
  and $Y=\varphi(T)$. If\/ $\bar\nu_A=0$ on $Y$ then $\Gr\bar c_{A|Y}=\partial_Y^0A$
  belongs to $\cB$. If moreover $\bar\mu_A\neq0$ on $Y$ then $\partial_Y^1A$ is
  covered by the cells in $\cB$ that it meets, and among them there is
  a unique cell $B_T^1$ whose closure meets $\partial_Y^0A$. More
  precisely:
  \begin{displaymath}
    B_T^1=\big(\bar c_{A|Y},0,\mu_{B_T^1},G_A\big) 
  \end{displaymath}
  and either $|\mu_{B_T^1}|=|\bar\mu_A|$ on $Y$, or
  $|\mu_{B_T^1}|\leq|\pi^{N_0}\bar\mu_A|$ on $Y$. In particular the closure of
  $B_T^1$ contains $\partial_Y^0A$.
\end{claim}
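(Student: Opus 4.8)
The plan is to propagate the hypotheses on $Y$ to the whole socle $Z:=\varphi(S')\in\widehat\cA$ containing $Y$ — here $S'$ is the unique simplex of $\cS_{|\cT}$ containing $T$, which exists because $\cS$ is closed and $\cT$ refines $\cS_{|\cT}$ — then to describe $\overline A\cap(Z\times K)$ using that $\cA$ is a closed $\lhd^n$\--complex, and finally to transfer everything to $\cB$ along the refinement $\cB$ of $\cA_{|\cT}$. The two delicate points will be (a) recognising $\partial^0_ZA$ and $\partial^1_ZA$ as, respectively, a cell and a union of cells of $\cA_{|\cT}$, which is where the closed $\lhd^n$\--complex structure of $\cA$ — rather than just large continuity of its bounds — enters, and (b) identifying the center and coset of $B_T^1$, which rests on the closed cellular complex structure of $\cB$.

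\emph{From $Y$ to $Z$.} Since $Y\subseteq Z\subseteq\overline{\widehat A}$ and $\bar\nu_A=0$ on $Y$, the graph $\Gr\bar c_{A|Y}$ lies in $\overline A$. Fixing $y\in Y$, the point $(y,\bar c_A(y))$ lies in some cell $B\in\cA$ (as $\cA$ is a closed complex of sets by Proposition~\ref{pr:cell-comp}), which meets $\overline A$, hence is contained in a face $\partial^i_{\widehat B}A$ with $\widehat B=Z$ ($\widehat\cA$ being a partition containing both $Z$ and $y$) and $i=0$ (since $(y,\bar c_A(y))$ cannot lie in a type~$1$ face). Therefore $\partial^0_ZA$ is a non-empty cell of $\cA$, which forces $\bar\nu_A=0$ on all of $Z$ and $\partial^0_ZA=\Gr\bar c_{A|Z}$. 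If moreover $\bar\mu_A\neq0$ on $Y$, then $\partial^1_YA$ is a non-empty subset of $\overline A$, and the same reasoning started from a point of it shows $\partial^1_ZA$ is a non-empty cell; by Remark~\ref{re:dd-partition}, every cell of $\cA$ meeting $\partial^1_ZA$ is contained in it (both are pieces of the partition of $\overline A$ by the faces $\partial^j_WA$, $W$ running over the elements of $\widehat\cA$ in $\overline{\widehat A}$), all of type~$1$ and socle~$Z$, and they cover $\partial^1_ZA\subseteq\overline A=\bigcup\cA$; so $\partial^1_ZA$ is their union. As $S'\in\cS_{|\cT}$, $\partial^0_ZA$ and the cells forming $\partial^1_ZA$ belong to $\cA_{|\cT}$.

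\emph{Transfer to $\cB$ and the cell $B_T^1$.} Since $\cB$ refines $\cA_{|\cT}$ and $\widehat\cB=\varphi(\cT)$ refines $\widehat\cA_{|\cT}$, the cells of $\cB$ inside the type~$0$ cell $\partial^0_ZA$ are the graphs of $\bar c_A$ over the pieces of $\widehat\cB$ contained in $Z$, and the one over $Y\in\widehat\cB$ is $\partial^0_YA$; this proves $\partial^0_YA\in\cB$, the first assertion. Likewise the cells of $\cB$ inside the cells of $\cA_{|\cT}$ making up $\partial^1_ZA$ partition $\partial^1_ZA$, and those meeting $\partial^1_YA=\partial^1_ZA\cap(Y\times K)$ are exactly those contained in it, all of socle~$Y$ and type~$1$; so $\partial^1_YA$ is covered by the family $\cB_Y$ of cells of $\cB$ it meets. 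They partition $\partial^1_YA=(\bar c_{A|Y},0,\bar\mu_{A|Y},G_A)$, whose fibre over each $y\in Y$ is unbounded above in valuation; as $\cB_Y$ is finite and its members are fitting cells, Proposition~\ref{pr:fit-cell} shows that exactly one $B_T^1\in\cB_Y$ has $\nu_{B_T^1}=0$. Now $\Gr\bar c_{A|Y}\subseteq\overline{\partial^1_YA}=\bigcup_{B\in\cB_Y}\overline B$, so a dimension argument yields $B\in\cB_Y$ for which $\{y\in Y:(y,\bar c_A(y))\in\overline B\}$ has non-empty interior; since $\partial^0_YA\in\cB$ and $\cB$ is a closed cellular complex, its defining property (with $B$ in the role of $A$ and $\partial^0_YA$ in the role of the smaller cell) gives $\partial^0_YA=\partial^0_YB=\Gr c_{B|Y}$, which forces $\nu_B=0$ and $c_B=\bar c_A$ on $Y$; so $B=B_T^1$ by uniqueness of the $\nu=0$ member, $c_{B_T^1}=\bar c_A$ on $Y$, and comparing the coset conditions of $B_T^1\subseteq\partial^1_YA$ (both now with center $\bar c_A$) gives $G_{B_T^1}=G_A$. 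The same reasoning applied to any $B'\in\cB_Y$ with $\overline{B'}$ meeting $\partial^0_YA$ yields $B'=B_T^1$ (uniqueness) and shows $\{y\in Y:(y,\bar c_A(y))\in\overline{B_T^1}\}=Y$, i.e.\ $\overline{B_T^1}\supseteq\partial^0_YA$.

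\emph{The bound.} Every cell of $\cB_Y$ has socle $Y$, hence is non-empty over each $y\in Y$, so $\#\cB_Y$ equals the number of fibres of $\cB_Y$ over any fixed $y\in Y$. If $\#\cB_Y=1$ then $B_T^1=\partial^1_YA$ and $|\mu_{B_T^1}|=|\bar\mu_A|$ on $Y$. Otherwise, for each $y$ the fibre $\{t:t-\bar c_A(y)\in G_A,\ v(t-\bar c_A(y))\ge v\mu_{B_T^1}(y)\}$ of $B_T^1$ is a proper subset of that of $\partial^1_YA$, so $v\mu_{B_T^1}(y)>v\bar\mu_A(y)$; both values lying in the coset $vG_A$ of $v\GG=N_0\cZ$, this forces $v\mu_{B_T^1}(y)-v\bar\mu_A(y)\ge v(\pi^{N_0})$, i.e.\ $|\mu_{B_T^1}|\le|\pi^{N_0}\bar\mu_A|$ on $Y$. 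Since this alternative does not depend on $y$, the proof is complete. As indicated above, the genuinely delicate steps are the propagation from $Y$ to $Z$ together with the recognition of $\partial^i_ZA$ inside $\cA_{|\cT}$, and the identification of $c_{B_T^1}$ and $G_{B_T^1}$; the rest is bookkeeping about fitting cells mod~$\GG$ and their fibres.
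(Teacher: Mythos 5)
Most of your argument runs parallel to the paper's and is sound: the reduction of $\partial^0_YA$ and $\partial^1_YA$ to unions of cells of $\cB$ (your detour through $Z=\varphi(S')$ is a harmless repackaging of the paper's direct argument at the level of $Y$), the identification $\partial^0_YA\in\cB$, the use of the closed cellular complex property of $\cB$ to show that any $B\in\cB_Y$ whose closure meets $\partial^0_YA$ satisfies $\partial^0_YA=\partial^0_YB$, hence $\nu_B=0$, $c_B=\bar c_{A|Y}$ and $G_B=G_A$, and the final dichotomy on $|\mu_{B_T^1}|$ via Proposition~\ref{pr:fit-cell} and $v\GG=N_0\cZ$, which is exactly the paper's.

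The genuine gap is the uniqueness of $B_T^1$. You define it as \emph{the} member of $\cB_Y$ with $\nu=0$, justified by ``as $\cB_Y$ is finite and its members are fitting cells, Proposition~\ref{pr:fit-cell} shows that exactly one $B_T^1\in\cB_Y$ has $\nu_{B_T^1}=0$''. Proposition~\ref{pr:fit-cell} says nothing of the sort, and the statement does not follow from finiteness plus the fitting condition, nor from (P1)--(P4): $\partial^1_YA$ can perfectly well be partitioned, inside a legitimate cellular monoplex refining $\cA_{|\cT}$, into several cells of which more than one has $\nu=0$ --- for instance one cell with center $\bar c_{A|Y}$ containing the deepest points, together with a cell $(c',0,\mu',G_A)_{|Y}$ whose center $c'$ differs from $\bar c_{A|Y}$ by an element of $G_A$ and whose bound $\mu'$ is small, plus the graph $\Gr c'$ (which also shows your side remark that all members of $\cB_Y$ are of type $1$ is unwarranted) and further cells covering the rest; the tree condition does not exclude this. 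What the unboundedness in valuation of the fibres of $\partial^1_YA$ actually yields is the \emph{existence} of a member with $\nu=0$ and center $\bar c_{A|Y}$, not that it is the only $\nu=0$ member. Since you invoke this false uniqueness both to define $B_T^1$ and to conclude $B'=B_T^1$ for any $B'$ whose closure meets $\partial^0_YA$, the uniqueness assertion of the Claim is not established as written. The repair is short and is the paper's argument: by your own reasoning, any two cells of $\cB_Y$ whose closures meet $\partial^0_YA$ have center $\bar c_{A|Y}$, coset $G_A$ and $\nu=0$; hence over any $y\in Y$ each of them contains every $t$ with $t-\bar c_A(y)\in G_A$ of sufficiently large valuation, and being disjoint members of a partition they must coincide. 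With that substitution your proof closes and is essentially the paper's.
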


\begin{proof}
Note first that for every $i\in\{0,1\}$, $\partial_Y^iA$ is contained in
$\overline{A}$ hence in $\bigcup\cA$ since it is closed by assumption.
Every cell in $\cA$ which meets $\partial_Y^iA$ is contained in it since
$\cA$ is a $\lhd^n$\--complex, and belongs to $\cA_{|\cT}$ (otherwise its socle
would not meet $Y$ since $Y\subseteq\bigcup\varphi(\cT)$). Since $\cB$ refines
$\cA_{|\cT}$ it follows that $\partial_Y^iA$ is the union of the cells $B$ in
$\cB$ which it contains. 

In particular, if $\bar\nu_A=0$ on $A$ then $\partial_Y^0A\neq\emptyset$ hence it contains
a cell $B\in\cB$. Necessarily $B$ is of type $0$ since so is $\partial_Y^0A$,
and thus $B=\partial_Y^0A$ since they have the same socle $Y$. This proves
the first point. 

For the second point, since $\bar\nu_A=0\neq\bar\mu_A$ on $Y$ both $\partial_Y^0A$
and $\partial_Y^1A$ are non-empty. Now $\partial_Y^0A$ is contained in the closure
of $\partial_Y^1A$, which is the union of the closure of the cells in $\cB$
contained in $\partial_Y^1A$. Hence necessarily the closure of at least one
of them, say $B$, meets $\partial_Y^0A$. 

$\widehat{B}$ meets $Y$ and both of them belong to $\widehat{\cB}$ so
$\widehat{B}=Y$. Since $\overline{B}\cap(Y\times K)$ meets $\partial_Y^0A$ and
$B\subseteq\partial_Y^1A$ is disjoint from $\partial_Y^0A$, $B$ must be of type $1$ with
$\nu_B=0$ because otherwise $B$ would be closed in $Y\times K$. It follows
that $\overline{B}\cap(Y\times B)$ is the union of $B$ and $\partial_Y^0B$, and the
latter meets $\partial_Y^0A$. By the first point $\partial_Y^0A\in \cB$. By
Proposition~\ref{pr:cell-comp} applied to $\cB$, $\partial_Y^0B\in\cB$.
Thus $\partial_Y^0B=\partial_Y^0A$, in particular they have the same center so
$c_B=\bar c_{A|Y}$. Pick any $(x,t)\in B$, so that $t-c_B(x)\in G_B$. $B$
is contained in $\partial_Y^1A$ hence $t-\bar c_A(x)\in G_A$. Since
$c_B(x)=\bar c_A(x)$ it follows that $G_B\cap G_A\neq\emptyset$ hence $G_A=G_B$. 

This proves that $B=(\bar c_{A|Y},0,\mu_B,G_A)$. The uniqueness of $B$
follows. Indeed if $B'$ is any cell in $\cB$ contained in $\partial_Y^1A$
whose closure meets $\partial_Y^0A$, the same argument shows that $B'=(\bar
c_{A|Y},0,\mu_{B'},G_A)$. This implies that for any $t'\in K$ such that
$t'-\bar c_A(x)$ is small enough and belongs to $G_A$, the point
$(x,t')$ will belong both to $B$ and $B'$, so $B=B'$. 

If $|\mu_B|=|\bar\mu_{A|Y}|$ we are done, so let us assume the contrary.
Then $|\mu_B(x)|\neq|\bar\mu_A(x)|$ for some $x\in Y$. $B$ is a fitting cell so
let $t\in K$ be such that $(x,t)\in B$ and $|t-c_B(x)| = |\mu_B(x)|$. We
have $|t-\bar c_A(x)| \leq |\bar\mu_A(x)|$ because $(x,t)\in\partial_Y^1A$, so
$|\mu_B(x)|<|\bar\mu_A(x)|$. We are going to show that $|\mu_B|<|\bar\mu_A|$ on
$Y$. Since $\partial_Y^1A$ is a fitting cell it follows that $\partial_Y^1A$ is not
contained in $B$, so there is at least one other cell $C$ in $\cB$
contained in $\partial_Y^1A$. Now $\widehat{C}$ is contained in $Y$ and both
of them belong to $\widehat{\cB}$ so $\widehat{B}=Y$. For each $y$ in
$Y$ fix $t_y$ in $K$ such that $(y,t_y)\in C$. Since $C$ is contained in
$\partial_Y^1A$ we have:
\begin{displaymath}
  0\leq|t_y-\bar c_A(y)|\leq|\bar\mu_A(y)|\mbox{ and }t_y-\bar c_A(y)\in G_A 
\end{displaymath}
Necessarily $|\mu_B(y)|<|t_y-\bar c_A(y)|$ because otherwise $(y,t_y)$
would belong both to $C$ and $B$, a contradiction. Hence {\it a
fortiori} $|\mu_B(y)|<|\bar\mu_A(y)|$. By Proposition~\ref{pr:fit-cell}
this implies that $|\mu_B(y)|\leq|\pi^{N_0}\bar\mu_A(y)|$ because $B$ and $A$
are fitting cells mod $\GG$, $v\GG=N_0\cZ$ and $G_B=G_A$. So
$|\mu_B|\leq|\pi^{N_0}\bar\mu_{A|Y}|$ in that case, which proves our claim. 
\end{proof}

We can now begin our construction of a preparation $(\cU,\cC,\cF_\cC)$
for $(\cS,\varphi,\cA,\cF_0)$ such that $\biguplus\cU=\biguplus\cT\cup S$. We are going to
refine $\cA_S$ twice. First ``vertically'', according to the image by
$\varphi$ of a certain partition of $S$ which, together with $\cT$, forms a
simplicial subcomplex $\cU$ of $\cS$ refining $\cS_{|\cT}\cup\{S\}$
(Claim~\ref{cl:U-comp}). Then ``horizontally'' by enlarging the cells
in $\cB$ contained in the closure of $\bigcup\cA_S$ in such a way that the
family of these new cells, together with $\cB$, forms a cellular
monoplex $\cC$ mod $\GG$ refining $\cA_{|\cT}\cup\cA_S=\cA_{|\cU}$ such
that $\cC\lhd^n\cA_{|\cU}$. The point of the construction is to ensure
that $\cC$ comes with a  $\lhd^n$\--system $\cF_\cC$ for
$(\cC,\cA_{|\cU})$ such that $(\cU,\cC,\cF_\cC)$ is a preparation for
$(\cS,\varphi,\cA,\cF_0)$. 

\subsection{Vertical refinement} 
\label{sub:vert-refin}

Let $\cT_S$ be the list of proper faces of $S$. We first deal with the case
where $S$ is not closed, that is $\cT_S\neq\emptyset$. For every $A$ in $\cA_S$
let:
\begin{displaymath}
  (c_A^\circ,\nu_A^\circ,\mu_A^\circ)=(c_A\circ\varphi,\nu_A\circ\varphi,\mu_A\circ\varphi)_{|S}
\end{displaymath}

For every $T\in\cT_S$ and every $A\in\cA_S$ let $\Phi_{T,A}(t,\varepsilon)$ be the formula
saying that $(t,\varepsilon)\in T\times R^*$ and that one of the following conditions hold,
with $n_1=\max(n,1+2vN)$:
\begin{description}
  \item[(A1)$_{t,\varepsilon}$:]
    $\bar\nu^\circ_A(t)\neq 0$ and for every $s\in S$ such that
    $\|s-t\|\leq|\varepsilon|$:
    \begin{displaymath}
      |c^\circ_A(s)-\bar c^\circ_A(t)|\leq|\pi^{n_1}\bar \nu^\circ_A(t)|
    \end{displaymath}
    and $|\nu^\circ_A(s)|=|\bar \nu^\circ_A(t)|$ and $|\mu^\circ_A(s)|=|\bar \mu^\circ_A(t)|$
  \item[(A2)$_{t,\varepsilon}$:]
    $\bar\nu^\circ_A(t)= 0$, $\bar\mu^\circ_A(t)\neq 0$ and for every $s\in S$ such that
    $\|s-t\|\leq|\varepsilon|$:
    \begin{displaymath}
      |c^\circ_A(s)-\bar c^\circ_A(t)|\leq|\pi^{n_1-N_0}\mu^\circ_B(t)|
    \end{displaymath}
    and $|\nu^\circ_A(s)|\leq|\mu^\circ_B(t)|\leq|\bar \mu^\circ_A(t)|=|\mu^\circ_A(s)|$ where $B$
    is the cell $B_{T}^1$ given by Claim~\ref{cl:A0-A1}. 
  \item[(A3)$_t$:]
    $\bar\nu^\circ_A(t)=\bar\mu^\circ_A(t)=0$.
\end{description}
Let $\Phi(t,\varepsilon)$ be the conjunction of the (finitely many)
$\Phi_{T,A}(t,\varepsilon)$'s as $T$ ranges over $\cT_S$ and $A$ over $\cA_S$.
Finally let $\Psi(t,\varepsilon)$ be the formula saying that that $|\varepsilon|$ is maximal
among the elements $\varepsilon'$ in $R^*$ such that $K\models\Phi(t,\varepsilon')$. Obviously
$\Psi(t,\varepsilon)$ implies $\Phi(t,\varepsilon)$. 

By continuity of the center and bounds of $A$, for every $t\in T$ there
exists $\varepsilon_{t,T,A}\in R^*$ such that $K\models\Phi_{T,A}(t,\varepsilon_{t,T,A})$. Hence for
every $t\in\partial S$ there is $\varepsilon\in R^*$ such that $K\models\Phi(t,\varepsilon)$ (every $\varepsilon\in R^*$
such that $|\varepsilon|\leq|\varepsilon_{t,T,A}|$ for every $(T,A)\in\cT_S\times\cA_S$ is a
solution). For every $t\in\partial S$, the set $E_t$ of elements $\varepsilon$ of $R^*$
such that $K\models \Phi(t,\varepsilon)$ is semi-algebraic, bounded and non-empty. So by
Corollary~\ref{co:max} there is $\varepsilon_t\in E_t$ such that $|\varepsilon_t|$ is
maximal in $|E_t|$, that is $K\models \Psi(t,\varepsilon_t)$. Theorem~\ref{th:skol} then
gives a semi-algebraic function $\varepsilon:\partial S\to R^*$ such that $K\models\Psi(t,\varepsilon(t))$
for every $t\in \partial S$, hence {\it a fortiori}:
\begin{equation}
  \forall t\in\partial S,\ K\models\Phi(t,\varepsilon(t)).
  \label{eq:phi-eps-t}
\end{equation}

\begin{claim}\label{cl:eps-loc-cst}
  Let $\varepsilon:\partial S\to R^*$ be the semi-algebraic function defined above. Then
  the restriction of $|\varepsilon|$ to every proper face $T$ of $S$ is
  continuous.
\end{claim}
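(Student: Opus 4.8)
The plan is to write $|\varepsilon|$ as a pointwise minimum of finitely many ``threshold'' functions, one for each conjunct of $\Phi$, and to check that each of these restricts continuously to a fixed proper face $T$ of $S$.

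First I would observe that the three alternatives (A1)$_{t,\varepsilon}$, (A2)$_{t,\varepsilon}$, (A3)$_t$ are monotone in $|\varepsilon|$: shrinking $\varepsilon$ only shrinks the set $\{s\in S\tq\|s-t\|\leq|\varepsilon|\}$ over which $s$ is quantified, and (A3)$_t$ does not mention $\varepsilon$. Hence for each $t\in\partial S$ the set $\{|\varepsilon'|\tq\varepsilon'\in R^*,\ K\models\Phi(t,\varepsilon')\}$ is downward closed in $|R^*|$, and by (\ref{eq:phi-eps-t}), the definition of $\Psi$ and Corollary~\ref{co:max} its largest element is $|\varepsilon(t)|$. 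Since $\Phi$ is the conjunction of the $\Phi_{T',A}$'s, this gives
\begin{displaymath}
  |\varepsilon(t)|=\min_{(T',A)\in\cT_S\times\cA_S}\rho_{T',A}(t),\qquad
  \rho_{T',A}(t)=\max\{|\varepsilon'|\tq\varepsilon'\in R^*,\ K\models\Phi_{T',A}(t,\varepsilon')\}.
\end{displaymath}
A finite pointwise minimum of continuous functions is continuous, so it suffices to fix the proper face $T$ and a cell $A\in\cA_S$ and to prove that $\rho_{T,A}$ restricts to a continuous function on $T$ --- the conjuncts indexed by other faces contributing either the constant $1$ or, by the same analysis, another continuous function on $T$.

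Next I would bring in the triangulation $(\cS,\varphi)$. Since $\cF_0$ contains $\CB(\cA)$ and $(\cS,\varphi)$ triangulates $\cF_0$ with parameters $(n,N,e,M)$, the functions $c_A^\circ=(c_A\circ\varphi)_{|S}$, $\nu_A^\circ$ and $\mu_A^\circ$ are $N$\--monomial mod $U_{e,n}$ on the simplex $S$; and since the center and bounds of $A$ are largely continuous and, $\cS$ being closed, $\varphi$ restricts to a homeomorphism from $\overline S$ onto $\overline{\widehat A}$, these functions extend continuously to $\overline S$ (I write $\bar c_A^\circ$, $\bar\nu_A^\circ$, $\bar\mu_A^\circ$ for the extensions). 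Because a nondegenerate monomial on $S$ loses exactly the coordinates that vanish on a face $T=F_J(S)$, each of $\bar\nu_A^\circ$ and $\bar\mu_A^\circ$ is either identically $0$ on $T$ or nowhere $0$ on $T$; consequently exactly one of (A1), (A2), (A3) is the active alternative throughout $T$. If it is (A3) then $\rho_{T,A}\equiv 1$ on $T$, which settles that case, so it remains to treat the (A1)\-- and (A2)\--cases.

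Finally, on $T$ with (A1) active I would make the threshold explicit. Writing each of $c_A^\circ$, $\nu_A^\circ$, $\mu_A^\circ$ as $\cU_{e,n}$ times a monomial, the valuations of $\nu_A^\circ$ and $\mu_A^\circ$ are affine functions of $vx$, so the equalities $|\nu_A^\circ(s)|=|\bar\nu_A^\circ(t)|$ and $|\mu_A^\circ(s)|=|\bar\mu_A^\circ(t)|$ hold for all $s\in B(t,\varepsilon')\cap S$ precisely when $|\varepsilon'|$ does not exceed an explicit bound built from the $|t_k|$ for $k\notin J$, which is a minimum of finitely many continuous functions of $t$ on $T$; for the inequality $|c_A^\circ(s)-\bar c_A^\circ(t)|\leq|\pi^{n_1}\bar\nu_A^\circ(t)|$ one estimates separately the deviation of the monomial part of $c_A^\circ$ from its limit at $t$ (an ultrametric computation whose validity threshold for $|\varepsilon'|$ is again piecewise affine, hence continuous, in $vt$) and the fluctuation of the factor $\cU_{e,n}$, which has relative norm at most $|\pi^{n_1}|$ once $|\varepsilon'|$ is small enough --- this being the point of the choice $n_1=\max(n,1+2vN)$. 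Taking the minimum of these finitely many continuous thresholds yields continuity of $\rho_{T,A}$ on $T$; the (A2)\--case is identical, with the cell $B=B_T^1$ of Claim~\ref{cl:A0-A1} (whose bound $\mu_B$ is continuous, being among the functions triangulated in step (P4) of the preparation) taking the place of $\bar\nu_A^\circ$ and $n_1-N_0$ that of $n_1$. I expect the main obstacle to be precisely this last estimate: bounding how far the monomial (mod $U_{e,n}$) function $c_A^\circ$ strays from its continuous extension along a ball that reaches the face $T$, which is where the exact value of $n_1$ and the hypothesis $M>2v(e)$ come into play.
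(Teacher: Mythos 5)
Your reduction of $|\varepsilon(t)|$ to a finite minimum of thresholds $\rho_{T',A}(t)$ is sound (each conjunct's set of admissible $|\varepsilon'|$ is downward closed, bounded and semi-algebraic, so Corollary~\ref{co:max} gives the maxima, and the max of the intersection is the min of the maxima), and the dichotomy you need on a face ($\bar\nu_A^\circ$, $\bar\mu_A^\circ$ identically $0$ or nowhere $0$ on $T$) is indeed available, though more directly from the closed $\lhd^n$\--complex structure of $\cA$ (footnote~\ref{ft:derAi-cell}) than from your one-line monomial argument. The genuine gap is in the central step: the claim that each exact threshold is ``an explicit bound built from the $|t_k|$'' (note also that $t_k=0$ for $k\notin J$), hence continuous. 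This cannot work for the center condition in (A1)/(A2). That condition bounds $|c_A^\circ(s)-\bar c_A^\circ(t)|$, and the hypothesis that $c_A^\circ$ is $N$\--monomial mod $U_{e,n}$ only controls the \emph{norm} of $c_A^\circ$, not the size of the difference of two of its values: two values of equal norm can differ by something of full size. So the maximal admissible radius for this condition is governed by the modulus of continuity of $c_A$ near $t$, which is not encoded in the monomial data at all; in particular your assertion that the fluctuation of the $\cU_{e,n}$ factor ``has relative norm at most $|\pi^{n_1}|$ once $|\varepsilon'|$ is small enough'' is false as stated (the $\UU_e$ part can jump arbitrarily among $e$\--th roots of unity, and $n$ may be smaller than $n_1$), and it is anyway a non-explicit continuity statement of exactly the kind your formula was supposed to avoid. (The role of $n_1=\max(n,1+2vN)$ in the paper is different: it guarantees $1+\pi^{n_1}R\subseteq\PN_N$ via Hensel's lemma, used in Claim~\ref{cl:rec3}.) A similar problem affects the norm-equality conditions: the exact maximal radius depends on which points of $S$ actually lie in the ball $B(t,\varepsilon')$, i.e.\ on the local geometry of $S$ along the face, not merely on the exponents and the $|t_k|$; exhibiting a continuous \emph{sufficient} radius does not make the maximal radius continuous.

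What does make the exact thresholds (and hence $|\varepsilon|$) continuous is a soft ultrametric argument that your proposal never makes: if $\Phi_{T,A}(t,\varepsilon')$ holds and $t'\in T$ with $\|t'-t\|\leq|\varepsilon'|$, then $B(t',\varepsilon')=B(t,\varepsilon')$, and passing the quantified conditions to the limit at $t'$ and using the triangle inequality shows $\Phi_{T,A}(t',\varepsilon')$ holds with the \emph{same} $\varepsilon'$; combined with the maximality built into $\Psi$ this gives that $|\varepsilon|$ is locally constant on $T$, hence continuous. This is the paper's proof, and it requires no explicit description of the thresholds. As written, your argument replaces this step by an unproved (and, for the center condition, unprovable from the stated data) explicit formula, so the proof does not go through without essentially reinstating the local-constancy argument.
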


\begin{proof} 
Note first that if $K\models\Phi_{T,A}(t,\varepsilon')$ for some $t\in T$ and $\varepsilon'\in R^*$
then $K\models\Phi_{T,A}(t',\varepsilon')$ for every $t'\in T\cap B$ where $B=B(t,\varepsilon')$ is the
ball with center $t$ and radius $\varepsilon'$.

Indeed, assume for example that $\bar\nu^\circ_A(t)\neq0$, hence (A1)$_{t,\varepsilon'}$
holds. It claims that for every $s\in S\cap B$
\begin{equation}
  |c_A^\circ(s)-\bar c_A^\circ(t)|\leq|\pi^{n_1}\bar\nu_A(t)|
  \label{eq:eps-lc-1}
\end{equation}
and $|\nu_A(s)|=|\bar\nu_A^\circ(t)|$ and $|\mu_A(s)|=|\bar\mu_A^\circ(t)|$. Now
$T\cap B\subseteq\overline{S\cap B}$ hence, as $s$ tends in $S\cap B$ to any given
$t'\in T\cap B$ we get
\begin{equation}
  |c_A^\circ(t')-\bar c_A^\circ(t)|\leq|\pi^{n_1}\bar\nu_A(t)|
  \label{eq:eps-lc-2}
\end{equation}
and $|\nu_A(t')|=|\bar\nu_A^\circ(t)|$ and $|\mu_A(t')|=|\bar\mu_A^\circ(t)|$. 
By combining  (\ref{eq:eps-lc-1}) and (\ref{eq:eps-lc-2}) with the
triangle inequality we obtain that for every $s\in S\cap B$
\begin{displaymath}
  |c_A^\circ(s)-\bar c_A^\circ(t')|\leq|\pi^{n_1}\bar\nu_A(t)|=|\pi^{n_1}\bar\nu_A(t')|
\end{displaymath}
and $|\mu_A(s)|=|\bar\mu_A^\circ(t')|$, that is (A1)$_{t',\varepsilon'}$. 

Assume now that $\bar\mu^\circ_A(t)=0$, hence $\bar\nu_A^\circ(t)=0$, that is
(A3)$_t$ holds.  Then $\varphi(T)\in\widehat{A}$ and $\bar\mu_A(\varphi(t))=0$
imply that $\bar\mu_A=0$ on $\varphi(T)$ because $\cA$ is a closed
$\lhd^n$--complex (see footnote~\ref{ft:derAi-cell}). So
$\bar\mu_A^\circ=\bar\nu_A^\circ=0$ on $T$, and (A3)$_{t'}$ follows. 

The intermediate case (A2)$_{t,\varepsilon}$ where $\bar\nu_A^\circ(t)=0$ and
$\bar\mu_A^\circ(t)\neq0$ is similar, and left to the reader. 

Now it follows that if $K\models\Psi(t,\varepsilon')$ and $\|t'-t\|\leq|\varepsilon'|$, then $K\models\Psi(t,\varepsilon'')$ if
and only if $|\varepsilon'|=|\varepsilon''|$. So $|\varepsilon(t)|=|\varepsilon(t')|$ for every $t,t'\in T$ such that
$\|t-t'\|\leq|\varepsilon(t)|$. Thus $|\varepsilon|$ is locally constant, hence continuous
on $T$. 
\end{proof}

Theorem~\ref{th:mono-div} applies to $S$, $\cT_S$ and the function
$\varepsilon$. It gives a partition $\cU_S$ of $S$ such that $\cU_S\cup\cT_S$ is a
simplicial complex, for each $T\in\cT_S$ there is a unique $U\in\cU_S$
with facet $T$, and for every $u\in U$:
\begin{equation}\label{eq:constr-Uk}
  \left\| u - \pi_U(u) \right\| \leq \left|\varepsilon(\pi_U(u))\right| 
\end{equation}
where $\pi_U$ is the coordinate projection of $U$ onto $T$ (see
Remark~\ref{re:face-proj}). On $\varphi(U)$ let $\sigma_U=\varphi\circ\pi_U\circ\varphi^{-1}$. This is
a continuous retraction of $\varphi(U)$ onto $\varphi(T)$. 

For every $U\in\cU_S$ and every $A\in\cA_S$ let
\begin{equation}\label{eq:constr-Ak}
  A_U=A\cap\big(\varphi(U)\times K\big).
\end{equation}
Let $T_U=\emptyset$ if $U$ is closed, and $T_U\in\cT_S$ be the facet of $U$
otherwise. Finally let $\cU=\cU_S\cup\cT_S$.

\begin{claim}\label{cl:U-comp}
  With the notation above, $\cU$ is a simplicial subcomplex of $\cS$
  refining $\cS_{|\cT}\cup\{S\}$ and containing $\cT$. For every $U\subseteq S$ in
  $\cU$ and every $A\in\cA_S$, $A_U$ is a largely continuous fitting
  cell mod $\GG$. Moreover if $U$ is not closed then:
  \begin{enumerate}
    \item
      If $|\bar\nu_A|\neq 0$ on $\varphi(T_U)$, then for every
      $x\in\widehat{A}_U$:
      \begin{equation}\label{eq:c1}
        \big|c_A(x)-\bar c_A(\sigma_U(x))\big| \leq
        \big|\pi^{n_1}\bar\nu_A(\sigma_U(x))\big| 
      \end{equation}
      \begin{equation}\label{eq:munu1}
        |\nu_A(x)| = |\bar\nu_A(\sigma_U(x))|
        \mbox{ \ and \ }
        |\mu_A(x)| = |\bar\mu_A(\sigma_U(x))|
      \end{equation}
    \item
      If $|\bar\nu_A|=0<|\bar\mu_A|$ on $\varphi(T_U)$, then for every
      $x\in\widehat{A}_U$:
      \begin{equation}\label{eq:c2}
        \big|c_A(x)-\bar c_A(\sigma_U(x))\big| \leq
        \big|\pi^{n_1-N_0}\mu_B(\sigma_U(x))\big|
      \end{equation}
      \begin{equation}\label{eq:munu2}
        |\nu_A(x)|\leq|\mu_B(\sigma_U(x))|\leq|\bar \mu_A(\sigma_U(x))|=|\mu_A(x)|
      \end{equation}
      where $B$ is the cell $B_{T}^1$ given by Claim~\ref{cl:A0-A1}. 
  \end{enumerate}
\end{claim}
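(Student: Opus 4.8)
The proof naturally splits into two halves: first the combinatorial/topological assertion that $\cU$ is a simplicial subcomplex of $\cS$ refining $\cS_{|\cT}\cup\{S\}$ and containing $\cT$, and second the analytic estimates (1) and (2) for the cells $A_U$. For the first half, I would argue as follows. By Theorem~\ref{th:mono-div}, $\cU_S\cup\cT_S$ is a simplicial complex in $D^MR^{q}$ (where $S\subseteq D^MR^q$), and $\cU_S$ is a partition of $S$ in which each proper face $T\in\cT_S$ is the facet of a unique $U\in\cU_S$. Since $\cT_S$ consists precisely of the proper faces of $S$, all of which lie in $\cS_{|\cT}$ by the minimality of $S$ (already noted before Claim~\ref{cl:A0-A1}), the family $\cU=\cU_S\cup\cT_S$ clearly contains $\cT$ up to the faces it shares, refines $\cS_{|\cT}\cup\{S\}$ (each $U\in\cU_S$ lies in $S$, each element of $\cT_S$ is a face of $S$ in $\cS$), and $\biguplus\cU=\biguplus\cT\cup S$. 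The remaining point, that $\cU$ is a \emph{simplicial subcomplex} of $\cS$ in the sense defined in Section~\ref{se:notation}, amounts to checking that $\overline{\cU}$ refines a lower subset of $\overline{\cS}$ and that $\bigcup\cU$ is closed in $\bigcup\cS$; the latter holds because $\biguplus\cT$ is closed in $\biguplus\cS$ by (P1) and $S$ is minimal in $\cS\setminus\cS_{|\cT}$, so adjoining $S$ keeps a lower subset closed; the former is immediate since the faces of the new simplexes in $\cU_S$ are, by Theorem~\ref{th:mono-div}, the elements of $\cT_S$ together with the new simplexes themselves.

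For the statement that $A_U$ is a largely continuous fitting cell mod $\GG$: since $A_U=A\cap(\varphi(U)\times K)$ is a vertical restriction of $A$ to the socle $\varphi(U)\subseteq\widehat A$, it has the same center, bounds, coset and type as $A$ (this is the notation $(c_A,\nu_A,\mu_A,G_A)_{|\varphi(U)}$ from Section~\ref{se:cell-dec}), hence it is again largely continuous; and it is fitting because for a fixed $x\in\varphi(U)$ the fiber $\{t-c_A(x)\tq(x,t)\in A_U\}$ equals $\{t-c_A(x)\tq(x,t)\in A\}$, so the sup/inf defining fitting bounds are unchanged. One should also remark that $\varphi(U)\subseteq\widehat A$ holds because $\widehat{A}=\varphi(S)$ for $A\in\cA_S$ and $U\subseteq S$.

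The analytic estimates (1) and (2) are where the real work is, and I expect this to be the main obstacle. The idea is to transport the conditions (A1)$_{t,\varepsilon}$ and (A2)$_{t,\varepsilon}$ through the retraction $\sigma_U=\varphi\circ\pi_U\circ\varphi^{-1}$. Fix $U\in\cU_S$ not closed, with facet $T=T_U$, and let $x\in\widehat A_U=\varphi(U)$; write $s=\varphi^{-1}(x)\in U$ and $t=\pi_U(s)\in T$, so that $\sigma_U(x)=\varphi(t)$ and, by the Monotopic Division estimate~(\ref{eq:constr-Uk}), $\|s-t\|\leq|\varepsilon(t)|$. Since $t\in\partial S$, equation~(\ref{eq:phi-eps-t}) gives $K\models\Phi(t,\varepsilon(t))$, hence in particular $K\models\Phi_{T,A}(t,\varepsilon(t))$ for our $A\in\cA_S$. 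If $\bar\nu_A^\circ(t)\neq0$ then (A1)$_{t,\varepsilon(t)}$ applies and, because $s\in S$ satisfies $\|s-t\|\leq|\varepsilon(t)|$, it yields exactly $|c_A^\circ(s)-\bar c_A^\circ(t)|\leq|\pi^{n_1}\bar\nu_A^\circ(t)|$ together with $|\nu_A^\circ(s)|=|\bar\nu_A^\circ(t)|$ and $|\mu_A^\circ(s)|=|\bar\mu_A^\circ(t)|$; rewriting in terms of $x$ and $\sigma_U(x)$ (i.e.\ composing with $\varphi^{-1}$, using $c_A^\circ=c_A\circ\varphi$ etc.) gives precisely (\ref{eq:c1}) and (\ref{eq:munu1}). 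One must check that the hypothesis ``$|\bar\nu_A|\neq0$ on $\varphi(T_U)$'' matches ``$\bar\nu_A^\circ(t)\neq0$'': this follows because, $\cA$ being a closed $\lhd^n$-complex, $\bar\nu_A$ is either identically $0$ or nowhere $0$ on the face $\varphi(T_U)$ (the phenomenon recorded in footnote~\ref{ft:derAi-cell} and used in the proof of Claim~\ref{cl:eps-loc-cst}), so the pointwise condition at $t$ is equivalent to the condition on all of $\varphi(T_U)$. The case $|\bar\nu_A|=0<|\bar\mu_A|$ on $\varphi(T_U)$ is handled identically using (A2)$_{t,\varepsilon(t)}$ in place of (A1), with $B=B_T^1$ the cell furnished by Claim~\ref{cl:A0-A1}, yielding (\ref{eq:c2}) and (\ref{eq:munu2}); here one additionally invokes Claim~\ref{cl:A0-A1} to know $B_T^1$ is well-defined. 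The only subtlety is bookkeeping: making sure the ``for every $s\in S$ with $\|s-t\|\leq|\varepsilon|$'' quantifier in (A1)/(A2) is correctly instantiated at our particular $s=\varphi^{-1}(x)$, which is legitimate exactly because of~(\ref{eq:constr-Uk}). Once this dictionary between $\Phi_{T,A}$ and the desired inequalities is set up, the proof is a routine substitution.
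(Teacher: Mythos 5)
Your proposal is correct and follows essentially the same route as the paper: the combinatorial part is handled by construction via Theorem~\ref{th:mono-div} and the minimality of $S$, the fitting/largely continuous property of $A_U$ comes from it being a vertical restriction of $A$, and the estimates are obtained exactly as in the paper by setting $s=\varphi^{-1}(x)$, $t=\pi_U(s)$, noting $\sigma_U(x)=\varphi(t)$, and instantiating (A1)$_{t,\varepsilon(t)}$ or (A2)$_{t,\varepsilon(t)}$ (which hold by (\ref{eq:phi-eps-t})) at $s$ thanks to (\ref{eq:constr-Uk}). The extra dichotomy you invoke to match ``$|\bar\nu_A|\neq0$ on $\varphi(T_U)$'' with ``$\bar\nu_A^\circ(t)\neq0$'' is harmless but unnecessary, since the pointwise statement at $\varphi(t)\in\varphi(T_U)$ follows trivially from the hypothesis on all of $\varphi(T_U)$.
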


\begin{proof}
By construction $\cU$ is clearly a simplicial complex refining
$\cT\cup\{S\}$, hence refining $\cS_{|\cT}\cup\{S\}$ since $\cT$ refines
$\cS_{|\cT}$. For every $U\subseteq S$ in $\cU$ and every $A\in\cA_S$,
$A_U$ is a largely continuous fitting cell mod $\GG$ by
(\ref{eq:constr-Ak}), because so is $A$.
If moreover $U$ is not closed let $T=T_U\in\cT_S$
be its facet, let $x$ be any element of $\widehat{A}_{U}=\varphi(U)$,
$s=\varphi^{-1}(x)\in U$ and $t=\pi_U(s)\in T$, where $\pi_U$ is the coordinate
projection of $U$ onto $T$ (see Remark~\ref{re:face-proj}). Note that
$\sigma_U(x)=\varphi\circ\pi_U(s)=\varphi(t)$ hence $\bar c_A(\sigma_U(x))=\bar c_A^\circ(t)$, and
similarly for $\bar \nu_A(\sigma_U(x))$ and $\bar \mu_A(\sigma_U(x))$. By
(\ref{eq:phi-eps-t}) we have $K\models\Phi(t,\varepsilon(t))$. 

If $|\bar\nu_A|\neq 0$ on $\varphi(T)$ then $\bar\nu_A^\circ(t)=\bar\nu_A(\sigma_U(x))\neq0$
hence $\Phi(t,\varepsilon(t))$ says that (A1)$_{t,\varepsilon(t)}$ holds for $t$. By
(\ref{eq:constr-Uk}), $\|s-t\|\leq|\varepsilon(t)|$ so (\ref{eq:c1}) and
(\ref{eq:munu1}) follow from (A1)$_{t,\varepsilon(t)}$. Similarly, if
$|\bar\nu_A|=0<|\bar\mu_A|$ on $\varphi(T)$ then (\ref{eq:c2}) and
(\ref{eq:munu2}) follow from (A2)$_{t,\varepsilon(t)}$. 
\end{proof}

This finishes the construction of the vertical refinement of $\cA_S$
if $S$ is not closed. When $S$ is closed we simply take
$\cU=\cS_{|\cT}\cup\{S\}$. Claim~\ref{cl:U-comp} holds
in this case too, for the trivial reason that there is no non-closed
$U\subseteq S$ in $\cU$. 

\begin{remark}\label{re:cAk}
  For every $U\in\cU_S$, if $\nu_{A_U}=0$ then $\Gr c_{A_U}=B_U$
  for some $B\in\cA_S$. Indeed $\nu_{A|\varphi(U)}=\nu_{A_U}=0$ implies that
  $\nu_A=0$ (thanks to our definition of presented cells) hence $\Gr
  c_A=\partial^0_{\varphi(S)}A$ belongs to $\cA$: it is contained in
  $\overline{A}$, hence in $\bigcup\cA$ since the latter is closed, in
  particular it meets at least one cell $B$ in $\cA$, and the last
  point of Proposition~\ref{pr:cell-comp} then gives that $B=\Gr c_A$.
  Thus $B=\Gr c_A\in\cA_S$, and clearly $\Gr c_{A_U}=B_U$.
\end{remark}

\subsection{Horizontal refinement}
\label{sub:hori-refin}

For every $A\in\cA_S$ we are going to construct for each $U\in\cU_S$  a
partition $\cE_{A,U}$ of $A_U$, and for each $E$ in $\cE_{A,U}$ a
semi-algebraic function $h_{E,A_U}:\varphi(U)\to K$ such that:
\begin{description}
  \item[(Pres)] 
    $\widehat{E}=\varphi(U)=\widehat{A_U}$ and $E$ is a largely continuous
    fitting cell mod $\GG$.
  \item[(Fron)] 
    One of the following holds:
    \begin{description}
      \item[\boldmath($\partial1$)] 
        $\partial E=\emptyset$.
      \item[\boldmath($\partial2$)] 
        $\partial E=\overline{\Gr c_E}$ and $\Gr c_E\in\cE_{C,U}$ for some $C\in\cA_S$.
      \item[\boldmath($\partial3$)] 
        $\partial E=\overline{B}$ for some $B\in\cB$, in which case $U$ is not
        closed,
        $\widehat{B}=\varphi(T_U)$ and:
        \begin{displaymath}
          (c_B,\nu_B,\mu_B)=(\bar c_E,\bar \nu_E,\bar \mu_E)_{|\varphi(T_U)}. 
        \end{displaymath}
    \end{description}
  \item[(Out)] 
    $E\lhd^n A_U$ and $h_{E,A_U}$ is a $\lhd^n$\--transition for $(E,A_U)$. 
  \item[(Mon)] 
    $c_E\circ\varphi_{|U}$, $\mu_E\circ\varphi_{|U}$, $\nu_E\circ\varphi_{|U}$ and
    $h_{E,A_U}\circ\varphi_{|U}$ are $N$\--monomial mod $U_{e,n}$. 
\end{description}

This last construction will finish the proof of
Lemma~\ref{le:pretriang}. Indeed, assuming that it is done, let
$\cC$ be the union of $\cB$ and all the cells $E$ in $\cE_{A,U}$ for
$A\in\cA_S$ and $U\in\cU_S$. Let $\cF_\cC$ be the union of the family of
the corresponding functions $h_{E,A_U}$ and of $\cF_\cB$. By
Claim~\ref{cl:U-comp}, $\cU$ is a simplicial subcomplex of $\cS$ such
that $\biguplus\cU=\biguplus\cT\cup S$. The assumption (P2) for $\cB$, together with
(Pres) and (Fron), give that $\cC$ is a cellular monoplex mod $\GG$
refining $\cA_{|\cT}\cup\cA_S$ and that $\varphi(\cU)=\widehat{\cC}$. The
assumption (P3) for $\cB$ and $\cF_B$, together with (Out) above, give
that $\cC\lhd^n\cA_{|\cU}$ and $\cF_\cC$ is a  $\lhd^n$\--system for
$(\cC,\cA_{|\cU})$. Finally the assumption (P4) for $(\cT,\varphi_{|\cT})$
together with (Mon) ensure that $(\cU,\varphi_{|\cU})$ is a triangulation of
$\cF_\cC\cup\CB(\cC)$ with parameters $(n,N,e,M)$. So $(\cU,\cC,\cF_\cC)$
is a preparation of $(\cS,\varphi,\cA,\cF_0)$, and since $\biguplus\cU=\biguplus\cT\cup S$ we
conclude by Remark~\ref{re:pretri-prep}. 
\\

So let $A\in\cA_S$ and $U\in\cU_S$ be fixed once and for all in the
remainder. 

\begin{remark}\label{re:cAk-mon}
  Recall that $(\cS,\varphi)$ is a triangulation of $\cF_0$, and $\cF_0$
  contains $\CB(\cA)$. In particular $c_A\circ\varphi_{|S}$ is $N$\--monomial
  mod $U_{e,n}$ hence {\it a fortiori} so is $c_A\circ\varphi_{|U}$. By
  (\ref{eq:constr-Ak}) $c_{A_U}=c_{A|\varphi(U)}$ hence
  $c_{A_U}\circ\varphi_{|U}=c_A\circ\varphi_{|U}$. Thus $c_{A_U}\circ\varphi_{|U}$ is
  $N$\--monomial mod $U_{e,n}$, and so are $\mu_{A_U}\circ\varphi_{|U}$ and
  $\nu_{A_U}\circ\varphi_{|U}$ by the same argument. 
\end{remark}

Let us first assume that $U$ is closed. We distinguish two
elementary cases.

\paragraph{Case 1.1:} $\mu_{A_U}=0$ or $\nu_{A_U}\neq0$. 
\\
Then $A_U$ is closed. We let $\cE_{A,U}=\{A_U\}$ and
$h_{A_U,A_U}=1$. (Pres), ($\partial1$), (Out) and (Mon) are obvious (using
Remark~\ref{re:cAk-mon} for the latter). 

\paragraph{Case 1.2:} $0=|\nu_{A_U}|<|\mu_{A_U}|$. 
\\
We let $\cE_{A,U}=\{A_U\}$ and $h_{A_U,A_U}=1$. Again (Pres), (Out) and
(Mon) are obvious (same as Case~1.1). Moreover $\partial A_U=\Gr c_{A_U}=\Gr
c_A$, which belongs to $\cA_S$ by Remark~\ref{re:cAk}. If we let
$B=\Gr c_A$, we have $B_U=\Gr c_{A_U}$ and $\mu_{B_U}=0$ hence $\cE_{B,U}=\{B_U\}$
by the previous case. So $\partial A_U=\Gr c_A$ and $\Gr c_A\in\cE_{B,U}$,
which finishes the proof of the statement that ($\partial2$) holds.
\\

These cases being solved, we assume in the remainder that $U$ is not
closed. Recall that $T_U$ is then the facet of $U$ and belongs to
$\cT$. By construction $\partial \varphi(U)=\varphi(\partial U)= \varphi(\overline{T_U})=
\overline{\varphi(T_U)}$. For the convenience of the reader, each of the
following cases is illustrated by a geometric representation of its
conditions (almost like if we were dealing with a cell $A$ over a real
closed field, except that the vertical intervals representing the
fibres of $A$ over $\widehat{A}$ can be clopen). In these figures $A_U$ is
represented by a gray area in $K^2$, its bounds by dotted lines, its socle
$\varphi(U)$ by the horizontal axe, $\partial\varphi(U)=\varphi(T_U)$ by a dot on the left
bound of $\varphi(U)$, and $\overline{A}\cap(\varphi(T_U)\times K)$ by a thick line or
dot on the vertical axe above $\varphi(T_U)$.

\paragraph{Case~2.1:} $|\bar \mu_{A_U}|=0$ on $\varphi(T_U)$.

\begin{center}
  \begin{tikzpicture}
    \small
    \def\fonctionNUa{plot[domain=0:2] (\x,{.5+\x*(6-\x)/24})}
    \def\fonctionNUb{plot[domain=0:2] (\x,{.5+\x*(6-\x)/14})}
    \def\fonctionMU{plot[domain=0:2] (\x,{.5+\x*(4-\x)/2.7})}
    \newcommand{\cellulea}[3]{
        \fill[color=gray!#1] 
          plot[domain=#2:#3] (\x,{.5+\x*(6-\x)/24}) --
          plot[domain=#3:#2] (\x,{.5+\x*(4-\x)/2.7}) -- cycle;
      }
    \newcommand{\celluleb}[3]{
        \fill[color=gray!#1] 
          plot[domain=#2:#3] (\x,{.5+\x*(6-\x)/14}) --
          plot[domain=#3:#2] (\x,{.5+\x*(4-\x)/2.7}) -- cycle;
      }

    \draw[thin] (0,0) -- (2,0);
    \draw[thin] (0,0) -- (0,2);
    \draw (0,0) node{\tiny$\bullet$};

    \celluleb{20}{0}{2};
    \draw \fonctionNUa;
    \draw[dotted] \fonctionNUb;
    \draw[dotted] \fonctionMU;
    \draw (0,.5) node{\tiny$\bullet$};

    \draw (1,.5) node{$c_A$};
    \draw (1.4,1.3) node{$E=A_U$};

    \draw (3.5,1) node{{\normalsize or}};

    \begin{scope}[xshift=5cm]

      \draw[thin] (0,0) -- (2,0);
      \draw[thin] (0,0) -- (0,2);
      \draw (0,0) node{\tiny$\bullet$};

      \cellulea{20}{0}{2};
      \draw \fonctionNUa;
      \draw[dotted] \fonctionMU;
      \draw (0,.5) node{\tiny$\bullet$};

      \draw (1,.5) node{$c_A$};
      \draw (1.4,1.2) node{$E=A_U$};

    \end{scope}

  \end{tikzpicture}
\end{center}

We let $\cE_{A,U}=\{A_U\}$ and $h_{A_U,A_U}=1$. (Pres), (Out) and
(Mon) are obvious as in the previous cases.

\begin{itemize}
  \item {\it Sub-case 2.1.a:}
    $\nu_{A_U}\neq0$ or $\mu_{A_U}=\nu_{A_U}=0$. Then $\partial A_U$ is the closure of
    $\Gr\bar c_{A_U|\varphi(T_U)}=\Gr\bar c_{A|\varphi(T_U)}$. The latter belongs
    to $\cB$ by Claim~\ref{cl:A0-A1}, which proves ($\partial3$). 
  \item {\it Sub-case 2.1.b:}
    $\nu_{A_U}=0\neq\mu_{A_U}$. Then $\partial A_U$ is the closure of $\Gr c_{A_U}$.
    By Remark~\ref{re:cAk}, there is a cell $C\in\cA_S$ such that
    $C_U=\Gr c_{A_U}$. Then $\mu_{C_U}=0$ (because $A_U$ is a fitting
    cell) hence $\cE_{C,U}=\{C_U\}$ by the previous sub-case. So $\partial
    A_U=\overline{\Gr c_{A_U}}$ and $\Gr c_{A_U}\in\cE_{C,U}$, which
    proves that ($\partial2$) holds.
\end{itemize}

\paragraph{Case~2.2:} $0<|\bar \nu_A|$ on $\varphi(T_U)$.

\begin{center}
  \begin{tikzpicture}
    \small
    \def\fonctioncA{plot[domain=0:2.5] (\x,{.3+\x*(6-\x)/26})}
    \newcommand{\fonctioncB}[2]{plot[domain=0:2.5] (\x,{#1+\x*(6-\x)/#2})}
    \newcommand{\cellule}[3]{
        \fill[color=gray!#1] 
          plot[domain=#2:#3] (\x,{.5+\x*(6-\x)/14}) --
          plot[domain=#3:#2] (\x,{2+\x*(6-\x)/22}) -- cycle;
      }

    \cellule{20}{0}{2.5};

    \draw[thin] (0,0) -- (2.5,0);
    \draw[thin] (0,0) -- (0,2.5);
    \draw (0,0) node{\tiny$\bullet$};

    % Centre c_A
    \draw \fonctioncA;
    \draw (1.25,.3) node{$c_A$};

    % Bords de A et c_B
    \draw[dotted] \fonctioncB{.5}{14};
    \draw \fonctioncB{1.2}{17};
    \draw[dotted] \fonctioncB{2}{22};

    % Etiquettes de regions
    \draw 
      (1.25,1.2) node{$E_{B_1}$}
      (1.25,1.9) node{$E_{B_3}$}
      (2.5,1.7) node[right]{$c_{B_2}\circ\sigma_U=E_{B_2}$};

    % Bord de A
    \draw[very thick] 
      (0,.5) -- node[left]{$B_1$} 
      (0,1.2) node{\tiny$\bullet$} node[left]{$B_2$}
      -- node[left]{$B_3$} (0,2);

  \end{tikzpicture}
\end{center}

In this case, by Claim~\ref{cl:A0-A1}, $\partial_{\varphi(T_U)}^1A=\bar A\cap(\varphi(T_U)\times K)$
is the union of the cells $B\in\cB$ which it contains. For every such
$B$, $\widehat{B}=\varphi(T_U)$ (because $\widehat{\cB}=\varphi(\cT)$ and
$\widehat{B}\subseteq\varphi(T_U)$) and we let: 
\begin{displaymath}
  E_B=(c_B\circ\sigma_U,\nu_B\circ\sigma_U,\mu_B\circ\sigma_U,G_B) 
\end{displaymath}
These $E_B$'s form a family $\cE_{A,U}$ of two by two disjoint largely
continuous cells because the various cells $B$ involved are so and:
\begin{equation}
  (x,t)\in E_B\iff x\in\widehat{A_U}\mbox{ and }(\sigma_U(x),t)\in B.
  \label{eq:fiber-B}
\end{equation}
Each $E_B$ has socle $\varphi(U)=\widehat{A_U}$ and for every $x\in\varphi(U)$,
$\sigma_U(x)$ belongs to $\varphi(T_U)=\widehat{B}$. If $B$ is of type~$0$, then so is $E_B$
and $\mu_B(\sigma_U(x))=0$ (because $B$ is a fitting cell of type $0$) hence
$E_B$ is a fitting cell. If $B$ is of type~$1$, then $\mu_B(\sigma_U(x))\in vG_B$ by
Proposition~\ref{pr:fit-cell} (because $B$ is a fitting cell of
type~$1$). That is $\mu_{E_B}(x)\in G_{E_B}$ hence $\mu_{E_B}$ is a fitting
bound by Proposition~\ref{pr:fit-cell}. Similarly $\nu_{E_B}$ is a
fitting bound, so $E_B$ is a fitting cell. This proves (Pres), and one
can easily derive from (\ref{eq:fiber-B}) that $\partial E_B=\overline{B}$ so
that ($\partial 3$) holds. Note also that $c_{E_B}\circ\varphi_{|U}$ is
$N$\--monomial mod $U_{e,n}$ because so is $c_B\circ\varphi_{|T_U}$ and
$c_{E_B}\circ\varphi_{|U}= c_B\circ\sigma_U\circ\varphi_{|U}= c_B\circ\varphi\circ\pi_U= c_B\circ\varphi_{|T_U}$. The same
reasoning applies to $\nu_{E_B}$ and $\mu_{E_B}$. So the next claim
finishes to prove that $\cE_{A,U}$ is a partition of $A_U$ and that
(out), (Mon) hold.

\begin{claim}\label{cl:rec3}
  $E_B\lhd^n A_U$ and there is a semi-algebraic $\lhd^n$\--transition
  $h_{E_B,A_U}$ for $(E_B,A_U)$ such that
  $h_{E_B,A_U}\circ\varphi_{|U}$ is $N$\--monomial mod $U_{e,n}$.
\end{claim}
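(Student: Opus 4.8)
The plan is to build the required $\lhd^n$-transition for $(E_B,A_U)$ by composing three elementary $\lhd^n$-relations already present in the construction, and then to read off $N$-monomiality from the fact that the transition functions involved live in $\cF_0\cup\cF_\cB$. First I would check $E_B\subseteq A_U$. Take $(x,t)\in E_B$; by (\ref{eq:fiber-B}) we have $x\in\varphi(U)$ and $(\sigma_U(x),t)\in B\subseteq\partial^1_{\varphi(T_U)}A=\overline A\cap(\varphi(T_U)\times K)$, so $|\bar\nu_A(\sigma_U(x))|\leq|t-\bar c_A(\sigma_U(x))|\leq|\bar\mu_A(\sigma_U(x))|$ and $t-\bar c_A(\sigma_U(x))\in G_A$. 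We are in Case~2.2, so $\bar\nu_A(\sigma_U(x))\neq0$, and (\ref{eq:c1})--(\ref{eq:munu1}) of Claim~\ref{cl:U-comp} give $|c_A(x)-\bar c_A(\sigma_U(x))|\leq|\pi^{n_1}\bar\nu_A(\sigma_U(x))|$ together with $|\nu_A(x)|=|\bar\nu_A(\sigma_U(x))|$ and $|\mu_A(x)|=|\bar\mu_A(\sigma_U(x))|$. The ultrametric inequality then yields
\[
 t-c_A(x)=\cU_{n_1}(x,t)\big(t-\bar c_A(\sigma_U(x))\big),
\]
so $|t-c_A(x)|=|t-\bar c_A(\sigma_U(x))|$ lies between $|\nu_A(x)|$ and $|\mu_A(x)|$, and (since $n_1$ is large enough that $1+\pi^{n_1}R\subseteq\GG$) $t-c_A(x)\in G_A$; hence $(x,t)\in A_U$.

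Next I would chain the remaining relations, all evaluated at $y=\sigma_U(x)$, $s=t$. As $\cB$ refines $\cA_{|\cT}$, $B$ lies in a unique $A''\in\cA_{|\cT}$, and since $B$ meets $\partial^1_{\varphi(T_U)}A$ so does $A''$, whence $A''\subseteq\partial^1_{\varphi(T_U)}A$; by (P3) the $\lhd^n$-system $\cF_\cB$ supplies $\beta\in\{0,1\}$ and $h_{B,A''}:\varphi(T_U)\to K$ with $s-c_{A''}(y)=\cU_n(y,s)\,h_{B,A''}(y)^\beta(s-c_B(y))^{1-\beta}$ on $B$. Moreover $A''$ meets $\overline A$ and, $\partial^0_{\varphi(T_U)}A$ being empty in Case~2.2, the closed $\lhd^n$-complex structure of $\cA$ gives $A''\lhd^n\partial^1_{\varphi(T_U)}A$; its transition $h_{A'',A}:\varphi(T_U)\to K$ belongs to the inner $\lhd^n$-system for $\cA$, hence to $\cF_0$, and $s-\bar c_A(y)=\cU_n(y,s)\,h_{A'',A}(y)^\alpha(s-c_{A''}(y))^{1-\alpha}$ on $A''$ for some $\alpha\in\{0,1\}$. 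Substituting these into the displayed identity above, using $c_{E_B}=c_B\circ\sigma_U$ and absorbing all factors with values in $1+\pi^nR$ into a single $\cU_n(x,t)$ (legitimate since $n_1\geq n$), a short case split on $(\alpha,\beta)$ shows $E_B\lhd^n A_U$, with $h_{E_B,A_U}$ equal to the constant $1$ when $\alpha=\beta=0$, to $h_{B,A''}\circ\sigma_U$ when $\alpha=0,\beta=1$, and to $h_{A'',A}\circ\sigma_U$ when $\alpha=1$.

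Finally, for $N$-monomiality I would use $\sigma_U\circ\varphi_{|U}=\varphi\circ\pi_U$, so that $h_{E_B,A_U}\circ\varphi_{|U}$ is $1$, or $(h_{B,A''}\circ\varphi_{|T_U})\circ\pi_U$, or $(h_{A'',A}\circ\varphi_{|T_U})\circ\pi_U$, with $\pi_U$ the coordinate projection of $U$ onto $T_U$. Since $T_U\in\cT\subseteq\cS$ and $\varphi(T_U)=\widehat B=\widehat{A''}$ is the common domain of $h_{B,A''}$ and $h_{A'',A}$, and since $(\cT,\varphi_{|\cT})$ triangulates $\cF_\cB\cup\CB(\cB)$ by (P4) while $(\cS,\varphi)$ triangulates $\cF_0$, both $h_{B,A''}\circ\varphi_{|T_U}$ and $h_{A'',A}\circ\varphi_{|T_U}$ are $N$-monomial mod $U_{e,n}$; and precomposing an $N$-monomial-mod-$U_{e,n}$ function on $T_U$ with the coordinate projection onto $T_U$ preserves this property, since its monomial part only involves the coordinates in $\Supp T_U$, which $\pi_U$ leaves unchanged, and a $U_{e,n}$-valued factor precomposed with $\pi_U$ is still one. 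The constant $1$ is trivially $N$-monomial mod $U_{e,n}$, so the claim follows. I expect the real work to be the bookkeeping in the middle paragraph — tracking the $\cU_n$-factors through the triple composition and the $(\alpha,\beta)$-case analysis — while the only genuinely delicate point is the coset verification in $E_B\subseteq A_U$, which requires $1+\pi^{n_1}R\subseteq\GG$.
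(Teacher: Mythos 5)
Your argument is essentially the paper's own proof: first $E_B\subseteq A_U$ via (\ref{eq:fiber-B}), (\ref{eq:c1})--(\ref{eq:munu1}), the ultrametric estimate giving $t-c_A(x)=\cU_{n_1}(x,t)\big(t-\bar c_A(\sigma_U(x))\big)$ and the coset step (the paper phrases that step through $1+\pi^{n_1}R\subseteq\PN_N$ by Hensel's lemma rather than your $1+\pi^{n_1}R\subseteq\GG$, but it is the same point), then the chained transitions $B\lhd^n A''\lhd^n\partial^1_{\varphi(T_U)}A$ with $h_{B,A''}\in\cF_\cB$ from (P3) and $h_{A'',A}$ from the inner $\lhd^n$\--system contained in $\cF_0$, composed with $\sigma_U$ and shown $N$\--monomial mod $U_{e,n}$ via $\sigma_U\circ\varphi_{|U}=\varphi\circ\pi_U$ and (P4), exactly as in the paper. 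The only inaccuracy is harmless and matches the paper's own level of precision: $\cT$ need not be a subset of $\cS$ and $\widehat{A''}$ may equal $\varphi(S')$ for a face $S'\in\cS$ strictly containing $T_U$, so the $N$\--monomiality of $h_{A'',A}\circ\varphi_{|T_U}$ is obtained by restricting $h_{A'',A}\circ\varphi_{|S'}$ to $T_U$ (this is precisely what the note in (P4), that $(\cT,\varphi_{|\cT})$ also triangulates $\cF_{0|\varphi(\cT)}$, is there for), which does not affect the argument.
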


\begin{proof}
For every $(x,t)$ in $E_B$, let us prove that $(x,t)$ belongs to $A_U$.
Since $x\in\widehat{A_U}$ it suffices to prove that $(x,t)\in A$. 
By construction $(\sigma_U(x),t)$ belongs to $B$ hence to $\partial_{\varphi(T_U)}^1 A$ so:
\begin{equation}\label{eq:EBdansA}
  \big|\bar\nu_A(\sigma_U(x))\big|
    \leq \big|t-\bar c_A(\sigma_U(x))\big|
    \leq \big|\bar\mu_A(\sigma_U(x))\big|
  \mbox{ and }
  t-\bar c_A(\sigma_U(x))\in G_A
\end{equation}
By (\ref{eq:munu1}) $|\nu_A(x)| = |\bar\nu_A(\sigma_U(x))|$ and
$|\mu_A(x)| = |\bar\mu_A(\sigma_U(x))|$. Moreover by (\ref{eq:c1}):
\begin{eqnarray}
  \big|\big(t-c_A(x)\big) - \big(t-\bar c_A(\sigma_U(x))\big)\big| 
  &=& \big|c_A(x) - \bar c_A(\sigma_U(x)\big| \nonumber\\
  &\leq& \big|\pi^{n_1}\bar\nu_A(\sigma_U(x))\big| \label{eq:star}\\
  &<& \big|t-\bar c_A(\sigma_U(x))\big| \nonumber
\end{eqnarray}
Thus $|t-c_A(x)|=|t-\bar c_A(\sigma_U(x))|$ and by (\ref{eq:EBdansA}):
\begin{displaymath}
  |\nu_A(x))|\leq|t-c_A(x)|\leq|\mu_A(x)|
\end{displaymath}
Moreover by $(\ref{eq:star})$:
\begin{equation}\label{eq:tcn}
  \left|\frac{t-c_A(x)}{t-\bar c_A(\sigma_U(x))}-1\right|
  \leq \left|\pi^{n_1}\frac{\bar\nu_A(\sigma_U(x))}{t-\bar c_A(\sigma_U(x)}\right|
  \leq \big|\pi^{n_1}\big|
\end{equation}
Recall that $n_1=\max(n,1+2vN)$, in particular $n_1>2vN$ hence
$1+\pi^{n_1}R\subseteq \PN_N$ by Hensel's lemma. Since $t-\bar c_A(\sigma_U(x))\in G_A$
by (\ref{eq:EBdansA}) and $G_A\in K^\times/\PN_{N}^\times$, it follows that
$t-c_A(x)\in G_A$. So $(x,t)\in A$ which proves that $E_B\subseteq A$. 

It remains to check that $E_B\lhd^n A_U$, and to find a
$\lhd^n$\--transition for $(E_B,A_U)$. For every $(x,t)\in E_B$ let:
\begin{displaymath}
  \omega_B(x,t)=\pi^{-n}\bigg(\frac{t-c_A(x)}{t-\bar c_A(\sigma_U(x))}-1\bigg)
\end{displaymath}
By (\ref{eq:tcn}) $\omega_B$ takes values in $\pi^{n_1-n}R$ 
hence in $R$ since $n_1\geq n$, thus for every $(x,t)\in E_B$:
\begin{equation}
    t-c_A(x)=\cU_n(x,t)\big(t-\bar c_A(\sigma_U(x))\big) 
  \label{eq:cA-cbarA}
\end{equation}
with $\cU_n=1+\pi^n\omega_B$ in this case. We have $B\subseteq\partial^1_{\varphi(T_U)}A$ and by
(P3) $\cB\lhd^n\cA_{|\cT}$. Since $\cA$ is a closed $\lhd^n$\--complex this
implies that for some $A'\in\cA$ we have $B\lhd^n A'\lhd^n\partial^1_{\varphi(T_U)}A$. Let
$h_0\in\cF_0$ be a $\lhd^n$\--transition function for $(A',\partial^1_{\varphi(T_U)}A)$, and
$h_1\in\cF_B$ a $\lhd^n$\--transition function for $(B,A')$. Then for some
$\alpha_0,\alpha_1\in\{0,1\}$ and every $(x',t')$ in $B$ we have
\begin{displaymath}
  t'-\bar c_A(x')=\cU_n(x',t')h_0^{\alpha_0}(x')\big(t'-c_{A'}(x')\big)^{1-\alpha_0}
\end{displaymath}
and 
\begin{displaymath}
    t'-c_{A'}(x')=\cU_n(x',t')h_1^{\alpha_1}(x')\big(t'-c_B(x')\big)^{1-\alpha_1} 
\end{displaymath}
hence $t'-\bar c_A(x')=\cU_n(x',t')h(x')^\alpha(t'-c_B(x'))^{1-\alpha}$ with
$h=h_0^{1-\alpha_0}h_1^{(1-\alpha_0)\alpha_1}$ and $\alpha=\alpha_0+\alpha_1-\alpha_0\alpha_1$. So $h$ is a
$\lhd^n$\--transition function for $(B,\partial^1_{\varphi(T_U)}A)$. Moreover
$h_0\circ\varphi_{|T_U}$ and $h_1\circ\varphi_{|T_U}$ are $N$\--monomial mod $U_{e,n}$ by
(P4), hence so is $h\circ\varphi_{|T_U}$. For every $(x,t)$ in $E_B$,
$(\sigma_U(x),t)\in B$ so
\begin{displaymath}
  t-\bar c_A\big(\sigma_U(x)\big)
  = \cU_n(x,t)h\big(\sigma_U(x)\big)^\alpha\big[t-c_B\big(\sigma_U(x)\big)\big]^{1-\alpha}.
\end{displaymath}
Combining this with (\ref{eq:cA-cbarA}) and the definition of
$c_{E_B}=c_B\circ\sigma_U$ we get
\begin{displaymath}
  t-c_A(x)= \cU_n(x,t)h\big(\sigma_U(x)\big)^\alpha\big[t-c_{E_B}(x)\big]^{1-\alpha}.
\end{displaymath}
So $E_B\lhd^n A_U$ and $h\circ\sigma_U$ is a $\lhd^n$\--transition for $(E_B,A_U)$.
Moreover $h\circ\sigma_U\circ\varphi_{|U}=h\circ\varphi\circ\pi_U$ by definition of $\sigma_U$. The coordinate
projection $\pi_U$ of $U$ onto $T_U$ is obviously $1$\--monomial, and
$h\circ\varphi_{|T_U}$ is $N$\--monomial mod $U_{e,n}$ by construction. So
$h\circ\sigma_U\circ\varphi$ is also $N$\--monomial mod $U_{e,n}$ and we can take
$h_{E_B,A_U}=h\circ\sigma_U$. 
\end{proof}

\paragraph{Case~2.3:} $0=|\bar \nu_A|<|\bar\mu_A|$ on $\varphi(T_U)$ and $\nu_A\neq0$.

\begin{center}
  \begin{tikzpicture}
    \small
    \def\fonctioncA{plot[domain=0:2.5] (\x,{.3+\x*(6-\x)/26})}
    \newcommand{\fonctioncB}[2]{plot[domain=0:2.5] (\x,{#1+\x*(6-\x)/#2})}
    \newcommand{\cellule}[3]{
        \fill[color=gray!#1] 
          plot[domain=#2:#3] (\x,{.3+\x*(6-\x)/10}) --
          plot[domain=#3:#2] (\x,{2+\x*(6-\x)/22}) -- cycle;
      }

    \cellule{20}{0}{2.5};

    \draw[thin] (0,0) -- (2.5,0);
    \draw[thin] (0,0) -- (0,2.5);
    \draw (0,0) node{\tiny$\bullet$};

    % Centre c_A
    \draw \fonctioncA;
    \draw (1.25,.3) node{$c_A$};

    % Bords de A et c_B
    \draw[dotted] \fonctioncB{.3}{10};
    \draw[dashed] \fonctioncB{1.2}{17};
    \draw[dotted] \fonctioncB{2}{22};

    % Etiquettes de regions
    \draw 
      (1.25,1.2) node{$E$}
      (1.25,1.9) node{$D$}
      (2.5,1.7) node[right]{$\mu_{B_1}\circ\sigma_U=\mu_E$};

    % Bord de A
    \draw[very thick] 
      (0,.3) node{\tiny$\bullet$} node[left]{$B_0$} -- node[left]{$B_1$} 
      (0,1.2) -- node[left]{$\dots B_3,B_2$} (0,2);

  \end{tikzpicture}
\end{center}

Let $B_0=B^0_{T_U}$ and $B_1=B^1_{T_U}$ the two cells in $\cB$ given
by claim~\ref{cl:A0-A1}. Let:
\begin{displaymath}
  E = (c_A,\nu_A,\mu_{B_1}\circ\sigma_U,G_A)_{|\varphi(U)}
\end{displaymath}
If $|\mu_{B_1}|=|\bar\mu_A|$ on $\varphi(T_U)$ then $|\mu_{B_1}\circ\sigma_U|=|\mu_A|$ on
$\varphi(U)$ by (\ref{eq:munu2}). Thus $E$ and $A_U$ have the same
underlying set. In this case we let $\cE_{A,U}=\{E\}$ and properties
(Pres), (Mon), ($\partial3$) are trivially true. So is (out), using
Remark~\ref{re:cAk-mon} for $c_A\circ\varphi_{|U}$, $\nu_A\circ\varphi_{|U}$, and (P4) for
$\mu_{B_1}\circ\sigma_U\circ\varphi_{|U}=\mu_{B_1}\circ\varphi_{|T_U}$. 

Otherwise $|\mu_{B_1}|<|\bar\mu_A|$ on $\varphi(T_U)$ by Claim~\ref{cl:A0-A1}
and we let:
\begin{displaymath}
  D = (c_A,\pi^{-N_0}\mu_{B_1}\circ\sigma_U,\mu_A,G_A)_{|\varphi(U)} 
\end{displaymath}
$|\mu_{B_1}|\leq|\pi^{N_0}\bar\mu_A|$ on $\varphi(T_U)$ by Claim~\ref{cl:A0-A1},
$|\bar \mu_A\circ\sigma_U|=|\mu_A|$ on $\varphi(U)$ by (\ref{eq:munu2}), so
$|\nu_D|=|\pi^{-N_0}\mu_{B_1}\circ\sigma_U|\leq|\bar\mu_A\circ\sigma_U|\leq|\mu_A|=|\mu_D|$ on $\varphi(U)$.
Moreover $A$ is a fitting cell hence for every $x\in\varphi(U)$ there is $t\in
K$ such that $(x,t)\in A$ and $|t-c_A(x)|=|\mu_A(x)|$, so $(x,t)\in D$. Thus
$D$ is indeed a cell, with socle $\varphi(U)$. It is actually a largely
continuous cell, and $\mu_D=\mu_A$ is a fitting bound. Let us check that
$\nu_D=\pi^{-N_0}\mu_{B_1}\circ\sigma_U$ is a fitting bound too. $B_1$ is a fitting
cell of type~$1$ with socle $\varphi(T_U)$ hence $\mu_{B_1}(\varphi(T_U))\subseteq vG_{B_1}$
by Proposition~\ref{pr:fit-cell}. But $G_{B_1}=G_A$ by
Claim~\ref{cl:A0-A1}, $G_A=G_D$ and $\varphi(T_U)=\sigma_U(\varphi(U))$ by
construction, and $N_0\in v\GG$ so $\nu_D(\varphi(U))\subseteq vG_D$. Thus $\nu_D$ is
indeed a fitting bound by Proposition~\ref{pr:fit-cell}.

Clearly $A_U$ is the disjoint union of $E$ and $D$. Moreover the cells
in $\cB$ contained in $\bar D\cap(\varphi(T_U)\times K)$ are exactly those contained
in $\bar A\cap(\varphi(T_U)\times K)$ except $B_0$ and $B_1$. Thus the construction
that we have done for $A_U$ in case~2.2 applies to $D$ because
$\bar\nu_{D}=\pi^{-N_0}\mu_{B_1}\neq0$ on
$\varphi(T_U)$
and because the analogues of conditions (\ref{eq:c1}) and (\ref{eq:munu1})
that we used for $A_U$ in case~2.2 hold for $D$ in the present case.
Indeed by (\ref{eq:c2}) we have
\begin{displaymath}
  \big|c_{A_U}(x)-\bar c_{A_U}(\sigma_U(x))\big|
  \leq \big|\pi^{n_1-N_0}\bar\mu_{B_1}(\sigma_U(x))\big|.
\end{displaymath}
This is just condition (\ref{eq:c1}) for $D$ since $c_D=c_{A_U}$ and
$\nu_D=\pi^{-N_0}\mu_{B_1}$. Moreover condition (\ref{eq:munu1}) for $D$ is: 
\begin{displaymath}
  |\nu_{D}|=|\bar\nu_{D}\circ\sigma_U|
  \mbox{ \ and \ }
  |\mu_{D}|=|\bar\mu_{D}\circ\sigma_U|
\end{displaymath}
The first equality is true by definition of $\nu_D$ as
$\pi^{-N_0}\mu_{B_1}\circ\sigma_U$. The second one is true because $\mu_D=\mu_A$ and
because of (\ref{eq:munu2}).
  
So the construction of Case~2.2 gives a partition $\cE'$ of $D$ and
for each $E'\in\cE'$ a semi-algebraic function\footnote{Case~2.2 applied
  to $D$ actually gives for each $E'\in\cE'$ a $\lhd^n$\--transition
  $h_{E',D}$ for $(E',D)$. But $D\subseteq A_U$ and $c_D=c_{A_u}$ so
  $h_{E',D}$ is also a $\lhd^n$\--transition for $(E',A_U)$ and we
  can set $h_{E',A_U}=h_{E',D}$.\label{ft:h-D-AU}} 
$h_{E',A_U}:\varphi(U)\to K$ satisfying conditions (Pres), ($\partial3$), (out) and
(Mon). Since $E$ also has these properties (with $h_{E,A_U}=1$ since
$c_E=c_A$ on $\varphi(U)$) we can take $\cE_{A,U}=\{E\}\cup\cE'$.

\paragraph{Case~2.4:} $\bar\mu_A\neq0$ on $\varphi(T_U)$ and $\nu_A=0$.

\begin{center}
  \begin{tikzpicture}
    \small
    \def\fonctioncA{plot[domain=0:2.5] (\x,{.3+\x*(6-\x)/26})}
    \newcommand{\fonctioncB}[2]{plot[domain=0:2.5] (\x,{#1+\x*(6-\x)/#2})}
    \newcommand{\cellule}[3]{
        \fill[color=gray!#1] 
          plot[domain=#2:#3] (\x,{.3+\x*(6-\x)/26}) --
          plot[domain=#3:#2] (\x,{1.7+\x*(6-\x)/14}) -- cycle;
      }

    \cellule{20}{0}{2.5};

    \draw[thin] (0,0) -- (2.5,0);
    \draw[thin] (0,0) -- (0,2.5);
    \draw (0,0) node{\tiny$\bullet$};

    % Centre c_A
    \draw \fonctioncA;
    \draw (1.25,.3) node{$c_A$};

    % Bords de A et c_B
    \draw[dashed] \fonctioncB{.3}{5};
    \draw[dotted] \fonctioncB{1.7}{14};

    % Etiquettes de regions
    \draw 
      (1.25,1) node{$E$}
      (1.25,1.8) node{$D$}
      (2.5,2) node[right]{$\mu=\nu_D$};

    % Bord de A
    \draw[very thick] 
      (0,.3) node{\tiny$\bullet$} node[left]{$B_0$} 
      -- node[left]{$\dots B_2,B_1$} (0,1.7);

  \end{tikzpicture}
\end{center}

Let again $B_1=B_{T_U}^1$ be the cell given by claim~\ref{cl:A0-A1}.
We are going to split $A_U$ in two cells $E$ and $D$ to which previous
cases apply. In order to do so, choose any $i\in\Supp U \setminus \Supp
T_U$. For every $u\in U$ let $\xi_i(u)=u_i$, the $i$\--th coordinate of
$u$. Clearly $\xi_i$ is largely continuous and $\bar\xi_i=0$ on $\partial
U=\bar T_U$. So the function:
\begin{displaymath}
  \mu=(\xi_i\circ\varphi^{-1})^N.(\mu_{B_1}\circ\sigma_U)
\end{displaymath}
is largely continuous on $\widehat{A_U}=\varphi(U)$ and $\bar\mu=0$ on
$\varphi(T_U)$, hence also on $\overline{\varphi(T_U)}=\partial\varphi(U)$. Note
that $\mu\circ\varphi_{|U}$ is $N$\--monomial mod $U_{e,n}$. Let:
\begin{displaymath}
  E = (c_A,0,\pi^{N_0}\mu,G_A)_{|\varphi(U)} 
\end{displaymath}
\begin{displaymath}
  D = (c_A,\mu,\mu_A,G_A)_{|\varphi(U)}
\end{displaymath}
$E$ and $D$ are largely continuous fitting cells mod $\GG$ which define
a partition of $A_U$. (Here we use that $A$ is a fitting cell: for
every $x\in\varphi(U)$ there is $t\in K$ such that $(x,t)\in A$ and
$|t-c_A(x)|=|\mu_A(x)|$ so $(x,t)\in D$, which proves that $D$ is really a
cell. That $D$, $E$ are fitting cells and $A_U=E\cup D$ then follows from
Proposition~\ref{pr:fit-cell}.) In particular $E$ satisfies condition
(Pres). Since $\nu_E=0$ and $\bar\mu_E=\pi^{N_0}\bar\mu=0$ on $\partial\varphi(U)$, we have
$\partial E=\overline{\Gr c_E}$. By Remark~\ref{re:cAk}, $\Gr c_{A_U}=C_U$
for some $C\in\cA_S$, and by Sub-case~2.1.1 applied to $C_U$, $\Gr
c_{A_U}\in\cE_{C,U}$. This proves ($\partial2$) for $E$ since $c_E=c_{A_U}$. 
Let $h_{E,A_U}=1$, this is a $\lhd^n$\--transition for $(E,A_U)$
since they have the same center, so $E$ satisfies (out). It also
satisfies (Mon), thanks to Remark~\ref{re:cAk-mon} for
$c_E=c_{A_U}$ and because $\mu_E\circ\varphi_{|U}=\pi^{N_0}\mu\circ\varphi_{|U}$ is
$N$\--monomial mod $U_{e,n}$.

Case~2.3 applies to $D$ because $\nu_D=\mu\neq 0$, $|\bar\nu_D|=|\bar\mu|=0$ on $\varphi(T_U)$ and
$|\bar\mu_D|=|\bar\mu_A|\neq0$ on $\varphi(T_U)$, and because the analogues of 
conditions (\ref{eq:c2}) and (\ref{eq:munu2}) that we used for $A_U$
in case~2.3 hold for $D$ in the present case. Indeed (\ref{eq:c2}) holds
for $D$ because it holds for $A_U$, and because $D$ and $A_U$
have the same center. Condition (\ref{eq:munu2}) for $D$ is:
\begin{displaymath}
  |\nu_D|\leq|\mu_{B_1}\circ\sigma_U|\leq|\bar\mu_D\circ\sigma_U|=|\mu_D|
\end{displaymath}
The first inequality is true because $|\nu_D|=|\mu|\leq|\mu_{B_1}\circ\sigma_U|$ by
construction, the second one is true by claim~\ref{cl:A0-A1} and
because $\mu_D=\mu_A$, and the last equality is true because it is true
for $A_U$ by (\ref{eq:munu2}) and because $\mu_D=\mu_{A_U}=\mu_{A|\varphi(U)}$. 

So the construction of case~2.3 gives a partition $\cE'$ of $D$ and
for each $E'\in\cE'$ a semi-algebraic function\footnote{Same remark as
  in footnote~\ref{ft:h-D-AU}.} $h_{E',A_U}:\varphi(U)\to K$
satisfying conditions (Pres), (Fron), (out) and (Mon). Since $E$ also
has these properties we can take $\cE_{A,U}=\{E\}\cup\cE'$.

\section{Cartesian morphisms}
\label{se:cart-map}

Let $\cA$ be a cellular monoplex mod $\GG$ such that $\bigcup\cA$ is a
closed subset of $R^{m+1}$. Let $(\cU,\psi)$ be a triangulation of
$\CB(\cA)$ with parameters $(n,N,e,M)$ such that for every $A\in\cA$,
$\psi^{-1}(A)\in\cU$ (we will denote it $U_A$). Note that this is
essentially the data given by the conclusion of
Lemma~\ref{le:pretriang}. The aim of this section is to build a
triangulation $(\cS,\varphi)$ of $\cA$ with the same parameters $(n,N,e,M)$,
together with a continuous projection $\Phi:\biguplus\cS\to\biguplus\cU$ such that the following diagram is
commutative.
\begin{displaymath}
  \xymatrix{
    \bigcup\cA \ar@{->>}[d]    & \biguplus\cS \ar@{-->}[l]_\varphi \ar@{-->>}[d]^\Phi \\
    \bigcup\widehat{\cA} & \biguplus\cU \ar[l]_\psi
  }
\end{displaymath}

We will make the assumption that $\GG=Q_{N,M'}$ with $M'=M+v(N)$ and
$M>v(N)$. In addition we temporarily assume that $\cA$ is a rooted
tree, and $\cU$ a simplicial complex in $D^MR^{q_1}$ for some $q_1$.
We keep these data and assumptions until the end of this section,
where we finally state our result in a more precise and slightly more
general form. 

The construction is done below through a series of claims, which are
connected in the following way. The idea is to prepare the
construction of $\cS$, $\varphi$, $\Phi$ by building first the tree $\cH$ of
supports\footnote{See Remark~\ref{re:well-dispatched}.} of $S$ for
$S\in\cS$, together with an epimorphism of trees from $\cH$ to $\cU$. In
order to do so, we construct a pair of trees of finite subsets of
$\NN^*$ ordered by inclusion, $\cH=(H(A))_{A\in\cA}$ and
$\cP=(P(A))_{A\in\cA}$, which come naturally with increasing
maps\footnote{$\cA$, $\widehat{\cA}$ and
  $\cU$ are ordered by specialisation, while $\cH$ and $\cP$ are
ordered by inclusion.} making the following diagram commutative (see
Claim~\ref{cl:dispatching} and the comments after).
\begin{equation}
  \xymatrixrowsep{.5em}
  \xymatrix{
    \cA \ar@{->>}[dd] & \cH \ar@{->}[l]_\sim 
                            \ar@{-->>}[dd] 
                            \ar@{->>}[rd] &       \\
                      &                   & \cP \ar@{->>}[ld] \\                  
    \widehat{\cA}     & \cU \ar[l]_\sim      &
  }
  \label{eq:diag-A-H-P-U}
\end{equation}
For each $A\in\cA$, a simplex $S_A$ will then be constructed inside
$F_{H(A)}(D^MR^{q_2})$ (for some $q_2\in\NN^*$ large enough), together
with a semi-algebraic isomorphism $\varphi_A$ and a semi-algebraic
projection $\Phi_A$ defined by means of these maps from $\cH$
to $\cA$ and from $\cH$ to $\cU$. This will ensure not only that the
following diagram is commutative (Claim~\ref{cl:image-SA})
\begin{displaymath}
  \xymatrix{
    A \ar@{->>}[d]    & S_A \ar@{-->}[l]_{\varphi_A} \ar@{-->>}[d]^{\Phi_A} \\
    \widehat{A} & U_A \ar[l]_{\psi_{|U_A}}
  }
\end{displaymath}
but also that $\cS=(S_A)_{A\in\cA}$ is a simplicial complex
(Claim~\ref{cl:SA-simplex}) and that the resulting maps $\varphi$, $\Phi$
defined by glueing all the local maps $\varphi_A$, $\Phi_A$ are continuous on
$\bigcup\cS$ (Claims~\ref{cl:cart-cont} and \ref{cl:phi-homeo}).

\begin{claim}\label{cl:faces-UA}
  The faces of $U_A$ are exactly the sets $U_B$ with $B\leq A$ in $\cA$.
\end{claim}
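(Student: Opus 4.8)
The plan is as follows. First I would record three easy preliminary observations. Since $(\cU,\psi)$ is a triangulation of $\CB(\cA)$ with $\psi(U_A)=\widehat A$ for every $A\in\cA$, every simplex of $\cU$ is one of the $U_A$'s: its $\psi$-image is one of the pieces refining $\widehat{\cA}$ and lying inside a unique $\widehat A$, and for each $A$ the only $U\in\cU$ with $\psi(U)\subseteq\widehat A$ is $U_A$ (as $\psi(U_A)=\widehat A$), so $\psi(U)=\widehat A$. Hence $\cU=\{U_A\tq A\in\cA\}$ and $\psi(\cU)=\widehat{\cA}$. Next, $\bigcup\cA$ is closed and bounded, so by Theorem~\ref{th:extr-val} its image $\bigcup\widehat{\cA}$ under the coordinate projection $\pi\colon(x,t)\mapsto x$ is closed and bounded, whence $\cU$ is a \emph{closed} simplicial complex by Remark~\ref{re:closed-triang}; in particular $\overline{\cU}=\cU$, so every face of every $U_A$ already lies in $\cU$, and $\biguplus\cU$ is closed and bounded. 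Finally, $\psi$ and $\psi^{-1}$ commute with topological closure (both are semi-algebraic maps between closed and bounded semi-algebraic sets, cf.\ footnote~\ref{fn:image-compact}), so $\psi(\overline{U_A})=\overline{\widehat A}$ and $\psi^{-1}(\overline{\widehat A})=\overline{U_A}$.

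Next I would reduce the claim to the purely combinatorial statement: \emph{the faces of $U_A$ are exactly the $V\in\cU$ with $V\subseteq\overline{U_A}$.} To get the claim from this I must identify $\{V\in\cU\tq V\subseteq\overline{U_A}\}$ with $\{U_B\tq B\leq A\}$. By the closure-commutation above, for $V=U_B$ one has $U_B\subseteq\overline{U_A}$ iff $\widehat B\subseteq\overline{\widehat A}$, i.e.\ $\widehat B\leq\widehat A$ in $\widehat{\cA}$; so it suffices to prove $\{\widehat B\tq B\in\cA,\ B\leq A\}=\{Y\in\widehat{\cA}\tq Y\leq\widehat A\}$ and then note that $U_B$ with $\widehat B\leq\widehat A$ can be rewritten $U_{B'}$ with $B'\leq A$ (same socle). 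Here lies the one genuinely delicate point: it is \emph{not} true that $B\leq A\iff\widehat B\leq\widehat A$, since several cells of $\cA$ may share a socle, so I cannot argue cell by cell. Instead I would use that $\cA$ is a complex of sets (Proposition~\ref{pr:cell-comp}), so $\overline A=\bigsqcup_{B\leq A}B$ and hence $\bigcup_{B\leq A}\widehat B=\pi(\overline A)$; that $\pi(\overline A)=\overline{\widehat A}$ (being closed by Theorem~\ref{th:extr-val} and squeezed between $\widehat A$ and $\overline{\widehat A}$); and that $\widehat{\cA}$ is likewise a complex of sets, so $\overline{\widehat A}=\bigsqcup_{Y\leq\widehat A}Y$. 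Then $\{\widehat B\tq B\leq A\}$ is a subfamily of the partition $\{Y\in\widehat{\cA}\tq Y\leq\widehat A\}$ of $\overline{\widehat A}$ whose union is already all of $\overline{\widehat A}$, hence equals the whole partition.

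It remains to prove the combinatorial statement. One inclusion is immediate: a face of $U_A$ lies in $\overline{U_A}$, and in $\cU$ because $\overline{\cU}=\cU$. For the converse, let $V\in\cU$ with $V\subseteq\overline{U_A}$, hence $\overline V\subseteq\overline{U_A}$. Since $\cU$ is a simplicial complex, $\overline V\cap\overline{U_A}$ is the union of the common faces of $V$ and $U_A$; but that intersection is just $\overline V$. Each common face is in particular a face of $V$, and the faces of a simplex have pairwise distinct supports, hence are pairwise disjoint and partition its closure (Section~\ref{se:notation}, Proposition~\ref{pr:pres-face}). A subfamily of this partition of $\overline V$ whose union is all of $\overline V$ must be the whole family; in particular $V$ itself, which is a face of $V$, is a common face of $V$ and $U_A$, hence a face of $U_A$. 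Combining the two reductions yields the claim. The step I expect to need the most care is the socle identity $\{\widehat B\tq B\leq A\}=\{Y\in\widehat{\cA}\tq Y\leq\widehat A\}$, exactly because the socle map $A\mapsto\widehat A$ does not reflect the specialisation order on individual cells; everything else is bookkeeping with the complex structures and with the homeomorphism $\psi$.
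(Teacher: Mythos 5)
Your proof is correct. The forward direction (for $B\leq A$, $U_B$ is a face of $U_A$) is in substance the same as the paper's: both reduce it to $U_B\subseteq\overline{U_A}$ together with the fact that, $\cU$ being a closed simplicial complex, a simplex of $\cU$ contained in $\overline{U_A}$ must be a face of $U_A$ — a point you spell out via the criterion that $\overline{V}\cap\overline{U_A}$ is the union of the common faces. The converse is where you genuinely diverge. Given a face $V$ of $U_A$, the paper exhibits the required cell explicitly as $B=\partial^0_{\psi(V)}A$ or $\partial^1_{\psi(V)}A$, checking non-emptiness via the dichotomy $\bar\nu_A=0$ or $\bar\mu_A\neq0$ on $\psi(V)$ (which uses $|\bar\nu_A|\leq|\bar\mu_A|$ and the cellular-complex condition on the restricted bounds) and membership in $\cA$ from the closedness of the cellular complex. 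You instead prove the global socle identity $\{\widehat{B}\tq B\leq A\}=\{Y\in\widehat{\cA}\tq Y\leq\widehat{A}\}$ by a partition-counting argument, combining $\overline{A}=\bigcup_{B\leq A}B$ (Proposition~\ref{pr:cell-comp}), $\pi(\overline{A})=\overline{\widehat{A}}$ (Theorem~\ref{th:extr-val}) and the fact that the socle of $\cA$ is a closed complex of sets. Your route avoids the case analysis on the bounds altogether and, rightly, treats with care the fact that distinct cells may share a socle (so that $B\leq A$ is not detected cell-by-cell on socles); the paper's route is shorter and more informative, since it identifies exactly which cell of $\cA$ — a boundary cell $\partial^i_{\psi(V)}A$ of prescribed type — lies over each face, information of the kind exploited again in the neighbouring Claim~\ref{cl:face-stricte} and in Claim~\ref{cl:A0-A1}.
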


\begin{proof}
  Let $B\leq A$ in $\cA$, $Y=\widehat{B}$ and $i=\Tp B$. Then, with the
  notation of Section~\ref{se:cont-comp},  $B=\partial_Y^iA$ because $\cA$
  is a cellular complex. Since $A$ is bounded, the socle of
  $\overline{A}$ is closed hence $Y$ must be contained in it. Since
  $\psi^{-1}(Y)=U_B$, it follows that $U_B$ is a face of $U_A$.
  Conversely for every face $V$ of $U_A$, the set $B=\partial_{\psi(V)}^0A$
  (resp. $B=\partial_{\psi(V)}^1A$) is non-empty if $\bar\nu_{A|Y}=0$ (resp.
  $\bar\mu_{A|Y}\neq0$) hence belongs to $\cA$. One of these two cases
  necessarily happens (because $|\bar\nu_A|\leq|\bar\mu_A|$ on $Y$), which
  gives $B\in\cA$ such that $U_B=V$. 
\end{proof}

\begin{claim}\label{cl:face-stricte}
  Given any two cells $B\leq A$ in $\cA$, $B<A$ if and only if
  either $U_B<U_A$ or $\Tp B<\Tp A$. In particular if $B$ is the
  predecessor of $A$ in $\cA$ then either $U_B$ is the facet of $U_A$,
  or $U_B=U_A$, in which case $\Tp B=0$ and $\Tp A=1$
\end{claim}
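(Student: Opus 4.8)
The plan is to reduce the statement to the correspondence $B\mapsto U_B$ between $\cA$ and $\cU$ furnished by Claim~\ref{cl:faces-UA}, together with the structure of cellular complexes recalled in Section~\ref{se:cont-comp}. Since $(\cU,\psi)$ is a triangulation, $\psi$ maps $\cU$ bijectively onto $\widehat{\cA}$, so $U_B=U_A$ if and only if $\widehat B=\widehat A$; and Claim~\ref{cl:faces-UA} says that $U_B$ is a face of $U_A$ exactly when $B\leq A$, so that $B\leq A$ in $\cA$ is equivalent to $U_B\leq U_A$ in $\cU$. In particular, since $U_B\leq U_A$ holds throughout, $\widehat B\neq\widehat A$ is the same as $U_B<U_A$.

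The core point I would isolate is a description of when a face equals the cell it sits in. Because $\cA$ is a cellular complex, every $B\leq A$ satisfies $B=\partial_{\widehat B}^{\Tp B}A$; in particular, when $\widehat B=\widehat A=:X$, one has $\partial^0_XA=\Gr\bar c_{A|X}$ (of type $0$) and $\partial^1_XA=(\bar c_A,\bar\nu_A,\bar\mu_A,G_A)_{|X}$ (of type $1$), and each of these equals $A$ exactly when its type equals $\Tp A$ --- here one uses that over its own socle the continuous extensions $\bar c_A,\bar\nu_A,\bar\mu_A$ restrict to $c_A,\nu_A,\mu_A$, and that a type-$0$ cell is the graph of its centre, so $\mu_A=\nu_A=0$. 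Consequently, for $B\leq A$ the equality $B=A$ holds precisely when $\widehat B=\widehat A$ \emph{and} $\Tp B=\Tp A$. The one delicate step is excluding the pattern $\Tp A=0$ with $\Tp B=1$: if $\Tp A=0$ then $\mu_A=0$, hence $\bar\mu_A=0$ on every face of the socle, so $\partial^1_YA=\emptyset$ and no face of $A$ is of type $1$; thus, under $B\leq A$, the relation $\Tp B\neq\Tp A$ is equivalent to $\Tp B<\Tp A$. Negating the description of $B=A$ and inserting the two translations above yields the first assertion: for $B\leq A$, one has $B<A$ if and only if $U_B<U_A$ or $\Tp B<\Tp A$.

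For the final clause, let $B$ be the predecessor of $A$, so $B<A$ and hence $U_B<U_A$ or $\Tp B<\Tp A$. If $U_B=U_A$, the first part forces $\Tp B<\Tp A$, i.e. $\Tp B=0$ and $\Tp A=1$. Otherwise $U_B<U_A$ is a proper face of the simplex $U_A$, and I would check it is the facet: any proper face of $U_A$ is $U_C$ for some $C\leq A$ with $U_C\neq U_A$, hence $C<A$; since $\cA$ is a tree and $B$ is the immediate predecessor of $A$, the (finite) chain of cells strictly below $A$ has maximum $B$, so $C\leq B$ and therefore $U_C\leq U_B$. Thus $U_B$ dominates every proper face of $U_A$ and is one itself, i.e. it is the facet of $U_A$.

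Apart from the bookkeeping with Claim~\ref{cl:faces-UA} and the tree order, the only step needing genuine care is the description of when $\partial^i_XA=A$ and the exclusion of the degenerate type pattern $(\Tp A,\Tp B)=(0,1)$; I do not expect a serious obstacle beyond that.
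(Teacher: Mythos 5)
Your proof is correct, and for the equivalence it follows the paper exactly: since $\cA$ is a cellular complex, $B=\partial^{\Tp B}_{\widehat B}A$ and $A=\partial^{\Tp A}_{\widehat A}A$, so for $B\leq A$ one has $B=A$ if and only if $\widehat{B}=\widehat{A}$ and $\Tp B=\Tp A$; combined with $U_B\leq U_A$ from Claim~\ref{cl:faces-UA} and your exclusion of the pattern $\Tp A=0$, $\Tp B=1$ (the paper dismisses it with the same observation that $\partial^1_YA=\emptyset$ for a fitting cell of type $0$), this gives the first assertion. Where you genuinely deviate is the facet clause. The paper lets $V$ be the facet of $U_A$, forms the auxiliary cell $\partial^{\Tp B}_{\psi(V)}A$, sandwiches $B\leq\partial^{\Tp B}_{\psi(V)}A<A$, and invokes the predecessor property to force $B=\partial^{\Tp B}_{\psi(V)}A$, whence $U_B=V$. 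You instead argue purely order-theoretically: by Claim~\ref{cl:faces-UA} every proper face of $U_A$ is $U_C$ for some $C<A$, the chain of cells below $A$ has maximum $B$, so $U_C\leq U_B$ (Claim~\ref{cl:faces-UA} applied to $B$), and therefore $U_B$, being itself a proper face, is the largest one, i.e.\ the facet. Your route buys a small simplification, since it never needs the comparison $B\leq\partial^{\Tp B}_{\psi(V)}A$ between cells, which the paper asserts without detail; the paper's route, in exchange, exhibits the facet concretely as the simplex lying over the socle of the predecessor. Both rest on the same ingredients (Claim~\ref{cl:faces-UA}, the identity $B=\partial^{\Tp B}_{\widehat B}A$, and the tree order on $\cA$), and neither has a gap.
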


\begin{proof}
  Recall that $B=\partial_Y^jA$ with  $Y=\widehat{B}=\psi(U_B)$ and $j=\Tp B$.
  In particular $A=\partial_X^iA$ with $X=\widehat{A}$ and $i=\Tp A$. Thus
  $B\neq A$ if and only if $U_B\neq U_A$ or $\Tp B\neq\Tp A$. Since $U_B\leq U_A$
  by the previous claim, and obviously $\Tp B\leq \Tp A$ (otherwise
  $\partial_Y^jA=\emptyset$) this proves the equivalence. In particular if $U_B=U_A$
  then $\Tp B<\Tp A$ hence $\Tp B=0$ and $\Tp A=1$. 

  If $B$ is the predecessor of $A$ in $\cA$ and $U_B\neq U_A$, then
  $U_B<U_A$ by Claim~\ref{cl:faces-UA}. Let $V$ be the facet of $U_A$.
  Then $U_B\leq V<A$ hence $B\leq\partial^j_{\psi(V)}A<A$. On the other hand $B$ is
  the predecessor of $A$ in $\cA$, hence $B=\partial^j_{\psi(V)}A$. So
  $\widehat{B}=\psi(V)$ and finally $U_B=V$. 
\end{proof}

Given a strictly increasing map $\sigma:I\to J$ with $I\subseteq[\![1,r]\!]$ and
$J\subseteq[\![1,s]\!]$, we let $[\sigma]:K^s\to K^r$ be defined by $[\sigma](y)=u$ where
$u_i=y_{\sigma(i)}$ if $i\in I$, and $u_i=0$ otherwise. We say that a
function $f:S\subseteq K^r\to K^s$ is a {\df Cartesian map} if for every
$I\subseteq[\![1,r]\!]$ the restriction of $f$ to $S\cap F_I(K^r)$ is of that form,
that is if there is $J\subseteq[\![1,s]\!]$ and a strictly increasing map
$\sigma:I\to J$ such that $f(y)=[\sigma](y)$ for every $y\in S$ with support $I$. If
$X$ is the disjoint union of finitely many sets $X_k\subseteq K^{r_k}$ for
various $k$, then a Cartesian map on $X$ is simply the data of a
Cartesian map on each $X_k$. A {\df Cartesian morphism} is a
continuous Cartesian map. 

\begin{claim}\label{cl:dispatching}
  There exists a pair of functions $H$, $P$ from $\cA$ to
  $\cP(\NN^*)$ such that $H$ is strictly increasing and for every $B\leq A$
  and every $C$ in $\cA$:
  \begin{description}
  \item[(C0)] 
    If $\Tp A=0$ then $H(A)=P(A)$.
  \item[(C1)] 
    If $\Tp A=1$ then $H(A)=P(A)\cup\{r_A\}$ for some $r_A>\max P(A)$.
  \item[(C2)]
    $\Card P(A)=\Card(\Supp U_A)$.
  \item[(C3)] 
    $P(B)=H(B)\cap P(A)$ (in particular $P$ is increasing and $P(B)\subseteq
    H(B)$).
  \item[(C4)] 
    If $\sigma_A:\Supp U_A\to P(A)$ denotes the increasing bijection given by
    (C2) then $\sigma_A(\Supp U_B)=P(B)$.
  \item[(C5)] 
    If $P(C)\subseteq P(A)$ then $U_C\leq U_A$.
  \end{description}
\end{claim}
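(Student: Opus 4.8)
The plan is to build $H$, $P$ and the increasing bijections $\sigma_A$ of~(C4) simultaneously, by induction along the rooted tree $\cA$, exploiting the very rigid shape of the predecessor relation furnished by Claim~\ref{cl:face-stricte}. Two preliminary observations will drive the whole construction. First, being produced by $\TR m$, the simplicial complex $\cU$ has separated supports (Remark~\ref{re:well-dispatched}): for $V,W\in\cU$ one has $V\leq W$ iff $\Supp V\subseteq\Supp W$; combined with Claim~\ref{cl:faces-UA} (the faces of $U_A$ are exactly the $U_B$ with $B\leq A$) this yields, for $B\leq A$ in $\cA$, that $\Supp U_B\subseteq\Supp U_A$ with equality only when $U_B=U_A$. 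Second, the type is non-decreasing along chains of $\cA$ (as already observed in the proof of Claim~\ref{cl:face-stricte}), so the type-$1$ cells form a disjoint union of subtrees of $\cA$; for a type-$1$ cell $A$ I write $\tau(A)$ for the root of the one containing it, i.e.\ the least type-$1$ cell below $A$.

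I would first construct $P$ and the $\sigma_A$, leaving aside the type-$1$ coordinate. Enumerate $\cA$ along a linear extension of the specialisation order. For the root $\rho$, let $P(\rho)$ be any set of $\Card(\Supp U_\rho)$ positive integers, spaced with wide enough gaps (harmless since $\cA$ is finite, so only finitely many later insertions, at positions dictated by $\cA$, will ever be needed), and let $\sigma_\rho\colon\Supp U_\rho\to P(\rho)$ be the increasing bijection. For a cell $A$ with predecessor $B$, Claim~\ref{cl:face-stricte} offers exactly two cases: if $U_B=U_A$ (so $\Tp B=0<\Tp A=1$, and $A$ is the root of its own type-$1$ subtree), set $\sigma_A=\sigma_B$ and $P(A)=P(B)$; if instead $U_B$ is the facet of $U_A$, extend $\sigma_B$ to an increasing bijection $\sigma_A\colon\Supp U_A\to P(A)$ by giving each new coordinate in $\Supp U_A\setminus\Supp U_B$ a previously unused integer, inserted at the place inside $\NN^*$ forced by the order of $\Supp U_A$. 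Because the integers introduced at distinct steps are pairwise distinct, the integers created above a common cell along two different branches are disjoint. Once $P$ is defined, pick $N$ larger than $q_1$ and than every element of every $P(A)$, choose pairwise distinct $r_\tau>N$ (one per type-$1$ subtree root), and set $H(A)=P(A)$ when $\Tp A=0$ and $H(A)=P(A)\cup\{r_{\tau(A)}\}$ when $\Tp A=1$.

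Then (C0), (C1) (since $r_{\tau(A)}>N\geq\max P(A)$) and (C2) are immediate; (C4) comes from the coherence $\sigma_A|_{\Supp U_B}=\sigma_B$ propagated down the chain; (C3) holds because $P(B)=\sigma_A(\Supp U_B)\subseteq P(A)$ while $H(B)\setminus P(B)\subseteq\{r_{\tau(B)}\}$ with $r_{\tau(B)}>N$ not in $P(A)$; and strict monotonicity of $H$ follows by noting that $H(B)=H(A)$ for $B<A$ would force $P(B)=P(A)$ (intersect with $[\![1,N]\!]$), hence $U_B=U_A$ by separated supports, hence $\Tp B<\Tp A$ by Claim~\ref{cl:face-stricte}, hence $H(A)\supsetneq H(B)$ — a contradiction. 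The hard part will be (C5): given $P(C)\subseteq P(A)$ I would let $D$ be the meet of $A$ and $C$ in the rooted tree $\cA$ and use the branch-wise disjointness of the fresh integers to conclude that $P(C)\setminus P(D)=\emptyset$, i.e.\ $\Supp U_C=\Supp U_D\subseteq\Supp U_A$, so $U_C\leq U_A$ by the separated supports of $\cU$. This is the step where the two preliminary observations (separated supports of $\cU$, and the coherent branch-wise allocation of indices) really do the work; the only fiddly point is arranging the reserved gaps at the root so that each $\sigma_A$ stays increasing without ever disturbing strict monotonicity of $H$.
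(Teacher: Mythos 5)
Your construction is essentially correct, and it reaches the claim by a genuinely different bookkeeping than the paper's. The paper argues by induction on $\Card\cA$: at each step it \emph{rescales} all previously assigned indices by a factor $k$ (so that they lie in $k\NN^*$), which simultaneously creates the room needed to insert the new indices in increasing position and powers the verification of (C5), since $P(C')\subseteq P(A)$ then forces $P(C')\subseteq P(A)\cap k\NN^*=P(B)$. You instead freeze each index once chosen and obtain (C5) from the global freshness of the inserted integers and their branch-wise disjointness, reducing to $P(C)=P(D)$ for the meet $D$ of $A$ and $C$, hence $\Supp U_C=\Supp U_D$; this mechanism is viable and arguably more transparent, and your verifications of (C0)--(C4), of strict monotonicity of $H$, and the uniform choice of $r_{\tau(A)}$ along each type-$1$ subtree (which is indeed forced by (C1), (C3) and monotonicity of $H$) all check out.

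Two points need tightening. First, you invoke the ``separated supports'' property of $\cU$ (Remark~\ref{re:well-dispatched}) twice (in the strict monotonicity of $H$ and at the end of (C5)); but that property is not part of the statement of $\TR m$, and in the paper it is only justified afterwards (Remark~\ref{re:well-disp-proof}), by the very construction that Claim~\ref{cl:dispatching} feeds into, so as written you are leaning on an unstated strengthening of the induction hypothesis. Fortunately you never need it: for $B\leq A$, $U_B$ is a face of $U_A$ by Claim~\ref{cl:faces-UA}, so $\Supp U_B\subseteq\Supp U_A$, and a face of $U_C$ with the same support as $U_C$ (equivalently the same dimension) equals $U_C$ --- this is exactly the dimension argument the paper itself uses in its proof of (C5); so from $P(C)=P(D)$ you should conclude $U_C=U_D\leq U_A$ via Claim~\ref{cl:faces-UA} and Claim~\ref{cl:face-stricte}, rather than appeal to separated supports. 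Second, the ``wide enough gaps at the root'' device must be made precise: the value given to a new coordinate has to be a hitherto unused integer lying in a specific interval determined by previously fixed values on the same branch, so one needs an explicit reservation scheme (for instance root gaps of size $2^T$, with $T$ the total number of insertions, and a bisection rule), or simply the paper's trick of rescaling the already-built data at each step, which preserves all inductively established properties and guarantees the required room. With these two repairs your proof is complete.
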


According to this claim, $H:\cA\to\cH$ is an increasing bijection and
$P:\cA\to\cP$ an increasing surjection. Thus $P\circ H^{-1}:\cH\to\cP$ is an
increasing surjection. $H^{-1}$ and $P\circ H^{-1}$ are respectively the
maps $\cH\to\cA$ and $\cH\to\cP$ in the diagram (\ref{eq:diag-A-H-P-U}) at
the beginning of this section. The maps $A\mapsto\widehat{A}$ and $U_A\mapsto
A$ are $\cA\to\widehat{\cA}$ and $\cU\to\widehat{\cA}$ respectively. The
last\footnote{The dashed map from $\cH$ to $\cU$ is just the
compositum of $\cH\to\cP\to\cU$.} map, from $\cP$ to $\cU$, is $P(A)\mapsto
U_A$. This is a well defined increasing map by (C5), and obviously a
surjective one. The commutativity of the diagram follows by
construction.

\begin{remark}\label{re:sigma}
  Since $\sigma_A$ and $\sigma_B$ are strictly increasing, (C4) implies that
  $\sigma_A(i)=\sigma_B(i)$ for every $i\in\Supp U_B$.
\end{remark}

\begin{proof}
The construction goes by induction in $\Card\cA$. For the root $A$ of
$\cA$ we let $P(A)=\Supp U_A$, $H(A)=P(A)$ if $\Tp A=0$, and
$H(A)=P(A)\cup\{q_1+1\}$ if $\Tp A=1$ (recall that $\cU$ is a simplicial
complex in $D^MR^{q_1}$). If $\cA=\{A\}$ we are done. Otherwise let $A$
be a maximal element of $\cA$ and apply the induction hypothesis to
$\cA\setminus\{A\}$. This defines $P(A')$, $H(A')$ for every $A'\in\cA\setminus\{A\}$ so
that $H$ is strictly increasing on $\cA\setminus\{A\}$ and properties (C0) to
(C4) hold for every $B'\leq A'$ in $\cA\setminus\{A\}$. 

Let $B$ be the predecessor of $A$ in $\cA$ and $k=\Card(\Supp
U_A\setminus\Supp U_B)+1$. For every $A'\in\cA\setminus\{A\}$ let $P_k(A')=\{ki\}_{i\in
P(A')}$ and $H_k(A')=\{ki\}_{i\in H(A')}$. Clearly $P_k$ and $H_k$ inherit
all the properties of $P$ and $H$. Thus, replacing if necessary $P$
and $H$ by $P_k$ and $H_k$ we can assume that $H(A')\subseteq k\NN^*$ for every
$A'\in\cA\setminus\{A\}$. 

Let $q'$ be the maximum of the integers in all these sets $H(A')$. We
have to define $P(A)$ and $H(A)$ so that the resulting maps $P$, $H$
satisfy: (C0) to (C5) for every $B'\leq A'$ and every $C'$ in $\cA$; $H(B')\subseteq H(A')$ and
$H(B')\neq H(A')$ if $B'\neq A'$. By the induction hypothesis it suffices to
check these properties when $A'=A$, $B'=B$ and $C'\in\cA'$. 

We are going to build $\sigma_A$ first, and then let $P(A)=\sigma_A(\Supp U_A)$.
Let $j_1<\cdots<j_r$ be an enumeration of $\Supp U_B$. Let $j_0=0$ and
$j_{r+1}=q'+1$. For every $i\in\Supp U_A$ there is a unique
$l\in[\![0,r]\!]$ such that $j_l\leq i< j_{l+1}$. We then let
$\sigma_A(i)=\sigma_B(j_l)+i-j_l$ (if $j_l=j_0=0$ we let $\sigma_B(j_l)=0$ in this
definition). Note that $j_l+k\leq j_{l+1}$ and $\sigma_B(j_{l+1})\in P(B)\subseteq k\NN^*$
hence
\begin{displaymath}
  \sigma_A(j_l)\leq\sigma_A(i)< \sigma_B(j_l)+j_{l+1}-j_l\leq\sigma_B(j_l)+k\leq\sigma_B(j_{l+1}).
\end{displaymath}
It follows immediately that $\sigma_A$ is strictly increasing. Let
$P(A)=\sigma_A(\Supp U_A)$, by construction (C2) and (C4) hold, $P(A)\cap
k\NN^*=P(B)$ and $P(B)$ is strictly contained in $P(A)$ except if $\Supp
U_A=\Supp U_B$. Note also that in any case $q'+k\notin H(B)\cup P(A)$.
Finally (C5) holds because: 
\begin{itemize}
  \item
    If $P(C')\subseteq P(A)$ then $P(C')\subseteq P(B)$ by construction (because
    $C'\in\cA'$ hence $P(C')\subseteq k\NN^*$). So $U_{C'}\leq U_B$ by the induction
    hypothesis, and since $U_B\leq U_A$ we get $U_{C'}\leq U_A$.
  \item 
    If $P(A)\subseteq P(C')$ then in particular $P(A)\subseteq k\NN^*$, hence by
    construction $P(A)=P(B)=P(C')$ and $\Supp U_A=\Supp U_B$. This
    last point implies that $\dim U_A=\dim U_B$, hence $U_A=U_B$ since
    $U_B\leq U_A$. On the other hand $P(B)=P(C')$ implies that
    $U_B=U_{C'}$ by the induction hypothesis. So altogether $U_A= U_{C'}$
    and {\it a fortiori} $U_A\leq U_{C'}$.
\end{itemize}
It remains to define $H(A)$ and to check (C1) and (C3). We distinguish
four cases, according to the types of $A$ and $B$, and apply
Claim~\ref{cl:face-stricte} to each of them.

\paragraph{Case~1:} $\Tp A=0$, hence $\Tp B=0$ and $U_B$ is the facet
of $A$. In particular $\Supp U_B$ is strictly contained in $\Supp
U_A$, hence so is $P(B)$ in $P(A)$. By the induction hypothesis (C0),
$H(B)=P(B)$. Let $H(A)=P(A)$, then $H(B)\subseteq H(A)$, $H(B)\neq H(A)$ and
(C0), (C3) are obvious. 

\paragraph{Case 2:} $\Tp A=1$, $\Tp B=0$ and $U_B=U_A$. Then
$P(B)=P(A)$ by construction, and $P(B)=H(B)$ by the induction hypothesis
(C0). Let $H(A)=H(B)\cup\{q'+k\}$, then $H(B)\subseteq H(A)$, $H(B)\neq H(A)$ and (C1)
are obvious because $q'+k\notin H(B)$, and $H(B)\cap P(A)=P(B)\cap P(A)=P(B)$
which proves (C3). 

\paragraph{Case 3:} $\Tp A=1$, $\Tp B=0$ and $U_B$ is the facet of
$U_A$. We let $H(A)=P(A)\cup\{q'+1\}$. By the induction hypothesis (C0)
$H(B)=P(B)$. By construction $P(B)\subseteq P(A)\subseteq H(A)$. So $H(B)\subseteq H(A)$,
$H(B)\neq H(A)$ and (C1) are obvious because $q'+k\notin H(B)\cup P(A)$. As in
Case~2, $H(B)\cap P(A)=P(B)\cap P(A)=P(B)$ which proves (C3).

\paragraph{Case 4:} $\Tp A=\Tp B=1$  and $U_B$ is the facet of
$U_A$. By the induction hypothesis (C1), $P(B)$ is strictly contained in
$P(A)$. Let $H(A)=P(A)\cup H(B)$. Then  $H(B)\subseteq H(A)$, $H(B)\neq H(A)$
because $H(A)\setminus H(B)= P(A)\setminus k\NN^*=P(A)\setminus P(B)\neq\emptyset$, $H(A)\cap P(B)=(P(A)\cap
P(B))\cup(H(B)\cap P(B))=P(B)\cup P(B)=P(B)$ which proves (C3), and (C1)
follows because then $H(A)\setminus P(A)=H(B)\setminus P(A)=H(B)\setminus P(B)$ is a singleton
by the induction hypothesis (C1).
\end{proof}

With the notation of Claim~\ref{cl:dispatching}, let $q_2$ be
the maximal element of $\bigcup_{A\in\cA}H(A)$ and $\cS^\dag=
\{F_{H(A)}(D^MR^{q_2})\}_{A\in\cA}$. For every $A\in\cA$ let
$\Phi_A=[\sigma_A]:F_{H(A)}(D^MR^{q_2})\to D^MR^{q_1}$. Finally let 
$\Phi:\bigcup\cS^\dag\to D^MR^{q_1}$ be the resulting Cartesian map.

\begin{claim}\label{cl:cart-cont}
  $\Phi$ is continuous, hence a Cartesian morphism. 
\end{claim}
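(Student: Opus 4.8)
The claim asserts that the Cartesian map $\Phi:\bigcup\cS^\dag\to D^MR^{q_1}$ obtained by gluing the local maps $\Phi_A=[\sigma_A]$ is continuous. Since each piece $F_{H(A)}(D^MR^{q_2})$ of $\cS^\dag$ is a subset of the clopen ball $D^MR^{q_2}$ and $\cS^\dag$ is a finite family, the only way continuity can fail is at a point of $\overline{F_{H(A)}(D^MR^{q_2})}$ lying in some other piece $F_{H(B)}(D^MR^{q_2})$. So the plan is to show that whenever a point $y$ lies in the closure of $F_{H(A)}(D^MR^{q_2})$ and has support $H(B)$, necessarily $B\le A$ in $\cA$, and on that overlap the two local definitions $[\sigma_A]$ and $[\sigma_B]$ agree. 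Concretely, I would argue: a point $y$ adherent to $F_{H(A)}(D^MR^{q_2})$ has support contained in $H(A)$ (coordinates outside $H(A)$ are constantly $0$ there, and the coordinates inside either stay away from $0$ or tend to $+\infty$ in valuation), so $\Supp y = H(C)$ for some $C$ with $H(C)\subseteq H(A)$, i.e. $C\le A$ by the strict monotonicity of $H$ in Claim~\ref{cl:dispatching}; moreover, by the structure of discrete simplexes (faces are obtained by sending some coordinates to $+\infty$), the face of $F_{H(A)}(D^MR^{q_2})$ with support $H(C)$ is exactly $F_{H(C)}(D^MR^{q_2})$, which is the piece indexed by $C$.

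**The key step: compatibility of the $\sigma$'s.** The heart of the matter is that on the face indexed by $C\le A$, the linear map $\Phi_A=[\sigma_A]$ restricted to $F_{H(C)}(D^MR^{q_2})$ coincides with $\Phi_C=[\sigma_C]$. For a point $y$ with support $H(C)$, we have $[\sigma_A](y)_i=y_{\sigma_A(i)}$ if $i\in\Supp U_A$ and $0$ otherwise. Now $\sigma_A(i)=\sigma_C(i)$ for every $i\in\Supp U_C$ by Remark~\ref{re:sigma} (applied with $C$ in place of $B$, using (C4)). For indices $i\in\Supp U_A\setminus\Supp U_C$, we have $\sigma_A(i)\in P(A)\setminus P(C)$; I must check that such coordinates of $y$ are $0$. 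Since $\Supp y=H(C)$ and, by (C3), $P(C)=H(C)\cap P(A)$, any index in $P(A)\setminus P(C)$ lies outside $H(C)$, hence the corresponding coordinate $y_{\sigma_A(i)}$ is $0$. So $[\sigma_A](y)$ has entry $y_{\sigma_C(i)}$ at positions $i\in\Supp U_C$ and $0$ elsewhere, which is precisely $[\sigma_C](y)$. Thus the two local maps agree on the overlap, and since each $\Phi_A$ is clearly continuous on its (clopen) domain piece, the glued map $\Phi$ is continuous on $\bigcup\cS^\dag$.

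**Anticipated obstacle.** The routine-but-delicate part is the purely topological lemma that the closure of $F_I(D^MR^{q_2})$ inside $D^MR^{q_2}$ meets $F_J(D^MR^{q_2})$ only for $J\subseteq I$, and that when it does, the adherent point's neighbourhoods in $F_I$ behave so that $[\sigma_A]$-values converge. This is where one invokes the description of $p$-adic simplexes as inverse images of discrete simplexes under the valuation (Section~2.d): $F_I(D^MR^{q_2})\setminus F_J(D^MR^{q_2})$-adherence forces $J$-coordinates to have valuation tending to $+\infty$, i.e. to tend to $0$ in $K$, so the image coordinates $y_{\sigma_A(i)}$ with $i\notin\Supp U_C$ indeed go to $0$, matching the value of $[\sigma_C]$ there; the remaining coordinates are simply continuous coordinate projections. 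In fact, since $D^MR^{q_2}$ is a product of clopen balls and each $[\sigma_A]$ is a coordinate reindexing extended by zeros, $\Phi$ is even locally constant away from the coordinate hyperplanes and varies continuously across them exactly because of the matching just verified; so once the compatibility identity $[\sigma_A]\!\restriction_{F_{H(C)}}=[\sigma_C]$ is in hand, continuity of $\Phi$ is immediate.
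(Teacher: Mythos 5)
Your proposal is correct and follows essentially the same route as the paper: reduce continuity to the compatibility identity $[\sigma_A]=[\sigma_C]$ on the face with support $H(C)\subseteq H(A)$, deduce $C\leq A$ from the strict monotonicity of $H$, and check the coordinates case by case using Remark~\ref{re:sigma} on $\Supp U_C$ and (C3)--(C4) to see that the coordinates indexed by $P(A)\setminus P(C)$ vanish. The paper phrases this as showing $[\sigma_A](z)=[\sigma_B](z)$ for $z$ in a smaller stratum, but the computation is the same.
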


\begin{proof}
  We have to show that for every $T\leq S$ in $\cS^\dag$ and every
  $z\in T$, $\Phi(y)$ tends to $\Phi(z)$ as $y$ tends to
  $z$ in $S$.  By construction there are $A$, $B$ in $\cA$ such that
  $H(A)=\Supp S$, $H(B)=\Supp T$, $\Phi(y)=[\sigma_A](y)$ and $\Phi(z)=[\sigma_B](z)$.
  Since $[\sigma_A]$ is obviously continuous, it tends to $[\sigma_A](z)$ so we
  have to prove that $[\sigma_A](z)=[\sigma_B](z)$. Let $u=[\sigma_A](z)$ and
  $u'=[\sigma_B](z)$. Recall that $u,u'\in D^MR^{q_1}$ and for every
  $i\in[\![1,q_1]\!]$, $u_i=z_{\sigma_A(i)}$ if $i\in\Supp U_A$, $u_i=0$ otherwise,
  $u'_i=z_{\sigma_B(i)}$ if $i\in\Supp U_B$, and $u'_i=0$ otherwise. 
  
  Since $T\leq S$ we have $\Supp T\leq\Supp S$, that is $H(B)\leq H(A)$, hence
  $B\leq A$ since $H$ is strictly increasing. In particular $\Supp
  U_B\subseteq\Supp U_A$ hence for every $i\in[\![1,q_1]\!]$, we have
  $u_i=u'_i=0$ if $i\notin\Supp U_A$, and by Remark~\ref{re:sigma}
  $z_{\sigma_A}(i)=z_{\sigma_B}(i)$ if $i\in \Supp U_B$, that is $u_i=u'_i$ in
  this case too. The remaining case occurs when $i\in \Supp U_A\setminus\Supp U_B$, so that
  $u_i=z_{\sigma_A(i)}$ and $u'_i=0$. We have to prove that $z_{\sigma_A(i)}=0$,
  that is $\sigma_A(i)\notin\Supp z$. By (C4) and the assumption on $i$,
  $\sigma_A(i)\in P(A)\setminus P(B)$. By (C3), $P(A)\setminus P(B)=P(A)\setminus H(B)$. So
  $\sigma_A(i)\notin H(B)$, and we are done since $\Supp z=\Supp T=H(B)$.
\end{proof}

For every $A\in\cA$, $\mu_A\circ\psi$ is $N$\--monomial mod $U_{e,n}$ so there
are $\zeta\in K$ and some integers $\beta_{i,A}$ for $i\in\Supp U_A$ such that for every
$u\in U_A$
\begin{displaymath}
  \mu_A\circ\psi(u)=U_{e,n}(u)\cdot \zeta\cdot\!\!\!\!\!\!\!\prod_{i\in\Supp U_A}\!\!\!\!u_i^{N\beta_{i,A}}. 
\end{displaymath}
If $\mu_A\neq0$ then $v\mu_A(\widehat{A})=vG_A=v\lambda_A+N\cZ$ by
Proposition~\ref{pr:fit-cell}, and by the above displayed equation
$v(\zeta)\equiv v(\lambda_A)\;[N]$. So there is $\beta_{0,A}\in\cZ$ such that
$v(\zeta)=v(\lambda_A)+N\beta_{0,A}$. Let $\mu_A^v:vU_A\to\cZ$ be defined by\footnote{We
  remind the reader that $A$ is a cell mod $Q_{N,M'}$ with
$M'=M+v(N)$.}
$\mu_A^v(a)=M'+\beta_{0,A}+\sum_{i\in\Supp U_A}\beta_{i,A}a_i$. If $\mu_A=0$ then we let
$\mu_A(a)=+\infty$ for every $a\in vU_A$. Define $\nu_A^v$ accordingly. By
construction, for every $u\in U_A$ we have
\begin{eqnarray}
  v\mu_A\big(\psi(u)\big) &=&  v\lambda_A + N\mu_A^v(vu)-NM' \label{eq:def-muA-v}
  \\
  v\nu_A\big(\psi(u)\big) &=&  v\lambda_A + N\nu_A^v(vu)-NM' \label{eq:def-nuA-v}
\end{eqnarray}
In particular $\mu_A^v$ (resp. $\nu_A^v$) is uniquely determined by $\mu_A$
(resp. $\nu_A$), even if the coefficients $\beta_{i,A}$ are not.

\begin{remark}\label{re:muAv-pos}
  Since $A$ is a fitting cell mod $Q_{N,M'}$ contained in $R$,
  $v\mu_A+M'\geq0$ by Proposition~\ref{pr:mu-bounded}. On the other hand
  $0\leq v\lambda_A\leq N-1$ (see Section~\ref{se:notation}). So, for every $u\in
  U_A$ we have  by (\ref{eq:def-muA-v}):
  \begin{eqnarray*}
    \mu_A^v(vu) &=& v\mu_A(\psi(u))+NM'-v\lambda_A \\  
    &\geq& -M' +NM' -(N-1) \\
    &=& (N-1)(M'-1) \ \geq \ 0
  \end{eqnarray*}
\end{remark}

\noindent
Let $S_A\subseteq D^MR^{q_2}$ be defined as follows.
\begin{itemize}
  \item 
    If $\Tp A=0$, $S_A$ is the set of $y\in F_{H(A)}(D^MR^{q_2}) =
    F_{P(A)}(D^MR^{q_2})$ such that $\Phi(y)\in U_A$. 
  \item 
    If $\Tp A=1$, $S_A$ is the set of $y\in F_{H(A)}(D^MR^{q_2})$
    such that $\Phi(y)\in U_A$ and $\mu_A^v(v\Phi(y))\leq vy_{r_A}\leq \nu_A^v(v\Phi(y))$.
\end{itemize}
In both cases, for every $y\in S_A$ let
\begin{displaymath}
  \varphi_A(y)=\big(\psi\circ\Phi(y),\ c_A(\psi\circ\Phi(y))+\pi^{-NM'}\lambda_Ay_{r_A}^N\big)
\end{displaymath}
where $r_A=\max H(A)$ (if $H(A)=\emptyset$, which happens when $A$ is a point,
then $r_A$ is not defined but in that case $\Tp A=0$, hence $\lambda_A=0$
and we can let $\lambda_Ay_{r_A}^N=0$ by convention).

\begin{claim}\label{cl:image-SA}
  $\Phi(S_A)=U_A$ and $\varphi_A$ is a bijection from $S_A$ to $A$. 
\end{claim}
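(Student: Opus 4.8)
The plan is to prove the two assertions of Claim~\ref{cl:image-SA} in order, unwinding the definitions of $S_A$, $\varphi_A$ and $\Phi$, and using the structural facts already established about $U_A$, the monomial presentations of the bounds, and the positivity observed in Remark~\ref{re:muAv-pos}.

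First I would treat the equality $\Phi(S_A)=U_A$. For the inclusion $\Phi(S_A)\subseteq U_A$ there is nothing to do: by the very definition of $S_A$ (in both the type~$0$ and type~$1$ cases) we require $\Phi(y)\in U_A$. For the reverse inclusion, fix $u\in U_A$. Since $\Supp u=\Supp U_A$ (as $u$ is an interior point of the simplex $U_A$, i.e. $U_A=F_{\Supp U_A}(\text{something})$ up to the identifications of Section~\ref{se:notation}), the map $[\sigma_A]$ restricted to the support stratum $F_{P(A)}(D^MR^{q_2})$ is a bijection onto $F_{\Supp U_A}(D^MR^{q_1})$, because $\sigma_A:\Supp U_A\to P(A)$ is a bijection by (C2); so there is a unique $z\in F_{P(A)}(D^MR^{q_2})$ with $\Phi(z)=u$. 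If $\Tp A=0$ then $H(A)=P(A)$ by (C0) and this $z$ already lies in $S_A$, so $u\in\Phi(S_A)$. If $\Tp A=1$, then $H(A)=P(A)\cup\{r_A\}$ with $r_A\notin P(A)$ by (C1); I extend $z$ to a point $y\in F_{H(A)}(D^MR^{q_2})$ by choosing the extra coordinate $y_{r_A}\in D^MR^*$ with valuation in the interval $[\mu_A^v(vu),\nu_A^v(vu)]$. Such a valuation exists because $\mu_A^v(vu)\le\nu_A^v(vu)$ (this follows from $|\nu_A|\le|\mu_A|$ on $\widehat A$ via \eqref{eq:def-muA-v}--\eqref{eq:def-nuA-v}, since $v\mu_A\le v\nu_A$ translates to $\mu_A^v\le\nu_A^v$ — note the reversal of order between $|\cdot|$ and $v$), and it is $\ge 0$ by Remark~\ref{re:muAv-pos}, so it is an attainable valuation in $D^MR^*$ provided $M$ is large enough (one must check the congruence condition defining $Q_{1,M}$, which is ensured by $\mu_A^v(vu),\nu_A^v(vu)\in\cZ$ and the fact that every element of $\cZ$ is the valuation of some element of $Q_{1,M}$). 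Then $y\in S_A$ and $\Phi(y)=\Phi(z)=u$.

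Next I would show $\varphi_A:S_A\to A$ is a bijection. For well-definedness, take $y\in S_A$ and set $x=\psi\circ\Phi(y)\in\widehat A$ (using $\Phi(y)\in U_A$ and $\psi(U_A)=\widehat A$). If $\Tp A=0$ then $\lambda_A=0$, $c_A=c_A$ and $\varphi_A(y)=(x,c_A(x))=\Gr c_A$, which is exactly $A$ since a type~$0$ cell is the graph of its center; and $y\mapsto\Phi(y)\mapsto x$ is a bijection $S_A\to\widehat A$ as in the previous paragraph, so $\varphi_A$ is a bijection onto $A$. If $\Tp A=1$, write $t-c_A(x)=\pi^{-NM'}\lambda_A y_{r_A}^N$, so that $t-c_A(x)\in\lambda_A Q_{N,M'}$ (since $\pi^{-NM'}y_{r_A}^N\in Q_{N,M'}$ when $y_{r_A}\in Q_{1,M}$, by Lemma~\ref{le:Hensel-DP} applied to $x\mapsto x^N$, using $M'=M+v(N)$); hence $t-c_A(x)\in G_A$. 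Moreover $v(t-c_A(x))=v\lambda_A-NM'+N\,v(y_{r_A})$, and the constraint $\mu_A^v(v\Phi(y))\le v y_{r_A}\le\nu_A^v(v\Phi(y))$ combined with \eqref{eq:def-muA-v}--\eqref{eq:def-nuA-v} gives exactly $v\nu_A(x)\le v(t-c_A(x))\le v\mu_A(x)$, i.e. $|\nu_A(x)|\le|t-c_A(x)|\le|\mu_A(x)|$. Thus $(x,t)\in A$. Conversely, given $(x,t)\in A$, the point $u=\psi^{-1}(x)\in U_A$ determines $z\in F_{P(A)}(D^MR^{q_2})$ with $\Phi(z)=u$; since $t-c_A(x)\in G_A=\lambda_A Q_{N,M'}$ we have $\pi^{NM'}\lambda_A^{-1}(t-c_A(x))\in Q_{N,M'}$, which by the injectivity part of Lemma~\ref{le:Hensel-DP} (the map $x\mapsto x^N$ is injective on $Q_{1,M}$ since $M>v(N)$) has a unique $N$-th root $w\in Q_{1,M}$; setting $y_{r_A}=w$ and extending $z$ accordingly yields the unique $y\in S_A$ with $\varphi_A(y)=(x,t)$. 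So $\varphi_A$ is a bijection.

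The main obstacle I anticipate is keeping the bookkeeping of the three group indices straight — that $y_{r_A}\in D^MR=R\cap Q_{1,M}$ feeds through $y_{r_A}^N\in Q_{N,M+v(N)}=Q_{N,M'}$, and that the valuation data $\mu_A^v,\nu_A^v$ were normalized (via the $+M'$ shift and the $v\lambda_A+N\beta_{0,A}$ decomposition) precisely so that the raw valuation inequality $v\nu_A(x)\le v(t-c_A(x))\le v\mu_A(x)$ matches the discrete inequality $\mu_A^v(vu)\le v y_{r_A}\le\nu_A^v(vu)$ on the nose. Everything else is a routine unwinding of definitions once the commuting square in the statement is read off from $\Phi\circ\varphi_A^{-1}=\psi^{-1}$, which is immediate from the formula for $\varphi_A$.
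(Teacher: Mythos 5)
Your argument is correct and follows essentially the same route as the paper: the same appeal to Lemma~\ref{le:Hensel-DP} for the unique $N$-th root/power, to the normalisations (\ref{eq:def-muA-v})--(\ref{eq:def-nuA-v}) translating the norm inequalities on $t-c_A(x)$ into the discrete inequalities on $vy_{r_A}$, and to Remark~\ref{re:muAv-pos} to ensure $y_{r_A}\in R$; the only organisational difference, namely obtaining $\Phi(S_A)\supseteq U_A$ by lifting $u$ directly rather than deducing it from $A\subseteq\varphi_A(S_A)$ as the paper does, is harmless. One small slip: the valuation chain should read $v\mu_A(x)\leq v\big(t-c_A(x)\big)\leq v\nu_A(x)$ rather than the reversed chain you wrote, but the norm inequality you state immediately afterwards is the correct one (and you flag the order reversal yourself earlier), so nothing in the argument breaks.
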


\begin{proof}
  If $\Tp A=0$ the result is trivial because in that case $H(A)=P(A)$
  hence the restriction of $\Phi$ to $F_{H(A)}(D^MR^{q_2})$ is a bijection
  onto $F_{\Supp U_A}(D^MR^{q_1})$. So from now onwards we assume that $\Tp
  A=1$, hence $H(A)=P(A)\cup\{r_A\}$ and $r_A\notin P(A)$ by (C1).

  Let $y,y'\in S_A$ be such that $\varphi_A(y)=\varphi_A(y')$. Then $\psi(\Phi(y))=\psi(\Phi(y'))$
  and
  \begin{displaymath}
    c_A(\psi\circ\Phi(y))+\pi^{-NM'}\lambda_Ay_{r_A}^N=c_A(\psi\circ\Phi(y'))+\pi^{-NM'}\lambda_Ay'^N_{r_A}
  \end{displaymath}
  These two equations imply that $y_{r_A}^N=y'^N_{r_A}$. Since
  $y_{r_A},y'_{r_A}\in D^MR=Q_{1,M}\cap R$ and $M> v(N)$ it follows that
  $y_{r_A}=y'_{r_A}$ by Lemma~\ref{le:Hensel-DP}. On the other hand
  $\psi(\Phi(y))=\psi(\Phi(y'))$ implies $\Phi(y)=\Phi(y')$ (because $\psi$ is one-to-one),
  that is $y_i=y'_i$ for every $i\in P(A)$ (because $\Phi(y)=[\sigma_A](y)$ by
  construction). Thus $y_i=y'_i$ for every $i\in P(A)\cup\{r_A\}=H(A)$, that
  is $y=y'$ since $\Supp y=\Supp y'=H(A)$. This proves that $\varphi_A$ is
  one-to-one. 

  Let us check now that $A\subseteq\varphi_A(S_A)$. Pick any $(x,t)\in A$, let
  $u=\psi^{-1}(x)$ and $\delta=t-c_A(x)$. Since $\delta\in \lambda_AQ_{N,M'}$ and $\pi^{NM'}\in
  Q_{N,M'}$ we have $\pi^{NM'}\delta\in \lambda_AQ_{N,M'}$. Recall that $M'=M+v(N)$,
  hence by Lemma~\ref{le:Hensel-DP} there is a unique $z\in Q_{1,M}$ such that
  $\pi^{NM'}\delta=\lambda_Az^N$, hence $t=c_A(x)+\pi^{-NM'}\lambda_Az^N$. On the other
  hand we have $v\mu_A(\psi(u))=v\mu_A(x)\leq v\delta$ so by (\ref{eq:def-muA-v}) 
  \begin{displaymath}
    vz = \frac{v(\pi^{NM'}\delta/\lambda_A)}{N}
    \geq \frac{NM'+ v\mu_A(\psi(u))-v\lambda_A}{N} = \mu_A^v(vu).
  \end{displaymath}
  In particular $vz\geq0$ by Remark~\ref{re:muAv-pos} so $z\in Q_{1,M}\cap
  R=D^MR$. Similarly $vz \leq \nu_A^v(vu)$ by (\ref{eq:def-nuA-v}). 
  Let $y\in D^MR^{q_2}$ be such that $y_i=u_{\sigma_A(i)}$ if $i\in
  P(A)$, $y_i=z$ if $i=r_A$, $y_i=0$ otherwise. Then $y\in
  F_{H(A)}(D^MR^{q_2})$, $\Phi(y)=[\sigma_A](y)=u$ and $\mu_A^v(vu)\leq
  vy_{r_A} \leq \nu_A^v(vu)$ since $y_{r_A}=z$, so $y$ belongs to $S_A$. By
  construction $\varphi_A(y)=(x,t)\in A$, which proves that $A\subseteq\varphi_A(S_A)$.
  
  We turn now to $\Phi(S_A)$. For every $u\in U_A$,
  $\psi(u)\in\widehat{A}$ so there is $(x,t)\in A$ such that $u=\psi^{-1}(x)$.
  The above construction gives $y\in S_A$ such that $\varphi_A(y)=(x,t)$. In
  particular $\psi\circ\Phi(y)=x$, so $\Phi(y)=\psi^{-1}(x)=u$, which proves that
  $\Phi(S_A)\supseteq U_A$. Since $\Phi(S_A)\subseteq U_A$ by definition of $S_A$ we get
  that $\Phi(S_A)=U_A$.

  It only remains to show that $\varphi_A(S_A)\subseteq A$. Pick any $y\in S_A$, let
  $(x,t)=\varphi_A(y)$ and $\delta=t-c_A(x)=\pi^{-NM'}\lambda_Ay_{r_A}^N$. Since
  $\Phi(y)\in\Phi(S_A)=U_A$, we have $x=\psi(\Phi(y))\in\psi(U_A)=\widehat{A}$. Since
  $y_{r_A}\in D^MR=Q_{1,M}\cap R$, by Lemma~\ref{le:Hensel-DP} $y_{r_A}^N\in
  Q_{N,M+v(N)}=Q_{N,M'}$. Hence $\delta=\pi^{-NM'}\lambda_Ay_{r_A}^N$ belongs to
  $\lambda_A Q_{N,M'}$. We have $\mu_A^v(v\Phi(y))\leq v y_{r_A}$ by definition of
  $S_A$, so by (\ref{eq:def-muA-v})
  \begin{displaymath}
    v\mu_A\big(\psi(\Phi(y))\big) = v\lambda_A + N\mu_A^v(v\Phi(y))-NM' \leq v\lambda_A +
    Nvy_{r_A}-NM'.
  \end{displaymath}
  The left hand side is equal to $v\mu_A(x)$. For the right hand side we
  have
  \begin{displaymath}
    v\lambda_A + Nvy_{r_A}-NM' = v\big(\pi^{-NM'}\lambda_Ay_{r_A}^N\big)=v\delta.
  \end{displaymath}
  So $v\mu_A(x)\leq v\delta$, that is $|\delta|\leq|\mu_A(x)|$. Similarly $|\nu_A(x)|\leq|\delta|$
  hence $(x,t)\in A$.
\end{proof}

\begin{claim}\label{cl:SA-simplex}
  $S_A$ is a simplex in $D^MR^{q_2}$, whose faces are exactly the sets
  $S_B$ with $B\leq A$ in $\cA$. 
\end{claim}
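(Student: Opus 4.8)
The plan is to reduce the statement to its discrete analogue and then read it off from the combinatorics of $\cU$ recorded in Claims~\ref{cl:faces-UA}, \ref{cl:face-stricte} and \ref{cl:dispatching}. First I would note that the conditions defining $S_A$ — that $\Phi(y)\in U_A$ (equivalently $v\Phi(y)\in vU_A$, since $U_A$ is the $p$\--adic simplex $v^{-1}(vU_A)\cap D^MR^{q_1}$ and $v(\Phi(y))$ is obtained from $vy$ by the coordinate selection $[\sigma_A]$, with coordinates outside $\Supp U_A$ sent to $+\infty$), and in case $\Tp A=1$ also $\mu_A^v(v\Phi(y))\leq vy_{r_A}\leq\nu_A^v(v\Phi(y))$ — depend only on $vy$. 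Hence $S_A=v^{-1}(vS_A)\cap D^MR^{q_2}$ with $vS_A=v(S_A)$, so it suffices to prove that $vS_A$ is a discrete simplex in $\Gamma^{q_2}$ whose faces are exactly the sets $vS_B$ for $B\leq A$; the statement for $S_A$, and the description of its faces, then follows from the definition of $p$\--adic simplexes of index $M$ and of their faces as preimages of the faces of $vS_A$.

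Next I would check that $vS_A$ is a discrete polytope. Every point of $vS_A$ has value $+\infty$ on the coordinates outside $H(A)$, so peeling off, in the inductive definition of discrete polytopes, the coordinates $q_2,q_2-1,\dots$ not lying in $H(A)$ is trivial, with constant presentation $+\infty$. If $\Tp A=0$ then $H(A)=P(A)$ and, since $\sigma_A\colon\Supp U_A\to P(A)$ is an increasing bijection, $vS_A$ is the image of the discrete simplex $vU_A$ under the coordinate relabelling $[\sigma_A^{-1}]$ (padded with $+\infty$\--coordinates), hence is a discrete polytope, indeed a discrete simplex. If $\Tp A=1$ then $vS_A$ is obtained from this relabelled copy of $vU_A$ by adjoining the single coordinate $y_{r_A}$ with bounds $\mu_A^v\circ v\Phi$ and $\nu_A^v\circ v\Phi$; I would verify that this pair is a presentation: it is affine by the very definition of $\mu_A^v,\nu_A^v$; one has $0\leq\mu_A^v\leq\nu_A^v$ by Remark~\ref{re:muAv-pos} together with $|\nu_A|\leq|\mu_A|$ (so $v\mu_A\leq v\nu_A$) and the equations (\ref{eq:def-muA-v}), (\ref{eq:def-nuA-v}); and it is largely continuous because $\mu_A$, $\nu_A$ are largely continuous on $\widehat A$ by hypothesis, $v$ is continuous on $K^\times$, and on every face of $\widehat A$ the extensions $\bar\mu_A$, $\bar\nu_A$ are either $K^\times$\--valued or constant (the closed‑complex property), so $v\bar\mu_A$, $v\bar\nu_A$ are continuous there and, being $N$\--monomial, descend to affine continuous extensions of $\mu_A^v$, $\nu_A^v$ to every face of $vU_A$ (Proposition~\ref{pr:prol-proj}). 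Thus $vS_A$ is a discrete polytope.

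Finally — and this is where the real work lies — I would match the faces of $vS_A$ with the sets $vS_B$, $B\leq A$. Since $\cA$ is a cellular complex, $B=\partial^i_{\widehat B}A$ with $i=\Tp B$, whence $\mu_B=\bar\mu_A|_{\widehat B}$ and $\nu_B=\bar\nu_A|_{\widehat B}$ (one of them being $0$ according to $i$); with (\ref{eq:def-muA-v})–(\ref{eq:def-nuA-v}) and Proposition~\ref{pr:prol-proj} this gives $\mu_B^v=\bar\mu_A^v|_{vU_B}\circ(\text{projection})$ and likewise for $\nu_B^v$. Now $vU_B=F_I(vU_A)$ with $I=\Supp U_B=\sigma_A^{-1}(H(B)\cap P(A))$ is a face of $vU_A$ (Claim~\ref{cl:faces-UA}), and the non‑emptiness criterion of Proposition~\ref{pr:pres-face} for $F_{H(B)}(vS_A)$ — namely $r_A\in H(B)$ and $\bar\mu_A^v<+\infty$ on the relevant subface, or $r_A\notin H(B)$ and $\bar\nu_A^v=+\infty$ there — translates exactly into $\Tp B=1$ (i.e.\ $\bar\mu_A\neq0$ on $\widehat B$) resp.\ $\Tp B=0$ (i.e.\ $\bar\nu_A=0$ on $\widehat B$), which is what Claim~\ref{cl:face-stricte} records. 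Proposition~\ref{pr:pres-face} then yields that $F_{H(B)}(vS_A)$ is a discrete polytope presented by the restrictions of $(\mu_A^v\circ v\Phi,\nu_A^v\circ v\Phi)$ to $vU_B$, and this presentation is precisely the defining data of $vS_B$, so $F_{H(B)}(vS_A)=vS_B$. Conversely a non‑empty face $F_J(vS_A)$ gives, via Proposition~\ref{pr:pres-face}, a non‑empty face $F_{\sigma_A^{-1}(J\cap P(A))}(vU_A)=vU_B$ of $vU_A$ and a type fixed by whether $r_A\in J$, and the same argument shows $F_J(vS_A)=vS_B$ with $J=H(B)$. Hence the faces of $vS_A$ are exactly $\{vS_B : B\leq A\}$; applying this identification to each $vS_{B'}$ with $B'\leq A$ shows $vS_B$ is a face of $vS_{B'}$ whenever $B\leq B'\leq A$, so $B\mapsto vS_B$ is order‑preserving, and since $\cA$ ordered by specialisation is a tree, $\{B : B\leq A\}$ — hence $\{vS_B : B\leq A\}$ — is a chain. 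Therefore $vS_A$ is a discrete simplex and, translating back through $v^{-1}$, $S_A$ is a simplex of index $M$ whose faces are exactly the $S_B$ with $B\leq A$. The main obstacle throughout is the bookkeeping that aligns the face‑existence criterion of Proposition~\ref{pr:pres-face} with the type/facet combinatorics of Claims~\ref{cl:face-stricte} and \ref{cl:dispatching} (and the padding by $+\infty$\--coordinates), together with the large‑continuity check for $\mu_A^v$ and $\nu_A^v$.
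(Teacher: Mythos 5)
Your proposal is correct and follows essentially the same route as the paper's proof: after an order-preserving relabelling of coordinates (which the paper handles via an explicit identification $F_{H(A)}(D^MR^{q_2})\cong D^MR^{q'}$), both arguments reduce to showing that $vS_A$ is a discrete polytope presented by $(\mu_A^v,\nu_A^v)$ over $vU_A$ (affine, largely continuous, $0\leq\mu_A^v\leq\nu_A^v$ via Remark~\ref{re:muAv-pos}), then match its faces with the $vS_B$, $B\leq A$, in both directions using Proposition~\ref{pr:pres-face} together with Claim~\ref{cl:faces-UA} and the combinatorics of $H$, $P$, and conclude simplexhood from the chain structure coming from the tree $\cA$. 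The only differences are cosmetic (you keep the padded $+\infty$-coordinates instead of compressing them, and you cite Claim~\ref{cl:face-stricte} where the paper extracts the fact $r_A\in H(B)\iff\Tp B=1$ directly from (C0), (C1), (C3)).
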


\begin{proof}
Let $q=\Card P(A)$ and $q'=\Card H(A)$.
Let $\tau_A$ (resp. $\tau'_A$) be the strictly increasing map from $P(A)$ to
$[\![1,q]\!]$ (resp. from $H(A)$ to $[\![1,q']\!]$). By construction
and by Claim~\ref{cl:cart-cont} the following diagram is commutative
(vertical arrows are the natural coordinate projections). 
\begin{displaymath}
  \xymatrix{
    D^M\hbox{$R^{q'}$} \ar[r]^-{[\tau'_A]} \ar[d]
      & \overline{F_{H(A)}(D^M\hbox{$R^{q_2}$})} \ar[drr]^{[\sigma_A]=\Phi} \ar[d]
      &  &  \\
      D^MR^q \ar[r]^-{[\tau_A]} 
      & \overline{F_{P(A)}(D^M\hbox{$R^{q_2}$})} \ar[rr]^-{[\sigma_A]=\Phi}
      & & \overline{F_{\Supp U_A}(D^M\hbox{$R^{q_1}$})}
    }
\end{displaymath}
The horizontal arrows in this diagram are isomorphisms. All of them
are obtained simply by an order-preserving renumbering of the coordinates, hence they
preserve the faces and the property of being a simplex. It will then be
convenient here to identify isomorphic spaces, hence to consider
$U_A\subseteq D^MR^q$ and $S_A\subseteq D^MR^{q'}$. Since $\Phi(S_A)=U_A$ by
Claim~\ref{cl:image-SA}, after this identification $U_A$ is just the
image of $S_A$ by the coordinate projection of $D^MR^{q'}$ to
$D^MR^q$. Since $H(A)=\Supp S_A$ we identify also $H(A)$
with $[\![1,q']\!]$, and $P(A)$ with $[\![1,q]\!]$.

If $\Tp A=0$ then $H(A)=P(A)$, $q'=q$ and the vertical arrows are
identity maps. Thus $S_A$ identifies with $U_A$. In particular $S_A$
is a simplex. Every $B\leq A$ is also of type $0$ and $S_B$ identifies to
$U_B$. The conclusion follows by Claim~\ref{cl:faces-UA}.

From now on, let us assume that $\Tp A=1$. Then $q'=q+1$ hence $U_A$
is just the socle of $S_A$. Similarly, $U_B$ is the socle of $S_B$ for
every $B\leq A$ (if $\Tp B=0$ we have $S_B=U_B\times\{0\}$). By construction
$S_A$ is the inverse image of $vS_A$ by the valuation (restricted to
$D^MR^{q+1}$) and
\begin{displaymath}
  vS_A=\big\{a\in \cZ^{q+1}\tq \widehat{a}\in vU_A\mbox{ and }
  \mu_A^v(\widehat{a})\leq a_{q+1}\leq \nu_A^v(\widehat{a})\big\}.
\end{displaymath}

Since $\mu_A\circ\psi$ and $\nu_A\circ\psi$ are largely continuous on $U_A$,
(\ref{eq:def-muA-v}) and (\ref{eq:def-nuA-v}) imply that $\mu_A^v$ is
largely continuous on $vU_A$. They are affine maps by definition.
Since $0\leq \mu_A^v$  by Remark~\ref{re:muAv-pos}, and $\mu_A^v\leq \nu_A^v$
because $|\nu_A|\leq|\mu_A|$, it follows that $vS_A$ is a polytope in
$\Gamma^{q+1}$. We are going to check that its faces are exactly the sets
$vS_B$ for $B\leq A$ in $\cA$. This will finish the proof since $S_A$ will
then have the expected faces, which implies that $S_A$ is a simplex
because these faces form a chain by specialisation (because $\cA$ is a
tree). 

\paragraph{Step~1.} 
Let $B\leq A$ in $\cA$, then $B=\partial_Y^iA$ with $Y=\widehat{B}$ and $i=\Tp
B$. Let $J=H(B)\subseteq H(A)=[\![1,q+1]\!]$ and $\widehat{J}=P(A)=J\setminus\{q+1\}$. 
Since $(\mu_B,\nu_B)=(\bar\mu_A,\bar\nu_A)_{|Y}$, if $\Tp B=1$ we have by
construction 
\begin{equation}
  vS_B=\big\{a\in F_J(\Gamma^{q+1})\tq \widehat{a}\in vU_B\mbox{ and }
  \bar\mu_A^v(\widehat{a})\leq a_{q+1}\leq\bar\nu_A^v(\widehat{a})\big\}.
  \label{eq:vSB}
\end{equation}
This remains true also if $\Tp B=0$ because in that case $q+1\notin J$ and
$\bar\nu_A^v=+\infty$ on $vU_B$ (because $\bar\nu_A=\nu_B=0$ on $Y$) so the right
hand side is just $vU_B\times\{+\infty\}$, that is $vS_B$ (because $S_B=U_B\times\{0\}$
when $\Tp B=0$). In both cases we also have $vU_B=F_{\widehat{J}}(U_A)$,
because $vU_B$ is a face of $U_A$ by Claim~\ref{cl:faces-UA} and
$\Supp vU_B=P(B)=\widehat{J}$. So the description of $vS_B$ given by
(\ref{eq:vSB}) coincides with the description of $F_J(vS_A)$ given by
Proposition~\ref{pr:pres-face}, which proves that $vS_B=F_J(vS_A)$. 

\paragraph{Step 2.}
Conversely let $F_J(vS_A)\neq\emptyset$ be a face of $vS_A$, for some
$J\subseteq[\![1,q+1]\!]$, and let $\widehat{J}=J\setminus\{q+1\}$. By
Proposition~\ref{pr:pres-face} the socle of $F_J(vS_A)$ is
$F_{\widehat{J}}(vU_A)$ (because $vU_A$ is the socle of $vS_A$) and
two cases can happen: $q+1\in J$ and $\bar\mu_A^v<+\infty$ on
$F_{\widehat{J}}(vU_A)$, or $q+1\notin J$ and $\bar\nu_A^v=+\infty$ on
$F_{\widehat{J}}(vU_A)$. In both cases
\begin{equation}
  F_J(vS_A)=\big\{a\in F_J(\Gamma^{q+1})\tq \widehat{a}\in F_{\widehat{J}}(vU_A)\mbox{ and }
  \bar\mu_A^v(\widehat{a})\leq a_{q+1}\leq\bar\nu_A^v(\widehat{a})\big\}.  
  \label{eq:FJvSA}
\end{equation}
Since $F_{\widehat{J}}(vU_A)$ is a face of $vU_A$, by
Claim~\ref{cl:faces-UA} there is $C\leq A$ in $\cA$ such that
$F_{\widehat{J}}(vU_A)=vU_C$. Let $Y=\widehat{C}=\psi(U_C)$.

If $q+1\notin J$ then by Proposition~\ref{pr:pres-face}, $\bar\nu_A^v=+\infty$ on
$F_{\widehat{J}}(vU_A)=vU_C$. That is $\bar\nu_A=0$ on $Y=\psi(U_C)$, hence
$\partial_Y^0A\in\cA$. Let $B=\partial_Y^0A$ and apply Step~1 to $B$. Since
$J=\widehat{J}$ is the support of $vU_C=vU_B$ and of $S_B$ (because
$\Tp B=0$), we deduce from (\ref{eq:vSB}) and (\ref{eq:FJvSA}) that
$vS_B=F_J(S_A)$. 

If $q+1\in J$ then by Proposition~\ref{pr:pres-face}, $\bar\mu_A^v\neq+\infty$ on
$F_{\widehat{J}}(vU_A)=vU_C$. That is $\bar\mu_A\neq0$ on $Y=\psi(U_C)$, hence
$\partial_Y^1A\in\cA$. Let $B=\partial_Y^1A$ and apply Step~1 to $B$. Since
$\widehat{J}$ is the support of $vU_C=vU_B$ and
$J=\widehat{J}\cup\{q+1\}$ is the support of $S_B$ (because $\Tp B=1$), we
deduce from (\ref{eq:vSB}) and (\ref{eq:FJvSA}) that $vS_B=F_J(S_A)$.
\end{proof}

Finally let $\cS=\{S_A\tq A\in\cA\}$ and  $\varphi:\bigcup\cS\to \bigcup\cA$ be defined by
$\varphi_{|S_A}=\varphi_A$ for each $A\in\cA$. 

\begin{claim}\label{cl:phi-homeo}
  $\varphi$ is a homeomorphism from $\bigcup\cS$ to $\bigcup\cA$. 
\end{claim}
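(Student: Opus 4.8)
The plan is to show that $\varphi$ is a continuous bijection $\bigcup\cS\to\bigcup\cA$ whose inverse is also continuous, and then invoke the fact that $\bigcup\cS$ is closed and bounded to deduce it is a homeomorphism via Theorem~\ref{th:extr-val}. First I would check bijectivity: by Claim~\ref{cl:image-SA} each $\varphi_A$ is a bijection from $S_A$ onto $A$, and since $\cS=\{S_A\}_{A\in\cA}$ is a simplicial complex (Claim~\ref{cl:SA-simplex}) while $\cA$ is a cellular monoplex, both $\cS$ and $\cA$ are partitions of $\bigcup\cS$ and $\bigcup\cA$ respectively, with $\varphi$ restricting to the bijection $\varphi_A$ on each piece. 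Hence $\varphi$ is a well-defined bijection. It is semi-algebraic because each $\varphi_A$ is, being built from the semi-algebraic maps $\psi$, $\Phi=[\sigma_A]$, $c_A$ and the power map $y\mapsto y^N$.

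The heart of the argument is continuity of $\varphi$ on $\bigcup\cS$. Since $\cS$ is a closed simplicial complex (and $\cA$ a closed cellular complex), it suffices to fix $T=S_B\leq S=S_A$ in $\cS$ with $B\leq A$ in $\cA$, fix $z\in S_B$, and show $\varphi_A(y)\to\varphi_B(z)$ as $y\to z$ within $S_A$. By Claim~\ref{cl:cart-cont} the Cartesian morphism $\Phi$ is continuous, so $\Phi(y)\to\Phi(z)$; since $\psi$ is continuous (it is a triangulation map), $\psi(\Phi(y))\to\psi(\Phi(z))$. Then I would compare $\varphi_B(z)=(\psi(\Phi(z)),\ c_B(\psi\circ\Phi(z))+\pi^{-NM'}\lambda_B z_{r_B}^N)$ with the limit of $\varphi_A(y)$. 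For the first coordinate this is immediate. For the second coordinate the key points are: first, $c_A\circ\psi$ is largely continuous on $U_A$ (it belongs to $\CB(\cA)$, of which $(\cU,\psi)$ is a triangulation, so $c_A\circ\psi$ is $N$-monomial mod $U_{e,n}$ hence largely continuous), so $c_A(\psi\circ\Phi(y))\to \bar c_A(\psi\circ\Phi(z))=c_B(\psi\circ\Phi(z))$, using that $(c_B,\nu_B,\mu_B)=(\bar c_A,\bar\nu_A,\bar\mu_A)_{|\widehat B}$ because $\cA$ is a cellular complex; second, the term $\pi^{-NM'}\lambda_A y_{r_A}^N$ has to tend to $\pi^{-NM'}\lambda_B z_{r_B}^N$. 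Here I split into cases on the types of $A$ and $B$: if $\Tp A=\Tp B=1$ then $r_B=r_A$ (indeed $H(A)\setminus P(A)=H(B)\setminus P(B)$ is the common singleton $\{r_A\}$, by case~4 in the proof of Claim~\ref{cl:dispatching}) and $\lambda_A=\lambda_B$ (same coset $G_A=G_B$ since $B\subseteq A$), so $y_{r_A}\to z_{r_B}$ gives it directly; if $\Tp A=1$ but $\Tp B=0$, then $z_{r_A}=0$ since $\Supp z=H(B)=P(B)$ does not contain $r_A$, and $\lambda_B z_{r_B}^N$ is taken to be $0$ by convention, so both sides vanish; and if $\Tp A=0$ there is nothing extra to check. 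In all cases $\varphi_A(y)\to\varphi_B(z)$, proving continuity.

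Finally I would invoke closedness and boundedness: $\bigcup\cS\subseteq D^MR^{q_2}\subseteq R^{q_2}$ is bounded, and it is closed because $\cS$ is a closed simplicial complex (or because $\bigcup\cA$ is closed and $\varphi^{-1}$ — which we are about to show continuous — is continuous; but more directly, $\bigcup\cS$ being a finite union of closed simplexes in $D^MR^{q_2}$ is closed). Hence $\bigcup\cS$ is closed and bounded, so by Theorem~\ref{th:extr-val}(2) the continuous semi-algebraic bijection $\varphi$ from $\bigcup\cS$ onto $\bigcup\cA$ is automatically a homeomorphism. The main obstacle I anticipate is the bookkeeping in the second coordinate of $\varphi_A$ when passing to the limit across a face — specifically matching $\lambda_A$, $\lambda_B$ and the indices $r_A$, $r_B$ correctly across the four type-combinations — but all the needed facts have been set up in Claims~\ref{cl:face-stricte}, \ref{cl:dispatching} (cases 1--4) and \ref{cl:SA-simplex}, so it is really just a careful case check rather than a genuinely hard step.
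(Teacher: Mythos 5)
Your proposal is correct and takes essentially the same route as the paper: bijectivity from Claim~\ref{cl:image-SA}, closedness of $\bigcup\cS$ from Claim~\ref{cl:SA-simplex} plus boundedness and Theorem~\ref{th:extr-val}, and continuity checked at faces via Claim~\ref{cl:cart-cont}, the large continuity of the centers, and the same type-by-type comparison of $\lambda_Ay_{r_A}^N$ with $\lambda_Bz_{r_B}^N$. Only a few of your side justifications need repair (the facts themselves are fine): large continuity of $c_A\circ\psi$ comes from the cells of the cellular monoplex $\cA$ being \emph{largely continuous} by definition, not from being $N$\--monomial mod $U_{e,n}$ (which does not imply large continuity); $\lambda_B=\lambda_A$ holds because $B=\partial^1_{\widehat{B}}A$, hence $G_B=G_A$, since $\cA$ is a cellular complex --- not ``because $B\subseteq A$'', as a face is not a subset of $A$; and $r_A=r_B$ for an arbitrary $B\leq A$ of type $1$ is cleanest via (C1) and (C3) (as in the paper), since Case~4 of Claim~\ref{cl:dispatching} only treats predecessors.
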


\begin{proof}
We already know by Claim~\ref{cl:image-SA} that $\varphi$ is a bijection
from $\bigcup\cS$ to $\bigcup\cA$. It follows from Claim~\ref{cl:SA-simplex}
that $\bigcup\cS$ is closed, and it is obviously bounded. Thus by
Theorem~\ref{th:extr-val} it suffices to show that $\varphi$ is continuous. 
Since each $\varphi_A$ is obviously continuous on $S_A$, we only have to
prove that for every $z\in\partial S_A$ and $y\in S_A$, $\varphi_A(y)$ tends to $\varphi(z)$
as $y$ tends to $S_A$. By Claim~\ref{cl:SA-simplex} there is $B\leq A$
in $\cA$ such that $z\in S_B$, hence $\varphi(z)=\varphi_B(z)$. 
By Claim~\ref{cl:cart-cont}, $\psi\circ\Phi(y)$ tends to $\psi\circ\Phi(z)$. By
Claim~\ref{cl:image-SA}, $\psi\circ\Phi(y)\in \widehat{A}$ and
$\psi\circ\Phi(z)\in\widehat{B}$ hence $c_A(\psi\circ\Phi(y))$ tends to $\bar c_A(\psi\circ\Phi(z))$,
which is equal to $c_B(\psi\circ\Phi(z))$ since $\bar c_{A|\widehat{B}}=c_B$.
Thus it only remains to check that $\lambda_Ay_{r_A}^N$ tends to
$\lambda_Bz_{r_B}^N$.

If $\Tp A=0$ then also $\Tp B=0$ hence and we are done, since
$\lambda_Ay_{r_A}^N=0=\lambda_Bz_{r_B}^N$. If $\Tp B=1$ then $\lambda_B=\lambda_A$ (because
$\cA$ is a cellular complex) and $r_A=r_B$ (because by (C1) and (C3),
$H(B)\neq P(B)=H(B)\cap P(A)$ implies that $H(B)$ is not contained in
$P(A)$, hence $r_A\in H(B)$ since $H(B)\subseteq H(A)=P(A)\cup\{r_A\}$). Hence
obviously $\lambda_Ay_{r_A}^N$ tends to $\lambda_Az_{r_A}^N=\lambda_Bz_{r_B}^N$ in that
case. Finally if $\Tp A=1$ and $\Tp B=0$ then $r_A\notin H(B)$ (because by
(C0) and (C3), $H(B)=P(B)\subseteq P(A)$), hence $z_{r_A}=0$ since $\Supp
z=\Supp S_B=H(B)$. Thus $\lambda_Ay_{r_A}^N$ tends to $\lambda_Az_{R_A}^N=0$,
which proves the result because $\lambda_Bz_{r_B}^N=0$ since $\Tp B=0$. 
\end{proof}

\begin{remark}\label{re:well-disp-proof}
  By construction $\Supp S_A=H(A)$ and $\Supp S_{A'}=H(A')$ for every
  $A,A'\in\cA$, hence $\Supp S_{A'}\subseteq\Supp S_A$ if and only if $A'\leq A$
  (because $H$ is strictly increasing), which implies that $S_{A'}\leq
  S_A$ by Claim~\ref{cl:phi-homeo}. So for every $S,S'\in\cS$ we have
  \begin{displaymath}
    S'\leq S\iff \Supp S'\subseteq \Supp S. 
  \end{displaymath}
\end{remark}

We can recap now our construction and state the result which
was the aim of this section.

\begin{lemma}\label{le:lift-triang}
  Let $\cA$ be a cellular monoplex mod $Q_{N,M+v(M)}^\times$ 
  such that $\bigcup\cA$ is a closed subset of $R^{m+1}$. Let $(\cU,\psi)$ be a triangulation of
  $\CB(\cA)$ with parameters $(n,N,e,M)$ such that $M>v(N)$ and for
  every $A\in\cA$, $\psi^{-1}(A)\in\cU$ (let us denote it $U_A$). Then there
  exists a simplicial complex $\cS$ of index $M$, a Cartesian morphism
  $\Phi:\biguplus\cS\to\biguplus\cU$ and a semi-algebraic homeomorphism $\varphi:\biguplus\cS\to\bigcup\cA$
  such that for every $A\in\cA$, $\varphi^{-1}(A)\in\cS$ (let us denote it
  $S_A$) and for every $y\in S_A$
  \begin{displaymath}
    \varphi(y)=\big(\psi\circ\Phi(y),c_A(\psi\circ\Phi(y))+\pi^{-NM'}\lambda_Ay_{r_A}^N\big)
  \end{displaymath}
  where\footnote{If $\Supp S_A=\emptyset$ then $r_A$ is not
  defined but in that case $S_A$ is a point, hence so is $A$ so
  $\lambda_A=0$ and we can let $\lambda_Ay_{r_A}^N=0$ by
  convention.\label{fn:rA}} $r_A=\max(\Supp S_A)$.
\end{lemma}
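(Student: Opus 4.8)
The bulk of this lemma has in fact already been established: under the two temporary assumptions made at the beginning of Section~\ref{se:cart-map} — that $\cA$ is a rooted tree and that $\cU$ is a single rooted simplicial complex in one $D^MR^{q_1}$ — the conclusion is precisely what was constructed through Claims~\ref{cl:faces-UA}--\ref{cl:phi-homeo}. Indeed $\cS=\{S_A\}_{A\in\cA}$ is then a rooted simplicial complex in $D^MR^{q_2}$ by Claim~\ref{cl:SA-simplex} (together with Remark~\ref{re:well-disp-proof} for the rootedness), the Cartesian map $\Phi$ built just before Claim~\ref{cl:cart-cont} is a Cartesian morphism by Claim~\ref{cl:cart-cont} and satisfies $\Phi(S_A)=U_A$ by Claim~\ref{cl:image-SA}, the map $\varphi$ glued from the $\varphi_A$ is a semi-algebraic homeomorphism from $\bigcup\cS$ onto $\bigcup\cA$ with $\varphi^{-1}(A)=S_A$ for every $A$ (Claims~\ref{cl:image-SA} and \ref{cl:phi-homeo}), and the displayed formula for $\varphi(y)=\varphi_A(y)$ on $S_A$ is exactly the definition of $\varphi_A$ given just before Claim~\ref{cl:image-SA}, with $r_A=\max H(A)=\max(\Supp S_A)$ by construction and the convention for $\Supp S_A=\emptyset$ as stated there. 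It therefore only remains to remove the two temporary assumptions.

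For this, recall that any finite monoplex, being a tree under specialisation, is the disjoint union of its rooted components: if $r_1,\dots,r_k$ are the minimal elements of $\cA$, set $\cA_j=\{A\in\cA\tq A\geq r_j\}$. Each $A\in\cA$ has a unique minimum in $\{B\in\cA\tq B\leq A\}$, which is one of the $r_j$, so the $\cA_j$ partition $\cA$; each $\cA_j$ is again a cellular complex, because the frontier of any $A\in\cA_j$ is a union of faces $B\leq A$, all of which lie in $\cA_j$; and since $\bigcup\cA$ is closed it follows that each $\bigcup\cA_j$ is closed, hence clopen in $\bigcup\cA$, with $\bigcup\cA=\biguplus_j\bigcup\cA_j$. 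Likewise set $\cU_j=\{U_A\tq A\in\cA_j\}$. By Claim~\ref{cl:faces-UA} the faces of $U_A$ are exactly the $U_B$ with $B\leq A$, and for $A\in\cA_j$ these $B$ again lie in $\cA_j$; hence each $\cU_j$ is a rooted simplicial complex with root $U_{r_j}$, contained in one of the $D^MR^{q_i}$'s, and $(\cU_j,\psi_{|\biguplus\cU_j})$ is a triangulation of $\CB(\cA_j)$ with parameters $(n,N,e,M)$ for which $\psi^{-1}(A)\in\cU_j$ for every $A\in\cA_j$.

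Applying the construction of this section separately to each triple $(\cA_j,\cU_j,\psi_{|})$ yields a rooted simplicial complex $\cS_j$ of index $M$, a Cartesian morphism $\Phi_j:\biguplus\cS_j\to\biguplus\cU_j$ and a semi-algebraic homeomorphism $\varphi_j:\biguplus\cS_j\to\bigcup\cA_j$ with all the stated properties over $\cA_j$. Let $\cS=\{\cS_j\}_j$, which is a simplicial complex of index $M$, let $\Phi$ be the Cartesian map obtained by gluing the $\Phi_j$, and $\varphi$ the map obtained by gluing the $\varphi_j$. Since the $\bigcup\cA_j$ are clopen in $\bigcup\cA$ and, correspondingly, the $\bigcup\cS_j$ are clopen in $\bigcup\cS$, continuity of $\Phi$ and of $\varphi$, as well as bijectivity of $\varphi$, follow at once from the corresponding properties of the $\Phi_j$ and $\varphi_j$; and the remaining conclusions — $\varphi^{-1}(A)=S_A\in\cS$ and the formula for $\varphi$ on $S_A$ — hold componentwise, hence globally. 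The only point deserving attention, and the closest thing to an obstacle, is the verification that each rooted component $\cA_j$ of $\cA$ is matched with a genuine rooted simplicial subcomplex $\cU_j$ of $\cU$ sitting inside a single $D^MR^{q_i}$; this is what legitimises the component-by-component application of the construction, and it is exactly the content of Claim~\ref{cl:faces-UA}.
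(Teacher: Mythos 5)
Your proposal is correct and follows essentially the same route as the paper's own proof: split $\cA$ into the rooted subtrees above its minimal elements, check that each piece is a closed (hence clopen) rooted cellular monoplex whose corresponding family of simplexes $U_A$ is a lower, rooted subcomplex of $\cU$ lying in a single $D^MR^{q_i}$, apply the construction of Claims~\ref{cl:faces-UA}--\ref{cl:phi-homeo} to each piece, and glue along the clopen decomposition. The only cosmetic difference is that you invoke Claim~\ref{cl:faces-UA} (stated under the temporary rootedness assumption) to see that each $\cU_j$ is a lower subset, whereas the paper simply asserts this; since the proof of that claim does not use rootedness, this is harmless.
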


\begin{proof}
  Let $(A_k)_{1\leq k\leq r}$ be the list of minimal elements in $\cA$, and
  for each $k$ let $\cA_k$ be the family of elements in $\cA$ greater
  than $A_k$. This is a rooted, cellular monoplex mod
  $Q_{N,M+v(N)}^\times$. For every $A\in\cA_k$, $\overline{A}$ is the union
  of the cells $B\leq A$ in $\cA$ since $\cA$ is a cellular complex and
  $\bigcup\cA$ is closed. All these cells $B$ belong to $\cA_k$ hence
  $\bigcup\cA_k$ is closed. Since $\bigcup\cA\setminus\bigcup\cA_k$ is the union of the
  finitely many other $\cA_l$ it is closed, hence $\bigcup\cA_k$ is clopen
  in $\bigcup\cA$. Let $\cU_k=\{\psi^{-1}(\widehat{A})\tq A\in\cA_k\}$, this is a
  lower subset of $\cU$ with smallest element $\psi^{-1}(\widehat{A}_k)$
  hence a rooted simplicial complex in $D^{M_1}R^{q_1,k}$ for some
  $q_{1,k}$. Finally let $\psi_k$ be the restriction of $\psi$ to $\bigcup\cU_k$. 
  
  Claims~\ref{cl:faces-UA} to \ref{cl:phi-homeo} apply to
  $(\cU_k,\psi,\cA_k)$ and give a simplicial complex  $\cS_k$ in
  $D^MR^{q_2,k}$ for some $q_{2,k}$, a Cartesian morphism
  $\Phi_k:\bigcup\cS_k\to\bigcup\cU_k$ and a semi-algebraic homeomorphism
  $\varphi_k:\biguplus\cS_k\to\bigcup\cA_k$ satisfying all the required properties. Since
  each $\bigcup\cA_k$ is clopen in $\bigcup\cA$, and each $\bigcup\cU_k$ is clopen in
  $\biguplus\cU$, the  conclusion follows by taking for $\cU$ the family
  $\{\cU_k\}_{1\leq k\leq r}$ and for $\Phi$ (resp. $\varphi$) the map obtained by
  glueing together the various $\Phi_k$ (resp. $\varphi_k$).
\end{proof}

\section{Triangulation}\label{se:triang}

We have come up to the moment when we can show that $\TR m \Rightarrow
\TR{m+1}$. As $\TR 0$ is rather obvious, this will finish the proof of
$\TR m$ for every $m$.

\begin{theorem}\label{th:tm-plus-un}
  Assume $\TR m$. Let $(\theta_i:A_i\subseteq K^{m+1}\to K)_{i\in I}$ be a finite
  family of semi-algebraic functions, and $n,N\geq1$ be any integers. Then
  for some integers $e,M\geq 1$ which can be chosen arbitrarily large (in
  the sense of footnote~\ref{ft:arbit-large}), there exists a
  simplicial complex $\cT$ of index $M$ and a semi-algebraic
  homeomorphism $\varphi:\biguplus\cT\to\bigcup_{i\in I}A_i$ such that for every $i$ in $I$:
  \begin{enumerate}
    \item
      $\displaystyle \{ \varphi(T)\tq T\in\cT \mbox{ and } \varphi(T)\subseteq A_i\}$ is a
      partition of $A_i$.
    \item 
      $\forall T\in\cT$ such that $\varphi(T)\subseteq A_i$, $\theta_i\circ\varphi_{|T}$ is $N$\--monomial
      mod $U_{e,n}$.
  \end{enumerate}
\end{theorem}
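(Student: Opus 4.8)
The plan is to derive $\TR{m+1}$ by feeding the family $(\theta_i)$ into the ``largely continuous cell preparation up to small deformation'' (Theorem~\ref{th:large-cont-prep}) and then transporting the resulting cellular data, first into a \emph{cellular monoplex} (Lemma~\ref{le:pretriang}) and finally into a genuine simplicial complex (Lemma~\ref{le:lift-triang}). One must take care at the outset to reduce to bounded domains: replacing $K^{m+1}$ by a finite union of copies via the homeomorphisms $\psi_I$, $\tilde\psi_I$ of Section~\ref{se:appli-triang} (the same device used in the proof of Lemma~\ref{le:split-level}), we may assume each $A_i\subseteq R^{m+1}$. This costs nothing because the $\psi_I$ are semi-algebraic homeomorphisms, and a monomial-mod-$U_{e,n}$ conclusion pulls back along them (each $\psi_I$ is itself given coordinatewise by monomials $x_k$ or $1/(\pi x_k)$).

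Next I would invoke Theorem~\ref{th:large-cont-prep} with the prescribed $n,N$: for suitable large $e$ and $M_0>2v(e)$, and $N_0$ divisible by $eN$, it produces a tuple $\eta\in K^m$, the linear automorphism $u_\eta$ of $K^{m+1}$, and a finite family $\cD$ of largely continuous fitting cells mod $\GG=Q_{N_0,M_0}^\times$ refining $\{u_\eta^{-1}(A_i)\}$, with
\begin{displaymath}
  \theta_i\circ u_\eta(x,t)=\cU_{e,n}(x,t)\,h_{i,D}(x)\bigl[\lambda_D^{-1}(t-c_D(x))\bigr]^{\alpha_{i,D}/e}
\end{displaymath}
on each $D$. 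The point of working with $\theta_i\circ u_\eta$ is harmless: since $u_\eta$ is a linear automorphism of $K^{m+1}$, a triangulation of $\{u_\eta^{-1}(A_i)\}$ on which $\theta_i\circ u_\eta\circ\varphi$ is $N$-monomial mod $U_{e,n}$ transports via $u_\eta\circ\varphi$ to a triangulation of $\{A_i\}$ with the same monomiality property. I would then apply Lemma~\ref{le:pretriang} to $\cA=\cD$, taking for $\cF_0$ the family of exponent-base functions $x\mapsto \lambda_D^{-1}(t-c_D(x))$ together with the $h_{i,D}$'s and the coefficients needed to express the right-hand side above (all with domains in $\widehat{\cD}$), and to $N$ the given integer. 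This returns $(\cV,\varphi_0,\cD',\cF_{\cD'})$ with $\cD'$ a cellular monoplex mod $\GG$ refining $\cD$ (so $\cD'\lhd^n\cD$, hence the preparation formula survives on each cell of $\cD'$ after absorbing the $\lhd^n$-transition into the $\cU_{e,n}$-factor and the $h$-factor), and with $(\cV,\varphi_0)$ a triangulation of $\cF_0\cup\cF_{\cD'}\cup\CB(\cD')$ such that $\widehat{\cD'}=\varphi_0(\cV)$.

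Finally I would apply Lemma~\ref{le:lift-triang} to $\cA=\cD'$ and $(\cU,\psi)=(\cV,\varphi_0)$ — here one needs $\GG=Q_{N,M'}$ with $M'=M+v(N)$ and $M>v(N)$, which is arranged by choosing the parameters in Theorem~\ref{th:large-cont-prep} and Lemma~\ref{le:pretriang} compatibly, using that $e,M$ can be made arbitrarily large — obtaining a simplicial complex $\cS$ of index $M$, a Cartesian morphism $\Phi:\biguplus\cS\to\biguplus\cV$, and a semi-algebraic homeomorphism $\varphi_1:\biguplus\cS\to\bigcup\cD'$ with $\varphi_1(y)=(\psi\circ\Phi(y),\,c_A(\psi\circ\Phi(y))+\pi^{-NM'}\lambda_A y_{r_A}^N)$ on $S_A=\varphi_1^{-1}(A)$. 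Then $\varphi=u_\eta\circ\varphi_1$ is the desired homeomorphism onto $\bigcup_i A_i$, and $\cT$ is the subfamily of $\cS$ lying over the relevant cells; statement~(1) follows from $\cD'$ refining $\{u_\eta^{-1}(A_i)\}$ and $\cS$ being a partition. For statement~(2), on each $S_A$ one computes
\begin{displaymath}
  \theta_i\circ\varphi(y)=\cU_{e,n}(\varphi_1(y))\,h_{i,A}(\psi\Phi(y))\,\bigl[\lambda_A^{-1}\cdot\pi^{-NM'}\lambda_A y_{r_A}^N\bigr]^{\alpha_{i,A}/e}
  =\cU_{e,n}\cdot(h_{i,A}\circ\psi\circ\Phi)(y)\cdot\pi^{-NM'\alpha_{i,A}/e}\,y_{r_A}^{N\alpha_{i,A}/e},
\end{displaymath}
and one must check this is $N$-monomial mod $U_{e,n}$ in the coordinates of $y$: the factor $y_{r_A}^{N\alpha_{i,A}/e}$ is already a power of a coordinate, and $h_{i,A}\circ\psi\circ\Phi=h_{i,A}\circ\psi\circ[\sigma_A]$ restricted to $S_A$ is $N$-monomial mod $U_{e,n}$ in $y$ because $h_{i,A}\in\cF_0$ is triangulated by $(\cV,\psi)$ and $[\sigma_A]$ is an order-preserving coordinate renumbering (hence $1$-monomial); the $\cU_{e,n}$-factors multiply to a $\cU_{e,n}$-factor, and the $e$-th root of $y_{r_A}^N\in Q_{N,M}$ is legitimate by Lemma~\ref{le:Hensel-DP} and Remark~\ref{re:racine-de-U} since $e\mid N$ and $M>2v(e)$, folding the chosen $e$-th root of unity into the $\UU_e$-part. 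I expect the main obstacle to be bookkeeping the parameter constraints — making $N_0$, $M_0$, $M'$, $e$, $M$ simultaneously satisfy all divisibility and size requirements of the three invoked results — together with the care needed to see that after the $\lhd^n$-refinement in Lemma~\ref{le:pretriang} the exponent $\alpha_{i,A}/e$ and the base $\lambda_A^{-1}(t-c_A)$ are still the \emph{same} (up to $\cU_{e,n}$) for the new cells, so that monomiality genuinely propagates.
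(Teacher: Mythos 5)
Your pipeline is exactly the paper's (reduce to $R^{m+1}$, prepare with Theorem~\ref{th:large-cont-prep} after the deformation $u_\eta$, pass to a cellular monoplex with a compatible triangulation of the socle via Lemma~\ref{le:pretriang}, lift with Lemma~\ref{le:lift-triang}, compose with $u_\eta$), but the places you dismiss as bookkeeping are where the proof actually lives, and your parameter choices do not work as stated. First, Lemma~\ref{le:pretriang} must be applied with parameters $(n_1,N_0,e_1,M_1)$, where $N_0$ is the modulus produced by the preparation and $n_1=\max(n+v(e_0),1+2v(e_0))$ --- not with the given $(n,N)$. The modulus of the intermediate triangulation has to coincide with the modulus of the coset group of the cells, since this is what makes the maps $\mu_A^v,\nu_A^v$ in Section~\ref{se:cart-map} integer-affine and is required by the hypotheses of Lemma~\ref{le:lift-triang}; and the slack $n_1-v(e_0)\geq n$, $n_1>2v(e_0)$ is what later allows extracting $e_0$-th roots of the $\cU$-factors while still ending up in $U_{e_0e_1,n}$. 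With your choice of $n$ this root extraction is not available. Moreover $M_0$ is output by Theorem~\ref{th:large-cont-prep} \emph{before} the triangulation index $M_1$ exists, so you cannot ``choose them compatibly'': one must split each cell of the monoplex mod $Q_{N_0,M_0}^\times$ into cells mod $Q_{N_0,M_1+v(N_0)}^\times$ (same center and bounds, still fitting by Proposition~\ref{pr:fit-cell}, still a cellular monoplex with the same $\lhd^{n_1}$-system) before Lemma~\ref{le:lift-triang} applies. Also, $\cF_0$ in Lemma~\ref{le:pretriang} consists of functions with domains in $\widehat{\cA}\subseteq K^m$, so $(x,t)\mapsto\lambda_D^{-1}(t-c_D(x))$ cannot be put there; only the $h_{i,D}$ go in, the $t$-dependence being handled by the explicit formula for $\varphi$ in Lemma~\ref{le:lift-triang}.

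Second, your final display silently assumes that the simplex $S$ maps onto a cell whose center is the preparation center $c_A$, so that $t-c_A(x)$ pulls back to $\pi^{-N_0M'}\lambda_A y_{r}^{N_0}$. The monoplex only \emph{refines} the prepared family: on a cell $C$ of the monoplex one has $t-c_A(x)=\cU_{n_1}(x,t)\,h_{C,A}(x)^\beta\,(t-c_C(x))^{1-\beta}$ with $h_{C,A}$ in the $\lhd^{n_1}$-system, and the statement to prove is that $\big[\lambda_A^{-1}(t-c_A)\circ\varphi_{|S}\big]^{\alpha_{i,A}/e_0}$ is $N$-monomial mod $U_{e_0e_1,n}$. ``Absorbing the transition into the $h$-factor'' does not yield this: one must check that the whole expression takes values in $Q_{e_0,2v(e_0)+1}$ (using $t-c_A\in\lambda_AQ_{N_0,M_0}$, $e_0\mid N_0$, $M_0>2v(e_0)$), that the $e_0$-th root of the monomial part again has exponents in $N\ZZ$ (this is precisely where $e_0N\mid N_0$ is used), and that the constant together with the $\UU_{e_1}$-factor admits an $e_0$-th root up to an element of $\UU_{e_0e_1}$, the unit level dropping only by $v(e_0)$ (Remark~\ref{re:racine-de-U}). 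These verifications are the second half of the paper's proof and are absent from your sketch; as written, the monomiality in item~(2) is asserted rather than proved.
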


\begin{proof}
By using the same partition of $K^{m+1}$ as in the proof of
Lemma~\ref{le:split-level} we are reduced to the case where each $A_i$
is contained in $R^{m+1}$. We can also extend each $\theta_i$ to $R^{m+1}$
by an arbitrary value, and add to this family the indicator functions
of each $A_i$ inside $R^{m+1}$, hence assume that all these functions
have domain $R^{m+1}$, which is closed and bounded. Let $e_*\geq1$ and
$M_*\geq1$ be any integers.

Theorem~\ref{th:large-cont-prep} applies to $(\theta_i)_{i\in I}$. It gives
an integer $e_0\geq1$, a tuple $\eta\in R^m$, a linear automorphism
$u_\eta(x,t)=(x+t\eta,t)$ of $K^{m+1}$ (note that $u_\eta(R^{m+1})=R^{m+1}$
since $\eta\in R^{m+1}$), a pair of integers $N_0\geq1$ and $M_0>2v(e_0)$ such
that $e_0N$ divides $N_0$, and a finite family $\cA$ of largely
continuous cells mod $Q_{N_0,M_0}^\times$ partitioning
$u_\eta^{-1}(R^{m+1})=R^{m+1}$ such that for every $i\in I$, every $A\in\cA$
and every $(x,t)\in A$
\begin{equation}
  \theta_i\circ u_\eta(x,t)=\cU_{e_0,n}(x,t)h_{i,A}(x)
  \big[\lambda_A^{-1}\big(t-c_A(x)\big)\big]^\frac{\alpha_{i,A}}{e_0}
  \label{eq:prep-theta}
\end{equation}
where $h_{i,A}:\widehat{A}\to K$ is a semi-algebraic function and
$\alpha_{i,A}\in\ZZ$. 

Let $n_1=\max(1+2v(e_0),n+v(e_0))$, Lemma~\ref{le:pretriang} applied
to $\cA$ and the family $\cF_0=\{h_{i,A}\tq i\in I,\ A\in\cA\}$ gives a pair
of integers $e_1\geq1$ and $M_1>2v(e_1)$, a cellular monoplex $\cB$ mod
$Q_{N_0,M_0}$ refining $\cA$ such that $\cB\lhd^{n_1}\cA$, a 
$\lhd^{n_1}$\--system $\cF_1$ for $(\cB,\cA)$, and a triangulation
$(\cU,\psi)$ of $\cF_0\cup\cF_1\cup\CB(\cB)$ with parameters
$(n_1,N_0,e_1,M_1)$. Moreover $e_1,M_1$ can be chosen arbitrarily
large, in the sense of footnote~\ref{ft:arbit-large}, so we can
require that $e_*$ divides $e_1$ and $M_1\geq M_*$, and that $M_1\geq
M_0-v(N_0)$ and $M_1>v(N_0)\geq v(e_0)$. 

$Q_{N_0,M_1+v(N_0)}^\times$ is a subgroup of $Q_{N_0,M_0}^\times$ (because
$M_1+v(N_0)\geq M_0$) with finite index. Hence every cell in $\cB$ is the
disjoint union of finitely many cells $C$ mod $Q_{N_0,M_1+v(N_0)}^\times$
with the same socle and bounds as $B$. Since
$vQ_{N_0,M_1+v(N_0)}^\times=N_0\cZ=vQ_{N_0,M_0}^\times$, these cells $C$ are
still fitting cells by Proposition~\ref{pr:fit-cell}. One easily sees
that they form a cellular monoplex $\cC$ refining $\cA$ such that
$\cC\lhd^{n_1}\cA$ and $\cF_1$ is a  $\lhd^{n_1}$\--system for
$(\cC,\cA)$. Moreover $\CB(\cC)=\CB(\cB)$ and
$\widehat{\cC}=\widehat{\cB}$ so $(\cU,\psi)$ is a triangulation of
$\cB(\cC)$ with parameters $(n_1,N_0,e_1,M_1)$ such that
$\psi^{-1}(\widehat{C})\in\cU$ for every $C\in\cC$.

Since $M_1>v(N_0)$, Lemma~\ref{le:lift-triang} applies to
$\cC$ and $(\cU,\psi)$. It gives a simplicial complex $\cT$ of index $M_1$, a
Cartesian morphism $\Phi:\biguplus\cT\to\biguplus\cU$ and a semi-algebraic
homeomorphism $\varphi:\biguplus\cT\to\bigcup\cC$ such that $\varphi^{-1}$ maps each $C$ in
$\cC$ onto some $T$ in $\cT$, and for every $y$ in $T$ 
\begin{equation}\label{eq:def-phi}
  \varphi(y)=\big(\psi\circ\Phi(y),c_C(\psi\circ\Phi(y))+\pi^{-NM'}\lambda_Cy_{r_C}^{N_0}\big) 
\end{equation}
where\footnote{See footnote~\ref{fn:rA}.} $r_C=\max(\Supp T)$. Let
$\varphi_\eta=u_\eta\circ\varphi$, this is a semi-algebraic homeomorphism from $\biguplus\cT$ to
$R^{m+1}$. We are going to check that $\theta_i\circ \varphi_{\eta|T}$ is $N$\--monomial mod
$U_{e_0e_1,n}$ for every $i\in I$ and every $T\in\cT$. This will prove the
result, with $e=e_0e_1$ and $M=M_1$.

So pick any $T\in\cT$, let $C=\varphi(T)$ and $r_C$ be as above. There is a
unique $B\in\cB$ containing $C$, a unique $A\in\cA$ containing $B$. For
every $(x,t)\in C$ let $\delta_C(x,t)=t-c_C(x)$. Let $\delta_A$ and $\delta_B$ be
defined accordingly. Note that $\delta_C=\delta_B$ on $C$ because $C$ has the
same center as $B$ by construction. For every $y\in T$, by
(\ref{eq:prep-theta}) and (\ref{eq:def-phi}) we have
\begin{displaymath}
  \theta_i\circ\varphi_\eta(y)=\cU_{e_0,n}\big(\varphi(y)\big)h_{i,A}\big(\psi\circ\Phi(y)\big)
  \big[\lambda_A^{-1}\delta_A\big(\varphi(y)\big)\big]^\frac{\alpha_{i,A}}{e_0}.
\end{displaymath}
We have $\cU_{e_0,n}(\varphi(y))\in U_{e_0,n}\subseteq U_{e_0e_1,n}$ so the factor
$\cU_{e_0,n}(\varphi(y))$ can be replaced by $\cU_{e_0e_1,n}(y)$. Recalling
that $(\cV,\psi)$ is a triangulation of $\cF_0\cup\cF_1$ with parameters
$(n_1,N_0,e_1,M_1)$, that $\Phi$ is a Cartesian morphism and
$h_{i,A}\in\cF_0$, we get that the second factor
$h_{i,A}(\psi\circ\Phi(y))=h_{i,A}\circ\psi(\Phi(y))$ is $N_0$\--monomial mod
$U_{e_1,n_1}$ hence {\it a fortiori} $N$\--monomial mod $U_{e_0e_1,n}$
since $N$ divides $N_0$ and $n_1\geq n$. So it only remains to prove that the last
factor $[\lambda_A^{-1}\delta_A\circ\varphi_{|T}]^{\alpha_{i,A}/e_0}$ is $N$\--monomial mod
$U_{e_0e_1,n}$. It suffices to prove it for
$[\lambda_A^{-1}\delta_A\circ\varphi]^{1/e_0}$.

We can assume that $\Tp A=1$ otherwise $\lambda_A^{-1}\delta_A=1$ and the result
is trivial (see Remark~\ref{re:zero-infty}). Recall that
$\cC\lhd^{n_1}\cA$ and $\cF_1$ is a  $\lhd^n$\--system for
$(\cC,\cA)$. For every $(x,t)\in C$ we then have
\begin{displaymath}
  t-c_A(x)=\cU_{n_1}(x,t)h_{C,A}(x)^\beta\big(t-c_C(x)\big)^{1-\beta} 
\end{displaymath}
with $h_{C,A}\in\cF_1$ and $\beta\in\{0,1\}$ (depending on $A$, $C$). So by
(\ref{eq:def-phi}) we have
\begin{equation}
  \delta_A\big(\varphi(y)\big)=\cU_{n_1}\big(\varphi(y)\big)
  h_{C,A}\big(\psi\circ\Phi(y)\big)^\beta\big(\pi^{-NM}\lambda_Cy_{r_C}^{N_0}\big)^{1-\beta}.
  \label{eq:delta-mon}
\end{equation}
$(\cV,\psi)$ is a triangulation of $\cF_1$ with parameters
$(n_1,N_0,e_1,M_1)$ hence $h_{C,A}(\psi\circ\Phi(y))$ is $N_0$\--monomial mod
$U_{e_1,n_1}$. So (\ref{eq:delta-mon}) implies that $\delta_A\circ\varphi_{|T}$ is
$N_0$\--monomial mod $U_{e_1,n_1}$, hence so is $\lambda_A^{-1}\delta_A\circ\varphi_{|T}$.
Let $\chi:T\to\UU_{e_1}$ and $g:T\to K$ be semi-algebraic functions that for
every $y\in T$ 
\begin{displaymath}
  \lambda_A^{-1}\delta_A\circ\varphi(y)=\chi(y)\cU_{n_1}(y)\zeta g(y)
  \mbox{ \ and \ }
  g(y)=\prod_{1\leq i\leq q}y_i^{\alpha_i N_0}
\end{displaymath}
with $\zeta\in K$, $\alpha_1,\dots,\alpha_q\in\ZZ$. Let $k=N_0/(e_0N)$, by construction $e_0N$
divides $N_0$ hence $k\in\NN^*$. 
Since $T\subseteq D^{M_1}R^{q'}$, each $y_i\in D^{M_1}R\subseteq Q_{1,M_1}\subseteq
Q_{1,v(e_0)+1}$ (because $M_1>v(e_0)$) hence $y_i^{e_0}\in
Q_{e_0,2v(e_0)+1}$. {\it A fortiori} $y^{\alpha_iN_0}=y^{e_0Nk\alpha_i}$
belongs to $Q_{e_0,2v(e_0)+1}$ hence $g$ takes values in
$Q_{e_0,2v(e_0)+1}$ and $g^{1/e_0}$ is $N$\--monomial:
\begin{displaymath}
  \big(g(y)\big)^\frac{1}{e_0}
%   =\bigg(\prod_{1\leq i\leq q}y_i^{\alpha_i N_0}\bigg)^\frac{1}{e_0}
  =\bigg(\prod_{1\leq i\leq q}y_i^{e_0Nk\alpha_i}\bigg)^\frac{1}{e_0}
  =\prod_{1\leq i\leq q}y_i^{Nk\alpha_i}
\end{displaymath}
But $\lambda_A^{-1}\delta_A$ also takes values in $Q_{e_0,2v(e_0)+1}$ 
because $\delta_A(x,t)\in\lambda_A Q_{N_0,M_0}$ for every $(x,t)\in A$, and
$Q_{N_0,M_0}\subseteq Q_{e_0,2v(e_0)+1}$ since $e_0$ divides $N_0$ and $M_0>2v(e_0)$. 
Thus $(\lambda_A^{-1}\delta_A\circ\varphi_{|T})/g=\cU_{n_1}\zeta\chi$ takes values in 
$Q_{e_0,2v(e_0)+1}$ as well. So does the factor $\cU_{n_1}$ since
$n_1>2v(e_0)$. Hence finally $\zeta\xi(y)\in Q_{e_0,2v(e_0)+1}$ for every
$y\in T$, so $(\zeta\chi)^{1/e_0}$ is well defined. Note that
$\zeta^{e_1}=\zeta^{e_1}\chi^{e_1}=[(\zeta\chi)^{1/e_0}]^{e_0e_1}$ hence 
$\zeta^{e_1}\in P_{e_0e_1}$. Pick any $\eta\in K$ such that $\zeta^{e_1}=\eta^{e_0e_1}$,
and for every $y\in T$ let $\chi'(y)=(\zeta\chi(y))^{1/e_0}/\eta$. This is a
semi-algebraic function taking values in $\UU_{e_0e_1}$ because
\begin{displaymath}
  \big[(\zeta\chi)^{1/e_0}\big]^{e_0e_1}
  = \zeta^{e_1}=\eta^{e_0e_1} .
\end{displaymath}
By Remark~\ref{re:racine-de-U}, $\cU_{n_1}^{1/e_0}=\cU_{n_1-v(e_0)}$
because $n_1>2v(e_0)$, and by definition
$\chi'\cU_{n_1-v(e_0)}=\cU_{e_0e_1,n_1-v(e_0)}$. Altogether this gives
that
\begin{eqnarray*}
  \big[\lambda_A^{-1}\delta_A\circ\varphi_{|T}\big]^\frac{1}{e_0}
  &=& \cU_{n_1}^\frac{1}{e_0}(\zeta\chi)^\frac{1}{e_0}g^\frac{1}{e_0} \\
  &=& \chi'\cU_{n_1-v(e_0)}\big((\zeta\chi)^\frac{1}{e_0}/\chi'i\big) g^\frac{1}{e_0} \\
  &=& \cU_{e_0e_1,n_1-v(e_0)}\eta g^\frac{1}{e_0} 
\end{eqnarray*}
Thus $[\lambda_A^{-1}\delta_A\circ\varphi]^{1/e_0}$ is $N$\--monomial mod
$U_{e_0e_1,n_1-v(e_0)}$ (because so is $g^\frac{1}{e_0}$). It is {\it
a fortiori}  $N$\--monomial mod $U_{e_0e_1,n}$ since $n_1-v(e_0)\geq n$
by construction. 
\end{proof}

\paragraph{Acknowledgement.}
The idea of the proof of the Good Direction Lemma~\ref{le:gd-open} was
given to me at the beginning of this research (more than fifteen
years ago) by my colleague Daniel Naie. I would also like to thank
Immanuel Halupczok and Georges Comte for very helpful recent
discussions on stratifications.

% \bibliographystyle{alpha}
% \bibliography{biblio}

\end{document}